\def\date{4.4.2017} 
\newtheorem{theorem}{Theorem}[section]
\newtheorem{proposition}[theorem]{Proposition}
\newtheorem{lemma}[theorem]{Lemma}
\newtheorem{corollary}[theorem]{Corollary}
\theoremstyle{definition}
\newtheorem{definition}[theorem]{Definition}
\newtheorem{remark}[theorem]{Remark}
\def\examplesname{Examples}
\def\examplename{Example}
\newtheorem{exas}[theorem]{\examplesname}
\newenvironment{exs}{\begin{exas}\rm}{\end{exas}}
\newtheorem{exa}[theorem]{\examplename}
\newenvironment{ex}{\begin{exa}\rm}{\end{exa}}
\newcommand{\cJ}{\mathcal J}
\newcommand{\cY}{\mathcal Y}
\newcommand{\Pin}{\mathop{{\rm Pin}}\nolimits}
\newcommand{\Out}{\mathop{{\rm Out}}\nolimits}
\newcommand{\grp}{\mathop{{\rm grp}}\nolimits}
\newcommand{\mc}{\mathop{{\rm mc}}\nolimits}
\newcommand{\ms}{\mathop{{\rm ms}}\nolimits}
\newcommand{\Conj}{\mathop{{\rm Conj}}\nolimits}
\newcommand{\Inv}{\mathop{{\rm Inv}}\nolimits}
\newcommand{\AAut}{\mathop{{\rm AAut}}\nolimits}
\renewcommand{\phi}{\varphi} 
\newcommand{\AdS}{\mathop{{\rm AdS}}\nolimits}
\newcommand{\Stand}{\mathop{{\rm Stand}}\nolimits}
\newcommand{\cs}{\mathop{{\rm cs}}\nolimits}
\newcommand{\Comm}{\mathop{{\rm Comm}}\nolimits}
\newcommand{\PO}{\mathop{{\rm PO}}\nolimits}
\newcommand\lbr{\llbracket}
\newcommand\rbr{\rrbracket}
\font\tengoth=eufm10 at 10pt
\font\sevengoth=eufm7 at 6pt
\newcommand{\mlabel}[1]{\marginpar{#1}\label{#1}}
\newcommand{\fS}{{\mathfrak S}}
\newcommand{\g}{{\mathfrak g}}
\newcommand{\fb}{{\mathfrak b}}
\newcommand{\fh}{{\mathfrak h}}
\newcommand{\fp}{{\mathfrak p}}
\newcommand{\fu}{{\mathfrak u}}
\renewcommand{\:}{\colon}
\newcommand{\1}{\mathbf{1}}
\newcommand{\cA}{\mathcal{A}}
\newcommand{\cD}{\mathcal{D}}
\newcommand{\cF}{\mathcal{F}}
\newcommand{\cG}{\mathcal{G}}
\newcommand{\cH}{\mathcal{H}}
\newcommand{\cK}{\mathcal{K}}
\newcommand{\cM}{\mathcal{M}}
\newcommand{\cN}{\mathcal{N}}
\newcommand{\cO}{\mathcal{O}}
\newcommand{\cP}{\mathcal{P}}
\newcommand{\cR}{\mathcal{R}}
\newcommand{\cS}{\mathcal{S}}
\newcommand{\cT}{\mathcal{T}}
\newcommand{\cV}{\mathcal{V}}
\newcommand{\cW}{\mathcal{W}}
\newcommand{\cZ}{\mathcal{Z}}
\newcommand\bx{{\bf{x}}}
\newcommand\by{{\bf{y}}}
\newcommand\bp{{\bf{p}}}
\newcommand{\eset}{\emptyset}
\newcommand{\dd}{{\tt d}}
\newcommand{\trile}{\trianglelefteq}
\newcommand{\subeq}{\subseteq}
\newcommand{\supeq}{\supseteq}
\newcommand{\into}{\hookrightarrow}
\newcommand{\eps}{\varepsilon}
\newcommand{\N}{{\mathbb N}}
\newcommand{\Z}{{\mathbb Z}}
\newcommand{\R}{{\mathbb R}}
\newcommand{\C}{{\mathbb C}}
\newcommand{\bP}{{\mathbb P}}
\renewcommand{\H}{{\mathbb H}}
\newcommand{\T}{{\mathbb T}}
\newcommand{\bH}{{\mathbb H}}
\newcommand{\bS}{{\mathbb S}}
\renewcommand{\hat}{\widehat}
\renewcommand{\tilde}{\widetilde}
\newcommand{\Aff}{\mathop{{\rm Aff}}\nolimits}
\newcommand{\CAR}{\mathop{{\rm CAR}}\nolimits}
\newcommand{\GL}{\mathop{{\rm GL}}\nolimits}
\newcommand{\SL}{\mathop{{\rm SL}}\nolimits}
\newcommand{\PGL}{\mathop{{\rm PGL}}\nolimits}
\newcommand{\PAU}{\mathop{{\rm PAU}}\nolimits}
\newcommand{\AU}{\mathop{{\rm AU}}\nolimits}
\newcommand{\PSL}{\mathop{{\rm PSL}}\nolimits}
\newcommand{\SO}{\mathop{{\rm SO}}\nolimits}
\newcommand{\SU}{\mathop{{\rm SU}}\nolimits}
\newcommand{\OO}{\mathop{\rm O{}}\nolimits}
\newcommand{\U}{\mathop{\rm U{}}\nolimits}
\newcommand{\aff}{\mathop{{\mathfrak{aff}}}\nolimits}
\newcommand{\fsl} {\mathop{{\mathfrak{sl} }}\nolimits}
\newcommand{\so}  {\mathop{{\mathfrak{so} }}\nolimits}
\newcommand{\Exp}{\mathop{{\rm Exp}}\nolimits}
\newcommand{\Fix}{\mathop{{\rm Fix}}\nolimits}
\newcommand{\ad}{\mathop{{\rm ad}}\nolimits}
\newcommand{\Ad}{\mathop{{\rm Ad}}\nolimits}
\renewcommand{\Re}{\mathop{{\rm Re}}\nolimits}
\renewcommand{\Im}{\mathop{{\rm Im}}\nolimits}
\newcommand{\tr}{\mathop{{\rm tr}}\nolimits}
\newcommand{\Hom}{\mathop{{\rm Hom}}\nolimits}
\newcommand{\Herm}{\mathop{{\rm Herm}}\nolimits}
\newcommand{\Heis}{\mathop{{\rm Heis}}\nolimits}
\newcommand{\hsm}{\mathop{\rm hsm}\nolimits}
\newcommand{\Car}{\CAR}
\newcommand{\Aut}{\mathop{{\rm Aut}}\nolimits}
\newcommand{\Conf}{\mathop{\rm Conf{}}\nolimits}
\newcommand{\Diff}{\mathop{{\rm Diff}}\nolimits}
\newcommand{\Vir}{\mathop{{\rm Vir}}\nolimits}
\newcommand{\diag}{\mathop{{\rm diag}}\nolimits}
\newcommand{\id}{\mathop{{\rm id}}\nolimits}
\renewcommand{\dim}{\mathop{{\rm dim}}\nolimits}
\newcommand{\im}{\mathop{{\rm im}}\nolimits}
\newcommand{\Inn}{\mathop{{\rm Inn}}\nolimits}
\newcommand{\sgn}{\mathop{{\rm sgn}}\nolimits}
\newcommand{\ev}{\mathop{{\rm ev}}\nolimits}
\newcommand{\dS}{\mathop{{\rm dS}}\nolimits}
\newcommand{\PSO}{\mathop{{\rm PSO}}\nolimits}
\newcommand{\oline}{\overline}
\newcommand{\la}{\langle}
\newcommand{\ra}{\rangle}
\newcommand{\up}{\mathop{\uparrow}}
\newcommand{\res}{\vert}
\newcommand{\Spec}{{\rm Spec}}
\newcommand{\ssssarr}{\hbox to 15pt{\rightarrowfill}}
\newcommand{\sssarr}{\hbox to 20pt{\rightarrowfill}}
\newcommand{\ssarr}{\hbox to 30pt{\rightarrowfill}}
\newcommand{\sarr}{\hbox to 40pt{\rightarrowfill}}
\newcommand{\arr}{\hbox to 60pt{\rightarrowfill}}
\newcommand{\larr}{\hbox to 60pt{\leftarrowfill}}
\newcommand{\Arr}{\hbox to 80pt{\rightarrowfill}}
\newcommand{\mapdown}[1]{\Big\downarrow\rlap{$\vcenter{\hbox{$\scriptstyle#1$}}$}}
\newcommand{\mapright}[1]{\smash{\mathop{\arr}\limits^{#1}}}
\newcommand{\pmat}[1]{\begin{pmatrix} #1 \end{pmatrix}}
\newcommand{\mat}[1]{\begin{matrix} #1 \end{matrix}}
\renewcommand{\mlabel}{\label}
\begin{document}

\keywords{antiunitary representation, modular 
operator, modular conjugation, von Neumann algebra, 
local observables, half-sided modular inclusion 
quantum field theory} 
\mathclass{Primary 22E45; Secondary 81R05, 81T05.}

\abbrevauthors{K.-H. Neeb and G. \'Olafsson} 
\abbrevtitle{Antiunitary representations} 

\title{Antiunitary representations\\ and modular theory}

\author{Karl-Hermann Neeb} 
\address{Department Mathematik, Universit\"at Erlangen-N\"urnberg\\
Cauerstrasse 11, 91058 Erlangen, Germany\\ 
E-mail: neeb@math.fau.de}

\author{Gestur \'Olafsson} 
\address{Department of mathematics, Louisiana State University \\ 
Baton Rouge, LA 70803, USA \\ 
E-mail: olafsson@math.lsu.edu} 

\maketitlebcp


\begin{abstract}
Antiunitary representations of Lie groups take 
values in the group of unitary and antiunitary 
operators on a Hilbert space $\cH$. 
In quantum physics, antiunitary operators implement 
time inversion or a PCT symmetry, and in the modular 
theory of operator algebras they arise as modular conjugations 
from cyclic separating vectors of von Neumann algebras. 
We survey some of the key concepts 
at the borderline between the theory of local observables 
(Quantum Field Theory (QFT) in the sense of Araki--Haag--Kastler) 
and modular theory of operator algebras from the perspective 
of antiunitary group representations. Here a central point is 
to encode modular objects in  standard 
subspaces $V\subeq \cH$ which in turn are in one-to-one correspondence 
with antiunitary representations of the multiplicative group $\R^\times$. 
Half-sided modular inclusions and modular intersections of standard subspaces 
correspond to antiunitary representations of $\Aff(\R)$, and these provide 
the basic building blocks for a general theory started in the 
90s with the ground breaking work of Borchers and Wiesbrock and 
developed in various directions in the QFT context. The emphasis of these notes 
lies on the translation between configurations of standard 
subspaces as they arise in the context of modular localization 
developed by Brunetti, Guido and Longo, and the more classical 
context of von Neumann algebras with cyclic separating vectors. 
Our main point is that configurations of standard subspaces 
can be studied from the perspective of antiunitary Lie group 
representations and the geometry of the corresponding spaces, 
which are often fiber bundles over ordered symmetric spaces. 
We expect this perspective to provide new and systematic insight 
into the much richer configurations of nets of local observables in QFT. 
\end{abstract}

\tableofcontents

\section{Introduction} 
\mlabel{sec:1} 

One of the core ideas of quantum theory is that the states 
of a quantum system correspond to one-dimensional subspaces 
of a complex Hilbert space~$\cH$, i.e., the elements 
$[v] = \C v$, $v \not=0$, of its projective space $\bP(\cH)$. 
This set carries a geometric structure defined by 
the transition probability 
\[  \tau([v],[w]) := \frac{ |\la v,w \ra|^2}{\la v,v\ra \la w,w\ra} 
\in [0,1]\] 
between two states $[v]$ and $[w]$, 
where $d([v],[w]) = \arccos \sqrt{\tau([v],[w])} \in [0,\pi/2]$ 
is the corresponding Riemannian metric (the Fubini--Study metric),  
turning it into a Riemann--Hilbert manifold. {\it Wigner's Theorem} 
characterizes the automorphisms of $(\bP(\cH),\tau)$, 
resp., the isometries for the metric, as those bijections 
induced on $\bP(\cH)$ by elements of the {\it antiunitary group}~$\AU(\cH)$ 
of all linear and antilinear surjective isometries of $\cH$ 
(\cite{Ba64}).  
Accordingly, we have an isomorphism 
\[ \Aut(\bP(\cH),\tau) \cong \AU(\cH)/\T \1 =: \PAU(\cH)\] 
of $\Aut(\bP(\cH),\tau)$ with the {\it projective antiunitary group $\PAU(\cH)$.}
So any action of a group $G$ by symmetries of a quantum 
system leads to a homomorphism $\oline\pi \: G \to \PAU(\cH)$ and further to a 
homomorphism of the pullback 
extension $G^\sharp = \oline\pi^*\AU(\cH)$ of $G$ by the circle group $\T$ to $\AU(\cH)$, i.e., an antiunitary representation. 
More precisely, for a pair $(G, G_1)$, where $G_1 \subeq G$ is a subgroup 
of index $2$, a homomorphism $U \: G \to \AU(\cH)$ is called an 
{\it antiunitary representation of $(G,G_1)$} if 
$U_g$ is antiunitary for $g \not\in G_1$. 
If $G$ is a topological group with two connected components, then 
we obtain a canonical group pair by $G_1 := G_0$ (the identity component). 
In this case an {\it antiunitary representation of $G$} is a continuous 
homomorphism $U \: G \to \AU(\cH)$ mapping $G\setminus G_0$ into antiunitary operators. 

In the mathematical literature on 
representations, antiunitary operators have never been in the focus, 
whereas in quantum physics one is forced to consider antiunitary operators 
to implement a time-reversal symmetry (\cite{Wig59}). 
If the dynamics of a quantum system is 
described by a unitary one-parameter group $U_t = e^{itH}$, where 
the Hamiltonian $H$ is unbounded and bounded from below, 
then a unitary time reversal operator $\cT$ would 
lead to the relation $\cT H \cT = - H$, which is incompatible with 
$H$ being bounded from below. This problem is overcome 
by implementing time reversal by an antiunitary operator because it imposes no 
restrictions on the spectrum of the Hamiltonian. 
In particular, the PCT Theorem in Quantum Field Theory (QFT) 
which concerns the implementation of a symmetry 
reversing parity (P), charge (C) and time (T), leads to an 
extension of a unitary representation of the 
Poincar\'e group $P(d)^\uparrow_+ = \R^d \rtimes \SO_{1,d-1}(\R)^\uparrow$ 
to an antiunitary representation of 
the larger group $P(d)_+ \cong \R^d \rtimes\SO_{1,d-1}(\R)$ 
(\cite[Thm.~II.5.1.4]{Ha96}). 

In the modular theory of operator algebras 
one studies pairs $(\cM,\Omega)$ consisting 
of a von Neumann algebra $\cM \subeq B(\cH)$ and 
a cyclic separating unit vector $\Omega\in\cH$. 
Then $S(M\Omega) := M^*\Omega$ for $M \in \cM$ defines an unbounded 
antilinear involution, and the polar decomposition 
of its closure $\oline S = J \Delta^{1/2}$ leads to a positive 
selfadjoint operator $\Delta = S^*\oline S$, an antiunitary involution $J$ 
satisfying the {\it modular relation} $J\Delta J = \Delta^{-1}$,  
and $\alpha_t(M) := \Delta^{it}M\Delta^{-it}$ defines automorphisms 
of $\cM$ (see \cite{BR87} and \S\ref{subsec:4.1}). 
In particular, we are naturally led to antiunitary symmetries. 
We say that $(\Delta, J)$ 
is a pairs of {\it modular objects} if $J$ is a conjugation 
and $\Delta$ a positive selfadjoint operator satisfying the modular relation. 

To connect this with QFT, we recall the notion of a 
{\it Haag--Kastler net} of $C^*$-sub\-algebras $\cA(\cO)$ 
of a $C^*$-algebra $\cA$,  
associated to (bounded) regions $\cO$ in $d$-dimensional Minkowski space. 
The algebra $\cA(\cO)$ is interpreted as observables  
that can be measured in the ``laboratory'' $\cO$. 
Accordingly, one requires {\it isotony}, i.e., that $\cO_1 \subeq \cO_2$ implies 
$\cA(\cO_1) \subeq \cA(\cO_2)$ and that the $\cA(\cO)$ generate $\cA$. 
Causality enters by the {\it locality} assumption that 
$\cA(\cO_1)$ and $\cA(\cO_2)$ commute if 
$\cO_1$ and $\cO_2$ are space-like separated, i.e., cannot correspond 
with each other (cf.~Example~\ref{ex:caus-comp}). 
Finally one assumes an action 
$\sigma \: P(d)_+^{\up} \to \Aut(\cA)$ 
of  the connected Poincar\'e group such that 
$\sigma_g(\cA(\cO)) = \cA(g\cO)$. Every Poincar\'e invariant state $\omega$ 
of the algebra $\cA$ now leads by the GNS construction to a covariant 
representation $(\pi_\omega, \cH_\omega, \Omega)$ of $\cA$, and hence 
to a net $\cM(\cO) := \pi_\omega(\cA(\cO))''$ of von Neumann algebras 
on $\cH_\omega$. Whenever $\Omega$ is cyclic and separating for 
$\cM(\cO)$,  we obtain modular objects 
$(\Delta_\cO, J_\cO)$. This connection between the 
Araki--Haag--Kastler theory of local observables  
and modular theory leads naturally to antiunitary group representations 
(cf.\ Section~\ref{sec:5}). 

The starting point for the recent development that led to 
fruitful applications of modular theory in QFT 
was the Bisognano--Wichmann Theorem, asserting that, 
the modular automorphisms $\alpha_t(M) = \Delta^{it}M \Delta^{-it}$ 
corresponding to the algebra $\cM(W)$ of observables corresponding 
to a wedge domain $W$ in Minkowski space (cf.~Definition~\ref{def:wedges}) 
are implemented by the unitary action of a 
one-parameter group of Lorentz boosts preserving $W$ (\cite{BW76}). 
This geometric implementation of modular automorphisms in terms of 
Poincar\'e transformations was an important first step in a 
rich development based on the work of Borchers and Wiesbrock 
in the 1990s \cite{Bo92, Bo95, Bo97, Wi92, Wi93, Wi93c}.
They managed to distill the abstract essence from the Bisognano--Wichmann 
Theorem which led to a better understanding of the 
basic configurations of von Neumann algebras 
in terms of half-sided modular inclusions 
and modular intersections. 
This immediately led to very tight connections between 
the geometry of homogeneous spaces and modular theory \cite{BGL93}. 
In his survey \cite{Bo00}, Borchers described how 
these concepts have revolutionized quantum field theory. 
Subsequent developments can be found in \cite{Tr97, Sch97, Ar99, BGL02, Lo08, JM17}; 
for the approach to Quantum Gravity based on 
Non-commutative Geometry and Tomita--Takesaki Theory, see in particular~\cite{BCL10}. 

A key insight that simplifies matters considerably is that 
modular objects 
$(\Delta, J)$ associated to a pair $(\cM, \Omega)$ of a von Neumann algebra 
$\cM$ and a cyclic separating vector $\Omega$ are completely determined by 
the real  subspace 
\[ V_\cM := \oline{\cM_h\Omega}, \quad \mbox{ where } \quad 
\cM_h = \{ M \in \cM \: M^* = M\}.\] 
It satisfies 
$V_\cM \cap i V_\cM = \{0\}$ and $V_\cM +  i V_\cM$ is dense in $\cH$. 
Closed real subspaces $V \subeq \cH$ with these two properties 
are called {\it standard}. 
Every standard subspace $V$ determines by the polar decomposition 
of the closed operator $S_V$ defined on $V + i V$ by 
$S_V(x + i y) = x- iy$ a pair $(\Delta_V, J_V)$ of modular objects 
and, conversely, any such pair $(\Delta, J)$ determines a standard subspace 
as the fixed point space of $J\Delta^{1/2}$ (see Section~\ref{sec:3}).
We refer to \cite{Lo08} for an excellent survey on this correspondence. 
In QFT, standard subspaces provide 
the basis for the technique of modular localization, developed 
by Brunetti, Guido and Longo in \cite{BGL02}. For some applications 
we refer to \cite{Sch97, MSY06, Sch06, LW11, Ta12, LL14, Mo17}. 

From the perspective of antiunitary representations,
standard subspaces $V$ with modular objects $(\Delta, J)$ 
are in one-to-one correspondence with antiunitary representations 
\begin{equation}
  \label{eq:antiuni-corres}
U \: \R^\times \to \AU(\cH)\quad \mbox{  by  } \quad 
U_{-1} = J \quad \mbox{ and } \quad U_{e^t} = \Delta^{-it/2\pi}
\end{equation}
(Proposition~\ref{prop:3.2}). 
Accordingly, antiunitary representations $(U,\cH)$  
of the affine group $\Aff(\R) \cong \R \rtimes \R^\times$ 
correspond to one-parameter families of standard subspaces 
$(V_x)_{x \in \R}$, where $V_x$ corresponds to the affine stabilizer group of~$x$. 
Borchers' key insight was that the positive energy condition 
on the representation of the translation group is intimately related to 
inclusions of these subspaces. More precisely, 
$U_{(t,1)}= e^{itP}$ satisfies $P \geq 0$ if and only 
if $U_{(t,1)} V_0 \subeq V_0$ holds for all $t \geq 0$ 
(\S \ref{subsec:3.3}). This leads to Borchers 
pairs $(V,U)$ of a standard subspace~$V$ and a unitary one-parameter group 
$(U_t)_{t \in \R}$, a concept that is equivalent to the so-called half-sided modular 
inclusions $V_1 \subeq V_2$ of pairs of standard subspaces, 
which was condensed from the corresponding concept of a 
half-sided modular inclusion of von Neumann algebras 
(\S\S\ref{subsec:3.3},\ref{subsec:4.2}). 

The main objective of this article is to describe 
certain structures arising in QFT, such as nets of von Neumann algebras 
and standard subspaces, from the perspective of antiunitary group 
representations. Since any standard subspace $V$ 
corresponds to a representation of $\R^\times$ 
and inclusions of standard subspaces correspond to antiunitary 
positive energy representations of $\Aff(\R)$, it is very likely that a better 
understanding of antiunitary representations and corresponding 
families of standard subspaces provides new insight into the geometric 
structures underlying QFT. This article is written 
from a mathematical perspective and we are rather brief on the 
concrete physical aspects mentioned in \S \ref{subsec:5.2}. 
We tried to describe the mathematical side of the theory 
as clearly as  possible to make it easier for mathematicians 
to understand the relevant aspects without going to much into physics. 
For more details of the physical side, we recommend 
\cite{BDFS00, BGL02, Lo08, LL14}. 
In particular, the programs outlined by Borchers and Wiesbrock, 
see f.i.,  \cite{Bo97, Bo00}, \cite{Wi93c}, leave much potential 
for an analysis from the representation theoretic perspective.

The structure of this paper is as follows. 
In Section~\ref{sec:2} we discuss antiunitary representations of group 
pairs $(G, G_1)$ and criteria for a unitary representation 
of $G_1$ to extend to an antiunitary representation of $G$. 
An interesting simplifying feature is that, whenever antiunitary 
extensions exist, they are unique up to equivalence 
(Theorem~\ref{thm:equiv}). We show that 
irreducible unitary representations of $G_1$ fall into three types 
(real, complex and quaternionic) with respect to their extendability 
behavior to antiunitary representations of $G$. 
We also take a closer look at antiunitary representations 
of one-dimensional Lie groups (\S \ref{subsec:one-par}). 
Here $\R^\times$ plays a central role because 
its antiunitary representations encode modular objects $(\Delta, J)$ 
as in \eqref{eq:antiuni-corres}. 
We conclude Section~\ref{sec:2} with a discussion of antiunitary 
representations of the affine group $\Aff(\R)$,  the projective 
group $\PGL_2(\R)$ and the $3$-dimensional Heisenberg group $\Heis(\R^2)$. 

Section~\ref{sec:3} is devoted to various aspects of standard subspaces 
as a geometric counterpart of antiunitary representations of~$\R^\times$. 
In particular, we discuss how the embedding $V \subeq \cH$ can be 
obtained from the orthogonal one-parameter group 
$\Delta^{it}\res_V$ on $V$ (\S \ref{subsec:orthog}),  
and in \S \ref{subsec:3.3} we discuss half-sided modular inclusions 
of standard subspaces and how they are related to 
antiunitary representations of $\Aff(\R)$, $P(2)_+$ and $\PGL_2(\R)$. 

In Section~\ref{sec:4} we first recall some of the key features of
Tomita--Takesaki Theory. \S \ref{subsec:4.2} is of key importance
because it is devoted to the translation between pairs $(\cM,\Omega)$ 
of von Neumann algebras with cyclic separating vectors 
and standard subspaces~$V$. We have already seen how to obtain a standard 
subspace $V_\cM = \oline{\cM_h\Omega}$ from $(\cM,\Omega)$. 
Conversely, one can use Second Quantization (see Section~\ref{sec:6} for details) 
to associate to each standard subspace $V \subeq \cH$ 
pairs $(\cR_\pm(V),\Omega)$, where $\cR_\pm(V)$ is a von Neumann algebra 
on the (bosonic/fermionic) Fock space $\cF_\pm(\cH)$. 
This method has been invented and 
studied thoroughly by Araki and Woods in the 1960s and 1970s in the context of 
free bosonic quantum fields (\cite{Ar64, Ar99, AW63, AW68}); 
some of the corresponding fermionic results are more recent 
(cf.\ \cite{EO73}, \cite{BJL02}) and other statistics (anyons) 
are discussed in \cite{Sch97}. 

A central point  is that these correspondences permit to translate between results 
on configurations of standard subspaces and configurations of von Neumann algebras 
with common cyclic vectors. We explain this in detail for half-sided modular 
inclusions and Borchers pairs (\S\S\ref{subsec:4.2} and \ref{subsec:4.3}) 
but we expect it to go much deeper. Keeping in mind that 
standard subspaces are in one-to-one correspondence with antiunitary 
representations of $\R^\times$ and half-sided modular inclusions with 
antiunitary positive energy representations of $\Aff(\R)$, we expect that many interesting 
results on von Neumann algebras can be obtained from a better understanding 
of antiunitary representations of Lie group pairs $(G, G_1)$ and 
configurations of homomorphisms $\gamma \: (\R^\times, \R^\times_+) \to (G,G_1)$. 
The construction of free fields by second quantization associates to 
an antiunitary representation $(U,\cH)$ of $G$ 
on the one-particle spaces $\cH$, resp., to the corresponding 
standard subspaces $V_\gamma$, a net of Neumann algebras on Fock space. 
However, there is also a converse aspect which is probably  more important, 
namely that the passage from pairs 
$(\cM,\Omega)$ to the standard subspaces $V_\cM$ is not restricted to free fields 
and can be used to attach geometric structure to nets of von Neumann algebras, 
all encoded in the subgroup of $\AU(\cH)$ generated by all operators 
$\Delta_\cM^{it}$ and $J_\cM$. 
To substantiate this remark, we discuss in Section~\ref{sec:5} 
several aspects of 
nets of standard subspaces and von Neumann algebras as they arise in QFT. 
In particular, we consider nets of standard subspaces 
$(V_\ell)_{\ell \in L}$ arising from antiunitary representations $(U,\cH)$, which 
leads to the covariance relation $U_g V_\ell  = V_{g.\ell}$ for $g \in G_1$, and one expects 
geometric information 
to be encoded in the $G$-action on the index set $L$. 
A~common feature of the natural examples is that $L$ has a fibration 
over a symmetric space that corresponds to the projection 
$(\Delta_\ell, J_\ell) \mapsto J_\ell$, forgetting the modular operator. 
For details we refer to the discussion of several examples in 
Section~\ref{sec:5}. Typical index sets $L$ arise as 
conjugation orbits 
$\{ \gamma^g, (\gamma^\vee)^g \: g \in G \} \subeq \Hom(\R^\times, G)$, 
where $\gamma^g(t) = g\gamma(t)g^{-1}$ and $\gamma^\vee(t) = \gamma(t^{-1})$. 
In this picture, the above 
projection simply corresponds to the evaluation map 
$\ev_{-1} \: \Hom(\R^\times, G) \to \Inv(G)$ and the set $\Inv(G)$ of involutions 
of $G$ is a symmetric space (\cite{Lo69}; Appendix~\ref{app:a.2}). 
In many concrete situations, the centralizer of $\gamma(-1)$ in $G$ 
coincides with the centralizer of the whole subgroup $\gamma(\R^\times)$, so that 
the conjugacy class $C_\gamma = \{\gamma^g \: g\in G\}$ can be identified 
with the conjugacy class $C_{\gamma(-1)}$ of the involution $\gamma(-1)$, and 
this manifold is a symmetric space. We are therefore led to 
index sets which are ordered symmetric spaces, and these objects have been 
studied in detail in the 90s. We refer to the monograph \cite{HO96} 
for a detailed exposition of their structure theory. 

Section~\ref{sec:6} presents the second quantization process from 
standard subspaces $V \subeq \cH$ to pairs $(\cR^\pm(V), \Omega)$ 
in a uniform way, stressing in particular the similarity between the 
bosonic and the fermionic case. 

In the final Section~\ref{sec:7} we briefly describe some 
perspectives and open problems. 
Antiunitary representations occur naturally for interesting classes of 
groups such as the Virasoro group, 
conformal and affine groups related to euclidean Jordan algebras,
and automorphism groups of bounded symmetric domains. For detailed 
results we refer to the forthcoming paper \cite{NO17}. 
In \S\ref{subsec:7.4} we also explain how second quantization 
leads to interesting dual pairs in the Heisenberg 
group $\Heis(\cH)$: Any standard subspace 
$V\subeq \cH$ satisfying the factoriality condition $V \cap V' = \{0\}$, 
where $V'$ is the symplectic orthogonal space,  
leads by restriction of the irreducible Fock representation 
of $\Heis(\cH)$ to a factor representation 
of the subgroup $\Heis(V)$, which forms a dual pair with 
$\Heis(V')$ in $\Heis(\cH)$ (both subgroups are their mutual centralizers). 
So far, such dual pairs have not been exploited systematically 
from the perspective of unitary representations of infinite dimensional 
Lie groups. 

Some basic auxiliary lemmas and definitions have been 
collected in the appendix. \\

{\bf Notation and conventions:} 
As customary in physics, the 
scalar product $\la \cdot,\cdot\ra$ on a complex Hilbert space $\cH$ 
is linear in the second argument. \\
$\lbr S \rbr$ denotes the closed subspace 
of a Hilbert space $\cH$ generated by the subset $S$. \\
$\{a,b\} := ab + ba$ is the anti-commutator of two elements 
of an associative algebra. \\ 
For the cyclic group of order $n$ we write $\Z_n = \Z/n\Z$.\\ 

For $\bx, \by \in \R^{d-1}$, we 
write $\bx \by = \sum_{j = 1}^{d-1} x_j y_j$ for the scalar product and, 
for $x = (x_0, \bx) \in \R^{d}$, we write
$[x,y] = x_0 y_0 - \bx \by$ for the Lorentzian scalar product on the 
$d$-dimensional Minkowski space $\R^{1,d-1}\cong \R^d$. 
The light cone in Minkowski space 
is denoted 
\[ V_+ = \{ x \in \R^{1,d-1} \: x_0 > 0, [x,x] > 0\}.\] 

Here is our notation for some of the groups arising in physics: 
\begin{itemize}
\item the {\it Poincar\'e group}
$P(d) \cong \R^{1,d-1} \rtimes \OO_{1,d-1}(\R)$ of affine isometries 
of $\R^{1,d-1}$, 
\item $P(d)_+ = \R^{1,d-1} \rtimes \SO_{1,d-1}(\R)$ 
 is the subgroup of orientation preserving maps, and 
\item $P(d)^\uparrow = \R^{1,d-1} \rtimes \OO_{1,d-1}(\R)^\uparrow$ 
with $\OO_{1,d-1}(\R)^\uparrow = \{ g \in \OO_{1,d-1}(\R) \: gV_+ = V_+\}$ 
the subgroup preserving the causal structure. 
\item The corresponding {\it conformal group} 
is $\OO_{2,d}(\R)$, acting on the conformal compactification 
$\bS^1 \times \bS^{d-1}$ of $M^d$ with the kernel $\{\pm \1\}$ 
(see \cite[\S17.4]{HN12}). 
\end{itemize}
If not otherwise states, all Lie groups in this paper are finite dimensional.

\section{Antiunitary representations} 
\mlabel{sec:2}

In this section we discuss antiunitary representations of group 
pairs $(G, G_1)$ and criteria for a unitary representation 
of $G_1$ to extend to an antiunitary representation of $G$. 
We start in \S \ref{subsec:2.1} with some general remarks on 
group pairs $(G,G_1)$ and how to classify twists in this context. 
We also take a closer look at antiunitary representations 
of one-dimensional Lie groups in \S \ref{subsec:one-par} 
and discuss antiunitary 
representations of the affine group $\Aff(\R)$, the projective 
group $\PGL_2(\R)$ and the $3$-dimensional Heisenberg group 
in \S \ref{subsec:2.4}.

\begin{definition} An {\it antiunitary representation} $(U,\cH)$ of a 
group pair $(G, G_1)$, where $G_1 \subeq G$ is a subgroup of index $2$, 
is a homomorphism $U$ of $G$ into the group $\AU(\cH)$ 
of unitary or antiunitary operators on a 
complex Hilbert space $\cH$ for which $G_1 = U_G^{-1}(\U(\cH))$, i.e., 
$G_1$ is represented by unitary operators and the coset 
$G\setminus G_1$ by antiunitary operators. 

If $G$ is a Lie group, then $(G,G_1)$ is called a {\it Lie group pair}. 

If $G$ is a topological group with two connected components, then 
we obtain a canonical group pair by $G_1 := G_0$ (the identity component). 
In this case an {\it antiunitary representation of $G$} is a continuous 
homomorphism 
$U \: G \to \AU(\cH)$ mapping $G\setminus G_0$ into antiunitary operators. 
\end{definition}

We start this section with a discussion of the natural class of 
group pairs that will show up in the context of antiunitary representations.

\subsection{Involutive group pairs} 
\mlabel{subsec:2.1} 

\begin{definition} An {\it involutive group pair} is a 
pair $(G, G_1)$ of groups, where $G_1 \subeq G$ is a subgroup of index $2$ 
and there exists an element $g \in G \setminus G_1$ with 
$g^2 \in Z(G_1)$. Then $\tau(g_1) := gg_1 g^{-1}$ defines an involutive 
automorphism of $G_1$. 
\end{definition}

In most examples that we encounter below $G$ is a Lie 
group with two connected components and $G_1$ is its identity component. 

\begin{remark} (a) If $g^2 \in Z(G_1)$, then other elements 
$gh\in g G_1$ need not have central squares. 
From $(gh)^2 = ghgh = g^2 \tau(h)h$ it follows that 
$(gh)^2$ is central if and only if $\tau(h)h \in Z(G_1)$, which is 
in particular the case if $\tau(h) = h^{-1}$. 

(b) If $G$ is a Lie group, then any conjugacy class $C_g$ of $g \in G \setminus G_1$ 
with $g^2 \in Z(G)$ carries a natural symmetric space 
structure (Appendix~\ref{app:a.2}). In fact, the stabilizer 
of $g$ in $G_1$ is $G_1^\tau$, so that we obtain a diffeomorphism 
\[ G_1/G_1^\tau \to C_g, \qquad h G_1^\tau \mapsto h g h^{-1} = h \tau(h)^{-1} g. \] 
\end{remark}

\begin{exs} \mlabel{exs:conj} 
(a) Let $\cH$ be a complex Hilbert space 
and $(G, G_1) := (\AU(\cH), \U(\cH))$. 
An anti\-unitary operator $J \in \AU(\cH)$ is called a {\it conjugation} 
if $J^2 = \1$ and an {\it anticonjugation} if $J^2 = - \1$. 
Conjugations always exist and define a {\it real structure} on $\cH$ in 
the sense that $\cH^J= \Fix(J) := \ker(J-\1)$ is a real Hilbert space whose complexification 
is~$\cH$.\begin{footnote}{For the existence, fix an orthonormal 
basis $(e_j)_{j \in I}$ of $\cH$ and defined $J$ to be antilinear with 
$Je_j = e_j$ for every $j \in I$.} \end{footnote}
Anticonjugations define on $\cH$ a {\it quaternionic structure}, 
hence do not exist if $\cH$ is of finite odd dimension. 

Any (anti-)conjugation $J$ on $\cH$ is contained in $G\setminus G_1$ and 
satisfies $J^2 \in \{\pm \1\} \subeq Z(\U(\cH))$. 

(b) If $G_1$ is a group and $\tau \in \Aut(G_1)$ is an involutive automorphism, 
then \break $G := G_1 \rtimes \{\1,\tau\}$ defines an involutive group pair.
\end{exs}

\begin{ex} (A non-involutive group pair) 
Let $\sigma \: C_4 \to \Aut(\C)$ denote the natural action of 
the subgroup $C_4 = \{ \pm 1, \pm i\}\subeq \T$ by 
multiplication and form the semidirect product group 
$G := \C \rtimes_\sigma C_4$. Then $G_1 := \C \rtimes_\sigma \{\pm 1\}$ 
is a subgroup of index $2$ but no element $g \in G\setminus G_1$ satisfies 
$g^2 \in Z(G_1)$ because $g^2$ acts on $\C$ as $-\id_\C$. 
\end{ex}

\begin{remark} (Classification of involutive group pairs) 
\mlabel{rem:2.1} 
(a) Suppose we are given a group $G$ and an involutive automorphism 
$\tau$ of $G$. We want to classify all group extensions 
\[ \1 \to G \to G^\sharp \to \Z_2 \to \1, \] 
where the corresponding involution in the group $\Out(G)$ of outer automorphisms of $G$ 
is represented by~$\tau$. 
In view of \cite[Thm.~18.1.13]{HN12}, the equivalence classes of these extensions 
are parametrized by the cohomology group $H^2_\tau(\Z_2,Z(G))$, 
where $\oline 1$ acts on $Z(G)$ by $\tau\res_{Z(G)}$. 
As any cocycle 
$f \: \Z_2 \times \Z_2 \to Z(G)$ normalized by 
$f(\oline 0,g) = f(g,\oline 0) = e$ is determined by the element 
$z := f(\oline 1,\oline 1) \in Z(G)$ because all other values vanish, 
the group structure on the corresponding 
extension is given by an element $\hat\tau \in G^\sharp \setminus G$ satisfying 
\[ \hat\tau^2 = z \quad \mbox{ and } \quad \hat\tau g \hat\tau^{-1} = \tau(g)
\quad \mbox{ for }\quad g\in G.\]
This description shows in particular that $\tau(z) = z$, and a closer 
inspection of the cohomology groups yields 
\begin{equation}
  \label{eq:h2-form}
 H^2_\tau(\Z_2, Z(G)) \cong Z(G)^\tau/Z(G)_\tau, 
\quad \mbox{ where } \quad Z(G)_\tau := \{ \tau(z)z \: z \in Z(G)\}
\end{equation}
(\cite[Ex.~18.3.5]{HN12}). 

(b) For $\tau\res_{Z(G)} = \id_{Z(G)}$ we have 
\[ Z(G)_\tau = \{ z^2 \: z \in Z(G)\} \quad \mbox{ and }\quad 
H^2_\tau(\Z_2, Z(G)) \cong Z(G)/Z(G)_\tau.\] 
  
(c) For $\tau\res_{Z(G)} = -\id_{Z(G)}$, we have 
\[ H^2_\tau(\Z_2, Z(G)) \cong Z(G)^\tau 
= \{ z \in Z(G) \: z^2 = e\},\] 
the subgroup of central involutions.
\end{remark}

\begin{remark}
Although by \eqref{eq:h2-form} 
the cohomology groups $H^2_\tau(\Z_2,Z(G))$ are elementary abelian 
two groups, one cannot expect any bound on the order of an element 
$g \in G^\sharp \setminus G$. In the cyclic group 
$G^\sharp = \Z_{2^n}$ with $G \cong \Z_{2^{n-1}}$, any 
element of $G^\sharp \setminus G$ is of order $2^n$.
\end{remark}

\begin{ex} \mlabel{app:b.1} (a) For $G = \R$, Remark~\ref{rem:2.1} implies that 
$H^2_\tau(\Z_2,\R) = \{0\}$ for any involutive automorphism $\tau$. 
This implies that $G^\sharp \cong \R \rtimes_\tau \Z_2$. 

(b) For $G = \T$, the cohomology is trivial for $\tau = \id_\T$, 
but for $\tau(z) = z^{-1}$ the group  
\[ H^2_{\tau}(\Z_2, \T) \cong  \{ z \in \T \: z^2 = 1\} = \{ \pm 1 \}\] 
is non-trivial. A concrete model for the non-trivial extension with 
$\hat\tau^2 = -1$ is given by the subgroup 
\[  \Pin_2(\R) = \exp(\R I) \cup J \exp(\R I) \subeq \bH^\times, \] 
where $I$ and $J$ are the two generators of the skew-field $\bH$ of 
quaternions satisfying $I^2  = J^2 = - \1$ and $IJ = - JI$ 
(\cite[Ex.~B.3.24]{HN12}). 
This is a $1$-dimensional Lie group without a simply connected covering 
group (\cite[Ex.~18.2.4]{HN12})
\end{ex}

\begin{exs} \mlabel{ex:2.11}
Here are some concrete involutive group pairs $(G, G_1)$ that we shall 
be dealing with. 

(a) $G = \Aff(\R)\cong \R \rtimes \R^\times$ with $G_1 \cong \R \rtimes \R^\times_+$,  
the identity component. Here $r_x^2 = \1$ holds for the reflections 
$r_x = (2x,-1)$ in $x \in \R$.

(b) The automorphism group $G = \PGL_2(\R)$ of the real projective line 
$\bP_1(\R) \cong  \R \cup \{\infty\}$, where $G_1 = \PSL_2(\R)$ 
is the identity component and reflections in $\GL_2(\R)$ lead to orientation 
reversing involutions of $\bS^1$. 

(c) The {\it Poincar\'e group} $P(d) = \R^{1,d-1} \rtimes \OO_{1,d-1}(\R)$ 
of $d$-dimensional Minkowski space $\R^{1,d-1}$ contains the subgroup 
$P(d)_+ = \R^{1,d-1} \rtimes \SO_{1,d-1}(\R)$ of orientation preserving 
affine isometries. Then we obtain the involutive group pair $(G, G_1)$ with 
$G := P(d)_+$ and $G_1 := P(d)_+^\uparrow$. 
In the following the involution 
$R_{01} := \diag(-1,-1,1,\ldots, 1) \in G \setminus G_1$ plays an important role 
(cf.\ Lemma~\ref{lem:4.17}). 

(d) For a (bounded) symmetric domain $\cD \subeq \C^n$, the group $\Aut(\cD)$ of 
biholomorphic automorphisms is an index $2$-subgroup of the hermitian 
group $\AAut(\cD)$ of all bijections of $\cD$ that are either 
holomorphic or antiholomorphic. There always exist 
antiholomorphic involutions $\sigma$ in $\AAut(\cD)$ 
(see \cite{Ka97} for a classification covering even the infinite dimensional case). 
For any such involution $\sigma$, we obtain by 
$G_1 := \Aut(\cD)_0$ and $G := G_1 \rtimes \{\1,\sigma\}$ 
an involutive group pair (cf.~\cite{NO17} and \S\ref{subsec:7.3}). 
\end{exs}

\subsection{Extending unitary representations} 
\mlabel{subsec:2.2}

Suppose  that $G_1$ is an index two subgroup of the group $G$ 
and $(U,\cH)$ is a unitary representation of~$G_1$. 
In this subsection we discuss extensions of $U$ to antiunitary 
representations of $G$. In particular, we show that, 
in analogy to the classical case $G = G_1 \times \Z_2$, 
irreducible antiunitary representations fall into three types that 
we call real, complex and quaternionic, according to their commutant. 

We start with the following lemma on a situation 
where extensions always exist because the representation 
has been doubled in a suitable way. 

\begin{lemma} \mlabel{lem:extlem} {\rm(Extension Lemma)} 
Let $G_1 \subeq G$ be a subgroup of index two and 
$(U,\cH)$ be a unitary representation of $G_1$. 
Fix $r \in G \setminus G_1$ and consider the automorphism 
$\tau(g) := rgr^{-1}$ of $G_1$. Then the unitary 
representation $V := U \oplus U^*\circ \tau$ on $\cH \oplus \cH^*$ 
extends to an antiunitary 
representation of~$G$. 
\end{lemma}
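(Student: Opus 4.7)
The plan is to construct an explicit antiunitary extension by prescribing its value on the coset representative $r$ and then propagating it to the whole coset $rG_1$ via the right decomposition $G = G_1 \sqcup rG_1$.

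First I would fix the canonical antilinear isometric isomorphism $J \colon \mathcal{H} \to \mathcal{H}^*$, $Jv = \langle v,\cdot\rangle$, and recall that the dual representation $(U^*_g\varphi)(w) := \varphi(U_{g^{-1}}w)$ is intertwined with $U$ via $J$, namely $U^*_g J = JU_g$, so that $JU_gJ^{-1} = U^*_g$. Since $G_1$ has index two in $G$, $G_1$ is normal and $\tau(g) := rgr^{-1}$ is a well-defined automorphism of $G_1$. On $\mathcal{K} := \mathcal{H} \oplus \mathcal{H}^*$ set $V_g := U_g \oplus U^*_{\tau(g)}$ for $g \in G_1$; this is manifestly a unitary representation of $G_1$.

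The decisive step is the choice of $V_r$. I would take
\[
V_r(v,\varphi) := \bigl(J^{-1}\varphi,\, JU_{r^2}v\bigr).
\]
Antilinearity is immediate from antilinearity of $J$ and $J^{-1}$, isometry from the fact that $J$ and $U_{r^2}$ are isometries, and bijectivity from writing down the inverse explicitly; hence $V_r \in \AU(\mathcal{K})\setminus \U(\mathcal{K})$. I would then verify two identities by direct unwinding: (i) $V_r^2 = V_{r^2}$, using $\tau(r^2) = r^2$ and $JU_{r^2}J^{-1} = U^*_{r^2}$ to get $V_r^2(v,\varphi) = (U_{r^2}v, U^*_{r^2}\varphi)$; and (ii) $V_rV_gV_r^{-1} = V_{\tau(g)}$ for $g \in G_1$, using $J^{-1}U^*_{\tau(g)}J = U_{\tau(g)}$ on the $\mathcal{H}$-summand and $JU_{r^2 g r^{-2}}J^{-1} = U^*_{\tau^2(g)}$ on the $\mathcal{H}^*$-summand.

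With (i) and (ii) in hand I would extend $V$ to all of $G$ by $V_{rg_1} := V_rV_{g_1}$ for $g_1 \in G_1$. A case analysis on the cosets of $g,h \in G$ then gives $V_{gh} = V_gV_h$ throughout; the only nontrivial case is $g = rg_1$, $h = rh_1 \in rG_1$, where $gh = r^2\tau^{-1}(g_1)h_1 \in G_1$ and both $V_{gh}$ and $V_gV_h$ reduce, via (i) and (ii), to $V_{r^2}V_{\tau^{-1}(g_1)}V_{h_1}$. Since $V_{g_1}$ is unitary and $V_r$ is antiunitary, the coset $rG_1$ is represented by antiunitary operators, so $V$ is an antiunitary representation of $(G,G_1)$. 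Strong continuity in the Lie-group setting is automatic: $V\res_{G_1}$ is continuous because $U$ and $\tau$ are, and on $rG_1$ one uses continuity of $g \mapsto r^{-1}g$. The only conceptual obstacle is reconciling $V_r^2 = V_{r^2}$ with the fact that $r^2$ need not act trivially under $U$; this is precisely why the $U_{r^2}$-twist appears in the definition of $V_r$ and cannot be removed without enlarging the space further.
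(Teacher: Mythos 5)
Your proposal is correct and follows essentially the same route as the paper: the antiunitary operator $V_r(v,\varphi)=(J^{-1}\varphi, JU_{r^2}v)$ is exactly the operator $J(v,\lambda)=(\Phi^{-1}\lambda,\Phi U_{r^2}v)$ used there, and the two identities $V_r^2=V_{r^2}$ and $V_rV_gV_r^{-1}=V_{\tau(g)}$ are the same verifications. The only cosmetic difference is that the paper delegates the final coset case analysis to a separate extension lemma (Lemma~\ref{lem:ext-homo}) while you carry it out directly.
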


\begin{proof} Let $\Phi \: \cH \to \cH^*, \Phi(v)(w) := \la v, w \ra$ denote the canonical 
antiunitary operator and note that $U_g^* \circ \Phi = \Phi \circ U_g$ for 
$g \in G_1$. We consider the antiunitary operator 
\[ J \: \cH \oplus \cH^* \to \cH \oplus \cH^*, \quad 
J(v,\lambda) := (\Phi^{-1}\lambda, \Phi U_{r^2} v). \] 
It satisfies 
\[ J^2(v,\lambda) 
= J (\Phi^{-1}\lambda, \Phi U_{r^2} v) 
= (U_{r^2} v, \Phi U_{r^2} \Phi^{-1}\lambda) 
= (U_{r^2} v, U^*_{r^2} \lambda) = V_{r^2}(v,\lambda),\] 
where we have used $\tau(r^2) = r^2$ for the last equality. 
This proves that $J^2 = V_{r^2}$. 
We now show that $J V_g J^{-1} = V_{\tau(g)}$ for $g \in G$: 
\begin{align*}
J V_g(v,\lambda) 
&= J (U_g v, U^*_{\tau(g)} \lambda)  
=  (\Phi^{-1} U^*_{\tau(g)} \lambda, \Phi U_{r^2} U_g v)  
=  (U_{\tau(g)} \Phi^{-1}\lambda, \Phi U_{\tau^2(g)} U_{r^2} v)\\  
&=  (U_{\tau(g)} \Phi^{-1}\lambda, U^*_{\tau^2(g)} \Phi U_{r^2} v) 
=  V_{\tau(g)} (\Phi^{-1}\lambda,\Phi U_{r^2} v) 
=  V_{\tau(g)} J(v,\lambda). 
\end{align*}
The relations $J^2 = V_{r^2}$ and $J V_g J^{-1} = V_{\tau(g)}$ 
now imply by direct calculation that the assignment 
$V_{gr} := V_g J$ for $g \in G_1$ defines an extension of 
$V$ to an antiunitary representation of $G$ 
(Lemma~\ref{lem:ext-homo}). 
\end{proof}

The following theorem implies 
that extensions of unitary representations of $G_1$ to antiunitary 
representations $(U,\cH)$ of $G$ are always unique up to isomorphism. 
It also describes the situation for irreducible representations. 
Note that the commutant 
\[ U_G' = \{ A \in B(\cH) \: (\forall g \in G)\, A U_g = U_g A \}\] 
is not a complex subalgebra of $B(\cH)$ because some $U_g$ are antilinear. 

\begin{theorem} \mlabel{thm:equiv}
Let $G_1 \subeq G$ be a subgroup of index two, 
$r \in G\setminus G_1$ and $\tau(g) := rgr^{-1}$ for $g \in G_1$. 
\begin{itemize}
\item[\rm(a)] For two antiunitary representation $(U^j, \cH_j)_{j =1,2}$, we then have 
\[ U^1 \cong U^2 \quad \Longleftrightarrow \quad U^1\res_{G_1} \cong U^2\res_{G_1}.\] 
\item[\rm(b)] For any antiunitary representation $(U, \cH)$ of $(G,G_1)$, 
the von Neumann algebra $U_{G_1}'$ is the 
complexification of the real algebra $U_G'$. 
\item[\rm(c)] An antiunitary representation $(U, \cH)$ of $(G,G_1)$ 
is irreducible if and only if its commutant $U_G'$ 
is  isomorphic to $\R$, $\C$ or $\H$. More specifically: 
\begin{itemize}
\item[\rm(i)] If $U_G' \cong \R$, then $U_{G_1}' \cong \C$ and $U\res_{G_1}$ is irreducible.\item[\rm(ii)] If $U_G' \cong \C$, then $U_{G_1}' \cong \C^2$ 
and $U\res_{G_1}$ is a direct sum of two inequivalent irreducible representations 
which do not extend to an antiunitary representation of~$G$. 
\item[\rm(iii)] If $U_G' \cong \H$, then $U_{G_1}' \cong M_2(\C)$ 
and $U\res_{G_1}$ is a direct sum of two equivalent irreducible representations 
which do not extend to an antiunitary representation of~$G$. 
\end{itemize}
\item[\rm(d)] 
For an irreducible unitary representation $(U,\cH)$ of $G_1$, 
either 
\begin{itemize}
\item[\rm(i)] $U$ extends to an antiunitary representations $\oline U$ of $G$,  
and then $\oline U$ is  irreducible with $\oline U_{G}' \cong \R$; or 
\item[\rm(ii)] $U$ does not extend to an antiunitary representation of $G$. 
Then $V := U \oplus U^*\circ \tau$ extends to an irreducible 
antiunitary representation of $G$ and $V_G' \cong \C$ if \break 
$U^* \circ \tau \not\cong U$ and 
$V_G' \cong \H$ if $U^* \circ \tau \cong U$. 
\end{itemize}
\end{itemize}
\end{theorem}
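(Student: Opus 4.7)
\emph{Part (b)} is the structural backbone, so I plan to prove it first. For $r \in G \setminus G_1$, the map $\sigma_r(A) := U_r A U_r^{-1}$ is an antilinear endomorphism of the complex von Neumann algebra $U_{G_1}'$. Since $r^2 \in G_1$ and every $A \in U_{G_1}'$ commutes with $U_{r^2}$, we have $\sigma_r^2 = \id$, so $\sigma_r$ is an antilinear involution whose fixed-point real $*$-algebra is $U_G'$ (because $G$ is generated by $G_1$ and~$r$). Any complex $*$-algebra with an antilinear involution $\sigma$ splits as $\cN = \cN^\sigma \oplus i\cN^\sigma$ via $A = \tfrac12(A+\sigma(A)) + i\cdot\tfrac{1}{2i}(A-\sigma(A))$, which is precisely the assertion that $U_{G_1}'$ is the complexification of $U_G'$.

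\emph{Part (c)} then follows from a real Schur lemma: irreducibility of $(U,\cH)$ is equivalent to $U_G'$ containing no nontrivial projection, and the real $C^*$-algebras acting on a complex Hilbert space with this property are precisely $\R$, $\C$, $\H$. By (b), complexification yields $U_{G_1}' \cong \C$, $\C \oplus \C$, $M_2(\C)$ respectively, which forces the three decomposition patterns of $U\res_{G_1}$. In cases~(ii) and~(iii), any antiunitary extension to $G$ of a $G_1$-summand would define a nontrivial $G$-invariant closed subspace of~$\cH$, contradicting irreducibility of~$U$.

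\emph{Part (d).} Case~(i) is immediate from (c): if $U$ extends to $\oline U$, Schur gives $U_{G_1}' \cong \C$, so by (b) the real algebra $\oline U_G'$ has complexification $\C$ and hence equals~$\R$, making $\oline U$ irreducible of real type. For case~(ii), Lemma~\ref{lem:extlem} provides an antiunitary extension of $V := U \oplus U^*\circ\tau$, whose irreducibility I would verify by computing $V_G'$ via the antilinear involution $\sigma_r$ on $V_{G_1}'$. When $U \not\cong U^*\circ\tau$, $V_{G_1}' \cong \C \oplus \C$ with central projections $e_1, e_2$ swapped by $\sigma_r$ (because the Extension Lemma's $J$ exchanges the two summands), so $V_G' = \{\lambda e_1 + \oline\lambda e_2 : \lambda \in \C\} \cong \C$ and $V$ is of complex type. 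When $U \cong U^*\circ\tau$, $V_{G_1}' \cong M_2(\C)$ admits only the real forms $M_2(\R)$ and $\H$; the first contains a rank-one projection whose range is a $G$-invariant subspace on which $V$ restricts to an antiunitary extension of $U$, contradicting the hypothesis, so $V_G' \cong \H$ and $V$ is of quaternionic type.

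\emph{Part (a).} Fixing a $G_1$-equivariant unitary $\cH_1 \to \cH_2$ reduces the problem to the case $U^1\res_{G_1} = U^2\res_{G_1} = U$ on a common Hilbert space. Then $B := U^2_r (U^1_r)^{-1}$ is a unitary in $U_{G_1}'$ satisfying $\sigma(B) = B^{-1}$ for the antilinear involution $\sigma(X) := (U^1_r)^{-1} X U^1_r$, a consequence of $(U^1_r)^2 = U_{r^2} = (U^2_r)^2$. The required intertwiner amounts to finding a unitary $T \in U_{G_1}'$ with $T \sigma(T)^{-1} = B$, which then gives $T U^1_r T^{-1} = U^2_r$ and, combined with the identity on $U$, yields the equivalence. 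The existence of such $T$---the trivialization of the cocycle $B$---is the main technical obstacle; I would resolve it via Borel functional calculus, taking $T := B^{1/2}$ on a $\sigma$-compatible branch of the square root (the spectral measure of $B$ is invariant under $\lambda \mapsto \lambda^{-1}$ because $\sigma(B) = B^{-1}$), so that $\sigma(T) = T^{-1}$ and hence $T \sigma(T)^{-1} = T^2 = B$. An alternative route is to decompose each $U^j$ into $G$-isotypic components and invoke (c), (d) to exhibit explicit intertwiners in each irreducible piece.
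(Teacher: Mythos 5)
Your architecture coincides with the paper's at every step: (b) via the antilinear involution $\sigma_r$ on $U_{G_1}'$ with $U_G'$ as its fixed-point real form, (c) via a real Schur lemma, (d)(ii) via the Extension Lemma with the same two commutant computations, and (a) by trivializing the cocycle $B = U^2_r(U^1_r)^{-1}$ through a square root in $U_{G_1}'$ (the content of the paper's Lemma~\ref{lem:app.1}(c)). Two points need repair. In (a), your justification of the square-root step is off: since $\sigma(X)=(U^1_r)^{-1}XU^1_r$ is \emph{antilinear}, the relation $\sigma(B)=B^{-1}$ forces the spectral measure of $B$ to be fixed \emph{elementwise} by $\sigma$ (the inversion $z\mapsto z^{-1}=\oline z$ on $\T$ is already absorbed by the antilinearity), so any measurable branch $S\:\T\to\T$ with $S(z)^2=z$ gives $T=\int S\,dP$ satisfying $T^2=B$ and $\sigma(T)=T^{-1}$; there is no branch-compatibility condition and no issue at $-1$. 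Your phrasing describes the complex-linear case, where one genuinely needs $\ker(B+\1)=\{0\}$ — this is exactly the dichotomy between parts (b) and (c) of Lemma~\ref{lem:app.1}.

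The more substantive flaw is in (c)(ii)--(iii): the claim that an antiunitary extension of a $G_1$-summand ``would define a nontrivial $G$-invariant closed subspace of $\cH$'' does not hold as stated. An abstract extension of $U^+$ lives on $\cH_+$, which is \emph{not} $G$-invariant inside $\cH$ (indeed $U_r\cH_+=\cH_-$), so no invariant subspace of $\cH$ is produced. The non-extendability must be derived as in the paper from part (a): if $U^+$ extended, then $U\res_{G_1}$ would admit a reducible antiunitary extension of $G$, which by the uniqueness in (a) would be equivalent to the irreducible $U$ — a contradiction. (In case (ii) one can alternatively note that extendability of $U^+$ forces $(U^+)^*\circ\tau\cong U^+$, hence $U^-\cong U^+$, contradicting the inequivalence of the two summands.) Finally, the ``real Schur lemma'' you invoke — a real von Neumann algebra without nontrivial projections is $\R$, $\C$ or $\H$ — is correct but not free; the paper obtains it from the orthogonal Schur lemma of Stalder--Valette applied to $\T\cdot U_G$ acting on the underlying real Hilbert space, and you should either cite that or supply a Frobenius/Gelfand--Mazur argument.
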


\begin{proof} (a)
  \begin{footnote}
{In the finite dimensional context, this was already known to E.~Wigner; see \cite[p.~344]{Wig59}.} 
\end{footnote}
Let $\Phi \: \cH_1 \to \cH_2$ be a unitary intertwining operator 
for the representations $U^j\res_{G_1}$. Pick $r \in G \setminus G_1$ and consider 
the antiunitary operators $J_j := U^j_r \in \AU(\cH_j)$. 
Then the unitary operator 
$U := J_1^{-1} \Phi^{-1} J_2 \Phi \in \U(\cH_1)$ commutes with $U^1_{G_1}$. 
The map $j_1(M) := J_1MJ_1^{-1}$ defines an antilinear automorphism of the von Neumann 
algebra $(U^1_{G_1})'$ satisfying 
\[ j_1(U) 
= \Phi^{-1} J_2 \Phi J_1^{-1} 
= \Phi^{-1} J_2^{-1} U^2_{r^2} \Phi J_1^{-1} 
= \Phi^{-1} J_2^{-1} \Phi U^1_{r^2}  J_1^{-1} 
= \Phi^{-1} J_2^{-1} \Phi J_1 = U^{-1}.\] 
Therefore Lemma~\ref{lem:app.1}(c) implies the existence 
of a unitary operator $V \in (U^1_{G_1})'$ with $V^2 = U^{-1}$ and $j_1(V) = V^{-1}$. 
With $\Psi := \Phi \circ V$, this leads to 
\[ \Psi^{-1} J_2 \Psi 
= V^{-1} \Phi^{-1} J_2 \Phi V 
= V^{-1} J_1 U  V 
= V^{-1} U^{-1} J_1 V 
= V J_1 V = VV^{-1} J_1 = J_1.\] 
We conclude that the antiunitary representations $U^1$ and $U^2$ are equivalent. 

(b) Let $J := U_r$. Then $U_{G_1}'$ is invariant under the antilinear 
automorphism $j(M) := JMJ^{-1}$. Since $J^2$ commutes with $U_{G_1}'$, it is involutive. 
As $U_G'$ is the set of fixed points of $j$, it is 
a real form of the complex vector space $U_{G_1}'$. This implies the assertion. 

(c) The closed complex subspaces invariant under $U_G$ are precisely the 
closed real subspaces of the underlying real space $\cH^\R$ invariant under 
the group $\T \cdot U_G$. Therefore $U_G$ is irreducible if and only if 
the real representation of $\T \cdot U_G$ on $\cH^\R$ 
is irreducible, which is equivalent to its commutant 
being isomorphic to $\R, \C$ or $\H$ (\cite[Thm.~1]{StVa02}). 
Next we observe that the real linear commutant of $\T\1$ consists of the complex linear 
operators. Therefore the real linear commutant of $\T\cdot U_G$ equals the complex 
linear commutant $U_G'$. Now (b) implies that $U_G' \cong \R,\C,\H$ leads to 
$U_{G_1}' \cong \C, \C^2,M_2(\C)$, respectively. 

In the first case $U\res_{G_1}$ is irreducible. 
In the second case $\cH \cong \cH_+ \oplus \cH_-$, where $\cH_\pm$ 
are $G_1$-invariant subspaces on which the $G_1$-representations are 
irreducible and non-equivalent. As $U_r$ permutes the $G_1$-isotypical 
subspaces, $U_r \cH_\pm = \cH_\mp$. 
For the representations $U^\pm$ of $G_1$ on $\cH_\pm$, this implies that 
$U^- \cong (U^+)^*\circ \tau$. If $U^+$ or $U^-$ extends to an 
antiunitary representation of $G$, then 
$U\res_{G_1}$ has an extension to a reducible representation 
$\tilde U$ of $G$. As $U$ is irreducible, this contradicts (a).  
In the third case we have a similar decomposition with 
$U^+ \cong U^-$. Again, the irreducibility of $U$, combined with (a),  
implies that $U^\pm$ do not extend to~$G$. 

(d) 
If $U$ extends to an antiunitary representation $\oline U$ 
of $G$ on the same space, then this representation is obviously 
irreducible and (c) implies that $\oline U_G' \cong \R$. 
If such an  extension does not exist, then 
the Extension Lemma~\ref{lem:extlem} 
provides an extension of 
$V := U \oplus U^* \circ \tau$ to an antiunitary representation of~$G$ by 
\[ V_r := J, \quad \mbox{ where } \quad 
J(v,\lambda) := (\Phi^{-1}\lambda, U^*_{r^2} \Phi v).\] 
If $U^* \circ \tau \not\cong U$, then 
$V_{G_1}' \cong \C^2$, and if 
$U^* \circ \tau \cong U$, then $V_{G_1}' \cong M_2(\C)$. 

In the first case the algebra 
$V_{G_1}' \cong \C^2$ acts by diagonal operators 
$T_{(a,b)}(v,\lambda) := (av, b \lambda)$. Such an operator commutes with 
$J$ if and only if $b = \oline a$. Therefore $V_G' \cong \C$, and thus 
the representation $V$ of $G$ is irreducible. 

In the second case, $V_G'$ is a real form of $V_{G_1}' \cong M_2(\C)$. 
We show that the representation $(V,\cH \oplus \cH^*)$ of $G$ is irreducible. 
If this is not the case, there exists a proper $G$-invariant subspace 
$\cK \subeq \cH \oplus \cH^*$. As $V\res_{G_1} \cong U \oplus U$, 
the $G_1$-representation on $\cK$ must be irreducible 
and equivalent to~$U$. This contradicts the non-extendability of $U$ 
to an antiunitary representation of $G$. Therefore $V$ is irreducible  
and $V_G'$ is isomorphic to $\H$. 
\end{proof}

\begin{definition} \mlabel{def:3types} 
(Three types of irreducible representations
\begin{footnote}{In a special context, this 
classification by three types can already be found in 
Wigner's book \cite[\S 26, p.~343]{Wig59}.}\end{footnote}
) 
We keep the notation of the preceding theorem. 
If $(U,\cH)$ is an irreducible unitary representation 
of $G_1$ with $U \cong U^* \circ \tau$, then there exists a 
$\Phi \in \AU(\cH)$ with 
$\Phi U_g \Phi^{-1} = U_{rgr^{-1}}$ for $g \in G_1$. 
By Schur's Lemma, such an operator $\Phi$ is unique up to a scalar 
factor in $\T$, so that $\Phi^2$ does not depend on the concrete choice of 
$\Phi$. Therefore an antiunitary extension to $G$ exists if and only if 
$\Phi^2 = U_{r^2}$. Then we call $(U,\cH)$ of {\it real type 
(with respect to $\tau$)}. 
If this is not the case, but $U \cong U^* \circ \tau$, 
then $(U,\cH)$ is said to be of {\it quaternionic type 
(with respect to $\tau$)}, 
 and otherwise we say that it is of {\it complex type (with respect to $\tau$)}. 
This terminology matches the type of the commutant of the corresponding 
irreducible antiunitary representation of~$G$. 
\end{definition}

\begin{ex} (a) If $\cH = \C$ is one-dimensional, then $\AU(\cH) =\T \{\1,J\} 
\cong \OO_2(\R)$ for any conjugation $J$. 
We conclude in particular that all antiunitary operators are involutions. 

(b) If $\cH = \C^2$ is two-dimensional, we can already see all types of situations 
for groups generated by a single antiunitary operator, i.e., for antiunitary 
representations of the pair $(G,G_1) = (\Z, 2\Z)$. 

Let $J \in \AU(\cH)$ be antiunitary and $J^2 \in \U(\cH)$ be its square. 
If $J^2 = \1$, then $J$ is a conjugation, so that there are proper $J$-invariant 
subspaces. 
If $J^2 = - \1$, then $J$ is an anticonjugation defining a quaternionic structure 
on $\C^2 \cong \H$. In particular, the representation is irreducible with 
$U_G' \cong \H$ and $U_{G_1}' = B(\cH) \cong M_2(\C)$. 

Assume that $J^4 \not=\1$. Then $J^2$ is not an involution, so that 
it has an eigenvalue $\lambda \not=\pm 1$. If
$\cH^\lambda$ is the corresponding eigenspace, then 
$J \cH^\lambda = \cH^{\oline\lambda}$, so that $\Spec(J^2) = \{\lambda, \oline\lambda\}$. 
Choosing an orthonormal basis $e_1, e_2$ such that 
$e_1 \in \cH^\lambda$ and $e_2 := J e_1 \in \cH^{\oline\lambda}$, 
we obtain $Je_2 = J^2 e_1 = \lambda e_1$, so that $J$ is determined up to 
equivalence. The corresponding representation on $\C^2$ is irreducible 
with  $U_{G_1}' \cong \C^2$ and $U_G' \cong \C$ 
(Theorem~\ref{thm:equiv}(c)). 
\end{ex}

\begin{ex} (a) For $G = G_1 \times \Z_2$, the concepts of 
real/complex/quaternionic type coincides with the classical definition 
for $G_1$, as the characterization in Theorem~\ref{thm:equiv} shows. 

(b) For $G = G_1 \rtimes \{\1,\tau\}$ and $\tau^2 = \id_{G_1}$, 
the extendability of an irreducible unitary representation 
$(U,\cH)$ of $G_1$ is equivalent to the existence of a conjugation $J \in \AU(\cH)$ 
satisfying $JU_g J = U_{\tau(g)}$ for $g \in G_1$. 

If $U \cong U^* \circ \tau$, then a $J \in \AU(\cH)$ 
satisfying $JU_g J = U_{\tau(g)}$ for $g \in G_1$ exists 
and $J^2 \in U_{G_1}' = \C \1$, together with $J J^2 J = J^2$ imply 
$J^2 \in \{\pm \1\}$. 
Accordingly,  $U$ is of real, resp., quaternionic type if 
$J^2 = \1$, resp., $J^2 = - \1$. 
\end{ex}

\begin{ex} (a) For $G = \Z_2$ and $G_1 = \{e\}$, 
Theorem~\ref{thm:equiv})(a) reproduces the fact 
 that all conjugations on $\cH$ are conjugate under $\U(\cH)$. 
  
(b) For $G = \Z_4 = \Z/4\Z$ and $G_1 = \{\oline 0, \oline 2\}$, the case of 
antiunitary representations 
with $U_{\oline 2} = -\1$ likewise implies that all anticonjugations 
are conjugate under $\U(\cH)$. 

(c) The irreducible unitary representation of 
$G = \SU_2(\C)$ on $\C^2 \cong \bH$ (by left multiplication)  
is of quaternionic type. The complex structure on $\bH$ 
is defined by the right multiplication with~$I$. 
Then $\Phi(a) = a J$ defines a $G$-equivariant anticonjugation on $\C^2$. 
Therefore the representation is of quaternionic type. 

(d) For any compact connected Lie group $G_1$, the irreducible unitary 
representations $(U_\lambda, \cH_\lambda)$ 
are classified in terms of their highest weights 
$\lambda$ with respect to a maximal torus $T \subeq G_1$, resp., 
by the orbits $\cW\lambda$ under the Weyl group~$\cW$.  
As $-\cW\lambda$ is the Weyl group orbit of the dual representation, 
$U_\lambda$ is self-dual if and only if $-\lambda \in \cW\lambda$ 
(\cite[Prop.~VI.4.1]{BtD85}). 
It is of real, resp., quaternionic type if and only if an invariant 
symmetric, resp., skew-symmetric bilinear form exists 
(\cite[Prop.~II.6.4]{BtD85}), 
and this can also be read from the highest weight 
(\cite[Prop.~VI.4.6]{BtD85}). 

Further, for any automorphism $\sigma \in \Aut(G_1)$, there exists an 
inner automorphism $\sigma'$ such that $\tau := \sigma\sigma'$ preserves $T$. 
Then $\lambda^\tau := \lambda \circ \tau\res_T$ is an extremal weight 
of $U_\lambda \circ \tau \cong U_\lambda \circ \sigma$, so that 
$U_\lambda^* \circ \tau \cong U_\lambda$ if and only if 
$-\lambda^\tau \in \cW\lambda$. 
\end{ex}

The following lemma shows that, if only $G_1$ and an involutive 
automorphism $\tau$ of $G_1$ are given, 
then there always exists an extension to a group of the type 
$G_1 \rtimes_\alpha \Z_4$, where $\alpha_{\oline 1} = \tau$. 
This issue is already discussed in Wigner's book  
\cite[\S 26, p.~329]{Wig59}, where $J^2 = \pm \1$ is related to 
spin being integral or half-integral. 

\begin{lemma} \mlabel{lem:2.9}
Let $(U,\cH)$ be a unitary representation of the group $G_1$ and 
$\tau \in \Aut(G_1)$ be an involution. 
If $U \circ \tau \cong U^*$, then there exists a 
$J \in \AU(\cH)$ with $J^4 = \1$ and $J U_g J^{-1} = U_{\tau(g)}$ for $g \in G_1$. 
\end{lemma}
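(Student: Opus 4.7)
The strategy is to first produce \emph{some} antiunitary operator implementing $\tau$ and then to multiply it by a carefully chosen unitary from the commutant in order to cut its order down to a divisor of $4$. The hypothesis $U \circ \tau \cong U^*$ furnishes an antiunitary $J_0 \in \AU(\cH)$ with $J_0 U_g J_0^{-1} = U_{\tau(g)}$ for $g \in G_1$. Because $\tau^2 = \id_{G_1}$, the unitary $K := J_0^2$ commutes with every $U_g$, so it lies in the commutant $M := U_{G_1}'$. Conjugation $j(T) := J_0 T J_0^{-1}$ defines an antilinear $*$-automorphism of $M$ (antiunitary conjugation preserves the $*$-operation), and clearly $j(K) = K$.

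For any unitary $A \in M$ the operator $J := A J_0$ still satisfies $J U_g J^{-1} = U_{\tau(g)}$, and a direct computation gives
\[
J^2 \;=\; A\, J_0\, A\, J_0 \;=\; A \cdot j(A) \cdot K, \qquad J^4 \;=\; (A\, j(A)\, K)^2 \in M.
\]
The problem therefore reduces to locating a unitary $A \in M$ for which $A\, j(A)\, K$ is a self-inverse element of $M$.

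The key step would be to work inside the abelian von Neumann subalgebra $N \subseteq M$ generated by $K$. Since $j$ is an antilinear $*$-automorphism of $M$ fixing $K$, it maps $N$ into itself. By Borel functional calculus I can pick a unitary $L \in N$ with $L^2 = K^{-1}$. Then $j(L) \in N$, and $j(L)^2 = j(L^2) = j(K^{-1}) = K^{-1}$, so the unitary $P := L^{-1}\, j(L)$ satisfies $P^2 = L^{-2}\, j(L)^2 = K \cdot K^{-1} = \1$ (using commutativity inside $N$); hence $P$ is a self-adjoint unitary commuting with $K$ and $L$. Taking $A := L$ then yields
\[
A\, j(A)\, K \;=\; L \cdot j(L) \cdot K \;=\; L^{2}\, P\, K \;=\; K^{-1}\, K\, P \;=\; P,
\]
so $J := L J_0$ obeys $J^2 = P$ and $J^4 = P^2 = \1$, while $J U_g J^{-1} = L\, U_{\tau(g)}\, L^{-1} = U_{\tau(g)}$ because $L \in M$.

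The main obstacle, and the reason a naive scalar correction does not suffice, is that $K = J_0^2$ can be an arbitrary unitary in $M$ rather than $\pm \1$, so $J_0^{2n}$ generally fails to equal $\1$ for any $n$. The way around this is the observation that any two unitary square roots of $K^{-1}$ inside the abelian algebra $N$ differ by a unitary involution commuting with $K$; this is exactly the residue $P = L^{-1}\, j(L)$ that appears upon squaring $J$, and it turns the desired identity $J^4 = \1$ into a tautology rather than an obstruction.
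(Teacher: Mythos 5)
Your argument is correct. Every step checks out: $K=J_0^2$ lies in the commutant $M=U_{G_1}'$ because $\tau$ is an involution; conjugation by $J_0$ is a weakly continuous antilinear $*$-automorphism of $M$ fixing $K$, hence leaves the abelian von Neumann algebra $N$ generated by $K$ invariant; the Borel square root $L\in N$ of $K^{-1}$ exists; and the computation $J^2=L\,j(L)\,K=P$ with $P^2=L^{-2}j(L)^2=K\cdot K^{-1}=\1$ is valid since everything in sight lives in the abelian algebra $N$.

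The route is genuinely different from the paper's in execution, though it shares the germ of the idea (correct $J_0$ by a unitary square root taken inside the abelian algebra generated by $J_0^2$). The paper first splits $\cH=\cH_+\oplus\cH_-$ along $\cH_-=\ker(J_0^2+\1)$, notes that $J_0^4=\1$ already holds on $\cH_-$, and on $\cH_+$ invokes an auxiliary equivariant-square-root lemma (Lemma~\ref{lem:app.1}(d)) to produce $A\in U_{G_1}'$ with $j(A)=A$ and $A^2=J_0^2$, so that $A^{-1}J_0$ becomes a genuine conjugation there; the spectral decomposition is forced precisely because an equivariant square root need not exist when $-1$ is in the spectrum of $J_0^2$. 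You sidestep both the case split and the equivariance requirement: you take an arbitrary (non-equivariant) square root $L$ and observe that the failure of equivariance, $P=L^{-1}j(L)$, is automatically a self-adjoint unitary, so it only costs you $J^2=P$ rather than $J^2=\1$ --- which is all the lemma asks for. What the paper's version buys is slightly finer information (the resulting $J$ is a direct sum of a conjugation and an operator with square $-\1$, which is what feeds into the real/quaternionic dichotomy later); what yours buys is a shorter, uniform argument with no appeal to the appendix lemma. If you wanted to recover the paper's normal form from your construction, you could note that $\cH$ decomposes into the $\pm1$-eigenspaces of your $P$, on which $J$ restricts to a conjugation, resp.\ an anticonjugation.
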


\begin{proof} From $U \circ \tau \cong U^*$ we obtain a $J \in \AU(\cH)$ 
with $J U_g J^{-1} = U_{\tau(g)}$ for $g \in G_1$. 
As $\tau^2 = \id_{G_1}$, the unitary operator $J^2$ commutes with $U_{G_1}$. 
We therefore have a $G_1$-invariant orthogonal decomposition 
$\cH =\cH_+  \oplus \cH_-$, where $\cH_- = \ker(J^2 + 1)$ and 
$\cH_+ = \cH_-^\bot$. 
Since both subspaces are invariant under $G_1$ and $J$, 
we may w.l.o.g.\ assume that $\cH_-= \{0\}$ 
and show that there exists a conjugation commuting with $G_1$. 

Conjugating with $J$ defines an antilinear 
automorphism of the von Neumann algebra $\cM := U_{G_1}'$ 
fixing the unitary element $J^2$. Therefore Lemma~\ref{lem:app.1} implies 
the existence of a unitary $A \in U_{G_1}'$ with $JAJ = A$ and $A^2 = J^2$. 
Replacing $J$ by $\tilde J := A^{-1}J$, we obtain $\tilde J^2 = \1$.   
\end{proof}

\begin{lemma} \mlabel{lem:ext-abelian}
Let $G_1$ be an abelian group, $\tau(g) = g^{-1}$ 
and $G := G_1 \rtimes \{\1,\tau\}$. Then every unitary representation 
of $G_1$ extends to an antiunitary representation of $G$. 
\end{lemma}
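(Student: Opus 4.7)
The plan is to construct a conjugation $J \in \AU(\cH)$ (antiunitary with $J^2 = \1$) satisfying
\[ J U_g J = U_g^* = U_{g^{-1}} = U_{\tau(g)} \quad \text{for all } g \in G_1, \]
and then define the extension by $U_\tau := J$. Since $\tau^2 = e$ in $G = G_1 \rtimes \{\1,\tau\}$, the requirement $J^2 = \1$ matches $U_{\tau^2} = \1$, and Lemma~\ref{lem:ext-homo} then delivers a well-defined antiunitary representation of $G$ extending $U$.

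The construction of $J$ uses that $\cM := U(G_1)''$ is an \emph{abelian} von Neumann algebra. First I would reduce to the cyclic case via Zorn's lemma: decomposing $\cH$ into an orthogonal sum of $\cM$-invariant cyclic subspaces, it suffices to construct a conjugation on each piece (with the desired commutation relation) and sum them up. In the cyclic case, the spectral theorem for abelian von Neumann algebras identifies $\cH$ with some $L^2(X,\mu)$ on which $\cM$ acts by multiplication with functions in $L^\infty(X)$, and each $U_g$ acts as multiplication by a unimodular function $\phi_g$. Pointwise complex conjugation $J f := \oline f$ is then a conjugation of $L^2(X,\mu)$ inverting every multiplication operator, $J M_\phi J = M_{\oline\phi} = M_\phi^*$, so in particular $J U_g J = U_g^* = U_{\tau(g)}$, as required.

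The main technical input is the spectral representation of the abelian algebra $\cM$ as multiplications on an $L^2$-space; since $\cM$ is abelian this is classical and needs no separability hypothesis. As an alternative viewpoint, one can track the construction through positive-definite functions: for a cyclic vector $\xi$ the function $\varphi(g) := \la \xi, U_g \xi \ra$ satisfies $\varphi(g^{-1}) = \oline{\varphi(g)}$ precisely because $G_1$ is abelian (so $U_{g^{-1}} = U_g^*$), and this symmetry is exactly what makes the GNS construction carry a natural compatible conjugation. Once $J$ is in hand, the homomorphism property of the extension and the placement of $J$ in $\AU(\cH)\setminus \U(\cH)$ are immediate.
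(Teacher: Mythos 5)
Your proposal is correct and follows essentially the same route as the paper: decompose into cyclic pieces, realize each as multiplication by unimodular functions on an $L^2$-space, and take $Jf=\oline f$. The paper just makes the concrete choice $X=\hat{G_1}$ (viewing $G_1$ as discrete, so the cyclic piece is $L^2(\hat{G_1},\mu)$ with $(V_gf)(\chi)=\chi(g)f(\chi)$), which is exactly the positive-definite-function/GNS picture you mention as your alternative viewpoint.
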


\begin{proof} We consider $G_1$ as a discrete 
group, so that any unitary representation $(U,\cH)$ of $G_1$ 
is a direct sum of cyclic representations of the form 
$(V,L^2(\hat{G_1},\mu))$, where 
$(V_g f)(\chi) = \chi(g)f(\chi)$. 
Then $Jf := \oline f$ defines a conjugation on $L^2(\hat A,\mu)$ 
with $J V_g J = V_{g^{-1}}$, so that we obtain an extension of $V$ to an 
antiunitary representation of $G$. 
\end{proof}

\begin{lemma} \mlabel{lem:2.19} 
Suppose that $G \cong G_1 \rtimes \{\id,\tau\}$, 
where $\tau \in \Aut(G)$ is an involution. 
\begin{itemize}
\item[\rm(i)] If $(U,\cH)$ is an irreducible antiunitary 
representations of $G$ and $x \in \g^\tau$ satisfies 
$-i\dd U(x)\geq 0$, then $\dd U(x) = 0$. 
\item[\rm(ii)] If $(U,\cH)$ is an irreducible unitary 
representations of $G_1$ and $x \in \g^\tau$ satisfies 
$-i\dd U(x)\geq 0$ and $\dd U(x) \not=0$, 
then $U^* \circ \tau \not\cong U$, i.e., $U$ is of complex type 
with respect to $\tau$. 
\end{itemize}
\end{lemma}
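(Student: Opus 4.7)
The plan is to prove part (i) by a direct computation exploiting the antilinearity of $U_\tau$, and then to deduce part (ii) from (i) via the real/complex/quaternionic trichotomy of Theorem~\ref{thm:equiv}(d).

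For (i), set $A := -i\,\dd U(x)$, a self-adjoint operator with $A \geq 0$ by hypothesis. Since $x \in \g^\tau$, the element $\exp(tx) \in G_1$ is fixed by $\tau$, so $U_\tau$ commutes with the one-parameter unitary group $e^{itA} = U_{\exp(tx)}$ for every $t \in \R$. The crucial point is that $U_\tau$ is antilinear, so the commutation relation, read through the antilinear functional calculus (or equivalently by applying Stone's theorem to $U_\tau e^{itA} U_\tau^{-1} = e^{-it\,U_\tau A U_\tau^{-1}}$), forces $U_\tau A U_\tau^{-1} = -A$ as an identity of self-adjoint operators. The positivity hypothesis then combines with the observation that conjugation by an antiunitary operator preserves positivity of self-adjoint operators (which follows from $\la U_\tau \xi, U_\tau A\xi\ra = \oline{\la \xi, A\xi\ra} \geq 0$ on $\cD(A)$) to yield $-A = U_\tau A U_\tau^{-1} \geq 0$. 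Together with $A \geq 0$ this forces $A = 0$. Note that irreducibility is not actually needed for (i).

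For (ii), argue by contradiction. Assume $U^* \circ \tau \cong U$, so by Definition~\ref{def:3types} the representation $U$ is of real or quaternionic type with respect to $\tau$. In the real case, $U$ itself extends to an antiunitary representation $\tilde U$ of $G$ on the same Hilbert space $\cH$; applying (i) to $\tilde U$ gives $\dd U(x) = \dd \tilde U(x) = 0$. In the quaternionic case, the Extension Lemma~\ref{lem:extlem} together with Theorem~\ref{thm:equiv}(d) produces an irreducible antiunitary representation $\tilde V$ of $G$ on $\cH \oplus \cH^*$ whose restriction to $G_1$ is $U \oplus (U^* \circ \tau)$; applying (i) to $\tilde V$ yields $\dd \tilde V(x) = 0$, and reading off the first coordinate gives $\dd U(x) = 0$. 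Either case contradicts $\dd U(x) \neq 0$, so $U^* \circ \tau \not\cong U$, i.e.\ $U$ is of complex type with respect to $\tau$.

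The only genuinely technical point is the passage from the group-level identity $U_\tau e^{itA} U_\tau^{-1} = e^{itA}$ to the operator identity $U_\tau A U_\tau^{-1} = -A$ in the unbounded setting, but this is a routine combination of Stone's theorem with the antilinear spectral calculus for $U_\tau$ and presents no conceptual obstacle.
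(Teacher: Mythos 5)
Your proof is correct and follows essentially the same route as the paper: part (i) is the same computation (the antilinearity of $U_\tau$ turns commutation with $e^{t\,\dd U(x)}$ into $U_\tau\,\dd U(x)\,U_\tau^{-1}$ being minus the self-adjoint generator, which positivity then kills), and part (ii) is the same trichotomy argument, with the paper merely phrasing the real-type case as "(i) implies $U$ does not extend" rather than as one branch of a contradiction. Your observation that irreducibility is not needed for (i) is accurate.
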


\begin{proof} (i) The conjugation $U_\tau$ on $\cH$ satisfies 
$U_\tau i\dd U(x) U_\tau = - i\dd U(\tau x) = - i \dd U(x),$ 
so that the positivity assumption implies $\dd U(x) = 0$. 

(ii) From (i) it follows that $U$ does not extend to an antiunitary 
representation of $G$. 
By Theorem~\ref{thm:equiv}(d)(ii), 
$V := U \oplus U^* \circ \tau$ extends to an irreducible 
antiunitary representation of $G$. 
If $V_G' \cong \H$, then $U^* \circ \tau \cong U$ 
implies $-i \dd V(x) \geq 0$, so that $\dd V(x) = 0$ by (i), 
and this contradicts $\dd U(x) \not=0$. We conclude that 
$V_G' \cong \C$ and $U^* \circ \tau \not\cong U$.
\end{proof}

\begin{remark} \mlabel{rem:doublecone}
In \cite{OM16} the authors study a concept of 
a ``Wigner elementary relativistic system'' which 
is defined as a faithful irreducible orthogonal representation  
$(U,\cK)$ of the proper orthochronous Poincar\'e group 
$G := P(4)_+^\uparrow$ on a real Hilbert space $\cK$. 
Writing $(\tilde P_j)_{0 \leq j \leq 3}$ for the skew-adjoint generators 
of the unitary representation 
of the translations groups $U_{t e_j}= e^{t \tilde P_j}$, 
the {\it mass squared operator} is defined as 
\[ M^2 := -\tilde P_0^2 + \sum_{j = 1}^3 \tilde P_j^2.\] 
One of the main results in \cite{OM16} is that if 
$M^2 \geq 0$, then $\cK$ carries a complex structure $I$ commuting 
with the image of $U$ (\cite[Thm.~4.3, Thm.~5.11]{OM16}). 

This result can be obtained quite directly in our context. 
We consider the complexification $(U_\C,\cK_\C)$ of the representation 
on $\cK$ by extending all operators $U_g$ to unitary operators on $\cK_\C$. 
Then the operators $P_j := -i \tilde P_j$ are selfadjoint with 
\[ M^2 = P_0^2 - \sum_{j = 1}^3 P_j^2 \geq 0.\] 
Since $(U,\cK)$ is irreducible, its commutant 
is isomorphic to $\R, \C$ or $\H$ (\cite[Thm.~1]{StVa02}). 
We claim that it is isomorphic to $\C$. 
If this is not the case, then 
$U_\C$ is either irreducible (if the commutant is $\R$) 
or a direct sum of two copies of 
the same irreducible unitary representation $(\hat U,\hat\cH)$ of $G$ 
(if the commutant is $\bH$). As $M^2 \geq 0$, the spectrum 
of the translation group is contained in the set 
\[ D := \{ (x_0, \bx) \in \R^{1,3} \: x_0^2 \geq \bx^2 \}.\] 
The decomposition 
$D = D_+ \dot\cup \{0\} \dot\cup D_-$ with $D_\pm := \{ x \in D \: \pm x_0 > 0\}$ 
is invariant under $\SO_{1,3}(\R)^\uparrow$, so that we obtain a corresponding 
decomposition $\hat U = \hat U^+ \oplus \hat U^0 \oplus \hat U^-$, 
where the spectrum of $\hat U_j\res_{\R^{1,3}}$ is supported by $D_j$. 
Since $\hat U$ is irreducible, only one summand is non-zero. 
Further, $\hat U = \hat U_0$ implies that the translation group acts 
trivially, which is ruled out by the assumption that $U$ 
is faithful. Hence we may w.l.o.g.\ assume that $\hat U = \hat U_+$, 
so that $P_0 > 0$ (i.e., $P_0 \geq 0$ and $\ker P_0 = \{0\}$) 
on $\hat\cH$ and therefore on $\cH_\C$. 
Next we observe that the conjugation $J$ of $\cH_\C$ with 
respect to $\cH$ commutes with $\tilde P_0$, hence 
satisfies $J P_0 J = - P_0$, which leads to the contradiction 
$P_0 = 0$ because it implies that the spectrum of $P_0$ is symmetric 
(cf.~Remark~\ref{rem:2.21} below). This shows that 
the commutant $U_G'$ is $\C$, so that there exists an, up to sign unique,
complex structure on $\cH$ commuting with $U_G$. 
\end{remark}

\subsection{One-parameter groups} 
\mlabel{subsec:one-par}

We have seen in Example~\ref{app:b.1}  that there are three types 
of one-dimensional Lie groups defining involutive group pairs: 
\begin{itemize}
\item[\rm(A)] $\R^\times$, resp., $(\R^\times, \R^\times_+)$, 
\item[\rm(B)] $\R \rtimes \{\pm \id\}$, and 
\item[\rm(C)] $\Pin_2(\R)$. 
\end{itemize}

Before we turn to the most important case (A), we take a brief look 
at the other two cases. 

\begin{remark} \mlabel{rem:2.20} Case (B): Here any antiunitary representation $(U,\cH)$ 
yields a conjugation $J := U_{(0,-1)}$ which defines a real structure on $\cH$ and 
satisfies $J U_t J = U_{-t}$ for $t \in \R$. 
Conversely, every unitary one-parameter group extends to an antiunitary representation of $G$ 
(Lemma~\ref{lem:ext-abelian}). 

Case (C): For the group $G = \T \{\1,J\} = \Pin_2(\R)$, we have  
$J^2 = -\1$ and $JzJ^{-1} = \oline z$ for $z \in \T$, so that 
antiunitary representations correspond to pairs $(H,I)$, where 
$I \in \AU(\cH)$ satisfies $I^4 = \1$ and $H$ is a selfadjoint operator 
satisfying $IHI =  H$ and $e^{\pi i H} = I^2$. This implies in particular that 
$\Spec(H) \subeq \Z$. For any such pair we put 
$U_J := I$ and $U_{e^{it}} := e^{itH}$ (see \cite[\S 4.5]{NO16} for a natural occurence 
of such representations). 
\end{remark}

The following simple observation is the fundamental link between modular 
theory and antiunitary representations. 

\begin{lemma} \mlabel{lem:funda}  
For every continuous antiunitary representation $(U,\cH)$ 
of $\R^\times$ and the infinitesimal generator $H$ defined by 
$U_{e^t} = e^{itH}$, we obtain by 
\[ \Delta := e^H \quad \mbox{ and }  \quad J := U_{-1} \] 
a pair $(\Delta, J)$, consisting of a positive operator $\Delta$ and a 
conjugation $J$ satisfying the modular relation 
\begin{equation}
  \label{eq:modrel1}
 J\Delta J =  \Delta^{-1}.
\end{equation}
Conversely, any such pair $(\Delta, J)$ defines an antiunitary 
representation of $\R^\times$ by 
\[ U_{e^t} := \Delta^{-it/2\pi} \quad \mbox{ and }  \quad 
U_{-1} := J.\] 
\end{lemma}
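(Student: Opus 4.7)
The plan is to exploit the decomposition $\R^\times = \R^\times_+ \cup (-\R^\times_+)$, whose identity component $\R^\times_+$ is canonically isomorphic to $(\R,+)$ via $t \mapsto e^t$. The restriction of $U$ to $\R^\times_+$ is then a strongly continuous unitary one-parameter group on $\cH$, and Stone's theorem supplies a unique selfadjoint operator $H$ with $U_{e^t} = e^{itH}$. Setting $\Delta := e^H$ via the Borel functional calculus gives a positive selfadjoint operator (with trivial kernel), while $J := U_{-1}$ is antiunitary because $-1 \notin \R^\times_+$ and satisfies $J^2 = U_{(-1)^2} = U_1 = \1$, hence is a conjugation.

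For the modular relation I would exploit the fact that $\R^\times$ is abelian: from $(-1)\cdot e^t\cdot (-1)^{-1} = e^t$ the homomorphism property forces $J U_{e^t} J^{-1} = U_{e^t}$, i.e.\ $J e^{itH} J^{-1} = e^{itH}$ for every $t \in \R$. Since $J$ is antilinear one has $J i J^{-1} = -i$, and functional calculus applied to the selfadjoint operator $JHJ^{-1}$ yields the identity $J e^{itH} J^{-1} = e^{-it\,JHJ^{-1}}$. Comparing the two expressions and invoking uniqueness of the generator of a unitary one-parameter group gives $JHJ^{-1} = -H$, and functional calculus once more produces $J\Delta J^{-1} = e^{JHJ^{-1}} = e^{-H} = \Delta^{-1}$.

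For the converse direction, starting from a pair $(\Delta, J)$ satisfying $J\Delta J = \Delta^{-1}$, I would let $H$ be the selfadjoint logarithm of $\Delta$, translate the modular relation into $JHJ = -H$ via functional calculus, and define $U_{e^t}$ to be the unitary one-parameter group generated by $H$ (using the paper's normalization convention) together with $U_{-1} := J$, extending multiplicatively to all of $\R^\times$. The homomorphism axioms to check are then: the one-parameter group law on $\R^\times_+$ (Stone); the identity $U_{-1}^2 = J^2 = \1 = U_1$; and the commutation $J U_{e^t} J^{-1} = U_{e^t}$, which follows by running the antilinear functional-calculus identity above in reverse from $JHJ = -H$. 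Strong continuity is inherited from Stone's theorem on the identity component and is automatic on the other component since $\R^\times_+$ is clopen in $\R^\times$.

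The only step that deserves care is the antilinear functional-calculus identity $J e^{itH} J^{-1} = e^{-it\,JHJ^{-1}}$: it is precisely the sign change $JiJ^{-1} = -i$ that converts the involutive symmetry $JHJ^{-1} = -H$ into the \emph{inverse} relation $J\Delta J^{-1} = \Delta^{-1}$, and this is the whole content of the correspondence. Beyond this observation nothing substantial remains, and no genuine obstacle arises.
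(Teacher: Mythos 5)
Your proof is correct and follows essentially the same route as the paper's: the paper's entire argument is the observation that antiunitarity of $J$ converts the commutation relation $JU_{e^t}J = U_{e^t}$ (forced by commutativity of $\R^\times$) into $JHJ = -H$, which is equivalent to $J\Delta J = \Delta^{-1}$. You have simply spelled out the routine supporting details (Stone's theorem, the antilinear functional-calculus identity, and the verification of the homomorphism axioms in the converse direction) that the paper leaves implicit.
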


\begin{proof} The only point one has to observe here is that the 
antiunitarity of $J$ implies that 
$J U_t J = U_t$ corresponds to the relation $JHJ = -H$, which is equivalent to 
$J\Delta J = \Delta^{-1}$. 
\end{proof}

Lemma~\ref{lem:funda}  motivates the following definition from the perspective of 
antiunitary representations: 

\begin{definition} A {\it pair of modular objects} on a complex 
Hilbert space $\cH$ is a pair $(\Delta, J)$, 
where $J$ is a {\it conjugation}, i.e., an antilinear 
isometric involution and $\Delta > 0$ is a positive selfadjoint 
operator satisfying the {\it modular relation} \eqref{eq:modrel1}.
Then $J$ is called the {\it modular conjugation} and $\Delta$ the 
{\it modular operator}. 
\end{definition}

With this terminology, the preceding lemma immediately yields: 

\begin{corollary} \mlabel{cor:2.21}
For any continuous homomorphism $\gamma \: (\R^\times, \R^\times_+) \to (G,G_1)$ and 
any continuous antiunitary representation $(U,\cH)$ of $(G,G_1)$, 
we obtain a pair of modular objects $(\Delta_\gamma, J_\gamma)$ from the representation 
$U \circ \gamma$ of~$\R^\times$. 
\end{corollary}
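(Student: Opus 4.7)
The plan is to observe that this corollary is an immediate application of Lemma~\ref{lem:funda} to the pullback representation $U \circ \gamma$. First I would verify that $U \circ \gamma \colon \R^\times \to \AU(\cH)$ is itself a continuous antiunitary representation of the pair $(\R^\times, \R^\times_+)$. Continuity is preserved under composition. Since $\gamma$ is a homomorphism of pairs, $\gamma(\R^\times_+) \subseteq G_1$ and $\gamma(-1) \in G \setminus G_1$; composing with $U$, which sends $G_1$ to unitary operators and $G \setminus G_1$ to antiunitary ones, shows that $(U \circ \gamma)(\R^\times_+) \subseteq \U(\cH)$ while $(U \circ \gamma)(-1) = U_{\gamma(-1)}$ is antiunitary. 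Thus the composition meets the hypothesis of Lemma~\ref{lem:funda}.

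Then I would apply Lemma~\ref{lem:funda} directly. Let $H_\gamma$ be the selfadjoint infinitesimal generator determined by $U_{\gamma(e^t)} = e^{it H_\gamma}$ for $t \in \R$, and set
\[
\Delta_\gamma := e^{H_\gamma}, \qquad J_\gamma := U_{\gamma(-1)}.
\]
By the lemma, $\Delta_\gamma$ is a positive selfadjoint operator, $J_\gamma$ is a conjugation, and the modular relation $J_\gamma \Delta_\gamma J_\gamma = \Delta_\gamma^{-1}$ holds, so $(\Delta_\gamma, J_\gamma)$ is a pair of modular objects as required. There is no substantial obstacle here; the corollary simply records that the bijection of Lemma~\ref{lem:funda} between antiunitary representations of $\R^\times$ and pairs of modular objects is natural with respect to pullback along continuous homomorphisms of pairs $\gamma \colon (\R^\times, \R^\times_+) \to (G, G_1)$.
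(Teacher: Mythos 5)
Your proposal is correct and matches the paper exactly: the paper states this corollary as an immediate consequence of Lemma~\ref{lem:funda} applied to the composition $U \circ \gamma$, which is precisely your argument. Your additional verification that $U \circ \gamma$ is a continuous antiunitary representation of $(\R^\times, \R^\times_+)$ is a sensible bit of explicitness that the paper leaves implicit.
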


\begin{remark} \mlabel{rem:2.21} For a selfadjoint operator~$H$, the existence 
of a conjugation $J$ satisfying $JHJ = - H$ 
is equivalent to the restriction $H(0,\infty)$ to the strictly positive 
spectral subspace being 
equivalent to the restriction $H(-\infty,0)$ to the strictly negative 
spectral subspace (\cite{Lo08}).  Only such 
operators $H$ arise as infinitesimal 
generators for antiunitary representations of~$\R^\times$. 
\end{remark}

\begin{ex} Let $(G,G_1)$ be an involutive pair of Lie groups 
and $r \in G\setminus G_1$ be such that $\tau :=c_r\res_{G_1}$ is an involution. 
Then $\Ad(\tau)$ is an involutive 
automorphism of $\g$ and if $\g$ is non-abelian, then $\g^\tau \not= \{0\}$. 

(A) If $r^2 = \1$, then any element $x \in \g^\tau$ leads to a homomorphism 
\[ \gamma_{r,x} \: \R^\times \to G, \quad 
\gamma_{r,x}(e^t) := \exp(tx), \quad 
\gamma_{r,x}(-1) := r.\] 

(B) If $r^2 = \1$, then any element $x \in \g^{-\tau}$ leads to a homomorphism 
\[ \gamma_{r,x} \: \R \rtimes \{\pm \id_\R\} \to G, \quad 
\gamma_{r,x}(t) := \exp(tx), \quad 
\gamma_{r,x}(-1) := r.\] 

(C) If $r^4 = \1$, then any element $x \in \g^{-\tau}$ with 
$\exp(\pi x) = r^2$ leads to a homomorphism 
\[ \gamma_{r,x} \: \Pin_2(\R) = \T \{\1,J\} \to G, \quad 
\gamma_{r,x}(e^{it}) := \exp(tx), \quad 
\gamma_{r,x}(J) := r.\] 
\end{ex}

\begin{definition} \mlabel{def:cplx-type} 
(One-parameter groups of complex type) 
Let $(G, G_1)$ be an involutive Lie group pair. 
We assume that $G$ is a subgroup of a complex Lie group $G_\C$ 
on  which there exists an antiholomorphic involution $\sigma$ such that 
$G \subeq (G_\C)^\sigma$. We consider the set 
\[  \cY_{(G,G_1)} := \{ x \in \g \: 
2\pi = \min \{ t > 0 \: \exp(ti x) =  e\}, \exp(\pi i x) \in G \setminus G_1\}.\] 
We associate to each $x \in  \cY_{(G,G_1)}$ the holomorphic homomorphism 
\[ \gamma_x \: \C^\times \to G_\C, \quad 
\gamma_x(e^z) := \exp(z x).\] 
Then 
$\sigma(\gamma_x(w)) = \gamma_x(\oline w)$ for $w \in \C^\times$ and thus 
$\gamma_x(\R^\times) \subeq (G_\C)^\sigma$ 
holds automatically and $r_x := \gamma_x(-1)$ 
is an involution. For $x \in \cY_{(G,G_1)}$, we thus obtain 
\[ \gamma_x \in \Hom((\R^\times,\R^\times_+), (G, G_1)).\] 
\end{definition}

In Section~\ref{sec:5} below we shall see that many geometric 
realizations of modular automorphism groups come  from elements of $\cY_{(G,G_1)}$, 
where $G = P(d)_+$ is the Poincar\'e group or the conformal 
group $\Conf(\R^{1,d-1}) \cong \OO_{2,d}(\R)/\{\pm \1\}$ of Minkowski space 
(cf.\ \cite[\S17.4]{HN12}). 
This motivates the following discussion of examples.

\begin{ex} \mlabel{ex:one-par} 
(a) For $(G, G_1) = (\R^\times, \R^\times_+)$ and $G_\C = \C^\times$ and 
$\exp(z) = e^z$,  we have 
$\cY_{(G,G_1)} = \{\pm 1\} \subeq \R = \g.$ 

(b) (Lorentz groups) 
For 
\[ G = \SO_{1,1}(\R) \subeq G_\C = \SO_{1,1}(\C) 
= \Big\{ \pmat{ a & b \\ b & a} \: a,b \in \C, 
 a^2 - b^2 = 1 \Big\}, \] 
we have $G \cong \R^\times$ and $G_\C \cong \C^\times$, so that we basically 
have the same situation as under~(a). Here a canonical 
generator of the Lie algebra is the boost generator 
\begin{equation}
  \label{eq:boostgen2}
 b_0 := \pmat{0 & 1 \\ 1 & 0} 
\quad \mbox{ with } \quad 
e^{z b_0} = \pmat{ \cosh z & \sinh z \\ \sinh z & \cosh z} \quad \mbox{ and } \quad 
r_{b_0} = e^{\pi i b_0} = - \1.
\end{equation}
We have $\cY_{(G,G_0)} = \{ \pm b_0\}$. 

This example embeds naturally into the higher dimensional 
Lorentz groups 
$G = \SO_{1,d}(\R) \subeq G_\C = \SO_{1,d}(\C)$, where 
\begin{equation}
  \label{eq:bostgen-d}
b_0 := E_{10} + E_{01} \in \cY_{(G,G_0)}\quad \mbox{ and } \quad 
 r_{b_0} = R_{01} = \diag(-1,-1,1,\ldots, 1). 
\end{equation}

Since the simple real Lie algebra $\g = \so_{1,d}(\R)$ (for $d \geq 2$)  
is of real rank $1$, all $\ad$-diagonalizable elements $x \in \g$ are conjugate 
to a multiple of $b_0$. All these elements $x$ are diagonalizable matrices 
and $\im(x)$ is a two-dimensional Minkowski plane in which the two eigenvectors 
are light-like. Conversely, every triple $(\beta, \ell_+, \ell_-)$ 
consisting of $\beta \in \R^\times$ and two linearly independent light-like vectors 
$\ell_\pm$ specifies such an element $x = x(\ell_+, \ell_-, \beta) \in \g$ by 
$x \ell_\pm = \pm\beta \ell_\pm$ and $\ker x = \{\ell_1, \ell_2\}^\bot$. 
We then have 
\[ \cY_{(G,G_0)} = \Ad(G)b_0 = 
\{ x(\ell_+, \ell_-, \beta) \: \beta = 1 \} 
\cong \SO_{1,d}(\R)/(\SO_{1,1}(\R) \times \SO_{d-1}(\R)),\] 
and this is a symmetric space because the centralizers of $b_0$ and 
the involution $r_{b_0}$ share the same identity component. 

(c) For the affine group $G := \Aff(\R)\cong \R \rtimes \R^\times$ of the real line,  
the coset $G \setminus G_0$ consists of the orientation reversing affine maps. 
Note that $G_\C \cong \C \rtimes \C^\times$ and $G_\C^\sigma = G$. 
Here $\cY_{(G,G_0)} \cong \R \times \{ \pm 1\}$ 
is the set of real affine vector fields $X$ 
for which the vector field $iX$ on $\C$ generates a $2\pi$-periodic flow 
(whose center lies on the real axis). 
\end{ex}

\begin{ex} \mlabel{ex:proj-grp} 
We consider the real projective group 
$G = \PGL_2(\R)\subeq G_\C = \PGL_2(\C)$ acting on the real projective line 
$\bS^1 \cong \R \cup \{ \infty\}$, 
resp., on the Riemann sphere $\C \cup \{\infty\} \cong \bP_1(\C)$. 

A subset $I \subset \bS^1$ is called an {\it interval} 
if it is connected, open, non-empty and not dense. Then the interior $I'$ 
of its complement also is an interval. For every interval there is a canonical 
involution $r_I \in \PGL_2(\R)$ fixing both endpoints and exchanging $I$ and~$I'$.
The centralizer of $r_I$ in $\PSL_2(\R)$ is isomorphic to $\PSO_{1,1}(\R) 
\cong \R$, hence connected, and there exists 
an element $x_I \in \cY_{(G,G_0)}$ which is up to sign unique. 
The corresponding homomorphism $\gamma^I := \gamma_{x_I} \: \R^\times \to G$ 
satisfies $\gamma^I_{-1} = \exp(\pi i x_I) = r_I$. 

For the interval $I = (0,\infty)$, we have 
\[ \gamma^I_t(z) = t z \quad \mbox{ and } \quad  r_I(z) = -z. \]  
For $I = (-1,1)$, we have $\gamma^I(\R^\times) = \PO_{1,1}(\R)$ and 
$\gamma^I_{2t}(z) := \frac{\cosh t \cdot z + \sinh t}{\sinh t \cdot z + \cosh t}.$
This leads to 
\[ \gamma^I_{2ti}(z) = \frac{\cos t \cdot z + i \sin t}{i \sin t \cdot z + \cos t}, 
\quad \mbox{ so that} \quad 
\gamma^I_{2\pi i}(z) = z \quad \mbox{ and } \quad 
r_I(z) = \gamma^I_{\pi i}(z) = \frac{1}{z}.\] 
\end{ex}

\subsection{Some low-dimensional groups}
\mlabel{subsec:2.4} 
\subsubsection{The affine group of the real line} 
\mlabel{subsubsec:2.5.1}

We consider the affine group $G := \Aff(\R) = \R \rtimes \R^\times$ 
and its identity component $G_1 = \R \rtimes \R^\times_+$. 
We say that a unitary representation $(U,\cH)$ of $G_1$ is of {\it positive 
energy} if $U_{(t,1)} = e^{itP}$ with $P \geq 0$, i.e., the restriction to the 
translation subgroup has non-negative spectrum. We speak of {\it strictly positive 
energy} if, in addition, $\ker P = \{0\}$. 

Up to unitary equivalence, $G_1$ has exactly one 
irreducible unitary representation with strictly positive energy 
and every unitary representation with strictly positive energy is a 
multiple of the irreducible one. The analogous statement holds for negative 
energy (\cite[Thm.~2.8]{Lo08}). Further, any 
unitary representation $U$ of $G_1$ decomposes uniquely as a direct 
sum $U = U^+ \oplus U^0 \oplus U^-$, where $U^\pm$ have strictly positive/negative 
energy and the translation group is contained in $\ker U^0$.   

The unique irreducible representation of strictly positive energy can be realized 
on $\cH := L^2(\R^+)$ by 
\begin{equation}
  \label{eq:pics-s1}
(U_{(t,e^s)}f)(x) = e^{itx} e^{s/2} f(e^sx).
\end{equation}
It  obviously extends by $U_{(0,-1)} f := \oline f$ to an irreducible 
antiunitary representation of~$G$. 
By Theorem~\ref{thm:equiv} we thus obtain up to equivalence 
precisely one irreducible antiunitary representation of $G$ 
with strictly positive energy. More generally, we have 
by \cite[Prop.~2.11]{Lo08} and 
Theorem~\ref{thm:equiv}: 

\begin{proposition}
Every unitary representation $(U,\cH)$ 
of $\Aff(\R)_0$ of strictly positive energy extends to an 
antiunitary representation $\oline U$ of $\Aff(\R)$ on the same Hilbert space 
which is unique up to equivalence.   
\end{proposition}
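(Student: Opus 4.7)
The plan is to combine the structural result \cite[Prop.~2.11]{Lo08} recalled above with the uniqueness statement in Theorem~\ref{thm:equiv}(a), and to bootstrap from the irreducible case handled by the explicit construction after \eqref{eq:pics-s1}.

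For \emph{existence}, I would first invoke the cited structure theorem to write $U \cong U_0 \otimes \1_\cK$ on $\cH \cong L^2(\R_+) \otimes \cK$, where $U_0$ is the unique irreducible strictly positive energy representation of $\Aff(\R)_0$ on $L^2(\R_+)$ and $\cK$ is an appropriate multiplicity Hilbert space. Since $U_0$ admits the explicit antiunitary extension $\oline U_0$ to $\Aff(\R)$ defined by $(\oline U_0)_{(0,-1)} f := \overline f$, and since any complex Hilbert space carries a conjugation $J_\cK \in \AU(\cK)$ (Examples~\ref{exs:conj}(a)), I would set
\[ \oline U_g := U_g \otimes \1_\cK \quad \text{for } g \in \Aff(\R)_0, \qquad \oline U_g := (\oline U_0)_g \otimes J_\cK \quad \text{for } g \in \Aff(\R) \setminus \Aff(\R)_0. \]
The verification that $\oline U$ is an antiunitary representation of $\Aff(\R)$ on the same Hilbert space $\cH$ then reduces to three elementary facts about tensor products on a complex Hilbert space: the tensor product of two antilinear maps is antilinear, the composition rule $(A_1 \otimes B_1)(A_2 \otimes B_2) = A_1 A_2 \otimes B_1 B_2$ holds regardless of linearity types of the factors, and $J_\cK^2 = \1$. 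These imply that $\oline U_g$ is antiunitary exactly for $g \notin \Aff(\R)_0$ and that the group law is preserved in all four case-combinations of arguments with respect to the index-two subgroup.

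For \emph{uniqueness}, I would appeal directly to Theorem~\ref{thm:equiv}(a) applied to the involutive Lie group pair $(\Aff(\R), \Aff(\R)_0)$ of Examples~\ref{ex:2.11}(a): any two antiunitary extensions $\oline U^1, \oline U^2$ of $U$ have restrictions to $\Aff(\R)_0$ both equivalent to $U$, hence to each other, so the theorem yields $\oline U^1 \cong \oline U^2$ as antiunitary representations of $\Aff(\R)$.

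The only point that truly requires attention is the tensor-product construction of antilinear operators on a complex Hilbert space; in particular one should verify that $(\oline U_0)_g \otimes J_\cK$ really is antiunitary (and not unitary). Otherwise there is no substantial obstacle: the statement is essentially a direct combination of the cited multiplicity result of Longo with Theorem~\ref{thm:equiv}(a).
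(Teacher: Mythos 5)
Your proof is correct and follows essentially the same route as the paper: the paper likewise reduces to the unique irreducible strictly positive energy representation on $L^2(\R_+)$ with its explicit conjugation $f \mapsto \oline f$, handles multiplicity via the cited result of Longo, and obtains uniqueness from Theorem~\ref{thm:equiv}(a). Your explicit verification that $(\oline U_0)_g \otimes J_\cK$ is a well-defined antilinear operator (both factors being antilinear, so the balancing relation for scalars is respected) is exactly the detail the paper leaves implicit, and it is handled correctly.
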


The representation theory of the affine group can be used to draw some 
general conclusions on spectra of one-parameter groups. 

\begin{proposition} \mlabel{prop:2.x} Let $G$ be a connected Lie group and 
$(U,\cH)$ be a unitary representation for which $\dd U$ is faithful 
and $x \in \g$. Then the following assertions hold: 
\begin{itemize}
\item[\rm(a)] If $\ad x$ has a non-zero real eigenvalue, then $\Spec(i\dd U(x)) = \R$. 
\item[\rm(b)] If $\g$ is semisimple and 
$0\not=y$ is nilpotent, then $\Spec(i\dd U(y)) \in \{ \R,\R_+, \R_-\}$. 
\item[\rm(c)] If $0\not=x \in \g$ is such that 
$\ad x$ is diagonalizable and $\fb \trile \g$ is the ideal generated by 
$\im(\ad x)$ and $x$, then $\ker\big(\dd U(x)\big) = \cH^B 
= \{ \xi \in \cH \: (\forall g \in B)\ U_g\xi = \xi\}$ holds 
for the corresponding integral subgroup $B \trile G$. 
\end{itemize}
\end{proposition}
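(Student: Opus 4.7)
The plan is to reduce both to the representation theory of $\Aff(\R)_0$ recalled in \S\ref{subsubsec:2.5.1}. For (a), pick a real eigenvector $y \in \g$ with $[x,y] = \lambda y$, $\lambda \in \R^\times$; after rescaling $x \leadsto x/\lambda$ (which only rescales $\Spec(i\dd U(x))$ by $\lambda$) we may assume $\lambda = 1$, so that $\R x + \R y \cong \aff(\R)$ with $y$ a translation generator. Let $H$ be the corresponding integral subgroup; the restriction $U\vert_H$ decomposes as $U^+ \oplus U^0 \oplus U^-$ (strictly positive, zero, strictly negative energy), and faithfulness of $\dd U$ gives $\dd U(y) \neq 0$, ruling out $U\vert_H = U^0$. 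In the explicit model \eqref{eq:pics-s1}, the substitution $u = \log\xi$ intertwines the dilation group with translations on $L^2(\R, du)$, so $i\dd U(x)$ has spectrum $\R$ on $U^+$ and on $U^-$, which is enough. For (b), Jacobson--Morozov embeds the nonzero nilpotent $y$ into an $\fsl_2$-triple $(y,h,f)$ with $[h,y] = 2y$, reducing to (a) with $\R h + \R y \cong \aff(\R)$; on the resulting decomposition the translation generator $P := i\dd U(y)$ has spectrum $[0,\infty)$ on $U^+$, $(-\infty,0]$ on $U^-$, and $\{0\}$ on $U^0$, and faithfulness forbids $U\vert_H = U^0$, so $\Spec P$ is a closure of a union of these pieces and lands in $\{\R_+, \R_-, \R\}$.

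\noindent\textbf{Part (c), the conjugation trick.} The inclusion $\cH^B \subseteq \ker(\dd U(x))$ is immediate since $x \in \fb$. For the reverse, fix $\xi \in \ker(\dd U(x))$ and put
\[ \g^\xi := \{\, z \in \g : U_{\exp(tz)}\xi = \xi \text{ for all } t \in \R\,\}, \]
the Lie algebra of the closed stabilizer $G^\xi \subseteq G$, in particular a Lie subalgebra of $\g$. I read ``$\ad x$ diagonalizable'' over $\R$ and decompose $\g = \bigoplus_\lambda \g^\lambda$. The key step is $\g^\lambda \subseteq \g^\xi$ for every $\lambda \neq 0$: for $z \in \g^\lambda$, the identity $\Ad(\exp tx)z = e^{t\lambda}z$ combined with $U_{\exp(tx)}\xi = \xi$ yields
\[ U_{\exp(tx)}\, U_{\exp(sz)}\,\xi \;=\; U_{\exp(s e^{t\lambda}z)}\,\xi \qquad (s,t \in \R), \]
so $\phi(s) := \langle \xi, U_{\exp(sz)}\xi\rangle$ is constant on each orbit of $s \mapsto s e^{t\lambda}$, i.e.\ on both open half-lines. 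Continuity of $\phi$ together with $\phi(0) = \|\xi\|^2$ pins these constants down to $\|\xi\|^2$, and the Cauchy--Schwarz equality case forces $U_{\exp(sz)}\xi = \xi$.

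\noindent\textbf{Finishing (c) and the main obstacle.} Combining $x \in \g^\xi$ with $\g^\lambda \subseteq \g^\xi$ for all $\lambda \neq 0$ and using that $\g^\xi$ is closed under brackets gives
\[ \g^\xi \;\supseteq\; \R x \,+\, \im(\ad x) \,+\, \sum_{\lambda \neq 0}[\g^{-\lambda},\g^\lambda]. \]
The remaining task is to identify the right-hand side with $\fb$: using the grading $[\g^\mu,\g^\nu] \subseteq \g^{\mu+\nu}$ together with the fact that $\ad z$ preserves $\ad x$-eigenspaces for $z \in \g^0$ (because $[\ad x,\ad z] = \ad[x,z] = 0$), one checks directly that this subspace is $\ad \g$-stable; hence it is an ideal containing $\{x\} \cup \im(\ad x)$, and so equals $\fb$. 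Then $B \subseteq G^\xi$ follows from connectedness of $B$ together with closedness of $G^\xi$. The step I expect to be the main obstacle is precisely this ideal identification: the conjugation trick giving $\g^\lambda \subseteq \g^\xi$ is quick once the right quantity $\phi$ is chosen, but the grading/Jacobi bookkeeping needed to recognize $\fb$ has to be handled carefully. It is also crucial that ``$\ad x$ diagonalizable'' be read over $\R$ --- purely imaginary eigenvalues would make the $e^{t\lambda}$-orbits periodic circles in $\R$ and destroy the continuity-at-$0$ argument used to fix the value of $\phi$.
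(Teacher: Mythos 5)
Your proposal is correct. Parts (a) and (b) follow the paper's argument essentially verbatim: embed a copy of $\aff(\R)$ via the eigenvector (resp.\ via Jacobson--Morozov), use faithfulness of $\dd U$ to exclude the case where the translation generator acts trivially, and read off the spectrum from the classification $U = U^+\oplus U^0\oplus U^-$ of representations of $\Aff(\R)_0$. In part (c) your skeleton is also the paper's (the paper likewise decomposes $\g = \bigoplus_\mu \g_\mu(\ad x)$ over $\R$, so your reading of ``diagonalizable'' is the intended one, and it likewise identifies the subalgebra generated by $x$ and $\im(\ad x)$ with the ideal $\fb$), but you prove the key vanishing statement differently. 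The paper deduces ``$\dd U(\g_\mu(\ad x))$ kills $\ker\dd U(x)$ for $\mu\neq 0$'' from the representation theory of $\Aff(\R)_0$ --- namely that on the infinite-dimensional irreducible constituents the dilation generator has purely continuous spectrum, so $\ker\dd U(x)$ is contained in the translation-fixed part. You instead run the classical Mautner-phenomenon argument on the matrix coefficient $\phi(s)=\la \xi, U_{\exp(sz)}\xi\ra$, using $\Ad(\exp tx)z = e^{t\lambda}z$, continuity at $s=0$, and the equality case of Cauchy--Schwarz. Both are sound; your route is more elementary and self-contained (it needs no classification of $\Aff(\R)_0$-representations, only Stone's theorem and the closedness of the stabilizer $G^\xi$), while the paper's is shorter given the machinery already set up in \S\ref{subsubsec:2.5.1}. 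Your explicit verification that $\R x + \im(\ad x) + \sum_{\lambda\neq 0}[\g^{-\lambda},\g^\lambda]$ is $\ad\g$-stable just spells out what the paper compresses into one sentence about invariance under $\ad(\g_0(\ad x))$.
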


\begin{proof} (a) Let $0\not=y \in \g$ with $[x,y] = \lambda y$ for some 
$\lambda \not=0$. Then $\fh := \R x + \R y$ is a $2$-dimensional non-abelian 
subalgebra and $\dd U(y) \not=0$. Therefore the assertion follows from the fact that, 
for all irreducible unitary representations of the corresponding 
$2$-dimensional subgroup isomorphic to $\Aff(\R)_0$, 
the spectrum of $i \dd U(x)$ coincides with $\R$.

(b) Using the Jacobson--Morozov Theorem, we find an $h \in \g$ with 
$[h,x] = x$, so that the Lie algebra $\fb := \R h + \R x$ is isomorphic to 
$\aff(\R)$. Then the result follows from the classification of the 
irreducible representations of the group $\exp(\fb) \cong \Aff(\R)$. 

(c) Let $\g = \oplus_{\mu\in \R} \g_\mu(\ad x)$ denote the eigenspace 
decomposition of $\g$ with respect to the diagonalizable operator $\ad x$. 
Then the representation theory of $\Aff(\R)_0$ implies that, 
for $\mu\not=0$, the operators in $\dd U(\g_\mu(\ad x))$ vanish 
on $\ker \dd U(x)$. This shows that the Lie subalgebra $\fh$ generated 
by $x$ and $[x,\g]$ acts trivially on $\ker \dd U(x)$. 
Since this subalgebra is invariant under $\ad(\g_0(\ad x))$ and 
contains the other eigenspaces of $\ad x$, it is an ideal of $\g$, 
hence coincides with~$\fb$. Therefore $\ker\big(\dd U(x)\big) = \cH^B$. 
\end{proof}

\subsubsection{The projective group of the real line} 

We consider the projective group $G = \PGL_2(\R)$ 
and its identity component $G_1 = \PSL_2(\R)$. 
We write $r(x) = -x$ for the reflection in $0$ which 
commutes with the dilation group $\R^\times \subeq \Aff(\R) \subeq \PGL_2(\R)$ 
(cf.~Example~\ref{ex:proj-grp}). Note that $r$ extends to an antiholomorphic 
automorphism $r(z) := -\oline z$ of the upper half plane $\C_+$, so that 
we obtain an identification of $G$ with the group $\AAut(\C_+)$ 
(Example~\ref{ex:2.11}(d)).

For the generators of $\fsl_2(\R)$, we write 
\[ T = \pmat{0 & 1 \\ 0 & 0}, \quad 
S = \pmat{0 & 0 \\ -1 & 0} \quad \mbox{ and } \quad 
E = \frac{1}{2} \pmat{ 1 & 0 \\ 0 & -1}.\]  
They satisfy the commutation relations 
\[ [E,T] = T, \quad [E,S] = -S \quad \mbox{ and }\quad 
[T,S] = - 2 E.\]
In the complexification $\fsl_2(\C)$, we have the basis 
\begin{align*}
  L_{\pm 1} := \frac{1}{2}\pmat{1 & \mp i \\ \mp i & -1} = E \mp \frac{i}{2}(T-S), 
\qquad   L_0 := -\frac{i}{2} \pmat{0 & 1 \\ -1 & 0} =- \frac{i}{2}(T+S).
\end{align*}
These elements satisfy the relations 
\[ [L_0, L_{-1}]= L_{-1}, \quad [L_0, L_1] = - L_1 
\quad \mbox{ and } \quad [L_1, L_{-1}] = -2 L_0.\] 
\begin{definition}
The element $L_0 \in i \fsl_2(\R)$ is called the {\it conformal Hamiltonian}. 
A~unitary representation  $(U,\cH)$ of $\tilde\SL_2(\R)$ is called a 
{\it positive energy representation} if \break ${\dd U(L_0) \geq 0}$. 
\end{definition}

The following result is well known; for a proof in the spirit of the 
present exposition, we refer to \cite[Cor.~2.9]{Lo08}. 

\begin{corollary} \mlabel{cor:psl2-restrict}
For every non-trivial irreducible positive energy representation $U$ of 
the simply connected covering group 
$\tilde\SL_2(\R)$, the restriction to $\Aff(\R)_0$ is also irreducible. 
If $U$ is non-trivial, then $U\res_{\Aff(\R)_0}$ is the unique irreducible 
representation with strictly positive energy.
\end{corollary}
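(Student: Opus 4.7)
The plan is to establish two facts for the restriction $U\res_{\Aff(\R)_0}$: strictly positive energy for the translation subgroup, and cyclicity of a distinguished vector. Given these, the uniqueness result for irreducible strictly-positive-energy representations of $\Aff(\R)_0$ recalled just before the corollary identifies $U\res_{\Aff(\R)_0}$ as a single copy of that unique representation, yielding both the irreducibility and the final identification.

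Let $P := -i\dd U(T)$ denote the generator of the translation subgroup. Proposition~\ref{prop:2.x}(c) applied with $x = E$, which is $\ad$-diagonalizable on $\fsl_2(\R)$ and whose generated ideal $\fb$ is all of $\fsl_2(\R)$, combined with irreducibility and non-triviality of $U$, yields $\ker \dd U(E) = \{0\}$, so the restriction to $\Aff(\R)_0$ is non-trivial. Proposition~\ref{prop:2.x}(b) applied to the nilpotent element $T$ constrains $\spec(P) \in \{\R,[0,\infty),(-\infty,0]\}$. The identity $T+S = 2iL_0$ converts positive energy $\dd U(L_0)\geq 0$ into $-i\dd U(T+S)\geq 0$; conjugating by $\exp(sE)$, which sends $T+S$ to $e^sT+e^{-s}S$, gives $-i\dd U(e^sT+e^{-s}S)\geq 0$ for all $s\in\R$. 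Evaluating on an analytic vector $v$, dividing by $e^s$, and letting $s\to\infty$ yields $\langle v,-i\dd U(T)v\rangle \geq 0$; since analytic vectors form a core, $P\geq 0$.

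The crucial step is strict positivity $\ker P = \{0\}$. In a non-trivial irreducible positive-energy representation of $\tilde\SL_2(\R)$, standard lowest-weight theory presents $\cH$ as the closure of the algebraic direct sum $\cD = \bigoplus_{n\geq 0}\cH_n$, where $\cH_n$ is the one-dimensional $L_0$-eigenspace for eigenvalue $h+n$ and $h>0$. From $-iT = L_0 + \tfrac{1}{2}(L_1 - L_{-1})$ and $\dd U(L_{\pm 1}):\cH_n\to\cH_{n\mp 1}$, one checks that for any finite sum $v=\sum_{n=0}^N v_n\in\cD$ with $v_N\neq 0$, the $\cH_{N+1}$-component of $Pv$ equals $-\tfrac{1}{2}\dd U(L_{-1})v_N$, which is non-zero since $\dd U(L_{-1})\colon\cH_N\to\cH_{N+1}$ is a non-trivial linear map between one-dimensional spaces. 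Thus $\ker P\cap\cD = \{0\}$; since $\cD$ consists of analytic vectors and is therefore a core for $P$ by Nelson's theorem, $\ker P = \{0\}$.

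Cyclicity of the lowest-weight vector $v_h$ for $\Aff(\R)_0$ follows by induction on the $L_0$-weight: $\dd U(E)v_h = \tfrac{1}{2}\dd U(L_{-1})v_h$ spans $\cH_{h+1}$, and using $[E,L_{-1}] = -L_0$ one shows that the $\cH_{h+n}$-component of $\dd U(E)^n v_h$ is a non-zero multiple of $\dd U(L_{-1})^n v_h$; since each $\cH_{h+n}$ is one-dimensional, the $\Aff(\R)_0$-orbit of $v_h$ spans a dense subspace. Combined with strictly positive energy and the uniqueness result, $U\res_{\Aff(\R)_0}$ must have multiplicity one, hence equals the unique irreducible strictly-positive-energy representation of $\Aff(\R)_0$. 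The main obstacle is the strict positivity step: the constraints from Proposition~\ref{prop:2.x} leave $0$ as a possible spectral value, and the $\fsl_2$-module analysis on the lowest-weight core (plus Nelson) is what rules it out.
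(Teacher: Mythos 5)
The paper itself does not prove this corollary -- it cites \cite[Cor.~2.9]{Lo08} -- so your proposal has to stand on its own. Your overall strategy (establish $P:=-i\dd U(T)>0$ plus a multiplicity-one statement and invoke the uniqueness theorem for strictly-positive-energy representations of $\Aff(\R)_0$) is the right one, and the positivity argument $P\geq 0$ via scaling $L_0$ by the dilations, as well as the triangularity computations in the lowest weight module, are correct. But there are two genuine gaps.

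The serious one is exactly the step you single out as crucial. From $\ker P\cap\cD=\{0\}$ together with $\cD$ being a core you cannot conclude $\ker P=\{0\}$: a core only determines the closure of $P\res_\cD$, not its point spectrum. Concretely, $P$ acts on $\cD=\bigoplus_n\cH_n$ as a Jacobi matrix with non-vanishing super-diagonal entries; any such operator is injective on finitely supported vectors, yet $0$ is an eigenvalue of the self-adjoint closure whenever the (unique up to scalars) formal solution of the three-term recursion $Pv=0$ happens to be square-summable, and your argument does nothing to exclude this. A correct route is the Mautner phenomenon: in any continuous unitary representation of $\tilde\SL_2(\R)$, a vector fixed by the unipotent group $\exp(\R T)$ is fixed by the whole group (products $\exp(tT)\exp(uS)\exp(t'T)$ with suitably chosen $t,t'$ converge to non-trivial dilations as $u\to 0$, which forces invariance under $\exp(\R E)$, and then $\exp(sE)\exp(uS)\exp(-sE)\to e$ gives invariance under $\exp(\R S)$); hence $\ker P=\cH^{\tilde\SL_2(\R)}=\{0\}$ by irreducibility and non-triviality, with no positive energy needed at this point. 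Alternatively, one reads off $\ker P=\{0\}$ from an explicit model in which $P$ acts as multiplication by a positive variable.

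The second gap is the final inference ``cyclic vector $+$ multiple of one irreducible $\Rightarrow$ multiplicity one.'' This is false for infinite-dimensional irreducibles: $\pi\oplus\pi$ has cyclic vectors whenever $\pi$ is irreducible and infinite dimensional, since its generated von Neumann algebra is $B(\cH)$ acting diagonally and $\xi_1\oplus\xi_2$ is cyclic as soon as $\xi_1,\xi_2$ are linearly independent. Fortunately your computation proves more than you use: $v_h$ is cyclic already for the powers of $\dd U(E)$, so the self-adjoint operator $-i\dd U(E)$ has simple spectrum and generates a maximal abelian von Neumann algebra; the commutant of $U\res_{\Aff(\R)_0}$ is therefore abelian, the restriction is multiplicity free, and together with strict positivity and the uniqueness theorem this yields multiplicity one. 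So the second gap is repairable from what you actually established; the first is not.
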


\begin{remark} \mlabel{rem:lowei} 
In $\PSL_2(\R)$, we have $\exp(2\pi i L_0) = \1$, 
so that, for every 
irreducible positive energy representation of $\PSL_2(\R)$, 
the spectrum of $\dd U(L_0)$ is contained in $m + \N_0$ for some 
$m \in \N_0$. We call $m$ its 
{\it lowest weight} and write $\cH_m= \C \xi_m$ for the $m$-eigenspace of 
$L_0$ in $\cH$. Then $\dd U(L_1) \xi_m = 0$ and 
$\xi_{m + k} := \dd U(L_{-1})^k \xi_m$, $k \in \N_0$, is an orthogonal 
basis of $\cH$.
\end{remark}

\begin{theorem}{\rm(\cite[Thm.~2.10]{Lo08})}  \mlabel{thm:1.4}
Every unitary positive energy representation $U$ of $\PSL_2(\R)$ extends to an 
antiunitary representation $\oline U$ of $\PGL_2(\R)$ on the same Hilbert space. 
This extension is unique up to isomorphism 
and, if $U$ is irreducible, then $J := \oline U_r$ {\rm(for $r(x) = -x$)} 
is unique up to a multiplicative factor in $\T$. 
\end{theorem}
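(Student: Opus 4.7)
The plan is to reduce to the irreducible case, construct the extension by hand on the canonical orthogonal basis of $L_0$-eigenvectors from Remark~\ref{rem:lowei}, and then derive both uniqueness statements from Theorem~\ref{thm:equiv} together with Schur's lemma. Since $e^{2\pi i L_0} = \1$ in $\PSL_2(\R)$, the selfadjoint operator $\dd U(L_0)$ has spectrum contained in $\N_0$, and the representation theory of $\PSL_2(\R)$ yields a decomposition $U \cong \bigoplus_\iota U_\iota$ into irreducible positive energy subrepresentations. Because antiunitary extensions to $(\PGL_2(\R), \PSL_2(\R))$ are unique up to equivalence whenever they exist (Theorem~\ref{thm:equiv}(a)), it suffices to extend each summand separately and form the direct sum.

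For irreducible $U$ with lowest weight $m \in \N_0$, Remark~\ref{rem:lowei} provides an orthogonal basis $\xi_{m+k} := \dd U(L_{-1})^k\xi_m$, $k \in \N_0$. Using $\dd U(L_1)\xi_m = 0$ together with $[L_0, L_{\pm 1}] = \mp L_{\pm 1}$ and $[L_1, L_{-1}] = -2L_0$, one obtains inductively
\[ \dd U(L_0)\xi_{m+k} = (m+k)\xi_{m+k}, \qquad \dd U(L_1)\xi_{m+k+1} = a_k\xi_{m+k}, \qquad a_k \in \R, \]
so that $\dd U(L_0)$ and $\dd U(L_{\pm 1})$ all have real matrix entries in the basis $(\xi_{m+k})$. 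I would then define $J$ to be the antilinear involution determined by $J\xi_{m+k} := \xi_{m+k}$; orthogonality makes $J$ isometric, $J^2 = \1$ is immediate, and $J$ commutes with $\dd U(L_0)$ and $\dd U(L_{\pm 1})$.

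Taking the representative $r_0 = \diag(1,-1) \in \GL_2(\R)$ of the reflection $r(x) = -x$, one finds $\Ad(r_0)E = E$, $\Ad(r_0)T = -T$, $\Ad(r_0)S = -S$, which in $\fsl_2(\C)$ becomes $\Ad(r_0)L_0 = -L_0$ and $\Ad(r_0)L_{\pm 1} = L_{\mp 1}$. Combining this with the antilinearity of $J$ and the commutation facts above, one checks on the algebraic span $\cD := \bigoplus_k \C\xi_{m+k}$ the infinitesimal intertwining identity
\[ J\,\dd U(X)\,J^{-1} = \dd U(\Ad(r_0)X) \qquad \text{for all } X \in \fsl_2(\R). \]
Exponentiating on $\cD$ yields $JU_gJ^{-1} = U_{rgr^{-1}}$ for $g \in \PSL_2(\R)$, and setting $\oline U_r := J$ produces the desired antiunitary extension to $\PGL_2(\R)$.

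Uniqueness of $\oline U$ up to equivalence is Theorem~\ref{thm:equiv}(a). For irreducible $U$, if $J, J'$ are two conjugations implementing the extension, then using $r^2 = \1$ one checks that the unitary $JJ'$ commutes with $U_g$ for every $g \in \PSL_2(\R)$; by Schur's lemma $JJ' \in \C\1$, and unitarity forces $J' \in \T\cdot J$. The main technical obstacle I anticipate is the passage from the infinitesimal intertwining on $\cD$ to the group-level identity on all of $\cH$: it requires knowing that $\cD$ consists of analytic vectors for $U$ and that $U_{\exp X} = e^{\dd U(X)}$ on this core, which is standard for lowest weight $\fsl_2$-modules but deserves careful bookkeeping.
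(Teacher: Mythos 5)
Your proof is correct and follows essentially the same route as the paper: the paper likewise reduces via Theorem~\ref{thm:equiv} to constructing the extension on an irreducible lowest weight representation, defines the conjugation $C$ by fixing the orthogonal basis $\xi_n$ of Remark~\ref{rem:lowei}, and verifies $CEC=E$, $CTC=-T$, $CSC=-S$ to conclude $CU^m_gC=U^m_{rgr}$. Your additional bookkeeping (real matrix entries of $\dd U(L_{\pm 1})$, the analytic-vector issue in passing from $\fsl_2(\R)$ to the group, and the Schur argument for the $\T$-ambiguity of $J$) just makes explicit what the paper leaves implicit.
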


\begin{proof} In view of Theorem~\ref{thm:equiv}, it suffices 
to verify the first assertion. 

Here the main point is to define the antiunitary 
involution on the irreducible lowest weight representation $U^m$ of 
lowest weight~$m \in \N_0$. We 
specify an antiunitary involution $C$ on $\cH$ by $C \xi_n = \xi_n$ for 
$n \geq m$  (cf.\ Remark~\ref{rem:lowei}). 
Then $C$ commutes with $L_0$ and $L_{\pm 1}$ and 
\[ C E C = E, \quad C T C = - T \quad \mbox{ and }\quad 
C S C = - S.\] 
This implies that $C U^m_g C = U^m_{rgr}$ for $g \in \PSL_2(\R)$. 
\end{proof}

\begin{definition} (Positive energy representations) 
\mlabel{def:posen-rep}
(a) A unitary representation $(U,\cH)$ of the translation group 
$\R^d= \R^{1,d-1}$ of Minkowski space is said to be a {\it positive 
energy representation} if $-i \dd U(x) \geq 0$ for 
$x \in \oline{V_+}$. 

(b) A unitary representation $(U,\cH)$ of the Poincar\'e group 
$P(d)^\uparrow_+$ is said to be a {\it positive 
energy representation} if its restriction to the translation subgroup 
is of positive energy.  We likewise define antiunitary 
positive energy representations of $P(d)_+$. 
\end{definition}

\begin{remark} 
(a) For the group $G := P(2)_+ \cong \R^{1,1} \rtimes \SO_{1,1}(\R)$ 
and the reflection $r := (0,-\1)$ inducing on $G$ the involution 
$\tau(b,a)= (-b,a)$, there are similar results to Theorem~\ref{thm:1.4}  
(cf.~Theorem~\ref{thm:wies2-standard}). 
Here the main point is to see that the irreducible strictly positive energy 
representations $(U,\cH)$ of $G_0$ carry a natural conjugation  
that we can use for the extension. In the $L^2$-realization 
on the hyperbolas 
\[ \cO_m = \{ (\lambda, \mu) \in \R^2 \: \lambda^2 - \mu^2 = m^2 \}, \quad 
m > 0, \]
suggested by Mackey theory, we can extend the representation 
simply by $U_{(0,0,-\1)} f = \oline f$. 

(b) For the Poincar\'e group $P(d)_+$, the situation is more complicated. 
The irreducible strictly positive energy representations of $P(d)^\uparrow_+ 
\cong \R^d \rtimes \SO_{1,d-1}(\R)^\uparrow$ are induced from representations 
of the stabilizer group $\SO_{1,d-1}(\R)_{e_0} \cong \SO_{d-1}(\R)$ 
and realized in vector-valued $L^2$-spaces on the hyperboloids 
\[ \cO_m = \{ (p_0,\bp) \in \R^d \: 
p_0^2 - \bp^2 = m^2\}, \quad m > 0.\]
Since the stabilizer group is non-trivial for $d > 2$, the existence 
of an antiunitary extension to $P(d)_+$ depends on the 
existence of an antiunitary extension of the representation 
$(\rho, V)$ of $\SO_{d-1}(\R)$ to $\OO_{d-1}(\R)$.  We refer to 
\cite{NO17} for a detailed analysis of these issues; 
see also \cite[Thm.~9.10]{Va85} for a discussion concerning the 
Poincar\'e group. 
\end{remark}

\subsubsection{The Heisenberg group} 

In this subsection we recall the close connection between 
unitary representations of the $3$-dimensional 
Heisenberg group and positive energy representations of~$\Aff(\R)_0$ 
(cf.~\cite[Thm.~2.8]{Lo08}) which also extends to antiunitary extensions. 

We define the {\it Heisenberg group} $\Heis(\R^2)$ as the 
manifold $\T \times \R^2$, endowed with the group multiplication 
\[ (z,s,t)(z',s',t') = (zz' e^{is't}, s + s', t + t'). \] 
Note that 
\[ \Heis(\R^2) \cong (\T \times \R) \rtimes_\alpha \R 
\quad \mbox{ for } \quad 
\alpha_t(z,s) = (z e^{ist}, s).\] 
Extending the action of $\R$ on $\T \times \R$ 
to an action of $\R \times \Z_2 \cong \R^\times$ via 
\[ \beta_r(z,s) = (z r^{is} , s) \quad \mbox{ and }\quad 
\beta_{-1}(z,s) = (\oline z, -s), \] 
we obtain the larger group 
\[ \Heis(\R^2)_\tau 
\cong (\T\times \R) \rtimes_\beta \R^\times 
\cong \Heis(\R^2) \rtimes \{\1,\tau\}, \quad \mbox{ with }  \quad 
\tau(z,s,t) = (\oline z, -s,t).\] 

\begin{proposition} There is a natural one-to-one correspondence between 
unitary representations $(\tilde U, \cH)$ of $\Heis(\R^2)$ 
satisfying $\tilde U_{(z,0,0)} = z\1$ and unitary strictly positive 
energy representations $(U,\cH)$ of $\Aff(\R)_0$. It is established 
as follows: 
\begin{itemize}
\item[\rm(i)] If $U$ is given and $U_{(b,1)} = e^{ibP}$ with $P > 0$, then we put 
$W_s := e^{is \log P}$ and 
$\tilde U_{(z,s,t)} := z W_s U_{(0,e^t)}$. 
\item[\rm(ii)] If $\tilde U$ is given and 
$W_s := \tilde U_{(1,s,0)} = e^{isA}$, then we put 
$U_{(s,e^t)} := e^{is \exp A} \tilde U_{(1,0,t)}$. 
\item[\rm(iii)] This correspondence extends naturally 
to antiunitary representations of $\Heis(\R^2)_\tau$ and 
antiunitary positive energy representations of $\Aff(\R)$. 
\end{itemize}
\end{proposition}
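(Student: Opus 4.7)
The plan is to reduce the claim to an identification of infinitesimal generators on the two sides, with both group laws encoding the same Weyl form of the canonical commutation relation. I would proceed in roughly four steps.

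First, I would identify the generators. On the Heisenberg side, set $W_s := \tilde U_{(1,s,0)} = e^{isA}$ and $V_t := \tilde U_{(1,0,t)} = e^{itB}$. A direct computation with the group law in $\Heis(\R^2)$ gives $(1,0,t)(1,s,0)(1,0,-t) = (e^{ist},s,0)$, which combined with $\tilde U_{(z,0,0)} = z\1$ forces the Weyl relation $e^{itB}e^{isA}e^{-itB} = e^{ist}e^{isA}$. On the affine side, write $U_{(b,1)} = e^{ibP}$ and $U_{(0,e^t)} = e^{itH}$; the affine group law forces $e^{itH}Pe^{-itH} = e^tP$. The key link is that, provided $P$ is strictly positive with trivial kernel, the choice $A := \log P$ and $B := H$ turns the affine relation (via functional calculus) into $e^{itB}Ae^{-itB} = A + t\1$, which is exactly the infinitesimal form of the Weyl relation. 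Conversely, by Stone--von Neumann, the central character condition $\tilde U_{(z,0,0)} = z\1$ implies that $\tilde U$ is a multiple of the Schr\"odinger representation, so $A$ has full spectrum and $P := e^A$ is a strictly positive selfadjoint operator with trivial kernel.

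For (i), given strictly positive energy $U$, I would define $P$ and $W_s := e^{is\log P}$ as stated, and verify from $U_{(0,e^t)}W_sU_{(0,e^{-t})} = e^{ist}W_s$ (which follows by functional calculus from the affine relation) that the assignment $\tilde U_{(z,s,t)} := zW_s U_{(0,e^t)}$ is multiplicative:
\[ \tilde U_{(z_1,s_1,t_1)}\tilde U_{(z_2,s_2,t_2)} = \tilde U_{(z_1z_2 e^{is_2t_1},\,s_1+s_2,\,t_1+t_2)}. \]
Strong continuity is inherited from $U$ and from the spectral calculus of $P$. For (ii), given $\tilde U$, I would read off $A$ and $B$, set $P := e^A > 0$ and $H := B$, note from Step~1 that $P$ and $H$ satisfy the affine relation, and define $U_{(b,e^t)} := e^{ibP}e^{itH}$. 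The two constructions are then formally inverse to each other, as a bookkeeping check on generators shows: Step~(i) produces $A' = \log P = A$ and $B' = H = B$, and reapplying Step~(ii) returns the same $P$ and $H$.

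For the antiunitary extension in (iii), observe that $\tau(z,s,t) = (\oline z,-s,t)$ on the Heisenberg side and the reflection $(0,-1)$ on the affine side both give involutive automorphisms of the respective index--two subgroups. A conjugation $J$ extending $\tilde U$ antiunitarily to $\Heis(\R^2)_\tau$ must satisfy, by antilinearity of $J$, the relations $JAJ^{-1} = A$ and $JBJ^{-1} = -B$; a conjugation $J$ extending $U$ antiunitarily to $\Aff(\R)$ must satisfy $JPJ^{-1} = P$ and $JHJ^{-1} = -H$. Under the identifications $A = \log P$ and $B = H$ these two sets of conditions are manifestly identical, so antiunitary extensions correspond bijectively through the equivalence of (i)--(ii).

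The main obstacle I anticipate is the careful handling of unboundedness: the entire correspondence hinges on $\log P$ being a well-defined selfadjoint operator, which requires $\ker P = \{0\}$. This is exactly what \emph{strictly} positive energy provides, and conversely it is guaranteed by Stone--von Neumann once one assumes the non-trivial central character $\tilde U_{(z,0,0)} = z\1$. The two hypotheses of the proposition are genuinely equivalent precisely because of this domain issue; dropping strict positivity would leave the correspondence undefined on the kernel of $P$, which explains why the proposition pairs the central character condition with strictly (rather than merely) positive energy.
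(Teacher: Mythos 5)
Your argument is correct and follows essentially the same route as the paper: set $A=\log P$, use the functional-calculus passage from $V_tPV_{-t}=e^tP$ to $V_tAV_{-t}=A+t\1$ and hence to the Weyl relation $V_tW_sV_{-t}=e^{ist}W_s$, and match the antiunitary conditions $JPJ=P$, $JHJ=-H$ with $JAJ=A$, $JBJ=-B$. The only superfluous step is the appeal to Stone--von Neumann: $P=e^A$ is automatically positive with trivial kernel for any selfadjoint $A$ by spectral calculus, so strict positivity in direction (ii) needs no classification result.
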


\begin{proof} (i) Let $V_t := U_{(0,e^t)}$ and $A := \log P$. 
Then $V_t P V_{-t} = e^t P$ implies that $W_s = e^{isA}$ satisfies 
\[ V_t A V_{-t} = t \1 + A \quad \mbox{ and }\quad
V_t W_s V_{-t}  = e^{ist} W_s.\] 
Therefore $V$ and $W$ define a unitary representation of 
$\Heis(\R^2)$ via $\tilde U_{(z,s,t)} := z W_s V_t.$

(ii) With $V_t := \tilde U_{(1,0,t)}$ and $W_s = \tilde U_{(1,s,0)} = e^{isA}$,  
the positive operator $P := e^A$ satisfies 
$V_t P V_{-t}= e^t P$, so that we obtain a positive energy representation of 
$\Aff(\R)_0$  by $U_{(s,e^t)} := e^{is P} V_t$. 

(iii) If, in addition, $U$ is an antiunitary representation of 
$\Aff(\R)\cong \R \rtimes \R^\times$ and $J = U_{(0,-1)}$, then 
$J U_{(b,a)} J = U_{(-b,a)}$ leads to 
$J P J = P$ and thus to $JAJ = A$. We therefore obtain an antiunitary 
representation $\hat U$ of 
$\Heis(\R^2)_\tau \cong (\T\times \R) \rtimes_\alpha \R^\times$ by 
$\hat U_{(z,s,a)} := \hat U_{(z,s)} U_{(0,a)}.$ 
\end{proof}

\begin{remark}
Write $\Heis(\R^2)_\tau$ as the semidirect 
product $\Heis(\R^2) \rtimes \{\1,\tau\}$, 
where $\tau(z,s,t) = (\oline z, -s, t)$. 
Then the conjugacy class $C_\tau \subeq \Heis(\R^2)_\tau$ is a $2$-dimensional 
symmetric space diffeomorphic to  $\T \times \R$ 
and the centralizer of $\tau$ in $\Heis(\R^2)$ is the subgroup 
$\{\pm 1\} \times \{0\} \times \R$ which also commutes 
with the whole subgroup $\{(1,0)\} \times \R$. 
Therefore $C_\tau$ can be identified with the conjugacy class 
of the homomorphism $\gamma \: \R^\times \to \Heis(\R^2)_\tau 
\cong (\T \times \R) \rtimes_\beta \R^\times$ with 
$\gamma(t) = (1,0,t)$. 
\end{remark}

\section{Modular objects and standard subspaces} 
\mlabel{sec:3}

Besides antiunitary representations of $\R^\times$ 
(Lemma~\ref{lem:funda}), 
there are other interesting ways to encode modular objects $(\Delta, J)$. 
Below we discuss some of them. In particular, we introduce the 
concept of a standard subspace $V \subeq \cH$ 
which is a geometric counterpart of antiunitary representations of~$\R^\times$ 
(Proposition~\ref{prop:3.2}). 
We also discuss how the embedding $V \subeq \cH$ can be 
obtained from the orthogonal one-parameter group 
$\Delta^{it}\res_V$ on $V$ (\S \ref{subsec:orthog}),  
and in \S \ref{subsec:3.3} we introduce half-sided modular inclusions 
of standard subspaces and how they are related to 
antiunitary representations of $\Aff(\R)$, $P(2)_+$ and $\PGL_2(\R)$. 
Modular intersections are studied in \S\ref{subsec:modint}.  

\subsection{Standard subspaces} 

We now turn to the fundamental concept of a standard subspace $V$ of a 
complex Hilbert space $\cH$. The key structures on the set 
$\Stand(\cH)$ of standard subspaces is a natural action of the 
group $\AU(\cH)$, an order structure induced by inclusion, and 
an involution $V \mapsto V'= i V^{\bot_\R}$ defined by the symplectic orthogonal space. 

\begin{definition} A closed real subspace $V \subeq \cH$ is called a {\it standard 
real subspace} (or simply a {\it standard subspace}) 
if $V \cap i V = \{0\}$ and $V + i V$ is dense in $\cH$. 
We write $\Stand(\cH)$ for the set of standard subspaces of~$\cH$.
\end{definition}

For every standard subspace $V \subeq \cH$, we obtain an 
antilinear unbounded operator 
\[ S \: \cD(S) := V + i V \to \cH, \qquad 
S(v + i w) := v - i w \] 
and this operator is closed, so that $\Delta_V := S^*S$ is a positive 
selfadjoint operator. We thus obtain 
the polar decomposition 
\[ S = J_V \Delta_V^{1/2},\] 
where $J_V$ is an antilinear isometry, and 
$S = S^{-1} = \Delta_V^{-1/2} J_V^{-1} = J_V^{-1} (J_V \Delta_V^{-1/2} J_V^{-1})$ 
leads to $J_V^{-1} = J_V$ and the modular relation $J_V\Delta_V J_V = \Delta_V^{-1}$. 
If, conversely, $(\Delta, J)$ is a pair of modular objects, then 
$S := J \Delta^{1/2}$ is a densely defined antilinear involution and 
\[ \Fix(S) := \{ \xi \in \cD(S) \: S\xi = \xi \} \] 
is a standard subspace with $J_V = J$ and $\Delta_V = \Delta$.
The correspondence between modular objects and standard subspaces 
is the core of Tomita--Takesaki Theory (see Theorem~\ref{thm:tom-tak} below).

Combining the preceding discussion with 
Lemma~\ref{lem:funda}, we obtain:
\begin{proposition}\mlabel{prop:3.2} If $(U,\cH)$ is an antiunitary 
representation of $\R^\times$ with $U_{e^t} = \Delta^{-it/2\pi}$ for $t \in \R$ and 
$J := U_{-1}$, then $V := \Fix(J\Delta^{1/2})$ is a standard subspace. 
This defines a bijection $V \leftrightarrow U^V$ between antiunitary representations of 
$\R^\times$ and standard subspaces.
\end{proposition}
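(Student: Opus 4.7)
The plan is to invoke Lemma~\ref{lem:funda} to reduce the claim to a bijection between pairs $(\Delta, J)$ of modular objects and standard subspaces $V \subeq \cH$, given by
\[
(\Delta, J) \longmapsto V := \Fix(J\Delta^{1/2}), \qquad V \longmapsto (\Delta_V, J_V),
\]
where the second arrow comes from the polar decomposition of the closure of the antilinear involution $S_V(v + iw) := v - iw$ on $V + iV$. Since Lemma~\ref{lem:funda} already identifies antiunitary representations of $\R^\times$ with such pairs $(\Delta, J)$, this suffices.

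First I would verify that $V := \Fix(J\Delta^{1/2})$ is standard. Set $S := J\Delta^{1/2}$ on $\cD(S) := \cD(\Delta^{1/2})$. Because $J$ is an antilinear isometric involution and the modular relation $J\Delta J = \Delta^{-1}$ gives $J\cD(\Delta^{1/2}) = \cD(\Delta^{-1/2})$ with $J\Delta^{1/2}J = \Delta^{-1/2}$, a direct computation shows that $S$ is densely defined, closed, antilinear, and satisfies $S^2 = \id$ on $\cD(S)$. For any $\xi \in \cD(S)$, antilinearity of $S$ together with $S^2 = \id$ yields the decomposition
\[
\xi \;=\; \underbrace{\tfrac{1}{2}(\xi + S\xi)}_{\in V} \;+\; i\cdot \underbrace{\tfrac{1}{2i}(\xi - S\xi)}_{\in V},
\]
since $S$ fixes each summand; hence $V + iV \supeq \cD(S)$ is dense in $\cH$. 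Moreover, if $v = iw$ with $v,w \in V$, then $v = Sv = S(iw) = -iSw = -iw = -v$, so $v = 0$, giving $V \cap iV = \{0\}$. Thus $V$ is standard.

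Next I would show the two assignments are mutually inverse. Starting from $V$, the operator $S_V$ on $V + iV$ is clearly antilinear, involutive on its domain, and closable (its graph is contained in that of the closed operator $\oline{S_V}$ obtained because $V$ is closed and $V\cap iV = \{0\}$ lets $S_V$ be defined unambiguously). The standard polar decomposition $\oline{S_V} = J_V \Delta_V^{1/2}$ then yields modular objects, and by construction $\Fix(\oline{S_V}) = V$. In the other direction, beginning with $(\Delta, J)$ and passing to $V = \Fix(S)$, I have just shown that $V + iV$ is a core for $S$, and on this core $S_V$ coincides with $S$: on $V$, both equal the identity, and on $iV$ both equal $-\id$. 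Hence $\oline{S_V} = S = J\Delta^{1/2}$, and the uniqueness of the polar decomposition of a closed antilinear operator (apply it to $\oline{S_V}^*\oline{S_V} = \Delta$ and read off the antilinear partial isometry part) gives $\Delta_V = \Delta$ and $J_V = J$. Combined with Lemma~\ref{lem:funda}, this establishes the claimed bijection $V \leftrightarrow U^V$.

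The main technical point I expect to dwell on is the identification $\oline{S_V} = J\Delta^{1/2}$, which rests on showing that $V + iV$ is a core for $S = J\Delta^{1/2}$ and that polar decomposition of closed antilinear operators is unique. The first requires care because $\cD(S) = \cD(\Delta^{1/2})$ may be strictly larger than the algebraic sum $V + iV$ a priori, so one has to argue that $V + iV$ is dense in $\cD(S)$ in the graph norm; this follows from the explicit decomposition above, which shows $V + iV = \cD(S)$ as sets. With that in hand, the rest is bookkeeping.
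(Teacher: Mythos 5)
Your argument is correct and takes essentially the same route as the paper: there the proposition is obtained by ``combining the preceding discussion with Lemma~\ref{lem:funda}'', where the preceding discussion is exactly the correspondence $(\Delta,J)\leftrightarrow V$ via the polar decomposition $S_V=J_V\Delta_V^{1/2}$ and $V=\Fix(J\Delta^{1/2})$. You merely supply details the paper leaves implicit (the identity $\cD(S)=V+iV$, the core argument, and the uniqueness of the antilinear polar decomposition).
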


\begin{remark} \mlabel{rem:anaext} 
The parametrization of the one-parameter group 
in Proposition~\ref{prop:3.2} may appear artificial, but it turns out that 
it is quite natural. 
As $V \subeq \cD(\Delta^{1/2})$, for each $v \in V$, the orbit map 
$U^v(g) := U_g v$ has an analytic extension 
\[ \{ z \in \C \: 0 \leq \Im z \leq \pi \} \to \cH, 
\quad z \mapsto U^v_{e^z} := \Delta^{-iz/2\pi}v \] 
with $U^v(i\pi) = \Delta^{1/2} v = J v$. This fits with $U_{-1} = J$ 
and it is compatible with  the context of Definition~\ref{def:cplx-type}, where 
$\gamma(-1) = \exp(\pi ix)$ is obtained by analytic continuation from 
$\gamma(e^t) = \exp(tx)$.
\end{remark}

\begin{remark} \mlabel{rem:4.3} 
(a) If $V = \Fix(S)$ is a standard subspace with modular objects 
$(\Delta, J)$, then 
\begin{equation}
  \label{eq:delta-conj}
 \Delta^{1/4} S \Delta^{-1/4} = \Delta^{1/4} J \Delta^{1/4} 
= \Delta^{1/4}\Delta^{-1/4}J = J
\end{equation}
implies that $V = \Fix(S) = \Delta^{-1/4} \Fix(J) = \Delta^{-1/4}\cH^J$. 

(b) Write $\Stand_0(\cH)$ for the set of those standard subspaces $V$ 
for which $V + i V = \cH$, i.e., the antilinear involution~$S$ 
is bounded. 
Combining \eqref{eq:delta-conj} with the fact that the unitary group $\U(\cH)$ 
acts transitively on the set of all conjugations (=antiunitary involutions), 
it follows that the group $\GL(\cH)$ acts transitively on $\Stand_0(\cH)$. This leads
to the structure of a Banach symmetric space on this set  
\[ \Stand_0(\cH) \cong \GL(\cH)/\GL(\cH^J) \cong \GL(\cH)/\GL(\cH)^J,\] 
where $J$ is any conjugation on $\cH$ (cf.\ Appendix~\ref{app:a.2} and \cite{Kl11}). 
For $\cH = \C^n$, we obtain in particular 
\[ \Stand(\C^n) = \Stand_0(\C^n) \cong \GL_n(\C)/\GL_n(\R). \] 
For elements of $\Stand_0(\cH)$, there are no proper inclusions. As we shall see in 
\S \ref{subsec:3.3}, the order structure on $\Stand(\cH)$ is non-trivial if $\cH$ is infinite dimensional. 

(c) To extend (b) to arbitrary standard subspaces $V$, we note that  
a dense complex subspace $\cD\subeq \cH$ carries at most one 
Hilbert space structure (up to topological linear isomorphism) 
for which the inclusion 
$\cD \into \cH$ is continuous (Closed Graph Theorem). 
We consider the category $\cG$ whose objects are 
all dense subspaces $\cD \subeq \cH$ 
carrying such Hilbert space structures and whose morphisms are 
the topological linear isomorphisms $\cD_1 \to \cD_2$ with respect to the 
intrinsic Hilbert space structures. This defines a category 
in which all morphisms are invertible, so that we actually 
obtain a groupoid. As all these subspaces $\cD$ are isomorphic to 
$\cH$ as Hilbert spaces, this groupoid acts transitively. 

For each standard subspace $V \subeq \cH$, the dense subspace 
$V + iV$ carries the natural Hilbert structure obtained from the identification
 with the complex Hilbert space $V_\C$. Therefore the groupoid $\cG$ 
acts transitively on $\Stand(\cH)$ with stabilizer groups $\cG_V \cong \GL(V)$. 

(d) Write $\Conj(\cH)$ for the set of conjugations on $\cH$ 
(Examples~\ref{exs:conj}). 
Then the map $\Stand(\cH) \to \AU(\cH), V \mapsto J_V$ is surjective and 
$\AU(\cH)$-equivariant. The fiber in a fixed conjugation $J$ 
corresponds to the set of all positive 
operators $\Delta$ satisfying $J\Delta J = \Delta^{-1}.$ 
Passing to $D := i\log \Delta\res_{\cH^J}$, it follows that it can be parametrized 
by the set of all skew-adjoint operators on the real Hilbert space $\cH^J$ 
(see also Remark~\ref{rem:2.3}(b) for a different parametrization). 
\end{remark}

The problem to describe the set of pairs 
$(V,\cH)$, where $V \subeq \cH$ is a standard subspace, 
can be addressed from two directions. One could 
either start with a real Hilbert space $V$ and ask 
for all those complex Hilbert spaces into which $V$ embeds 
as a standard subspace, or start with the pair $(\cH,J)$, 
respectively the real Hilbert space $\cH^J$, and ask for all standard 
real subspaces $V \subeq \cH$ with $J_V = J$. Both problems 
have rather explicit answers that are easily explained 
(see \cite{NO16} for details). 

\begin{remark} \mlabel{rem:2.3} 
(a) Let $(V, (\cdot,\cdot))$ be a real Hilbert space. 
For any realization of $V$ as a standard subspace of $\cH$, the 
restriction of the scalar product of $\cH$ to $V$ is a complex-valued 
hermitian form 
\[ h(v,w) := \la v,w\ra  = (v,w) + i \omega(v,w),\] 
where $\omega \: V \times V \to \R$ is continuous and skew-symmetric, 
hence of the form $\omega(v,w) = (v,Cw)$ for a skew-symmetric operator 
$C = - C^\top$ on $V$ satisfying $\|Cv\| < \|v\|$ for any non-zero 
$v \in V$ (\cite[Lemma~A.10]{NO16}). Conversely, we obtain 
for every such operator $C$ on $V$ by completion of $V_\C$ 
with respect to $h$ a complex Hilbert space in which $V$ is a standard 
real subspace. Then $C$ extends to a bounded skew-hermitian operator 
$\hat C$ on $\cH$ satisfying
\[ \Delta = \frac{\1 - i \hat C}{\1 + i \hat C} 
\quad \mbox{ and } \quad 
\hat C = i \frac{\Delta - \1}{\Delta + \1}.\] 

(b) If we start with the conjugation $J$ on $\cH$, then 
the standard subspaces $V$ with $J_V = J$ are the subspaces of the form 
$V = (\1 + i C)\cH^J$, where $C \in B(\cH^J)$ is a skew-symmetric operator 
satisfying $\|Cv\| < \|v\|$ for $0\not= v \in \cH^J$ 
(\cite[Lemma~B.2]{NO16}). Writing also $C$ for its complex linear extension 
to $\cH$, we then have 
\[ \Delta^{1/2} = \frac{\1 - i C}{\1 + i C} \quad \mbox{ and }\quad 
C = i \frac{\Delta^{1/2}-\1}{\Delta^{1/2}+ \1}.\]
\end{remark}

\begin{remark} If $V$ is a standard subspace of $\cH$ 
and  $W \subeq V+ i V$ is a real subspace closed in $\cH$ 
such that $W$ corresponds to a standard subspace of the complex Hilbert space $V_\C$, then 
$W$ is also standard in $\cH$ because the closure of 
$W + iW$ contains $V + i V$, hence all of $\cH$. 
\end{remark}

\subsection{Symplectic aspects of standard subspaces} 
\mlabel{subsec:3.2} 

Let $V \subeq \cH$ be a standard subspace and 
consider the corresponding 
antiunitary representation $U^V \:\R^\times \to \AU(\cH)$ 
with $U^V_{-1} = J^V$ and $U^V_{e^t} = \Delta^{-it/2\pi}$ 
(Proposition~\ref{prop:3.2}). 
Since the operators $\Delta^{it}$ commute with 
$S = J \Delta^{1/2}$, they leave the closed subspace 
$V = \Fix(S)$ invariant. Further, the relation 
$JSJ = \Delta^{1/2} J = S^* =  J \Delta^{-1/2}$ 
implies that 
\[ JV = V',\quad \mbox{ where } \quad 
V' := \{ w \in \cH \: (\forall v \in V) \ 
\Im \la v, w \ra = 0\} = i V^{\bot_\R}  \] 
is the {\it symplectic orthogonal space of $V$}, and 
$V^{\bot_\R}$ denotes the orthogonal complement of $V$ in the underlying 
real Hilbert space $\cH^\R$ (\cite[Prop.~3.2]{Lo08}). 
In particular, the orbit $U^V_{\R^\times}V = \{V,V'\}$ consists of at most 
two standard subspaces. 

\begin{lemma}\mlabel{lem:stand-factorial} 
The following assertions hold: 
\begin{itemize}
\item[\rm(i)] $U^{V'}(t) = U^V(t^{-1})$ for $t \in \R^\times$, 
is the antiunitary representation corresponding to~$V'$. 
\item[\rm(ii)] $J_{V'} = J_V$ and $\Delta_{V'} = \Delta_V^{-1}$. 
\item[\rm(iii)] $V \cap V' = \cH^{U^V}$ 
is the fixed point space for the antiunitary representation $(U^V,\cH)$ of~$\R^\times$. 
\item[\rm(iv)] $V = V'$ is equivalent to $\Delta = \1$. 
\end{itemize}
\end{lemma}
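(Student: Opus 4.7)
The plan is to reduce everything to the (already established) relation $V' = J_V V$ and the polar decomposition $S = J\Delta^{1/2}$ of the Tomita operator of $V$. Throughout, write $J = J_V$, $\Delta = \Delta_V$, and $S = S_V = J\Delta^{1/2}$, so that $V = \Fix(S)$ by definition.

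First I would compute the Tomita operator of $V'$. Using $J^2 = \1$ and the modular relation $J\Delta J = \Delta^{-1}$, I get $S^* = \Delta^{1/2}J = J\Delta^{-1/2}$, which is itself a densely defined closed antilinear involution with polar decomposition $J\cdot\Delta^{-1/2}$. Since $V' = JV$, conjugation by $J$ gives
\[
\Fix(S^*) = \Fix(J S J^{-1}) = J\,\Fix(S) = JV = V'.
\]
So $S^*$ is the Tomita operator of $V'$, and by uniqueness of polar decomposition this reads off $J_{V'}=J$ and $\Delta_{V'}=\Delta^{-1}$, proving (ii). Feeding these modular objects into the bijection of Proposition~\ref{prop:3.2} gives $U^{V'}_{-1}=J=U^V_{-1}$ and $U^{V'}_{e^t}=\Delta^{it/2\pi}=U^V_{e^{-t}}$, which is exactly (i).

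For (iii), an element $\xi \in V \cap V'$ satisfies both $S\xi=\xi$ and $S^*\xi=\xi$, i.e.\ $J\Delta^{1/2}\xi=\xi$ and $J\Delta^{-1/2}\xi=\xi$. Comparing, $\Delta^{1/2}\xi = \Delta^{-1/2}\xi$, so $\Delta\xi = \xi$ on the strictly positive selfadjoint operator $\Delta$; this then forces $J\xi=\xi$ as well. Hence $U^V_{-1}\xi=\xi$ and $U^V_{e^t}\xi=\Delta^{-it/2\pi}\xi=\xi$ for every $t$, so $\xi \in \cH^{U^V}$. Conversely, if $U^V$ fixes $\xi$, then $J\xi=\xi$ and $\Delta^{it}\xi=\xi$ for all $t$, and by the spectral theorem $\xi\in\ker(\log\Delta)$, so $\Delta^{1/2}\xi=\xi$; then $S\xi=J\Delta^{1/2}\xi=J\xi=\xi$ and similarly $S^*\xi=\xi$, giving $\xi\in V\cap V'$.

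Finally, (iv) follows immediately from (ii) combined with Proposition~\ref{prop:3.2}: the standard subspace is determined by its modular pair, so $V=V'$ is equivalent to $(\Delta,J)=(\Delta^{-1},J)$, i.e.\ $\Delta=\Delta^{-1}$, which forces $\Delta=\1$ since $\Delta$ is positive; conversely $\Delta=\1$ gives $S=J=S^*$ and hence $V=\Fix(J)=V'$. I don't anticipate any real obstacle here—the only mildly delicate point is the identity $\Fix(S^*)=J\Fix(S)$, which rests on the conjugation relation $JSJ^{-1}=S^*$ obtained from the modular relation; once this is in hand, (i)--(iv) all drop out mechanically.
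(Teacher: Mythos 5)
Your proposal is correct and follows essentially the same route as the paper: the identity $S^*=JSJ=J\Delta^{-1/2}$ gives $V'=\Fix(S^*)$, from which (i) and (ii) are read off by uniqueness of the polar decomposition, (iii) follows from $v=Sv=S^*v\Rightarrow \Delta v=v,\ Jv=v$ and its converse, and (iv) from (ii). The only difference is that you spell out the steps the paper compresses into ``follow immediately.''
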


\begin{proof} (i) and (ii) follow immediately 
from  $V' = \Fix(J \Delta^{-1/2})$. 

(iii) If $v \in V\cap V'$, then 
$v = Sv = S^*v$ implies $\Delta v = v$ and hence $Jv = v$. Conversely, 
these two relations imply $v \in V \cap V'$. 

(iv) follows from (ii). 
\end{proof}

\begin{remark} \mlabel{rem:dirsum} (Direct sums of standard subspaces) 

(a) Suppose that $V_j \subeq \cH_j$ are standard subspaces for $j =1,2$. 
Then $V := V_1 \oplus V_2 \subeq \cH_1 \oplus \cH_2$ is a standard subspace. 
We have $J_V = J_{V_1} \oplus J_{V_2}$ and 
$\Delta_V = \Delta_{V_1} \oplus \Delta_{V_2}$. 
In particular, the corresponding antiunitary representation 
$U^V$ of $\R^\times$ is the direct sum $U^{V_1} \oplus U^{V_2}$. 

(b) In particular, every standard subspace $V$ can be written 
as such a direct sum 
\[ V = (V \cap V') \oplus V_1, \quad \mbox{ where } \quad V_1' \cap V_1 = \{0\} \] 
and $(V \cap V')_\C$ is the set of fixed points of the unitary 
representation $U^V\res_{\R^\times_+}$ (Lemma~\ref{lem:stand-factorial}). 
\end{remark}

\begin{lemma} \mlabel{lem:split} Let $V$ be a standard subspace, 
$V_1 \subeq V$ be a closed subspace and $V_2 := V \cap V_1^{\bot_\R}$ be its orthogonal 
complement in~$V$. Then the following are equivalent: 
\begin{itemize}
\item[\rm(i)] $V = V_1 \oplus V_2$ is a direct sum of standard subspaces. 
\item[\rm(ii)] $V_1 \subeq V_2'$, i.e., $i V_1 \bot V_2$ in $\cH$. 
\item[\rm(iii)] $V_1$ is invariant under the modular automorphisms 
$(\Delta_V^{it})_{t \in \R}$. 
\end{itemize}
If these conditions are satisfied and $V_1$ is also standard, then $V  =V_1$.
\end{lemma}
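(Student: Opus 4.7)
The plan is to prove the cycle $(\text{i}) \Rightarrow (\text{iii}) \Rightarrow (\text{ii}) \Rightarrow (\text{i})$ and to deduce the final claim from the decomposition produced in (i).

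For $(\text{i})\Rightarrow(\text{iii})$: an orthogonal decomposition $\cH = \cH_1 \oplus \cH_2$ with $V_j$ standard in $\cH_j$ gives $\Delta_V = \Delta_{V_1} \oplus \Delta_{V_2}$ by Remark~\ref{rem:dirsum}, so $\Delta_V^{it}$ preserves $\cH_1$ and in particular $V_1$. For $(\text{ii})\Rightarrow(\text{i})$: by construction $V_1 \perp_\R V_2$, while (ii) asserts $\Im\la V_1, V_2\ra = 0$; expanding $\la v_1 + iw_1, v_2 + iw_2\ra$ for $v_j, w_j \in V_j$ shows that these two real-valued conditions together force complex orthogonality of $\cH_j := \overline{V_j + i V_j}$ in $\cH$. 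Since $V + iV \subseteq \cH_1 + \cH_2$ and $V + iV$ is dense in $\cH$, this forces $\cH = \cH_1 \oplus \cH_2$. Each $V_j$ is standard in $\cH_j$, as $V_j \cap iV_j \subseteq V \cap iV = \{0\}$ and $V_j + iV_j$ is dense in $\cH_j$ by construction.

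The substantive step is $(\text{iii})\Rightarrow(\text{ii})$, for which I would use a KMS-type analytic continuation. First, since $\Delta_V^{it}$ commutes with $S_V$ it preserves $V$, and being unitary it preserves the real orthogonal complement $V_1^{\perp_\R}$; hence $V_2 = V \cap V_1^{\perp_\R}$ is also $\Delta_V^{it}$-invariant. For $v_1 \in V_1$, $v_2 \in V_2$ set
\[ F(z) := \la v_1, \Delta_V^{iz} v_2\ra, \qquad -\shalf \leq \Im z \leq 0. \]
Since $v_2 \in V \subseteq \cD(\Delta_V^{1/2})$, $F$ is analytic in the open strip, continuous and bounded by $\|v_1\|\,\|v_2\|$ on its closure. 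Using $\Delta_V^{1/2} w = J_V w$ for $w \in V$, the boundary values read
\[ F(t) = \la v_1, \Delta_V^{it} v_2\ra, \qquad F(t - \tfrac{i}{2}) = \la v_1, J_V \Delta_V^{it} v_2\ra. \]
The invariance $\Delta_V^{it} v_2 \in V_2 \subseteq V_1^{\perp_\R}$ gives $\Re F(t) = 0$, while $J_V \Delta_V^{it} v_2 \in J_V V = V'$ combined with the symplectic-complement definition of $V'$ yields $\la v_1, J_V \Delta_V^{it} v_2\ra \in \R$, so $\Im F(t - \tfrac{i}{2}) = 0$. Schwarz reflection across each boundary line extends $F$ to an entire function, and since each reflection preserves the sup bound, $F$ is globally bounded by $\|v_1\|\,\|v_2\|$. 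The combination of the two reflections yields the functional equation $F(z) = -F(i - z)$, so $F$ is constant by Liouville and this symmetry forces the constant to vanish. Hence $\la v_1, v_2\ra = F(0) = 0$, and in particular $\Im\la v_1, v_2\ra = 0$, which is (ii). The main technical care in this step is the antilinear/linear bookkeeping, especially the identity $J_V \Delta_V^{it} = \Delta_V^{it} J_V$ (a consequence of $J_V \Delta_V J_V = \Delta_V^{-1}$ together with antilinearity of $J_V$), and verifying that the iterated Schwarz reflections genuinely produce an entire function respecting the same sup bound.

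For the final assertion, suppose (i)--(iii) hold and $V_1$ is itself standard in $\cH$. Then $V_1 + iV_1$ is dense in $\cH$, so $\cH_1 = \cH$, and the orthogonal decomposition $\cH = \cH_1 \oplus \cH_2$ from~(i) forces $\cH_2 = \{0\}$, hence $V_2 = \{0\}$ and $V = V_1$.
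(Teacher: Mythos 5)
Your proof is correct (up to one small slip noted below), but the key step takes a genuinely different route from the paper's. The paper disposes of (i) $\Leftrightarrow$ (ii) as an elementary verification -- essentially the orthogonality computation you give for (ii) $\Rightarrow$ (i) -- and then proves (iii) $\Rightarrow$ (i) \emph{directly}: it shows that $\cH_1 = \oline{V_1 + iV_1}$ is invariant under the modular group, uses the analytic continuation of the vector-valued orbit map $t \mapsto \Delta_V^{-it/2\pi}v$ to the strip (Remark~\ref{rem:anaext}) to get $J_V v = \Delta_V^{1/2}v \in \cH_1$ for $v \in V_1$, concludes that $\cH_1$ reduces the whole antiunitary representation $U^V$, and then lets $V = \Fix(S_V)$ split along $\cH = \cH_1 \oplus \cH_1^{\bot}$. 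You instead prove (iii) $\Rightarrow$ (ii) by a scalar KMS-type argument: continuation of the matrix coefficient $F(z) = \la v_1, \Delta_V^{iz}v_2\ra$, two boundary reality conditions, Schwarz reflection and Liouville, and only then pass to (i). Both arguments live on the same strip; the paper's is shorter because the vector-valued continuation handles $J_V$ and $\Delta_V$ in one stroke and never needs (ii) as an intermediate station, while yours is more self-contained at the level of bounded scalar functions and makes the two boundary conditions (real orthogonality on $\Im z = 0$, symplectic orthogonality on $\Im z = -\tfrac{1}{2}$) completely explicit -- which is exactly the KMS mechanism used elsewhere in the paper. One correction: composing the reflection across $\Im z = 0$ (where $\Re F = 0$, so $F(z) = -\oline{F(\oline z)}$) with the reflection across $\Im z = -\tfrac{1}{2}$ (where $\Im F = 0$, so $F(z) = \oline{F(\oline z - i)}$) yields the translation relation $F(z) = -F(z-i)$, not the point reflection $F(z) = -F(i-z)$; this does not affect the conclusion, since $F$ is bounded entire, hence constant, and either relation forces the constant to vanish.
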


\begin{proof} (i) $\Leftrightarrow$ (ii) is easy to verify. 

(i) $\Leftrightarrow$ (iii): Clearly, (i) implies (iii). To see the converse, 
consider the closed subspace $\cH_1 := \oline{V_1 + i V_1}$ of $\cH$. 
Then, for each $v \in V$, the curve $t \mapsto \Delta_V^{-it/2\pi}v$ is contained in $\cH_1$, 
hence the same is true for its analytic continuation to the strip  
\break $\{ z \in \C \: 0 \leq \Im z \leq \pi\}$ (Remark~\ref{rem:anaext}). Therefore
$\Delta^{1/2}v = Jv \in \cH_1$ and thus $\cH_1$ is invariant under the 
antiunitary representation $U^V$ of $\R^\times$ corresponding to $V$
(Proposition~\ref{prop:3.2}). Since the orthogonal decomposition 
$\cH = \cH_1 \oplus \cH_1^\bot$ reduces $U^V$, the standard subspace $V$ decomposes 
accordingly. 

If (i)-(iii) are satisfied and $V_1$ is also standard, then (i) implies 
that $V_1 = V$ (cf.~\cite[Prop.~3.10]{Lo08}). 
\end{proof}


\subsection{Orthogonal real one-parameter groups} 
\mlabel{subsec:orthog}

For any standard subspace $V$, the unitary operators 
$\Delta^{it}$ define on the real Hilbert space~$V$ 
a continuous orthogonal one-parameter group $(U,V)$ 
(\S \ref{subsec:3.2}). 

If, conversely, $(U_t)_{t \in \R}$ is a strongly continuous one-parameter 
group on the real Hilbert space $V$, then we can recover the corresponding 
embedding of $V$ as a standard subspace as follows. 
Let $V_0 := V^U$ be the subspace of $U$-fixed vectors and $V_1 := V_0^\bot$. 
Then $U_t = e^{tD}$ with a skew-symmetric infinitesimal generator 
$D = -D^\top$ satisfying $V_0 = \ker D$. On $V_1$ we have the 
polar decomposition $D = I|D|$, where $I$ is a complex structure 
and $|D| = \sqrt{-D^2}$. We now consider the bounded skew-symmetric 
operator $C$ on $V$ defined by $C\res_{V_0} = 0$ and 
\[ C\res_{V_1} = I \frac{\1 - e^{-|D|}}{\1 + e^{-|D|}}.\] 
Then $h(v,w) := (v,w) + i (v,Cw)$ 
leads to an embedding of $V$ as a standard subspace $V \subeq \cH$ 
as in Remark~\ref{rem:2.3}(a). The operator $D$ can be recovered directly from 
$C$ by $D\res_{V_0} = 0$ and 
\[ D\res_{V_1} = I \log\Big(\frac{\1 + |C_1|}{\1 - |C_1|}\Big) \] 
(cf.~\cite[Rem.~4.3]{NO16}, where different sign conventions are used). 

The orthogonal one-parameter group $(U_t)_{t \in \R}$ on $V$ is trivial 
if and only if $D = 0$, which corresponds to $\Delta = \1$, 
resp., to $C = 0$, resp., to $V = \cH^J$ (Lemma~\ref{lem:stand-factorial}(iv)).

\subsection{Half-sided modular inclusions of standard subspaces} 
\mlabel{subsec:3.3}

We have seen above that standard subspaces $V \subeq \cH$ 
are in one-to-one correspondence with antiunitary 
representations $U^V \: \R^\times \to \AU(\cH)$ (Proposition~\ref{prop:3.2}). 
In this subsection we shall see how certain inclusions of 
standard subspaces can be related to antiunitary positive energy representations 
of $\Aff(\R)$ (cf.~Section~\ref{subsubsec:2.5.1}). 
Here the positive energy condition for the translation group 
turns out to be the crucial link 
between the inclusion order on $\Stand(\cH)$ and the affine geometry 
of the real line. 

There are two ways to approach inclusions of standard subspaces. 
One is to consider the interaction of a unitary one-parameter group 
with a standard subspace, which leads to the concept of a Borchers pairs 
and the other considers the modular groups of 
two standard subspaces and leads to the concept of a half-sided modular inclusion. 
These perspectives have been introduced by  Borchers (\cite{Bo92}) 
and Wiesbrock (\cite{Wi93}), respectively, in the context of 
von Neumann algebras (see \S\ref{subsec:4.2} for the translation 
to standard subspaces and \cite{Lo08} for the results in the context of standard subspaces).

\begin{definition} \mlabel{def:3.8} (a) Let $(U_t)_{t \in \R}$ be a continuous unitary 
one-parameter group on $\cH$ and $V \subeq \cH$ be a standard subspace. 
We call $(U,V)$ a {\it (positive/negative) Borchers pair} if 
$U_t V \subeq V$ holds for $t \geq 0$ and 
$U_t = e^{itP}$ with $\pm P \geq 0$. 

(b) 
We call an inclusion $K \subeq H$ of standard subspaces of 
$\cH$ a {\it $\pm$half-sided modular inclusion} if 
\[ \Delta_H^{-it} K \subeq K \quad \mbox{ for } \quad \pm t \geq 0.\] 
\end{definition}

\begin{remark}\mlabel{rem:3.13e}
The inclusion $K \subeq H$ is positive half-sided modular 
if and only if the inclusion $H' \subeq K'$ is negative half-sided modular 
(\cite[Cor.~3.23]{Lo08}). 
\end{remark}

The following theorem provides a passage from Borchers pairs 
to  antiunitary representations of $\Aff(\R)$ 
(\cite[Thm.~3.2]{BGL02}, \cite[Thm.~3.15]{Lo08}). 

\begin{theorem}[Borchers' Theorem---one particle case]
  \mlabel{thm:bor-stand} 
If $(U,V)$ is a positive/negative Borchers pair, 
then 
\[ U^V(a) U(b) U^V(a)^{-1} = U(a^{\pm 1} b) 
\quad \mbox{ for } \quad  a \in \R^\times, b \in\R,\] 
i.e., we obtain an antiunitary 
positive energy representation $(\tilde U,\cH)$ of $\Aff(\R)$ by 
$\tilde U_{(b,a)} = U(b) U^V(a)$. 
\end{theorem}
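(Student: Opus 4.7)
The plan is to establish two commutation relations --- one for the modular group and one for the conjugation --- and then assemble the antiunitary representation of $\Aff(\R) = \R \rtimes \R^\times$ algebraically. Concretely, in the positive-energy case ($P \geq 0$) the goal is to prove
\[ \Delta^{-is/2\pi}\, U(b)\, \Delta^{is/2\pi} = U(e^s b) \qquad\text{and}\qquad J\, U(b)\, J = U(-b) \]
for all $s, b \in \R$. Once these hold, setting $\tilde U_{(b,a)} := U(b) U^V(a)$ yields a homomorphism $\Aff(\R) \to \AU(\cH)$ by direct algebraic verification; it is unitary on the identity component and antiunitary on the other, and positivity of the energy of the translation subgroup is built in by hypothesis. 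The negative case ($P \leq 0$) will follow by replacing $U(t)$ with $U(-t)$.

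The first step is to translate the invariance hypothesis $U(t)V \subeq V$ for $t \geq 0$ into a commutation with the unbounded antilinear involution $S = J\Delta^{1/2}$. Since $U(t)$ is $\C$-linear, it preserves $\cD(S) = V + iV$ and satisfies $S U(t) = U(t) S$ on $\cD(S)$ for $t \geq 0$. Formally this reads $\Delta^{1/2} U(t) \Delta^{-1/2} = J U(t) J$, and the content of the theorem is that this formal identity can be boosted, via analytic continuation, to the one-parameter family of commutation relations above.

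The analytic heart of the argument rests on two holomorphic extensions. For $v \in V$, the modular orbit $z \mapsto \Delta^{-iz/2\pi} v$ extends holomorphically to the strip $S_\pi := \{z \in \C : 0 \leq \Im z \leq \pi\}$ with boundary value $Jv$ at $\Im z = \pi$ (Remark~\ref{rem:anaext}), while by $P \geq 0$ the map $w \mapsto U(w)v = e^{iwP}v$ extends boundedly and holomorphically to the closed upper half plane $\oline{\C_+}$. Since $e^z \in \oline{\C_+}$ whenever $z \in S_\pi$, the candidate $z \mapsto U(e^z b)\, v$ is a well-defined bounded holomorphic function on $S_\pi$. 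I would identify this function with the formally-defined $z \mapsto \Delta^{-iz/2\pi} U(b) \Delta^{iz/2\pi} v$ by matching boundary values: on the real axis the matching comes from $S U(t) = U(t) S$ combined with the intertwining of $\Delta^{it}$ with $S$, and at $\Im z = \pi$ one reads off the $J$-relation. The rigorous identification is carried out for scalar matrix elements $\la u, \cdot \ra$ with $u \in V'$, exploiting that $\la u, U(t)v\ra$ is real on $[0,\infty)$ by the symplectic orthogonality of $V$ and $V'$ (together with $U(t)v \in V$); via Schwarz reflection this yields analytic continuation of the matrix element across the positive real axis, which is precisely the extra regularity needed to close the argument by a three-lines type bound.

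The main obstacle will be the domain management: $\Delta^{iz}$ is unbounded off the real axis, so one cannot apply three-lines directly to the operator-valued function. I would reduce to scalar matrix elements on a suitable dense core (for instance finite linear combinations of vectors of the form $E_P([0,N]) \xi$ with $\xi \in \cD(\Delta^{1/2}) \cap \cD(\Delta^{-1/2})$), control the boundary behavior on both edges of $S_\pi$, and then pass back to the full space by strong continuity, density, and the closedness of $\Delta^{1/2}$. Once the two commutation relations are verified on such a core and extended by continuity, the antiunitary representation of $\Aff(\R)$ is assembled as described in the first paragraph, and positive energy is inherited from $U$.
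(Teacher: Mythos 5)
The paper gives no proof of this theorem: it is stated with references to \cite[Thm.~3.2]{BGL02} and \cite[Thm.~3.15]{Lo08}, so there is no internal argument to compare against, and your sketch is essentially a reconstruction of the standard proof from those sources. The ingredients you list are the right ones — $SU(t)=U(t)S$ on $\cD(S)$ for $t\ge 0$, the holomorphic extension of $z\mapsto \Delta^{-iz/2\pi}v$ for $v\in V$ to the strip $0\le \Im z\le \pi$ with boundary value $Jv$, the extension of $w\mapsto U(w)$ to $\oline{\C_+}$ from $P\ge 0$, the fact that $e^z\in\oline{\C_+}$ on the strip, the reality of $\la u,U(t)v\ra$ for $u\in V'$, $v\in V$, $t\ge 0$, and a reflection/Phragm\'en--Lindel\"of argument on matrix elements, followed by the purely algebraic assembly of $\tilde U$. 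Two points deserve care in a full write-up: the candidate function $z\mapsto U(e^z b)v$ lives on the strip only for $b\ge 0$ (the case $b<0$ is then obtained by taking inverses in the already-established relation), and agreement of two functions holomorphic on the open strip along the real edge does not by itself force identity — one needs the Schwarz reflection across that edge, which is precisely what the reality of $\la u, U(t)v\ra$ on $[0,\infty)$ provides; your plan already gestures at both, so the skeleton is sound.
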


We are now ready to explain how inclusions of 
standard subspaces are related to antiunitary representations of 
$\Aff(\R)$. The following result contains in particular a converse of Borchers' Theorem. 
For its formulation, we 
recall the one-to-one correspondence between standard subspaces and antiunitary 
representations of $\R^\times$ from Proposition~\ref{prop:3.2}. 

\begin{theorem}{\rm(Antiunitary positive energy 
representations of $\Aff(\R)$ and standard subspaces)} 
\mlabel{thm:3.8}  
Let $(U,\cH)$ be an antiunitary representation 
of $\Aff(\R)$. For each  $x \in \R$, we consider the homomorphism 
\[ \gamma_x \: \R^\times \to \Aff(\R), \qquad 
\gamma_x(s)  := (x,1)(0, s)(-x,1) = ((1-s)x, s)\] 
whose range is the stabilizer group $\Aff(\R)_x$ 
and the corresponding family $(V_x)_{x \in \R}$ 
of standard subspaces determined by $U^{V_x} = U \circ \gamma_x$. 
Then the following assertions hold: 
\begin{itemize}
\item[\rm(i)] $U_{(t,s)} V_x = V_{t + sx}$ and $U_{(t,-s)} V_x = V_{t-sx}'$ for 
$t,x \in \R, s > 0.$ 
\item[\rm(ii)] The following are equivalent: 
  \begin{itemize}
  \item[\rm(a)] $U$ is a positive energy representation. 
  \item[\rm(b)] $V_s \subeq V_0$ for $s \geq 0$. 
  \item[\rm(c)] $V_s \subeq V_t$ for $s \geq t$. 
  \item[\rm(d)] $(W, V_0)$ with $W_t := U_{(t,1)}$ is a positive Borchers pair. 
  \item[\rm(e)] $V_1 \subeq V_0$ is a +half-sided modular inclusion. 
  \end{itemize}
\item[\rm(iii)] $V_x = V_0$ for every $x \in \R$ is equivalent to 
$U_{(b,1)} = \1$ for every $b \in \R$. 
\item[\rm(iv)] $V_\infty := \bigcap_{t \in \R} V_t = \{ v \in V_0 \: (\forall b \in \R)\ 
U_{(b,1)}v = v\}$ is the fixed point space for the translations. 
\item[\rm(v)] $V_0 \cap V_0' 
= \cH^{\Aff(\R)} =  \{ v \in \cH \: (\forall g\in \Aff(\R))\ U_g v = v\}$. 
\end{itemize}
\end{theorem}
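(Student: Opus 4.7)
For part (i), the key computation is that conjugation by $g = (b,a) \in \Aff(\R)$ sends the stabilizer homomorphism to $g\gamma_x(r)g^{-1} = \gamma_{b+ax}(r)$ (direct calculation in $\R \rtimes \R^\times$). Applying $U$ and invoking Proposition~\ref{prop:3.2}, the standard subspace $U_g V_x$ is determined by its associated antiunitary $\R^\times$-representation $U_g U^{V_x}(\cdot) U_g^{-1}$. In the unitary case ($a > 0$) this coincides with $U^{V_{b+ax}}$, yielding $U_g V_x = V_{b+ax}$; in the antiunitary case ($a < 0$), antilinearity flips the sign in the exponential, so $U_g U^{V_x}(e^t) U_g^{-1} = U^{U_g V_x}(e^{-t})$, and Lemma~\ref{lem:stand-factorial}(i) then forces $U_g V_x = V_{b+ax}'$. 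Setting $g = (t, \pm s)$ with $s > 0$ yields the two formulas of (i).

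For part (ii), the covariance of (i) gives $(b) \Leftrightarrow (c)$ directly (as $U_{(t,1)}$ maps $V_{s-t}$ bijectively onto $V_s$) and $(b) \Leftrightarrow (e)$ via $\Delta_{V_0}^{-it} V_1 = U_{(0, e^{2\pi t})} V_1 = V_{e^{2\pi t}}$, while (d) unpacks as the conjunction of (b) (translated through (i)) and (a). The missing link $(a) \Leftrightarrow (b)$ is supplied by Borchers' Theorem~\ref{thm:bor-stand} for $(d) \Rightarrow (a)$, and by a Hardy-space argument for the converse: when $P \geq 0$, the operator-valued function $z \mapsto W_{te^z}$ is bounded and holomorphic on the strip $\{0 \leq \Im z \leq \pi\}$ for $t \geq 0$, yielding $\Delta_{V_0}^{1/2} W_t \Delta_{V_0}^{-1/2} = W_{-t}$ and hence $S_{V_0} W_t v = W_t v$ for $v \in V_0$. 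For part (iii), $V_x = V_0$ for all $x$ is equivalent to $U \circ \gamma_x = U \circ \gamma_0$ by Proposition~\ref{prop:3.2}, and the direct computation $\gamma_x(s) \gamma_0(s)^{-1} = ((1-s)x, 1)$ reduces this to $U_{(b,1)} = \1$ for every $b \in \R$.

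For part (iv), the inclusion $\supseteq$ is immediate from (i). For $\subseteq$, covariance shows $V_\infty$ is $\Aff(\R)_0$-invariant, in particular $\Delta_{V_0}^{it}$-invariant, so by Lemma~\ref{lem:split} one obtains the direct sum decomposition $V_0 = V_\infty \oplus V_\infty^\bot$ of standard subspaces; in particular $W := \oline{V_\infty + iV_\infty}$ is $J_{V_0}$-invariant and hence $\Aff(\R)$-invariant. Inside the subrepresentation on $W$, applying (i) together with the translation invariance of $V_\infty$ gives $V_x^W = V_\infty$ for every $x \in \R$, and (iii) applied to this subrepresentation then yields $U_{(t,1)}|_W = \1_W$. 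For part (v), Lemma~\ref{lem:stand-factorial}(iii) gives $\cH^{\Aff(\R)} \subeq V_0 \cap V_0'$. For the converse, $v \in V_0 \cap V_0'$ satisfies $\Delta_{V_0}^{it} v = v$ by the same lemma, and the group-law identity $\Delta_{V_0}^{it} W_s \Delta_{V_0}^{-it} = W_{s e^{-2\pi t}}$ gives $\Delta_{V_0}^{it}(W_s v) = W_{s e^{-2\pi t}} v$, which converges strongly to $v$ as $t \to +\infty$. A standard spectral argument (if $\|e^{it H} u\|$ is constant in $t$ and $e^{it H} u$ has a strong limit, then $u \in \ker H$) then forces $W_s v \in \ker \log \Delta_{V_0}$ with $W_s v = v$. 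The main technical obstacle is the Hardy-space analytic continuation underlying part (ii); once Borchers' Theorem and its converse are granted, the remainder proceeds by algebraic manipulation of the covariance from (i) together with spectral theory.
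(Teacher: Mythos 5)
Your treatment of (i), (iii) and (iv) is correct and follows essentially the same route as the paper: the conjugation formula $(b,a)\gamma_x(s)(b,a)^{-1}=\gamma_{b+ax}(s)$ plus the sign flip under antiunitaries for (i), the reduction of (iii) to $U_{((1-s)x,1)}=\1$, and the splitting of $V_\infty$ off $V_0$ via Lemma~\ref{lem:split} followed by (iii) for (iv). Your proof of (v), using that $\Delta_{V_0}^{it}(W_sv)=W_{se^{-2\pi t}}v\to v$ strongly and the spectral fact that a norm-preserving orbit with a strong limit must be a fixed vector, is a correct and somewhat more self-contained substitute for the paper's appeal to Proposition~\ref{prop:2.x}(c). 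The analytic-continuation argument for (a)$\Rightarrow$(b) in (ii) is the standard one and is sound modulo the usual domain bookkeeping.

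The gap is in (ii), in the implication (b)$\Rightarrow$(a). You assert that the "missing link" is "supplied by Borchers' Theorem~\ref{thm:bor-stand} for (d)$\Rightarrow$(a)"; but (d)$\Rightarrow$(a) is vacuous, since positivity of $P$ is built into the definition of a positive Borchers pair (Definition~\ref{def:3.8}), and Theorem~\ref{thm:bor-stand} takes $P\geq 0$ as a \emph{hypothesis} — it never produces it. What is actually needed is the converse statement: given the commutation relation $\Delta_{V_0}^{-it/2\pi}W(s)\Delta_{V_0}^{it/2\pi}=W(e^{t}s)$ (which holds automatically here by the group law) together with the inclusions $W(s)V_0\subseteq V_0$ for $s\geq 0$, conclude that $P\geq 0$. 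This is a genuine analytic theorem — the "converse of Borchers' Theorem", \cite[Thm.~3.17]{Lo08}, proved by an analytic continuation and three-lines type argument dual to the one you sketch for (a)$\Rightarrow$(b) — and it is precisely what the paper cites at this point. Without it your argument yields only (a)$\Rightarrow$(b)$\Leftrightarrow$(c)$\Leftrightarrow$(e) and (d)$\Leftrightarrow\bigl((\mathrm a)\wedge(\mathrm b)\bigr)$; nothing shows that (b), (c) or (e) implies (a) or (d). Since the equivalence of the order-theoretic conditions with the spectral condition is the heart of the theorem, this link must either be proved or explicitly cited.
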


\begin{proof} (i) follows from $(t,s)\gamma_x (t,s)^{-1}  = \gamma_{t+sx}$, 
$U_{(0,-1)} V_0 = V_0'$ and $V_x' = U_{(x,1)}V_0'$. 

(ii) (a) $\Leftrightarrow$ (b): For $W(s) := U_{(s,1)}$ we have 
\[ \Delta_{V_0}^{-it/2\pi} W(s)  \Delta_{V_0}^{it/2\pi} 
= U_{(0,e^{t})} W(s) U_{(0,e^{-t})}  = W(e^{t}s),\] 
so that the assertion follows from 
the converse of Borchers' Theorem \cite[Thm.~3.17]{Lo08}. 

(b) $\Leftrightarrow$ (c) follows from $V_t = W(t) V_0$ for $t \in \R$. 

By definition,  (d) is equivalent to (a) and (b). 

(b) $\Leftrightarrow$ (e): From (b) we derive (e) by 
\[ U^{V_0}_{e^t} V_1 
= U_{(0,e^t)} V_1 = V_{e^t} 
= U_{(1,1)} V_{e^t-1}\subeq U_{(1,1)} V_0 = V_1. \] 
From (e) we obtain, conversely, for $t \geq 0$ 
\[ U_{(1,1)} V_0 = V_1 \supeq U^{V_0}_{e^t} V_1 = V_{e^t} = U_{(1,1)} V_{e^t-1},\] 
and thus $V_{e^t-1} \subeq V_0$, which implies (b). 

(iii) If $W(x) := U_{(x,1)} = \1$ for every $x \in \R$, then $V_x = W(x)V_0 = V_0$. 
If, conversely, $W(x) V_0 = V_x = V_0$ for every $x \in \R$, 
then every $W(x)$ commutes with $\Delta_{V_0}$ and $J_{V_0}$, 
so that Theorem~\ref{thm:bor-stand} yields $W(x) = \1$ for every $x \in \R$. 

(iv) By (i), the closed real subspace $V_\infty$ of $V_0$ 
is invariant under $\Aff(\R)_0$. Hence Lemma~\ref{lem:split} implies 
that it is a direct summand of the standard subspace~$V_0$ and therefore 
also invariant under $J := U_{(0,-1)}$. Now (iii) implies that 
the translation group fixes $V_\infty$ pointwise. Conversely, 
every fixed vector $v\in V_0$ of the translations is contained in 
each subspace $V_x = W(x)V_0$, hence also in $V_\infty$. 

(v)  From Lemma~\ref{lem:stand-factorial}(iv) we know that 
$V_0 \cap V_0'$ is the space of fixed vector for the dilation group 
$(U_{(0,a)})_{a \in \R^\times}$. Proposition~\ref{prop:2.x}(c) 
implies the translations also act trivially on this space. This proves (v).
\end{proof}

\begin{remark} \mlabel{rem:3.11} (a) If the momentum operator $P$ from 
Theorem~\ref{thm:bor-stand} is strictly positive,  
then the space of fixed points for the dilation subgroup is trivial 
and Theorem~\ref{thm:3.8}(iv) implies $V_\infty = \{0\}$. 

(b) If $(U,V)$ is a Borchers pair for which 
$U_t V \subeq V$ for all $t \in \R$, 
then $U_t V = V$ for every $t \in \R$ because $V = U_0 V = U_t U_{-t}V\subeq U_t V$. 
Now Theorem~\ref{thm:3.8}(iii) entails $U_t = \1$ for every $t \in \R$. 
Therefore non-trivial representations of the translation group 
lead to proper inclusions. 

(c) For a Borchers pair $(U,V)$, the operators $(U_t)_{t \geq 0}$,  and 
the modular operators $(\Delta^{it})_{t \in \R}$ act by isometries on the real 
Hilbert space $V$, so that we obtain a representation of the semigroup
$[0,\infty) \rtimes \R^\times_+$ by isometries on $V$. 
In this sense we may consider Borchers' Theorem~\ref{thm:bor-stand} 
as a higher dimensional analog of the Lax--Phillips Theorem which 
provides a normal form for one-parameter semigroups of 
isometries on real Hilbert spaces as translations acting on spaces like 
$L^2(\R^+,\cK)$, where $\cK$ is a Hilbert space 
(cf.~Remark~\ref{rem:3.17}(b) and \cite{NO15}). 

The connection with the Lax--Phillips Theorem can also be made more direct as 
follows. The subspace $H := U_1 V$ is invariant under the 
modular automorphisms $(\Delta^{-it})_{t \geq 0}$. 
More precisely, 
$\Delta^{-it} H = \Delta^{-it} U_1 V = U_{e^{2\pi t}} V 
= V_{e^{2\pi t}}\subeq V_1 = H$ for $t \geq 0$, in the notation of Theorem~\ref{thm:3.8}. 
This shows that $\bigcup_{t \in \R} \Delta^{-it} H$ is dense in $V$ and 
that $\bigcap_{t > 0} \Delta^{-it} H = V_\infty$ is the fixed point 
set for $(U_t)_{t\in \R}$ in $V$  (Theorem~\ref{thm:3.8}(iv)). 
Assuming that $U$ has no non-zero fixed vectors (as in (a) above), we obtain 
$V_\infty = \{0\}$. This means that the subspace 
$H \subeq V$ is outgoing in the sense of Lax--Phillips 
for the orthogonal one-parameter group $(\Delta^{-it})_{t \in \R}$. 
\end{remark}

The group $\Aff(\R)$ is generated by translations and dilations, 
which is the structure underlying Borchers pairs. 
But we can also generate 
it by the subgroups  $\gamma_0(\R^\times)$ and $\gamma_1(\R^\times)$. 
For every antiunitary representation $(U,\cH)$ of $\Aff(\R)$, the corresponding 
modular objects lead to two standard subspaces $V_0$ and $V_1$ 
and we have already seen above that $V_1 \subeq V_0$ is a positive 
half-sided modular inclusion if $U$ is of positive energy. 
The following theorem provides a converse 
(see \cite[Thm.~3.21]{Lo08}). 

\begin{theorem}[Wiesbrock Theorem---one particle case]  \mlabel{thm:wiesbrock} 
An inclusion $K \subeq H$ of standard subspaces is positive 
half-sided modular if and only if there exists an antiunitary 
positive energy representation $(U,\cH)$ of $\Aff(\R)$ with 
$K = V_1$ and $H = V_0$. 
\end{theorem}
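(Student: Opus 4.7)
The ``$\Leftarrow$'' direction is immediate from Theorem~\ref{thm:3.8}(ii): given an antiunitary positive energy representation $(U,\cH)$ of $\Aff(\R)$, set $H := V_0$ and $K := V_1$; the implication (a)$\Rightarrow$(e) of Theorem~\ref{thm:3.8}(ii) yields that $K \subseteq H$ is positive half-sided modular.

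For the nontrivial ``$\Rightarrow$'' direction, given a +HSMI $K \subseteq H$, the plan is to construct a strongly continuous unitary one-parameter group $(W_s)_{s \in \R}$ on $\cH$ with \emph{positive} generator such that $W_s H \subseteq H$ for $s \geq 0$ and $W_1 H = K$. Once such $W$ exists, $(W,H)$ is a positive Borchers pair, Theorem~\ref{thm:bor-stand} produces an antiunitary positive energy representation $(U,\cH)$ of $\Aff(\R)$ via $U_{(b,a)} := W_b \cdot U^H_a$, and Theorem~\ref{thm:3.8}(i) then gives $V_0 = H$ and $V_1 = W_1 V_0 = K$, as required. The form of $W$ is forced by the anticipated group law: since $\gamma_1(e^t) = (1-e^t, e^t) = (1-e^t,1) \cdot (0,e^t)$ in $\Aff(\R)$, the desired representation must satisfy $\Delta_K^{-it/2\pi} = U^K_{e^t} = W_{1-e^t} \, \Delta_H^{-it/2\pi}$, so one is compelled to define
\[ W_{1-e^t} := \Delta_K^{-it/2\pi}\, \Delta_H^{it/2\pi} \qquad (t \in \R), \]
producing candidate operators $W_s$ for $s < 1$.

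The core difficulty is the additive group law $W_{s_1} W_{s_2} = W_{s_1+s_2}$, which must be extracted from the HSMI hypothesis alone and which simultaneously extends $W$ to all of $\R$. The natural route is a KMS-type analytic continuation: for $\xi \in K$, the orbit map $t \mapsto \Delta_H^{-it/2\pi}\xi$ extends holomorphically to the strip $\{0 \leq \Im z \leq \pi\}$ (cf.\ Remark~\ref{rem:anaext}), and comparing this extension with the analogous one for $\Delta_K^{-it/2\pi}$ yields commutation relations between $\log \Delta_H$ and $\log \Delta_K$ that are precisely those of the Lie algebra $\aff(\R)$; these integrate to the required one-parameter group. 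Positivity of the generator then follows formally from $W_s H \subseteq H$ for $s \geq 0$ by the converse of Borchers' theorem already invoked in Theorem~\ref{thm:3.8}(ii), the inclusion itself being inherited directly from HSMI for $s \leq 0$ and propagated via the group law. Finally $W_1 H = K$ is read off as the limit of $W_{1-e^t}$ for $t \to -\infty$, where the limiting operator identifies with $J_K J_H$ and $J_K J_H H = J_K H' = K$ follows by comparing the polar decompositions of $S_H$ and $S_K$ on the common dense domain $K + iK$, using Lemma~\ref{lem:stand-factorial}. The main obstacle is thus the group law step, for which the HSMI assumption is essential and irreducibly analytic.
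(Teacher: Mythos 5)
Your overall architecture matches the paper's: the ``$\Leftarrow$'' direction via Theorem~\ref{thm:3.8}(ii), and for ``$\Rightarrow$'' the reduction to producing a positive Borchers pair $(W,H)$ with $W_1H=K$, after which Theorem~\ref{thm:bor-stand} and Theorem~\ref{thm:3.8}(i) finish the argument. The difference is that the paper outsources the existence of the translation group --- equivalently, of a unitary positive energy representation of $\Aff(\R)_0$ generated by $\Delta_H^{it}$ and $\Delta_K^{it}$ --- to \cite[Thm.~3.21]{Lo08}, whereas you sketch a direct construction. But your sketch stops exactly at the theorem's entire analytic content: the assertion that comparing the two strip-analytic continuations ``yields commutation relations between $\log\Delta_H$ and $\log\Delta_K$ that are precisely those of $\aff(\R)$'' and that ``these integrate to the required one-parameter group'' is not an argument. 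Establishing a commutation relation between two unbounded self-adjoint operators from a half-sided invariance condition, and then integrating it, is precisely the delicate point where Wiesbrock's original von Neumann algebra proof had a gap (cf.\ the Erratum \cite{Wi97} and the complete proof in \cite{AZ05}). As written, the ``$\Rightarrow$'' direction is a plan, not a proof; either carry out the analytic-continuation argument in detail or cite \cite[Thm.~3.21]{Lo08} as the paper does.

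There is also a concrete error in your final step. In the $\Aff(\R)$ picture one has $J_H=U_{(0,-1)}$ and $J_K=U_{(2,-1)}$ (the reflection about the point $1$ is $(2,-1)$), so $J_KJ_H=U_{(2,1)}=W_2$, not $W_1$; consequently $J_KJ_H H=W_2H=V_2\subsetneq K$. Likewise the chain $J_KJ_HH=J_KH'=K$ fails: $J_KK'=K$, but $H'\neq K'$, so there is no reason for $J_KH'$ to equal $K$. The correct route to $W_1H=K$ is simply continuity of $s\mapsto W_sH$ together with the defining relation $W_{1-e^t}H=\Delta_K^{-it/2\pi}H$ and the fact that $\bigcap_{t\le 0}\Delta_K^{-it/2\pi}H$ must be identified with $K$ --- which again requires the group structure you have not yet established.
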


\begin{proof} In view of Theorem~\ref{thm:3.8}(ii)(e), it remains to 
show the existence of $U$ if the inclusion is +half-sided modular. 
In view of \cite[Thm.~3.21]{Lo08}, there exists a unitary positive energy 
representation $(U,\cH)$ of the connected affine group $\Aff(\R)_0$ 
such that $U_{\gamma_0(e^t)} = U^H(e^t)$ and $U_{\gamma_1(e^t)} = U^K(e^t)$ for $t \in \R$. 
Further, the translation unitaries 
$W_t := U_{(t,1)}$ satisfy $W_1 H = K$ and $W_t H \subeq H$ for $t \geq 0$. 
Therefore $(W,H)$ is a Borchers pair, and thus 
$\tilde U_{(b,a)} := W_b U^H_a$ defined an extension of $U$ to an antiunitary 
representation of $\Aff(\R)$ (Theorem~\ref{thm:bor-stand}). 
The corresponding subspaces are 
$V_0 = H$ by construction, and $V_1 = W_1 V_0 = W_1 H = K$.  
\end{proof}

\begin{exs} \mlabel{ex:hardy}
Below we provide an explicit description of a 
positive Borchers pair in a concrete model of the irreducible antiunitary 
positive energy representation of $\Aff(\R)$ 
(cf.~Theorem~\ref{thm:bor-stand} and \cite[\S4]{LL14}). 
A slight variation of \eqref{eq:pics-s1} leads to the 
antiunitary representation of $\Aff(\R)$ on 
$L^2\big(\R_+, \frac{dp}{p}\big)$ by 
\[ (U_{(b,e^t)}\psi)(p) = e^{ibp} \psi(e^tp), \qquad 
(U_{(0,-1)}\psi)(p) = \oline{\psi(p)}.\] 
Transforming it with the unitary operator 
$\Gamma \: L^2(\R_+, \frac{dp}{p}) \to L^2(\R,d\theta), 
\Gamma(\psi)(\theta) = \psi(e^\theta),$ transforms it into 
the representation 
\begin{equation}
  \label{eq:strip}
(U_{(b,e^t)}\psi)(\theta) := e^{ib e^\theta} \psi(\theta+t), \qquad 
(U_{(0,-1)}\psi)(\theta) := \oline{\psi(\theta)}.
\end{equation}
On the strip $\cS_\pi := \{ z \in \C \: 0 < \Im z < \pi\}$ we have the Hardy space 
\begin{equation}
  \label{eq:hardy}
 \cH^2(\cS_\pi) := \Big\{ \psi \in \cO(\cS_\pi) \: \sup_{0 < \lambda < \pi} 
\int_\R |\psi(\theta + i \lambda)|^2\, d\theta < \infty\Big\},
\end{equation}
and in these terms, the standard subspace $V_0$ 
corresponding to $\gamma_0(t) = (0,t)$ is given by 
\[ V_0 = \{ \psi \in \cH^2(\cS_\pi) \: 
(\forall z \in \cS_\pi)\ \oline{\psi(i \pi + \oline z)} = \psi(z)\}. \] 
On the strip $\cS_\pi$, the functions $B(z) := e^{ib e^z}$ satisfy 
\[ |B(x+ iy)| = e^{-b \Im(e^{x+iy})} = e^{-b e^x \sin y}\leq 1\quad \mbox{ because }  \quad 
\sin y \geq 0\]  
and $\oline{B(i\pi + \oline z)} = B(z).$ 
This shows hat, for $b\geq 0$, multiplication with $B$ 
defines an isometry of the Hardy space $\cH^2(\cS_\pi)$ and also of the real subspace 
$V_0$ into itself (cf.\ Remark~\ref{rem:3.11}(c)). 
One can show that all unitary operators commuting with the representation 
of the one-parameter group $(U_{(b,1)})_{b \in \R}$ and mapping $V_0$ into itself 
are multiplications with bounded holomorphic functions $\phi$ on $\cS_\pi$ 
satisfying $\phi(i\pi + \oline z) = \oline{\phi(z)}$ and whose boundary 
values in $L^\infty(\R,\C)$ satisfy $|\phi(x)| = 1$ for almost every $x \in \R$ 
(cf.~Remark~\ref{rem:inner}(c)). 

For explicit descriptions of standard 
subspaces related to free fields, we refer to \cite[p.~422ff]{FG89}.
\end{exs}

\begin{remark} \mlabel{rem:3.17}
(a) If $K \subeq H$ is a proper positive half-sided modular inclusion and 
$V$ is a closed real subspace with $K \subeq V \subeq H$, then $V$ is clearly standard. 
However, neither the inclusion $K \subeq V$ nor the inclusion $V \subeq H$ has to be 
half-sided modular. In fact, the existence of the unitary one-parameter group 
$(U_t)_{t \in \R}$ with $U_1 H = K$ implies that all the inclusions 
$U_t H \subeq U_s H$ for $0 \leq s < t \leq 1$ are proper 
(Theorem~\ref{thm:3.8}(ii)). Therefore $K$ has infinite 
codimension in $H$. So subspaces $V$ for which 
$V/K$ or $H/V$ is finite dimensional yield counterexamples. 

(b) Let $V$ be a standard subspace. We write $\hsm_+(V)$ for the set of all 
standard subspaces $H \subeq V$ for which the inclusion $H\subeq V$ is 
positive half-sided modular. To obtain a description of this set, one can proceed as 
follows. First we can split off the maximal direct summand 
$H_1 := \bigcap_{t \in \R} \Delta_V^{it} H$ of $V$ contained in $H$ 
(Lemma~\ref{lem:split}). This leaves us with the situation where $H_1 = \{0\}$. 

Decomposition of the corresponding antiunitary representation $(U,\cH)$ of $\Aff(\R)$ 
(Theorem~\ref{thm:wiesbrock}) into a subspaces $\cH^0$ on which the translations act trivially 
and an orthogonal space $\cH^+$ on which the representation is of strict 
positive energy, we accordingly obtain 
the direct sum $V = V^0 \oplus V^+$ of standard subspaces and 
$H = V^0 \oplus H^+$. Hence our assumption implies $V^0 = \{0\}$ and $\cH = \cH^+$. 
Now \cite[Thm.~2.8]{Lo08} implies that $U$ is a multiple of the unique irreducible 
positive energy representation of $\Aff(\R)$, so that we may assume that 
\[ \cH = L^2(\R_+, \cK) \quad \mbox{ and }\quad 
(U_{(t,e^s)}f)(x) = e^{itx} e^{s/2} f(e^sx), \qquad 
U_{(0,-1)} f = J_K f,\] 
where $J_K$ is a conjugation on $\cK$ (see \S\ref{subsubsec:2.5.1}). 

As all antiunitary rerpresentations of $\Aff(\R)$ with strictly positive 
energy and the same multiplicity are equivalent, we obtain all such standard subspaces 
$H$ by applying elements of the group 
\begin{align*}
 K &:= \{ U \in \U(\cH) \: UV = V \} 
= \{ U \in \U(\cH) \: (\forall a \in \R^\times)\ U U_{(0,a)} U^{-1} = U_{(0,a)}\} \\
&\cong \{ U \in \OO(V) \: (\forall a \in \R^\times)\ 
U \Delta_V^{it}\res_V = \Delta_V^{it}U\}.
\end{align*}
If $\cK = \C$, then the representation of $\R^\times$ on $\cH$ is (by Fourier transform) 
equivalent 
to the representation of $\R$ on $L^2(\R,\cK)$ by 
\[ (V_{e^x} \xi)(p) = e^{ixp} \xi(p) \quad \mbox{ and } \quad 
(V_{-1} \xi)(p) = \oline{\xi(-p)}.\]  
Therefore any unitary operator $M$ on $\cH$ commuting with $V_{\R^\times}$ is of the form 
$(M\xi)(p) = m(p)\xi(p)$, where $m \: \R \to \T$ is a measurable function satisfying
$m(-p) = \oline{m(p)}$. It would be interesting to see how this relates to the 
inner functions corresponding to endomorphisms of one-dimensional standard pairs 
(see Remark~\ref{rem:inner}(c)). 
\end{remark}

Combining the preceding results with the fact that 
the infinite dimensional irreducible positive energy representation of 
$\Aff(\R)_0$ extends to an antiunitary positive energy representation of 
$\PGL_2(\R)$ with lowest weight $1$ 
(Theorem~\ref{thm:1.4} and Corollary~\ref{cor:psl2-restrict}), we obtain 
(\cite[Cor.~4.15]{Lo08}): 

\begin{theorem} 
There exists a one-to-one correspondence between 
\begin{itemize}
\item[\rm(i)] Positive half-sided modular inclusions $K \subeq H$. 
\item[\rm(ii)] Antiunitary positive energy representations of $\Aff(\R)$. 
\item[\rm(iii)] Positive Borchers pairs $(V,U)$. 
\item[\rm(iv)] Unitary representations of $\PSL_2(\R)$ which are direct sums of 
representations with lowest weights $0$ or $1$. 
\end{itemize}
\end{theorem}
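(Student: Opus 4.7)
The equivalences (i)$\Leftrightarrow$(ii)$\Leftrightarrow$(iii) are assembled from earlier material. For (ii)$\Leftrightarrow$(iii) I invoke the equivalence (a)$\Leftrightarrow$(d) of Theorem~\ref{thm:3.8}(ii): the non-trivial direction is Borchers' Theorem~\ref{thm:bor-stand}, which promotes a positive Borchers pair $(W,V)$ to an antiunitary representation of $\Aff(\R)$ via $\tilde U_{(b,a)} := W_b\, U^V_a$, and the reverse assignment sends $U$ to $(U_{(\cdot,1)}, V_0)$. For (i)$\Leftrightarrow$(ii) I pair the forward direction of Theorem~\ref{thm:3.8}(ii)(e) (taking $K := V_1$, $H := V_0$) with Wiesbrock's Theorem~\ref{thm:wiesbrock} as converse.

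The new content is (ii)$\Leftrightarrow$(iv), which I would handle via the embedding $\iota \: \Aff(\R) \hookrightarrow \PGL_2(\R)$ sending $(b,a)$ to the class of $\bigl(\begin{smallmatrix} a & b\\ 0 & 1\end{smallmatrix}\bigr)$, identifying $\Aff(\R)$ with the stabilizer in $\PGL_2(\R)$ of $\infty \in \bP_1(\R)$. Given $\pi$ on $\PSL_2(\R)$ as in (iv), Theorem~\ref{thm:1.4} extends $\pi$ uniquely up to equivalence to an antiunitary representation $\bar\pi$ of $\PGL_2(\R)$, and $U := \bar\pi \circ \iota$ is antiunitary. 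By Corollary~\ref{cor:psl2-restrict}, each lowest-weight-$1$ summand of $\pi$ restricts along $\iota_0 := \iota\res_{\Aff(\R)_0}$ to the unique irreducible strictly positive energy representation of $\Aff(\R)_0$, while each lowest-weight-$0$ summand restricts to the trivial representation; hence $U$ is of positive energy.

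For the converse, I split $U\res_{\Aff(\R)_0} = U^+ \oplus U^0$ as in \S\ref{subsubsec:2.5.1}, the negative-energy part vanishing by hypothesis. By \cite[Thm.~2.8]{Lo08}, $U^+$ is a multiple of the unique irreducible strictly positive energy representation, and Corollary~\ref{cor:psl2-restrict} realises it as the $\iota_0$-restriction of a unique multiple $\pi^+$ of the lowest-weight-$1$ representation of $\PSL_2(\R)$; Theorem~\ref{thm:1.4} combined with Theorem~\ref{thm:equiv}(a) then shows that $\overline{\pi^+}\circ\iota$ recovers the $U^+$-piece of $U$ up to equivalence. The main obstacle is matching $U^0$ with a lowest-weight-$0$ (trivial) summand on the $\PSL_2(\R)$-side, since a $\PSL_2(\R)$-representation cannot have non-trivial dilation action while being translation-trivial: the $\fsl_2(\R)$-relation $[T,S] = -2E$ forces the infinitesimal generator at $E$ to vanish as soon as the generator at $T$ does. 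Consequently the matching obliges the $U^0$-component to be genuinely $\Aff(\R)$-trivial, which I expect to extract from the standard-subspace realisation underlying (i)---where Theorem~\ref{thm:3.8}(v) already shows $V_0 \cap V_0' = \cH^{\Aff(\R)}$, so that the modular-trivial subspace coincides with the $\Aff(\R)$-fixed subspace---combined with a reduction to the essential (dilation-non-degenerate) part of $U^0$, which must then vanish. With this triviality established, bijectivity follows from the uniqueness clauses in Theorems~\ref{thm:1.4} and~\ref{thm:equiv}(a).
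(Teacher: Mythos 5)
Your assembly of (i)$\Leftrightarrow$(ii)$\Leftrightarrow$(iii) from Theorems~\ref{thm:bor-stand}, \ref{thm:3.8} and \ref{thm:wiesbrock} is correct and is exactly the paper's intended argument; for the equivalence with (iv) the paper gives no proof at all beyond invoking Theorem~\ref{thm:1.4} and Corollary~\ref{cor:psl2-restrict} and citing \cite[Cor.~4.15]{Lo08}, so there you are genuinely on your own, and you have correctly located the crux in the translation-degenerate summand $U^0$.

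The gap is that your proposed resolution of that crux fails. Theorem~\ref{thm:3.8}(v) identifies the \emph{dilation}-fixed vectors with the $\Aff(\R)$-fixed vectors; it does not exclude a summand on which the translations act trivially while the dilations act without nonzero fixed vectors, and such summands exist. Take $\cH = L^2(\R)$, $U_{(b,1)} = \1$, $(U_{(0,e^t)}\xi)(p) = e^{itp}\xi(p)$ and $(U_{(0,-1)}\xi)(p) = \overline{\xi(-p)}$ (the model of Remark~\ref{rem:3.17}(b)). This is an antiunitary positive energy representation of $\Aff(\R)$ with $P = 0$ and $\cH^{\Aff(\R)} = \{0\}$, consistent with Theorem~\ref{thm:3.8}(v), yet it is not the restriction of any representation in class (iv): as you yourself observe, $[T,S] = -2E$ forces $\dd\pi(E) = 0$ whenever $\dd\pi(T) = 0$ for a unitary representation $\pi$ of $\PSL_2(\R)$. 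Hence the ``essential part of $U^0$'' need not vanish, and the natural map (ii)$\to$(iv) cannot be injective: it must forget the dilation action on $\cH^0$. The stated bijection with (iv) is therefore only correct if (ii) is read as the class of representations that split as (trivial) $\oplus$ (strictly positive energy) --- equivalently, whose translation-trivial part is $\Aff(\R)$-trivial --- with the corresponding non-degeneracy imposed on (i) and (iii). Under that hypothesis your argument does go through (modulo quoting, rather than proving, the on-the-nose uniqueness of the extension of a multiple of the strictly positive energy representation of $\Aff(\R)_0$ to a multiple of the lowest-weight-$1$ representation of $\PSL_2(\R)$); without it, no argument can close the gap, because the assertion you are trying to establish is false for the example above.
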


An important aspect of the last item in 
the preceding theorem is that it leads to a considerable enrichment 
of the geometry. Starting with a positive half-sided modular inclusion $K \subeq H$,  
we obtain an antiunitary representation of $\PGL_2(\R)$. 
Accordingly, for every interval $I \subeq \bS^1$, the corresponding 
homomorphism $\gamma^I \: \R^\times \to \PGL_2(\R)$ 
(Example~\ref{ex:proj-grp}) determines 
a standard subspace $V_I$, whereas the representation of 
$\Aff(\R)$ only leads to standard subspaces $V_I$ indexed by the 
open half-lines $I \subeq \R$. 

The following theorem is another result in this direction. 
It relates pairs of half-sided modular inclusions 
via the corresponding antiunitary representations of $\Aff(\R)$ 
to representations 
of the two-dimensional Poincar\'e group $P(2)_+$, resp., $\PGL_2(\R)$. 

\begin{theorem} \mlabel{thm:wies2-standard} {\rm(a)}
Let $H_1 \subeq V$ be a $-$half sided modular inclusion and 
$H_2 \subeq V$ be a $+$half sided modular inclusion such that 
\begin{equation}
  \label{eq:J-rel}
 J_{H_1}J_{H_2} = J_VJ_{H_2} J_{H_1} J_V.  
\end{equation}
Then the corresponding three modular one-parameter groups 
combine to a faithful continuous antiunitary representation 
of the proper Poincar\'e group $P(2)_+ \cong \R^{1,1} \rtimes \SO_{1,1}(\R)$. 

{\rm(b)} Let $H,V$ be standard subspaces of $\cH$ 
such that $H \cap V \subeq V$ and $H\cap V \subeq H$ are 
$-$, resp., $+$half-sided modular inclusions satisfying 
$J_H V = V.$ 
Then the corresponding three representation 
$U^H, U^V$ and $U^{H \cap V}$ 
generate a faithful antiunitary positive energy representation of $\PGL_2(\R)$. 
\end{theorem}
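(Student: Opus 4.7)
My plan is to treat both parts by first applying the one-particle Wiesbrock theorem (Theorem~\ref{thm:wiesbrock}) to each of the two half-sided modular inclusions separately. This produces two antiunitary positive energy representations of $\Aff(\R)$ on the same Hilbert space $\cH$, which share a common one-parameter subgroup (the modular group of $V$ in part (a), of $H \cap V$ in part (b)). The hypothesis involving the modular conjugations is then used to glue these into a representation of the larger group. Throughout, the irreducibility of the ingredients is not needed; we work directly with the global antiunitary extensions.

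For part (a), the $+$half-sided modular inclusion $H_2 \subseteq V$ yields, via Theorem~\ref{thm:wiesbrock}, an antiunitary positive energy representation $U^+ \colon \Aff(\R) \to \AU(\cH)$ with $U^+_{(0,e^s)} = \Delta_V^{-is/2\pi}$, $U^+_{(0,-1)} = J_V$ and some translation unitary $T_+(t) = U^+_{(t,1)}$ with $T_+(1) V = H_2$ and generator $P_+ \geq 0$. Applying the same theorem to the $-$half-sided inclusion $H_1 \subseteq V$, via the reformulation in Remark~\ref{rem:3.13e} using complements, yields another antiunitary representation $U^-$ of $\Aff(\R)$ sharing the same boost $U^V$ and the same antiunitary $J_V$, but with its own translation $T_-(t)$ whose generator satisfies $P_- \leq 0$. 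Together, $T_+, T_-$ and $U^V$ span the Lie algebra $\mot(1,1) = \R^{1,1} \rtimes \so_{1,1}(\R)$ provided that $[T_+(s), T_-(t)] = 0$, and adding $J_V$ extends the resulting representation of $P(2)_+^\uparrow$ to all of $P(2)_+$. The commutativity of $T_+$ and $T_-$ is where the hypothesis \eqref{eq:J-rel} enters: rewriting the identity as $(J_V J_{H_1})(J_V J_{H_2}) = (J_V J_{H_2})^{J_V}(J_V J_{H_1})$ and using the Wiesbrock construction $T_\pm(1) = J_V J_{H_{1,2}}$ up to a factor of the dilation, one reads off that the two one-parameter groups generated by $T_+$ and $T_-$ commute; this is a Borchers-type spectral argument combining positivity of $\pm P_\pm$ with the KMS analyticity of $\Delta_V^{it}$-orbits in the Hardy strip (cf.\ Remark~\ref{rem:anaext} and Example~\ref{ex:hardy}).

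For part (b), apply Wiesbrock's theorem to the $-$hsm inclusion $H\cap V \subseteq V$ and to the $+$hsm inclusion $H\cap V \subseteq H$, obtaining two antiunitary positive energy representations of $\Aff(\R)$ which now share the \emph{translation} subgroup generated by $\Delta_{H\cap V}^{it}$ rather than the dilation. The extra hypothesis $J_H V = V$ says that the conjugation $J_H$ normalizes $V$ and hence intertwines the two $\Aff(\R)$-representations up to a reflection, which is precisely what is needed to close the three one-parameter groups $\Delta_V^{it}$, $\Delta_H^{it}$, $\Delta_{H\cap V}^{it}$ into an $\fsl_2(\R)$-triple; the extremal subgroups correspond to $\gamma^I$ for three intervals $I \subset \bS^1$ as in Example~\ref{ex:proj-grp}. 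Adding the conjugation $J_V$ extends the resulting $\PSL_2(\R)$-representation to the full $\PGL_2(\R)$, and the positive energy property is inherited from that of the two $\Aff(\R)$-pieces by Corollary~\ref{cor:psl2-restrict}. Faithfulness follows because the common fixed vectors of all three one-parameter groups reduce (via Theorem~\ref{thm:3.8}(v) applied to each $\Aff(\R)$-subrepresentation) to $V\cap V' \cap H\cap H'$, which we quotient out.

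The main obstacle in both parts is extracting a commutation relation for unbounded one-parameter groups from a relation among bounded conjugations. Infinitesimally one wants identities such as $[\log\Delta_{H_i},\log \Delta_V]$ being proportional to the translation generators, but the modular operators are unbounded and generically have no common core, so the argument must stay at the level of bounded operators and invoke KMS-analyticity in the strip $\cS_\pi$ together with positivity of the relevant generators (the standard Borchers mechanism). Once the translation subgroups are shown to commute in part (a), or the $\fsl_2$-relations verified in part (b), the extension by the antiunitary $J_V$ is routine using Lemma~\ref{lem:ext-homo} and uniqueness of antiunitary extensions from Theorem~\ref{thm:equiv}.
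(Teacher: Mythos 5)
Your overall outline for (a) --- two antiunitary positive energy representations of $\Aff(\R)$ sharing the dilation subgroup $\Delta_V^{it}$ and $J_V$, with the whole problem reduced to showing that the two translation one-parameter groups commute --- is exactly the reduction the paper makes. But the paper then resolves that one remaining step by \emph{citation}: it invokes the von Neumann algebra version (Wiesbrock's Lemma~10 in \cite{Wi98}) and transfers it to standard subspaces via the second quantization mechanism of \S\ref{subsec:4.2}; the only thing it proves directly is the easy \emph{necessity} of \eqref{eq:J-rel}. Your proposal instead asserts that commutativity can be ``read off'' from \eqref{eq:J-rel} by ``a Borchers-type spectral argument,'' and this is a genuine gap, not a routine omission. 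Unwinding \eqref{eq:J-rel} with $J_{H_i} = W_i(2)J_V$ gives only $W_1(2)W_2(-2) = W_2(-2)W_1(2)$, i.e.\ commutation at a \emph{single} pair of parameter values; conjugating by $\Delta_V^{-it/2\pi}$ rescales the two arguments in opposite directions ($2e^{t}$ and $-2e^{-t}$), so covariance does not propagate this to all parameters. Getting from there to $[W_1(s),W_2(t)]=0$ for all $s,t$ is the entire analytic content of the theorem, requires the positivity of $\pm P_\pm$ together with a careful analytic continuation (arguments of precisely this kind in Wiesbrock's papers needed later corrections, cf.\ \cite{Wi97}, \cite{AZ05}), and cannot be waved through.

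Part (b) has an additional conceptual error: the two $\Aff(\R)$-representations coming from $H\cap V \subeq V$ and $H \cap V \subeq H$ do \emph{not} share ``the translation subgroup generated by $\Delta_{H\cap V}^{it}$.'' In Wiesbrock's correspondence the modular group of the smaller subspace is $U\circ\gamma_1$, the stabilizer of a \emph{point} of $\R$ (a torus conjugate to the dilations), not the translation subgroup; the translations are a third, derived one-parameter group. So the shared datum is a point-stabilizer in each copy of $\Aff(\R)$, and closing the three tori $\Delta_V^{it}$, $\Delta_H^{it}$, $\Delta_{H\cap V}^{it}$ into $\PSL_2(\R)$ again requires a nontrivial relation-checking argument that you do not supply (the paper cites \cite[Thm.~3]{Wi93b}). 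Likewise, Corollary~\ref{cor:psl2-restrict} runs in the wrong direction for deducing positivity of $L_0$, and faithfulness cannot be obtained by ``quotienting out'' fixed vectors, since the theorem asserts faithfulness of the representation on the given space. In short: your reduction is correct and matches the paper's, but the step you leave to a gestured spectral argument is the theorem.
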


\begin{proof} (a) The version for von Neumann algebras 
is contained in \cite[Lemma~10]{Wi98} and we shall see in 
\S \ref{subsec:4.2} below how the present version follows from 
this one. Here are some comments on the proof. 
Clearly, the two half-sided modular inclusion defines 
two antiunitary representations $U^{1/2}$ 
of $\Aff(\R) \cong \R \rtimes \R^\times$ 
that coincide on the subgroup of dilations. Now the main point is to verify 
that the corresponding images of the translation groups commute. 

That \eqref{eq:J-rel} is necessary can be seen as follows. 
If we have a unitary representation 
of $P(2)_0$ as required, then 
\[ U_{(b_1, b_2,e^t)} := U^1_{b_1} U^2_{b_2} \Delta_V^{-it/2\pi} 
\quad \mbox{ and } \quad U_{(0,0,-1)} := J_V\] 
defines an extension to an antiunitary representation of~$P(2)_+$. 
Here the modular conjugations 
\[ J_{H_1} = U_{(2,0,-1)} \quad \mbox{ and } \quad   J_{H_2} = U_{(0,2,-1)} \] 
satisfy \eqref{eq:J-rel} because 
$(0,0,-1) (0,2,-1) (2,0,-1) (0,0,-1) = (2,-2,1)$ 
holds in $P(2)$. 

(b) The von Neumann version is \cite[Thm.~3]{Wi93b} 
(see also \cite[Lemma~2]{Wi93c}). 
\end{proof}

\subsection{Half-sided modular intersections} 
\mlabel{subsec:modint}

\begin{definition}
We consider two standard subspaces $H_1, H_2 \subeq \cH$ and their 
modular objects $(\Delta_{H_j}, J_{H_j})_{j=1,2}$. We say that 
the pair $(H_1, H_2)$ has a {\it $\pm$modular intersection} if 
the following two conditions are satisfied
\begin{itemize}
\item[\rm(MI1)] The intersection $H_1 \cap H_2$ is a standard subspace and the 
inclusions $H_1 \cap H_2 \subeq H_j$, $j =1,2$, are 
$\pm$half-sided modular. 
\item[\rm(MI2)] The strong limit $S := \lim_{t \to \pm\infty} 
\Delta_{H_1}^{it} \Delta_{H_2}^{-it}$ (which always exists by Remark~\ref{rem:mi-1} below) 
satisfies $J_{H_1} S J_{H_1} = S^{-1}.$ 
\end{itemize}
\end{definition}

\begin{remark} \mlabel{rem:mi-1} 
(a) In $\Aff(\R)$ the two multiplicative one-parameter groups 
$\gamma_0(r) := (0,r)$ and $\gamma_1(r) := (1-r,r)$ (stabilizing the points 
$0$ and $1$, resp.) satisfy 
\[ \gamma_0(r)\gamma_1(r^{-1}) = (0,r)(1-r^{-1}, r^{-1}) = (r-1,1),\] 
so that $\lim_{r \to 0} \gamma_0(r)\gamma_1(r^{-1}) = (-1,1)$ 
exists. 
As a consequence, for every continuous unitary representation $(U,\cH)$ of $\Aff(\R)_0$, the limit 
\begin{equation}
  \label{eq:limit2}
\lim_{r \to 0} U_{\gamma_0(r)}U_{\gamma_1(r)}^{-1} = U_{(-1,1)}  
\end{equation}
exists. 

(b) Suppose that the $+$-variant of 
(MI1) is satisfied and let $(U^1, \cH)$, $(U^2, \cH)$ 
be the corresponding positive energy representations of 
$\Aff(\R)$ satisfying 
\[ H_1 \cap H_2 = V^1_1 = V^2_1,  \qquad 
  H_1 = V^1_0 \quad \mbox{ and } \quad 
  H_2 = V^2_0,\] 
where $(V^j_x)_{x \in \R}$ are the corresponding families of standard subspaces 
(Theorem~\ref{thm:3.8}). We write $W^j(t) := U^j_{(t,1)}$ for the representations of the 
translation group. 
Then (a) implies that 
\[ W^1(-1) = U^1_{(-1,1)} 
= \lim_{t \to \infty} U^1_{\gamma_0(e^{-t})}U^1_{\gamma_1(e^t)} 
= \lim_{t \to \infty} \Delta_{H_1}^{it/2\pi} \Delta_{H_1 \cap H_2}^{-it/2\pi} 
= \lim_{t \to \infty} \Delta_{H_1}^{it} \Delta_{H_1 \cap H_2}^{-it}\]
and likewise 
$W^2(-1) = \lim_{t \to \infty} \Delta_{H_2}^{it} \Delta_{H_1 \cap H_2}^{-it}.$ 
This leads to 
\begin{equation}
  \label{eq:limit3}
S := \lim_{t \to \infty} \Delta_{H_1}^{it} \Delta_{H_2}^{-it} 
=  W^1(-1) W^2(1), 
\end{equation}
so that the limit in (MI2) exists whenever (MI1) is satisfied. 
From the relation 
\begin{equation}
  \label{eq:s-rel}
 S H_2 
= W^1(-1) W^2(1) H_2 
= W^1(-1) (H_1 \cap H_2) = H_1
\end{equation}
it follows that $S J_{H_2} S^{-1} = J_{H_1}$, i.e., 
\begin{equation}
  \label{eq:s12-rel}
S J_{H_2} = J_{H_1} S.
\end{equation}
As condition (MI2) means that $J_{H_1} S$ is an involution, and this is equivalent 
to $S J_{H_1} = S(J_{H_1} S)S^{-1}$ being an involution, \eqref{eq:s12-rel} 
shows that (MI2) is equivalent to the relation ${J_{H_2} S J_{H_2} = S^{-1}}$. 

(c) If the negative variant of (MI1) is satisfied, 
then $H_j' \subeq (H_1 \cap H_2)'$ are $+$half-sided modular inclusions 
by Remark~\ref{rem:3.13e}. Let $(U^1, \cH)$, $(U^2, \cH)$ 
be the corresponding positive energy representations of 
$\Aff(\R)$ satisfying 
\[ (H_1 \cap H_2)' = V^1_0 = V^2_0,  \qquad 
  H_1' = V^1_1 \quad \mbox{ and } \quad 
  H_2' = V^2_1\] 
(Theorem~\ref{thm:3.8}). With the same notation as in (b), 
we obtain 
\[ W^1(1) 
= \lim_{t \to \infty} U^1_{\gamma_1(e^{-t})}  U^1_{\gamma_0(e^{t})}
= \lim_{t \to -\infty} \Delta_{H_1'}^{-it/2\pi} \Delta_{(H_1 \cap H_2)'}^{it/2\pi} 
= \lim_{t \to -\infty} \Delta_{H_1}^{it} \Delta_{H_1 \cap H_2}^{-it} \] 
and likewise 
$W^2(1) = \lim_{t \to -\infty} \Delta_{H_2}^{it} \Delta_{H_1 \cap H_2}^{-it}.$ 
This leads to 
\begin{equation}
  \label{eq:limit3b}
S := \lim_{t \to -\infty} \Delta_{H_1}^{it} \Delta_{H_2}^{-it} =  W^1(1) W^2(-1), 
\end{equation}
so that the limit in (MI2) exists. Here $S H_2' = H_1'$ shows that 
(MI2) is equivalent to ${J_{H_2} S J_{H_2} = S^{-1}}$. 
\end{remark} 

The following theorem extends Wiesbrock's Theorem~\ref{thm:wiesbrock} 
from half-sided modular inclusions to general modular intersections. 

\begin{theorem}[Wiesbrock's Theorem for modular intersections---one particle version] 
\mlabel{thm:wies-modinter}
For a pair $(H_1, H_2)$ of standard subspaces, the following 
assertions hold: 
  \begin{itemize}
  \item[\rm(a)] $(H_1, H_2)$ has a $+$modular intersection 
if and only if there exists an antiunitary representation 
$(U,\cH)$ of $\Aff(\R)$ such that the corresponding family of standard subspaces 
$(V_x)_{x \in\R}$ from {\rm Theorem~\ref{thm:wiesbrock}} satisfies 
$V_0 = H_1$ and $V_1 = H_2$. 
  \item[\rm(b)] $(H_1, H_2)$ has a $-$modular intersection 
if and only if there exists an antiunitary representation 
$(U,\cH)$ of $\Aff(\R)$ such that $V_0 = H_1'$ and $V_1 = H_2'$. 
  \end{itemize}
\end{theorem}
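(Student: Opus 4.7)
Plan: I handle part (a) in full and deduce part (b) by an analogous argument, using Remark~\ref{rem:3.13e} to convert $-$half-sided modular inclusions into $+$ones.

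For the forward direction of (a), assume $(U,\cH)$ realizes $V_0=H_1$ and $V_1=H_2$. Then Theorem~\ref{thm:3.8}(ii) gives positive energy and shows that $H_2 = V_1 \subseteq V_0 = H_1$ is $+$half-sided modular, whence $H_1\cap H_2 = H_2$ and (MI1) is immediate. For (MI2), the affine identity $\gamma_0(s)\gamma_1(s^{-1}) = (s-1,1)$ with $s=e^{-2\pi t}$ gives
\[
\Delta_{H_1}^{it}\Delta_{H_2}^{-it} = U_{\gamma_0(e^{-2\pi t})}\,U_{\gamma_1(e^{2\pi t})} = U_{(e^{-2\pi t}-1,\,1)},
\]
so $S = U_{(-1,1)}$, and the group identity $(0,-1)(-1,1)(0,-1) = (1,1) = (-1,1)^{-1}$ yields $J_{H_1}SJ_{H_1}=S^{-1}$.

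For the reverse direction of (a), assume (MI1) and (MI2). Wiesbrock's Theorem~\ref{thm:wiesbrock} applied to each inclusion in (MI1) produces antiunitary positive energy representations $(U^j,\cH)$ of $\Aff(\R)$ with $V^j_0=H_j$, $V^j_1=H_1\cap H_2$, and translation subgroups $W^j$; by Remark~\ref{rem:mi-1}(b), $S = W^1(-1)W^2(1)$ satisfies $SH_2 = H_1$ and $SJ_{H_2}=J_{H_1}S$. Motivated by the forward direction, where $S^{-1}$ plays the role of the translation by $1$, I would define
\[
T_s := \Delta_{H_1}^{-i\log s/(2\pi)}\,S^{-1}\,\Delta_{H_1}^{i\log s/(2\pi)} \qquad (s>0),
\]
extend it to $s\leq 0$ through conjugation by $J_{H_1}$, and then use (MI2), the intertwiner $SJ_{H_2}=J_{H_1}S$, and the modular relation $J_{H_1}\Delta_{H_1}J_{H_1}=\Delta_{H_1}^{-1}$ to verify the one-parameter group law $T_sT_t=T_{s+t}$ and the non-negativity of the generator. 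The inclusion $T_sH_1\subseteq H_1$ for $s\geq 0$ would follow from the corresponding property of $W^1$, so $(T,H_1)$ is a positive Borchers pair; Theorem~\ref{thm:bor-stand} then supplies the antiunitary representation $U$ of $\Aff(\R)$, and the identities $V_0=H_1$ (by construction) and $V_1 = T_1H_1 = S^{-1}H_1 = H_2$ complete the argument.

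Part (b) follows by an analogous construction. From the $-$variant of (MI1) and Remark~\ref{rem:3.13e}, the inclusions $H_j'\subseteq(H_1\cap H_2)'$ are $+$half-sided modular, so Wiesbrock's theorem provides representations $(U^j,\cH)$ with $V^j_0=(H_1\cap H_2)'$ and $V^j_1=H_j'$. The variant $S = W^1(1)W^2(-1)$ discussed in Remark~\ref{rem:mi-1}(c) satisfies $SH_2'=H_1'$, and the $-$variant of (MI2) again supplies the cocycle identity $J_{H_1}SJ_{H_1}=S^{-1}$. Defining $T_s$ analogously and verifying the group law produces an antiunitary representation $U$ with $V_0=H_1'$ and $V_1=H_2'$; the forward direction of (b) is the corresponding affine-group computation.

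The main obstacle is the verification, in the reverse direction, that the candidate family $(T_s)_{s\in\R}$ is a strongly continuous one-parameter unitary group with non-negative generator. Wiesbrock's theorem produces two affine representations whose translation subgroups are a priori independent, sharing only their common dilation subgroup at the point $1$, namely the modular group of $H_1\cap H_2$; condition (MI2) is precisely the cocycle-type compatibility that forces these two families to fit together into a single representation of $\Aff(\R)$, and verifying the group law for $T_s$ is the technical heart of the argument.
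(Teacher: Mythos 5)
There is a genuine gap, and it stems from a misreading of the statement: you treat the representation $(U,\cH)$ as if it were of \emph{positive energy}, but the whole point of Theorem~\ref{thm:wies-modinter} (emphasized in the paragraph right after it) is that modular intersections correspond to \emph{arbitrary} antiunitary representations of $\Aff(\R)$, with no spectral condition. Your appeal to ``Theorem~\ref{thm:3.8}(ii) gives positive energy'' is a non sequitur: that item states positive energy is \emph{equivalent} to $V_1\subseteq V_0$, and neither holds for a general representation. Consequently your forward direction collapses the theorem to the already-known case $H_2\subseteq H_1$, where $H_1\cap H_2=H_2$ and (MI1) is trivial. In general $H_1\cap H_2$ is neither $H_1$ nor $H_2$, and the actual content of the forward direction is the decomposition $U=U^+\oplus U^0\oplus U^-$ into strictly positive energy, translation-trivial, and strictly negative energy parts, giving $V_0\cap V_1=V_1^+\oplus V_0^0\oplus V_0^-$; this is how the paper shows the intersection is standard and that both inclusions are $+$half-sided modular. (Your verification of (MI2) via the exact identity $\Delta_{H_1}^{it}\Delta_{H_2}^{-it}=U_{(e^{-2\pi t}-1,1)}$ and the group relation $(0,-1)(-1,1)(0,-1)=(1,1)$ is fine and does not need positivity.)

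The same misunderstanding sinks the converse. You propose to exhibit $(T,H_1)$ as a \emph{positive} Borchers pair and invoke Theorem~\ref{thm:bor-stand}; but then the resulting representation would have positive energy and hence $H_2=V_1\subseteq V_0=H_1$ by Theorem~\ref{thm:3.8}(ii), which is false for a general modular intersection. The correct translation group is $W(t)=W^1(t)W^2(-t)$, whose generator is a difference $P_1-P_2$ of two non-negative operators and is of indefinite sign. Moreover, the one step you flag as ``the technical heart'' --- that your candidate family satisfies the group law, equivalently that $W^1$ and $W^2$ commute --- is precisely the nontrivial input the paper takes from Wiesbrock's \cite{Wi97}: (MI2) says $S$ commutes with $J_{H_1}J_{H_2}=W^1(-2)W^2(2)$, and from this one derives the commutation of the two translation groups. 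Leaving that step as a plan, on top of the structurally wrong positivity claim, means the converse is not established.
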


\begin{proof} (a) Suppose first that $(H_1, H_2)$ has the $+$modular 
intersection property. In the context of Remark~\ref{rem:mi-1}(b) we then have 
\[ J_{H_1} J_{H_2} 
=  J_{H_1} J_{H_1 \cap H_2} J_{H_1 \cap H_2} J_{H_2} 
=  U^1_{(0,-1)} U^1_{(2,-1)}  U^2_{(2,-1)}  U^2_{(0,-1)} 
=  W^1(-2) W^2(2), \] 
and this operator commutes with $S$ by (MI2). From these relations Wiesbrock derives in 
\cite{Wi97} that the two one-parameter groups $W^1$ and $W^2$ commute, so that 
$W(t) := W^1(t) W^2(-t)$ defines a unitary one-parameter group and 
$U_{(b,a)} := W(b)U^1_{\gamma_0(a)}$ 
defines an antiunitary representation of $\Aff(\R)$ 
for which the corresponding standard subspaces $(V_x)_{x \in \R}$ 
satisfy $V_0= H_1$ and $V_{1} = W(1)V_0= S^{-1} H_1 = H_2.$. 

Suppose, conversely, that $(U,\cH)$ is an antiunitary representation 
of $\Aff(\R)$ and let $(V_x)_{x \in \R}$ be the corresponding family of 
standard subspaces. We decompose $(U,\cH)$ as a direct sum 
\[ (U,\cH) = (U^+,\cH^+) \oplus (U^0,\cH^0) \oplus(U^-,\cH^-), \] 
where the representation $U^+$ has strictly positive energy, 
$U^-$ has strictly negative energy and the translation group acts 
trivially on $\cH^0$. Then the subspaces $V_x$ decompose accordingly 
as orthogonal direct sums $V_x = V_x^+ \oplus V_x^0 \oplus V_x^-$, 
where $V_x^0 = V^0_0$ does not depend on~$x$. 

Theorem~\ref{thm:3.8} now implies that $V_x^\pm \subeq V_y^\pm$ for $\pm (x-y) \geq 0$. 
To see that $V_0$ and $V_1$ have a $+$modular intersection, we first observe that 
\[ V_0 \cap V_1 
= (V_0^+ \cap V_1^+) \oplus V_0^0 \oplus (V_0^- \cap V_1^-) 
= V_1^+ \oplus V_0^0 \oplus V_0^-\] 
is an orthogonal direct sum of three standard subspaces, hence a standard subspace. 
Its invariance under the modular operators 
$\Delta_{V_0}^{-it/2\pi} = U_{\gamma_0(e^t)}$ for $t \geq 0$ follows from the 
invariance of $V_1^+$ under $U^+_{\gamma_0(e^t)}$ and 
the invariance of $V_0^+$ and $V_0^0$ under the corresponding modular group. 

For the invariance of $V_0 \cap V_1$ under $\Delta_{V_1}^{-it/2\pi} = U_{\gamma_1(e^t)} 
= U_{((1-e^t, e^t)}$, we likewise use that 
\[ U^-_{(1-e^t, e^t)} V^-_0 
= U^-_{(1-e^t, 1)} V^-_0 = V^-_{1 - e^t} \subeq V^-_0 \quad \mbox{ for } \quad t \geq 0.\] 
This shows that $(V_0, V_1)$ has a $+$modular intersection. 

(b) If $(H_1, H_2)$ is a $-$modular 
intersection, we likewise obtain with Remark~\ref{rem:mi-1}(c) 
that $U_{(b,a)} := W(b) U^1_{\gamma_1(a)}$ and $W(t) := W^1(t) W^2(-t)$  
define an antiunitary representation of $\Aff(\R)$ with 
$S = W(-1)$, $V_0 = V^1_1 = H_1'$ and $V_1 = W(1)V_0 = S^{-1}H_1' =  H_2'$. 

Suppose, conversely, that $(U,\cH)$ is an antiunitary representation 
of $\Aff(\R)$. We use the notation from (a). 
To see that $V_0'$ and $V_1'$ have a $-$modular intersection, we first observe that 
\[ V_0' \cap V_1' 
= ( (V_0^+)' \cap (V_1^+)') \oplus (V_0^0)' \oplus ((V_0^-)' \cap (V_1^-)') 
= (V_0^+)' \oplus V_0^0 \oplus (V_1^-)'\] 
to see that $V_0' \cap V_1'$ is standard. 
Its invariance under the modular operators 
$\Delta_{V_0'}^{it/2\pi} = \Delta_{V_0}^{-it/2\pi} = U_{\gamma_0(e^{t})}$ for $t \geq 0$ follows from the 
invariance of $(V_1^-)'$ under $U^-_{\gamma_0(e^t)}$ (Remark~\ref{rem:3.13e}) and 
the invariance of $V_0^+$ and $V_0^0$ under the corresponding modular group. 

For the invariance of $V_0' \cap V_1'$ under $\Delta_{V_1'}^{it/2\pi} = U_{\gamma_1(e^t)} 
= U_{(1-e^t, e^t)}$, we likewise use that 
\[ U^+_{(1-e^t, e^t)} (V^+_0)' 
= U^+_{(1-e^t, 1)} (V^+_0)' = (V^+_{1 - e^t})' \subeq (V^+_0)'\quad \mbox{ for }\quad t \geq 0.\] 
Therefore $(V_0', V_1')$ has a $-$modular intersection. 
\end{proof}

The key point of modular intersections is that they no longer require 
any spectral condition on the corresponding representations of $\Aff(\R)$. 
The preceding theorem shows that $\pm$modular intersections 
are characterized as pairs of standard subspaces that can be obtained from arbitrary 
antiunitary representations of $\Aff(\R)$. This is of particular relevance for 
representations of Lorentz groups $\SO_{1,d-1}(\R)$ which, for $d > 3$, never 
satisfy any positive energy condition. 

With the same method that we used to obtain Theorem~\ref{thm:wies2-standard},  
we now obtain by transcribing \cite[Thm.~6]{Wi97} from the context 
of von Neumann algebras the following theorem.  

\begin{theorem} \mlabel{thm:wies3-standard} 
Let $H_1, H_2, H_3$ be three standard subspaces such that 
$(H_1, H_2)$ and $(H_3, H_1')$ are $-$modular intersections 
and $(H_2, H_3)$ is a $+$modular intersection. 
Then the corresponding antiunitary representations $U^{H_j}$, $j =1,2,3$, of 
$\R^\times$ generate an anti-unitary representation of $\PGL_2(\R)$.
\end{theorem}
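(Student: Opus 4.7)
The plan is to mirror the argument used in the proof of Theorem~\ref{thm:wies2-standard}: deduce the statement from the already-established von Neumann algebra version \cite[Thm.~6]{Wi97} by transporting hypotheses and conclusions across the second quantization correspondence between standard subspaces of $\cH$ and pairs $(\cM,\Omega)$ of a von Neumann algebra and a cyclic separating vector on bosonic Fock space, as developed in \S\ref{subsec:4.2} and Section~\ref{sec:6}.

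Concretely, I would first associate to each $H_j \subeq \cH$ the von Neumann algebra $\cR_+(H_j) \subeq B(\cF_+(\cH))$ with the Fock vacuum $\Omega$ as cyclic separating vector. Under this construction, the modular objects of $(\cR_+(H_j), \Omega)$ are exactly $(\Gamma(\Delta_{H_j}), \Gamma(J_{H_j}))$, intersections behave well via $\cR_+(H_1) \cap \cR_+(H_2) = \cR_+(H_1 \cap H_2)$, and symplectic complements translate into commutants via $\cR_+(H)' = \cR_+(H')$ (cf.\ \S\ref{subsec:4.2}). The two conditions defining a $\pm$modular intersection, namely a half-sided modular inclusion into $H_1 \cap H_2$ and the condition (MI2) on the strong limit $\lim_{t \to \pm\infty} \Delta_1^{it} \Delta_2^{-it}$, are preserved and reflected by the second-quantization functor $\Gamma$, since $\Gamma$ is a topological group homomorphism that commutes with strong limits of unitary one-parameter groups and with composition of conjugations. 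Hence the three hypotheses on $(H_1,H_2)$, $(H_3, H_1')$, $(H_2,H_3)$ translate into exactly the same pattern of modular intersections for the triple $(\cR_+(H_1), \cR_+(H_2), \cR_+(H_3))$, with $H_1'$ passing to the commutant $\cR_+(H_1)'$.

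Applying \cite[Thm.~6]{Wi97} to this triple produces an antiunitary representation $\hat U$ of $\PGL_2(\R)$ on $\cF_+(\cH)$ whose modular generators attached to the stabilizers of three points of $\bP_1(\R)$ are precisely $\Gamma(\Delta_{H_j})^{it}$ and $\Gamma(J_{H_j})$. To conclude I would then restrict $\hat U$ to the one-particle subspace. Every generator of $\hat U$ lies in the image of the canonical embedding $\Gamma \: \AU(\cH) \to \AU(\cF_+(\cH))$, which is a group homomorphism that is injective on strongly continuous one-parameter subgroups; hence $\hat U$ itself lies in the image of $\Gamma$, and delifting yields the desired antiunitary representation of $\PGL_2(\R)$ on $\cH$, generated by $U^{H_1}$, $U^{H_2}$, and $U^{H_3}$.

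The main obstacle I foresee is the verification of the dictionary in the second paragraph, specifically that $\pm$modular intersection transfers cleanly under second quantization, since condition (MI2) involves an analytic strong limit rather than a purely algebraic relation. This should however reduce to the fact that $\Gamma$ is strong-operator-continuous on bounded unitary families together with the fact that both sides of (MI2) are already controlled on the Fock vacuum $\Omega$, so this technical step is a direct consequence of the machinery collected in \S\ref{subsec:4.2}--\S\ref{subsec:4.3} and does not require new input.
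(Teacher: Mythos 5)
Your proposal is correct and follows essentially the same route as the paper, which explicitly states that this theorem is obtained ``by transcribing [Wi97, Thm.~6] from the context of von Neumann algebras'' using the same second-quantization dictionary of \S\ref{subsec:4.2} that underlies Theorem~\ref{thm:wies2-standard}. The transfer of (MI1), (MI2), intersections and symplectic complements under $\Gamma$, and the delifting from $\Gamma(\AU(\cH))$ back to $\cH$, are exactly the mechanisms the paper invokes.
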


\begin{proof} For the proof we only has to observe that the 
group $G$ generated by the three modular one-parameter groups $(\Delta_{H_j}^{it})_{t \in \R}$, 
$j = 1,2,3$ is invariant under $J_{H_1}$. 
For the subgroup $G_{12}$ generated by the operators 
$\Delta_{H_1}^{it}$ and $\Delta_{H_2}^{is}$, this follows from Theorem~\ref{thm:wiesbrock}, 
and we likewise obtain the invariance of the subgroup 
$G_{13}$ generated by the operators 
$\Delta_{H_1}^{it}$ and $\Delta_{H_3}^{is}$. As $G$ is generated by $G_{12} \cup G_{13}$, 
the assertion follows. 
\end{proof}

\section{A glimpse of modular theory} 
\mlabel{sec:4}

We now recall some of the key features of
Tomita--Takesaki Theory. In \S \ref{subsec:4.2} we discuss 
the translation between pairs $(\cM,\Omega)$ 
of von Neumann algebras with cyclic separating vectors 
and standard subspaces~$V$. More specifically, we discuss 
this translation for half-sided modular inclusions in 
\S \ref{subsec:4.3}, and in \S \ref{subsec:4.4} 
we take a closer look at the space of modular conjugations 
of a von Neumann algebra.

\subsection{The Tomita--Takesaki Theorem} 
\mlabel{subsec:4.1}

Let $\cH$ be a Hilbert space and $\cM \subeq B(\cH)$ be a von Neumann algebra. 
We call a unit vector $\Omega \in \cH$ 
\begin{itemize}
\item {\it cyclic} if $\cM\Omega$ is dense in $\cH$.
\item {\it separating} if the map $\cM \to \cH, M \mapsto M\Omega$ is injective. 
\end{itemize}
It is easy to see that $\Omega$ is separating if and only if it is cyclic for the 
commutant $\cM'$. 

\begin{definition}
We write $\cs(\cM)$ for the set of cyclic and separating unit vectors for $\cM$. 
\end{definition}

\begin{theorem}[Tomita--Takesaki Theorem] \mlabel{thm:tom-tak}
Let $\cM \subeq B(\cH)$ be a von Neumann algebra and 
$\Omega \in\cH$ be a cyclic separating vector for $\cM$. 
Write $\cM_h := \{ M \in \cM \: M^* = M\}$ for the real subspace of hermitian 
elements in $\cM$. 
Then $V := \oline{\cM_h \Omega}$ is a standard subspace. 
The corresponding modular objects $(\Delta, J)$ satisfy
\begin{itemize}
\item[\rm(a)] $J \cM J = \cM'$ and $\Delta^{it} \cM \Delta^{-it} = \cM$ for $t \in \R$.
\item[\rm(b)] $J \Omega = \Omega$, $\Delta \Omega = \Omega$ and 
$\Delta^{it}\Omega = \Omega$ for all $t \in \R$.
\item[\rm(c)] For $M \in \cM \cap \cM'$, we have 
$JMJ = M^*$ and $\Delta^{it} M \Delta^{-it} = M$ for $t \in \R$. 
\end{itemize}
\end{theorem}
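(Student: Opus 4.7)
The overall strategy is to first show $V = \oline{\cM_h\Omega}$ is standard so that $(\Delta, J)$ are the modular objects canonically produced in Section~\ref{sec:3}, then harvest the easy statements (b) and (c) from the explicit construction and from the fact that $\Omega \in V \cap V'$, and finally tackle part (a), which is the heart of Tomita--Takesaki and requires a genuine analytic-extension argument.

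For standardness, every $M \in \cM$ decomposes as $M = H_1 + iH_2$ with $H_j \in \cM_h$, so cyclicity gives $\oline{V + iV} = \oline{\cM\Omega} = \cH$. The key to $V \cap iV = \{0\}$ is that, since $\Omega$ is separating, the antilinear map $S_0(M\Omega) := M^*\Omega$ is well defined on $\cM\Omega$, and the analogous $F_0(M'\Omega) := (M')^*\Omega$ on $\cM'\Omega$ (defined because $\Omega$ is cyclic for $\cM$ and hence separating for $\cM'$) satisfies $\la S_0 M\Omega, M'\Omega\ra = \la F_0 M'\Omega, M\Omega\ra$; this shows $F_0 \subeq S_0^*$, so $S_0$ is closable, and $V \cap iV \subeq \ker(S-\1) \cap \ker(S+\1) = \{0\}$ for $S := \oline{S_0}$. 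With standardness in hand, $V = \Fix(S)$ since $S(H\Omega) = H\Omega$ for $H \in \cM_h$, and Proposition~\ref{prop:3.2} identifies the polar decomposition $S = J\Delta^{1/2}$ with the modular objects of~$V$. Part (b) is then immediate: $\1 \in \cM_h$ gives $\Omega \in V$, while $\la H\Omega, \Omega\ra \in \R$ for $H \in \cM_h$ shows $\Omega \in V'$; hence $\Omega \in V \cap V' = \cH^{U^V}$ by Lemma~\ref{lem:stand-factorial}(iii), and $\Omega$ is fixed by all $\Delta^{it}$ and by $J$.

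The hard part is (a), for which the plan is the classical KMS strategy of Tomita. The identification $F = S^* = J\Delta^{-1/2}$ provides a second polar decomposition, now on $\cM'\Omega$. For $M \in \cM$ and vectors $\xi, \eta$ in a dense domain of analytic vectors for $\Delta$, one shows that the scalar function $z \mapsto \la \eta, \Delta^{-z} M \Delta^{z}\xi\ra$ extends boundedly and holomorphically across the strip $\{0 < \Re z < 1/2\}$, with boundary values on $\Re z = 1/2$ that can be rewritten using $S$, $F$ and the $*$-operation so as to pair trivially with $\cM'$. This forces $\Delta^{it}M\Delta^{-it} \in (\cM')' = \cM$ for every real~$t$, proving modular invariance of $\cM$; comparing the two polar decompositions $S = J\Delta^{1/2}$ and $F = J\Delta^{-1/2}$ on their common dense domain and using this invariance then yields $J\cM J = \cM'$. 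The main obstacle is precisely the construction and boundedness of the holomorphic extension on the strip; this is the technical heart of Tomita's theorem and I would follow \cite{BR87} rather than reproduce the details. Granted (a), part (c) is short: for $M$ in the center and hermitian, $M\Omega \in V \cap V'$ by the same calculation as for $\Omega$ (using that $M$ commutes with every $N \in \cM_h$), so $\Delta^{it}M\Omega = M\Omega$ and $JM\Omega = M\Omega$; combining these with $\Delta^{it}\Omega = \Omega = J\Omega$ and the separating property of $\Omega$ for both $\cM$ and $\cM'$, we obtain $\Delta^{it}M\Delta^{-it} = M$ and $JMJ = M^* = M$, and the non-hermitian case follows by decomposition into real and imaginary parts.
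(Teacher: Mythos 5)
Your proposal is correct and, at the structural level, matches the paper: both prove standardness of $V$ directly and defer the analytic heart of part (a) to \cite[Thm.~2.5.14]{BR87}. The interesting difference is in how $V \cap iV = \{0\}$ is obtained. The paper introduces $W := \oline{\cM_h'\Omega}$ and computes $\la M\Omega, M'\Omega\ra \in \R$ for $M \in \cM_h$, $M' \in \cM_h'$, concluding that $V \cap iV$ is a complex subspace of $W^\bot = \{0\}$ (using that $W + iW$ is dense because $\Omega$ is cyclic for $\cM'$). You instead prove closability of $S_0(M\Omega) := M^*\Omega$ via the relation $F_0 \subeq S_0^*$ and observe that $V \cap iV$ sits in $\Fix(S) \cap \Fix(-S) = \{0\}$ for $S = \oline{S_0}$. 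These are two faces of the same computation (your adjoint relation is exactly the paper's reality of $\la M\Omega, M'\Omega\ra$), but your route has the advantage of immediately producing the Tomita operator $S$ whose polar decomposition is needed anyway, whereas the paper's route is slightly quicker if one only wants standardness. Two small points worth tightening: (i) $S(H\Omega) = H\Omega$ for $H \in \cM_h$ only gives $V \subeq \Fix(S)$; for $\Fix(\oline{S_0}) = V$ (and hence for the identification of $\oline{S_0}$ with the involution $S_V$ attached to the standard subspace $V$, whose polar decomposition defines $(\Delta,J)$) one should note that $M_n\Omega \to \xi$ and $M_n^*\Omega \to \xi$ imply $\tfrac{1}{2}(M_n + M_n^*)\Omega \to \xi$ with $\tfrac{1}{2}(M_n+M_n^*) \in \cM_h$. (ii) Your derivations of (b) from $\Omega \in V\cap V'$ via Lemma~\ref{lem:stand-factorial}(iii), and of (c) from (a) and (b) using that $\Omega$ separates both $\cM$ and $\cM'$, are correct and actually go beyond the paper, which cites \cite{BR87} for all of (a)--(c); only the modular invariance $\Delta^{it}\cM\Delta^{-it} = \cM$ and $J\cM J = \cM'$ genuinely require the strip-extension argument you rightly identify as the technical core.
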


\begin{proof} 
We only show that $V$ is a standard subspace and refer to 
\cite[Thm.~2.5.14]{BR87} for the other assertions. 
Clearly, $V$ is a closed real subspace for which $V + i V$ is dense because
it contains $\cM_h \Omega + i \cM_h\Omega = \cM\Omega$.
The same holds for $W  :=  \oline{\cM_h' \Omega}$ because $\Omega$ is also 
cyclic for $\cM'$. For $M \in \cM_h$ and $M' \in \cM'_h$, we have 
\[ \la M\Omega, M'\Omega \ra = \la M'M\Omega, \Omega \ra = \la MM'\Omega, \Omega \ra 
= \la M'\Omega, M\Omega \ra \in \R,\]
which implies that $\omega(V,W) = \{0\}$. 
Therefore $V \cap iV$ is a complex subspace of $W^\bot = \{0\}$, hence trivial. 
Now the main point is to show that the modular objects $(\Delta, J)$ associated 
to $V$ satisfy (a)-(c). 
\end{proof}

The key point of the Tomita--Takesaki Theorem is that it provides 
for each cyclic separating vector $\Omega \in \cs(\cM)$ 
a pair $(\Delta, J)$ of modular objects. 
The modular operators $\Delta$ and their spectra are the key tool in 
the classification of factors and in the characterization 
of von Neumann algebras by their natural cones by A.~Connes \cite{Co73, Co74}. 
Here we emphasize that
$(\Delta, J)$ is encoded in an antiunitary representation $U^V$ of $\R^\times$. 

We first take a closer look at the antiunitary operators that come directly 
from $\cM$ and its commutant. The picture will be refined in \S\ref{subsec:4.4} below.  

\begin{ex} Let $\cM \subeq B(\cH)$ be a von Neumann algebra, 
$G_1 := \U(\cM) \times \U(\cM)$ and 
$\tau \in \Aut(G_1)$ be the flip automorphism. We consider the 
group $G := G_1 \rtimes \{\1,\tau\}$. 

If $\Omega$ is a cyclic separating vector for $\cM$ and 
$J$ the corresponding modular involution, then 
\[ U_{(g,h,\tau^\eps)} := g J h J J^\eps \] 
defines an antiunitary representation of the pair $(G, G_1)$. 

Any other conjugation $\tilde J$ on $\cH$ that we can use 
to extend the unitary representation $U\res_{G_1}$ is of the form 
$\tilde J = J  g$ for some central unitary element 
$g \in \U(\cM \cap \cM')$. 
\end{ex}

\begin{definition} \mlabel{def:symform}
A von Neumann algebra $\cM \subeq B(\cH)$ is said to be 
in {\it symmetric form} if there exists a conjugation $J$ on $\cH$ with 
\[ J\cM J = \cM' \quad \mbox{ and }\quad JZJ = Z^* \quad \mbox{ for } \quad Z \in \cM\cap \cM'.\]
\end{definition}

According to the Tomita--Takesaki Theorem, the existence of a 
cyclic separating vector 
implies that $\cM$ is in symmetric form. According to \cite[Thm.~III.4.5.6]{Bla06}, 
any two realizations of $\cM$ in symmetric form are unitarily equivalent.

Let $\fS_n(\cM)$ denote the set of {\it normal states} of the von Neumann algebra 
$\cM$. By the Gelfand--Naimark--Segal construction, any state $\omega$ corresponds 
to a cyclic normal representation 
$(\pi_\omega, \cH_\omega, \Omega_\omega)$ with 
$\omega(M) = \la \Omega_\omega, \pi_\omega(M) \Omega_\omega\ra$, 
which is uniquely determined up to unitary equivalence of cyclic representations. 
By construction $\Omega_\omega$ is cyclic, it leads to a faithful representation 
for which $\Omega_\omega$ is separating if and only if the state $\omega$ is 
{\it faithful}, i.e., $\omega(M^*M) > 0$ for any non-zero $M \in \cM$.

\begin{remark} (Existence of cyclic separating vectors) 
A von Neumann algebra $\cM$ possesses a faithful normal state if and only 
if it is {\it $\sigma$-finite} (also called {\it countably decomposable}) 
in the sense that every family of mutually orthogonal 
projections in $\cM$ is at most countable (\cite[Prop.~III.4.5.3]{Bla06}). 
This is always the case if $\cM$ can be realized 
on a separable Hilbert space, but not in general. Therefore one has to generalize 
the concept of a state to that of a {\it normal weight}. This is an 
additive positively homogeneous weakly lower semicontinuous 
functional $\omega \: \cM^+ \to [0,\infty]$ on the positive cone $\cM^+$ of $\cM$ 
that may also take the value $\infty$. A weight $\omega$ is called {\it semifinite} 
if the subset $\{M\in\cM_+\: \omega(M)<\infty\}$ 
generates $\cM$ as a von Neumann algebra. 
Every von Neumann algebra has a faithful normal semifinite weight 
(cf. \cite[III.2.2.26]{Bla06}) and the GNS construction as well as 
Tomita--Takesaki theory extend naturally to normal semifinite weights. 
In particular, any such weight leads to a symmetric form realization of~$\cM$. 
\end{remark}

\begin{ex} \mlabel{ex:3.2} 
(a) Let $\cH = L^2(X,\fS,\mu)$ for a $\sigma$-finite measure space 
$(X,\fS,\mu)$ and $\cM = L^\infty(X,\fS,\mu)$, 
acting on $\cH$ by multiplication operators. 
Then the normal states of $\cM$ are of the form 
$\omega_h(f) = \int_X fh\, d\mu$, where $0 \leq h$ satisfies 
$\int_X h\, d\mu = 1$. Such a state is faithful if and only if 
$h\not=0$ holds $\mu$-almost everywhere. Then $\Omega := \sqrt{h} \in \cH$ 
is a corresponding cyclic separating unit vector. 
From $S(f\Omega) = \oline f \Omega$, we obtain 
$S(f) = \oline f$, which is isometric and therefore 
$S = J$ and $\Delta = \1$. 

(b) Let $\cH = B_2(\cK)$ be the space of Hilbert--Schmidt operators on the complex 
separable Hilbert space $\cK$ and consider the von Neumann algebra 
$\cM = B(\cK)$ acting on $\cH$ by left multiplications. 
Then $\cM' \cong B(\cK)^{\rm op}$ acts by right multiplications. 
Normal states of $\cM$ are of the form 
$\omega_D(A) = \tr(AD)$, where $0 \leq D$ satisfies $\tr D = 1$. 
Such a state is faithful if and only if 
$\ker D = \{0\}$ (which requires $\cK$ to be separable), 
and then $\Omega := \sqrt{D} \in \cH$ 
is a cyclic separating unit vector. 
Then $S(M\Omega) = M^*\Omega = (\Omega M)^*$ implies that 
\[ JA = A^* \quad \mbox{ and } \quad 
\Delta(A) = \Omega^{2} A \Omega^{-2}= D A D^{-1} 
\quad \mbox{ for } \quad A \in B_2(\cK).\] 

(c) The prototypical pair $(\Delta, J)$ of a modular operator 
and a modular conjugation arises from the regular representation 
of a locally compact group $G$ on the Hilbert space $\cH = L^2(G, \mu_G)$ 
with respect to 
a left Haar measure~$\mu_G$. 
Here the modular operator is given by the multiplication 
\[ \Delta f = \Delta_G \cdot f,\] 
where $\Delta_G \: G \to \R^\times_+$ is the modular function of $G$ 
and the modular conjugation is given by 
\[ (Jf)(g) = \Delta_G(g)^{-\frac{1}{2}}  \oline{f(g^{-1})}.\] 
Accordingly, we have for $S = J \Delta^{1/2}$: 
\[ (Sf)(g) = \Delta_G(g)^{-1} \oline{f(g^{-1})} = f^*(g).\] 

The corresponding von Neumann algebra is the algebra $\cM \subeq B(L^2(G,\mu_G))$ 
generated by the left regular representation. 
If $M_f h =f * h$ is the left convolution with $f \in C_c(G)$, then the value of 
the corresponding normal weight $\omega$ on $\cM$ is given by 
$\omega(M_f) = f(e),$ so that $\omega$ corresponds to evaluation in~$e$, 
which is defined on a weakly dense subalgebra of~$\cM$.
\end{ex}

\subsection{Translating between standard subspaces and von Neumann 
algebras} 
\mlabel{subsec:4.2}

We have already seen that cyclic separating vectors of a von Neumann 
algebra $\cM$ lead to standard subspaces. In this subsection we explore 
some properties of this correspondence and describe 
how half-sided modular inclusions of standard subspaces translate into 
corresponding inclusions of von Neumann algebras. This correspondence 
shows that antiunitary representations of groups generated by 
modular one-parameter groups and conjugations from cyclic vectors 
of von Neumann algebras can already be studied in terms of 
standard subspaces and their inclusions, and all this can be encoded 
in antiunitary representations of pairs $(G,G_1)$, 
and homomorphisms $\R^\times \to G$, resp., $\Aff(\R) \to G$ 
(Corollary~\ref{cor:2.21} and Theorems~\ref{thm:3.8}, \ref{thm:wies-modinter}).

\begin{lemma} \mlabel{lem:4.14}
If $\cM \subeq B(\cH)$ is a von Neumann algebra and 
$\Omega \in \cH$ a separating vector for $\cM$, then we associate to every 
von Neumann subalgebra $\cN \subeq \cM$ the closed real subspace 
$V_\cN := \oline{\cN_h \Omega}$. This assignment is injective. 
\end{lemma}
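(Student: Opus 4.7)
The plan is to establish the stronger identity
\begin{equation*}
\cN_h = \{A \in \cM_h : A\Omega \in V_\cN\} \qquad (\ast)
\end{equation*}
for any von Neumann subalgebra $\cN \subseteq \cM$; once $(\ast)$ is known, the lemma follows at once, since $V_{\cN_1}=V_{\cN_2}$ forces $\cN_{1,h}=\cN_{2,h}$, and any von Neumann algebra is the $\C$-span of its selfadjoint part.

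The inclusion $\subseteq$ in $(\ast)$ is tautological from the definition of $V_\cN$. For the reverse, let $A \in \cM_h$ with $A\Omega \in V_\cN = \overline{\cN_h\Omega}$, and pick a sequence $N_k \in \cN_h$ with $N_k\Omega \to A\Omega$. Since $\cN \subseteq \cM$ gives $\cM' \subseteq \cN'$, for every $M' \in \cM'$ the commutators $[N_k,M']=[A,M']=0$ vanish, so
\begin{equation*}
(N_k-A)\,M'\Omega \;=\; M'(N_k-A)\Omega \;\longrightarrow\; 0.
\end{equation*}
Because $\Omega$ is separating for $\cM$, it is cyclic for $\cM'$, hence $\cM'\Omega$ is dense in $\cH$, and $N_k \to A$ strongly on this dense subspace. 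Provided the sequence $(N_k)$ is uniformly bounded by $\|A\|$, this extends to SOT-convergence on all of $\cH$, and the SOT-closedness of $\cN_h$ then yields $A \in \cN_h$.

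The main obstacle is producing a uniformly bounded approximating sequence. I would arrange this by spectral truncation within $\cN$: set $\tilde N_k := h(N_k) \in \cN_h$ with $h(t) := \operatorname{sgn}(t)\min(|t|,\|A\|)$, so $\|\tilde N_k\| \leq \|A\|$ automatically by the Borel functional calculus inside the abelian von Neumann subalgebra $W^*(N_k) \subseteq \cN$. To verify that $\tilde N_k\Omega \to A\Omega$ one estimates the tail
\begin{equation*}
\|(\tilde N_k - N_k)\Omega\|^2 \;=\; \int_{|t|>\|A\|}(|t|-\|A\|)^2 \, d\mu^{N_k}_\Omega(t),
\end{equation*}
with $\mu^{N_k}_\Omega$ the spectral measure of $N_k$ in the faithful normal state $\omega(\cdot):=\langle\Omega,\cdot\,\Omega\rangle$. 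The pointwise scalar inequality $(|t|-\|A\|)^2 \leq (t-a)^2$, valid whenever $|t|>\|A\|\geq|a|$, suggests comparing this tail with the vanishing quantity $\|(N_k-A)\Omega\|^2 = \omega((N_k-A)^2)$.

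The delicate step is turning this scalar pointwise bound into an operator-theoretic estimate in the non-commutative setting, since $N_k$ and $A$ need not commute; I would carry this out using the Tomita--Takesaki structure of $(\cN,\Omega)$ on $\cK := \overline{\cN\Omega}$, where the modular operator and conjugation associated to the standard subspace $V_\cN$ provide the interpolation between the scalar spectral calculus of $N_k$ and the (non-commutative) comparison with $A$. This interaction between the bounded approximation and the Tomita operator is where the separating hypothesis on $\Omega$ for the ambient algebra $\cM$ (rather than merely for $\cN$) enters essentially.
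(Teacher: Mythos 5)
Your reduction to the identity $(\ast)$ and your first step — using $\cM'\subeq \cN'$ and the cyclicity of $\Omega$ for $\cM'$ to get $N_k\to A$ pointwise on the dense subspace $\cM'\Omega$ — coincide exactly with the paper's argument. The problem is the completion. You correctly identify that passing from pointwise convergence on a dense subspace to $A\in\cN$ requires some uniform control, but the truncation device you propose is not established: the scalar inequality $(|t|-\|A\|)^2\le (t-a)^2$ cannot be integrated against the spectral measure $\mu^{N_k}_\Omega$ to produce a comparison with $\|(N_k-A)\Omega\|^2=\langle (N_k-A)\Omega,(N_k-A)\Omega\rangle$, precisely because $A$ does not commute with $N_k$ and the state $\omega=\langle\Omega,\cdot\,\Omega\rangle$ is not assumed tracial; the quantity $\omega((N_k-A)^2)$ does not dominate any spectral tail of $N_k$ relative to $\|A\|$ by a pointwise argument. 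Your closing appeal to the Tomita--Takesaki structure of $(\cN,\Omega)$ as the tool that will "interpolate" is a placeholder rather than a proof, and it is not clear that this route can be made to work at all. As it stands, the uniformly bounded approximating sequence is never produced, so the argument is incomplete.

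The gap is avoidable by a different device, which is what the paper does: instead of truncating the $N_k$, pass to their resolvents. The operators $(i\1+N_k)^{-1}$ lie in $\cN$, are \emph{automatically} contractions (no truncation needed), and since the bounded selfadjoint operators $N_k$ and $A$ all have $\cD=\cM'\Omega$ as a common core, pointwise convergence $N_k\to A$ on $\cD$ yields strong resolvent convergence $(i\1+N_k)^{-1}\to(i\1+A)^{-1}$ (Reed--Simon, Thm.~VIII.25). Since $\cN$ is closed in the strong operator topology, $(i\1+A)^{-1}\in\cN$, and inverse-closedness of von Neumann algebras gives $A\in\cN$. If you want to keep your truncation idea, you must actually prove the estimate $\|(\tilde N_k-N_k)\Omega\|\to 0$; otherwise replace that step by the resolvent argument.
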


Note that the subspace $V_\cN$ is standard if $\Omega$ is also cyclic for 
$\cN$. 

\begin{proof} (cf.~\cite[Prop.~3.24]{Lo08}) 
We have to show that 
$M \in \cM_h$ and $M \Omega \in V_\cN$ implies $M \in \cN$. 
First we find a sequence $A_n \in \cN$ such  that 
$A_n\Omega \to M\Omega$. For any $B \in \cM'$, this leads to 
$A_n B\Omega = BA_n \Omega \to B M\Omega = MB\Omega$, 
so that $A_n \to M$ holds pointwise on the dense subspace 
$\cD := \cM'\Omega$. Since the hermitian operators $A_n$ and $M$ are bounded, 
$\cD$ is a common core for all of them. 
With \cite[Thm.~VIII.25]{RS73} it now follows that $A_n \to M$ 
holds in the strong resolvent sense, i.e., that 
$(i\1 + A_n)^{-1} \to  (i\1 + M)^{-1}$ in the strong operator topology. 
This implies that $(i\1 + M)^{-1} \in \cN$, which entails $M \in \cN$. 
\end{proof}

The concept of a half-sided modular inclusion was originally conceived 
on the level of von Neumann algebras with cyclic separating 
vectors, where it takes the following form 
(\cite{Wi93,Wi97}). 

\begin{definition}  
Let $\Omega$ be a cyclic separating vector for the von Neumann algebra $\cM$ 
and $\cN \subeq \cM$ be a von Neumann subalgebra for which 
$\Omega$ is also cyclic. 
The triple $(\cM, \cN,\Omega)$ is called a {\it $\pm$half-sided modular inclusion} 
\begin{footnote}{Here we switched signs, compared to 
\cite{Bo97, Wi93}, to make the concept compatible with 
the sign convention in the context of standard 
subspaces \cite{Lo08}.} 
\end{footnote}
if 
\begin{equation}
  \label{eq:modular-inc}
\Delta_\cM^{-it} \cN \Delta_\cM^{it} \subeq \cN \quad \mbox{ for }\quad 
\pm t \geq 0.
\end{equation}
Note that $\Omega$ is also separating for $\cN$ because $\cN \subeq \cM$, 
so that we obtain two pairs of modular objects 
$(\Delta_\cM, J_\cM)$ and $(\Delta_\cN, J_\cN)$.
\end{definition}

\begin{lemma} Let $\cN \subeq \cM \subeq B(\cH)$ be von Neumann algebras
with the common cyclic separating vector $\Omega\in\cH$. 
Then $(\cM, \cN, \Omega)$ is a $\pm$half-sided modular inclusion if and only 
if the corresponding standard subspaces 
$V_\cN := \oline{\cN_h \Omega} \subeq V_\cM := \oline{\cM_h \Omega}$ 
define a $\pm$half-sided modular inclusion.
\end{lemma}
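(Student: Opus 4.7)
The key identification is that, by Theorem~\ref{thm:tom-tak}, the standard subspace $V_\cM = \oline{\cM_h\Omega}$ has the same modular operator as the one coming from $(\cM,\Omega)$; in particular $\Delta_{V_\cM} = \Delta_\cM$ and the modular automorphism group $\alpha_t(X) := \Delta_\cM^{-it} X \Delta_\cM^{it}$ preserves $\cM$ with $\alpha_t(\Omega) = \Omega$. The plan is to use this to translate the inclusion condition for algebras into one for the associated real subspaces and back, with Lemma~\ref{lem:4.14} providing the non-trivial direction.

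First I would prove the implication $(\Rightarrow)$. Assume $\Delta_\cM^{-it}\cN\Delta_\cM^{it} \subeq \cN$ for $\pm t \geq 0$. Since $\alpha_t$ preserves hermiticity, $\alpha_t(\cN_h) \subeq \cN_h$. Applying this to $\Omega$ and using $\Delta_\cM^{it}\Omega = \Omega$ gives $\Delta_\cM^{-it}(\cN_h\Omega) \subeq \cN_h\Omega$. Since $\Delta_\cM^{-it}$ is a (closed) operator whose spectral projections are bounded, it maps the closure into the closure, yielding $\Delta_{V_\cM}^{-it} V_\cN \subeq V_\cN$ for $\pm t \geq 0$, which is exactly the half-sided modular inclusion condition for the standard subspaces.

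For the converse $(\Leftarrow)$, set $\cN_t := \Delta_\cM^{-it}\cN \Delta_\cM^{it}$, which is a von Neumann subalgebra of $\cM$ (as $\alpha_t$ is an automorphism of $\cM$). Because $\Delta_\cM^{it}\Omega = \Omega$, we have
\[
 (\cN_t)_h \Omega \;=\; \Delta_\cM^{-it} \cN_h \Delta_\cM^{it}\Omega \;=\; \Delta_\cM^{-it}(\cN_h\Omega),
\]
so, taking closures, $V_{\cN_t} = \Delta_\cM^{-it} V_\cN$. The half-sided modular hypothesis on the standard subspaces then gives $V_{\cN_t} \subeq V_\cN$ for $\pm t \geq 0$. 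Now invoke the injectivity proof of Lemma~\ref{lem:4.14}: for any $M \in (\cN_t)_h \subeq \cM_h$, the element $M\Omega$ lies in $V_{\cN_t} \subeq V_\cN$, hence $M \in \cN$. Thus $(\cN_t)_h \subeq \cN$, and since $\cN_t$ is $*$-spanned by its hermitian part, $\cN_t \subeq \cN$.

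The only mild subtlety is making sure that $\Omega$ is cyclic (not merely separating) for $\cN_t$, so that Lemma~\ref{lem:4.14} applies as stated; but this is automatic since $\cN_t = \alpha_t(\cN)$ and $\alpha_t$ is a unitarily implemented automorphism fixing $\Omega$, hence cyclicity of $\Omega$ for $\cN$ transfers to $\cN_t$. Beyond this bookkeeping, the argument is essentially a transcription of inclusions between $\{\cN,\cN_t\}$ and $\{V_\cN, V_{\cN_t}\}$, with Lemma~\ref{lem:4.14} doing the decisive work in the harder direction.
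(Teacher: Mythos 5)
Your proof is correct and follows essentially the same route as the paper's: both directions hinge on the identity $\oline{\Delta_\cM^{-it}\cN_h\Delta_\cM^{it}\Omega} = \Delta_\cM^{-it}V_\cN$ (via $\Delta_\cM^{it}\Omega = \Omega$), with Lemma~\ref{lem:4.14} supplying the converse. The extra bookkeeping you add (cyclicity of $\Omega$ for $\cN_t$, noting $\Delta_\cM^{-it}$ is unitary hence preserves closures) is harmless but not needed beyond what the paper's two-line argument already contains.
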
 

\begin{proof} Since 
$\oline{\Delta_\cM^{-it} \cN_h \Delta_\cM^{it} \Omega}
= \Delta_\cM^{-it} V_\cN,$ 
relation \eqref{eq:modular-inc} implies 
\begin{equation}
  \label{eq:modinc-std}
 \Delta_\cM^{-it} V_\cN \subeq V_\cN \quad \mbox{ for } \quad \pm t \geq 0.
\end{equation}
If, conversely, the latter condition is satisfied, then 
$\Delta_\cM^{-it} \cN_h \Delta_\cM^{it} \Omega 
\subeq \Delta_\cM^{-it} V_\cN \subeq V_\cN,$ 
so that Lemma~\ref{lem:4.14} implies \eqref{eq:modular-inc}.
\end{proof}

The preceding lemma has a very interesting consequence 
because it translates directly between half-sided modular inclusions 
of von Neumann algebras and half-sided modular inclusions of the corresponding 
standard subspaces. It immediately implies that a triple 
$(\cM,\cN,\Omega)$ consisting of two von Neumann algebras 
$\cM$ and $\cN$ with a common cyclic separating vector $\Omega$ 
defines a modular intersection in the sense of \cite{Wi97} if and only if 
$V_\cM$ and $V_\cN$ have a modular intersection.  

Clearly, every result on half-sided modular inclusions on 
standard subspaces, such as Borchers' Theorem~\ref{thm:bor-stand} 
(\cite[Thms.~II.5.2, VI.2.2]{Bo00}), 
Wiesbrock's Theorem~\ref{thm:wiesbrock} 
(\cite{Wi93, AZ05}), and Theorem~\ref{thm:wies2-standard} 
(\cite[Lemma~10]{Wi98}) yield corresponding results on half-sided 
modular inclusions of von Neumann algebras which 
preceded the corresponding results on standard subspaces. 

It is remarkable that this transfer also works in the other direction: 
every result on half-sided modular inclusions of von Neumann algebras 
can be used to obtain a corresponding result on standard subspaces. 
For this transfer one can use the second quantization procedure 
described in some detail in Section~\ref{sec:6} below. It associates 
to every standard subspace $V \subeq \cH$ 
a von Neumann algebra $\cR(V) \subeq B(\cF_+(\cH))$ on the bosonic 
Fock space $\cF_+(\cH)$ for which the vacuum $\Omega$ is a cyclic separating vector 
and for which the modular objects are obtained by second quantization. 
Here we consider the  antiunitary representation 
\[ \Gamma \: \AU(\cH) \to \AU(\cF_+(\cH)),\quad 
\Gamma(U)(v_1 \vee \cdots \vee v_n) 
:= Uv_1 \vee \cdots \vee Uv_n\] 
obtained by second quantization. 
If $\gamma_V \: \R^\times \to \AU(\cH)$ is the antiunitary representation 
associated to $V$, then $\tilde\gamma_V := \Gamma \circ \gamma_V$ 
is the corresponding antiunitary representation on the Fock space $\cF_+(\cH)$ 
(cf.~Proposition~\ref{prop:3.2}). 

If $\Delta_V^{-it/2\pi}H = \gamma_V(e^t)H \subeq H$ holds for $t \geq 0$, then 
\[ \cR(\gamma_V(e^t)H) 
= \tilde\gamma_V(e^t) \cR(H) \tilde\gamma_V(e^{-t})\subeq \cR(H)\]  
implies that $(\cR(V), \cR(H),\Omega)$ is a $\pm$half-sided modular 
inclusion whenever $H \subeq  V$ is. 

If, conversely, $(\cR(V), \cR(H),\Omega)$ is a $\pm$half-sided modular 
inclusion, then 
\[ \cR(\gamma_V(e^t)H) 
= \tilde\gamma_V(e^t) \cR(H) \tilde\gamma_V(e^{-t})\subeq \cR(H) \] 
implies that $\gamma_V(e^t)H \subeq H$ by Theorem~\ref{thm:araki-1}(i). 
Therefore $H \subeq V$ is a half-sided modular inclusion of the same type. 
As the subgroup of $\AU(\cF_+(\cH))$ generated by the corresponding 
one-parameter groups $\tilde\gamma_V(\R^\times)$ is contained in 
the subgroup $\Gamma(\AU(\cH))$ which is the range of the 
second quantization homomorphism $\Gamma \: \AU(\cH) \to \AU(\cF_+(\cH))$, 
anything that we can say about subgroups generated by these groups 
and conditions relating to modular objects can be translated into 
a corresponding result on standard subspaces and the antiunitary 
one-parameter groups $\gamma_V$ on~$\cH$. 

According to this principle, any result on half-sided modular inclusions 
of von Neumann algebras has a ``one-particle version'' concerning 
standard subspaces and vice versa (cf.~\S\S\ref{subsec:3.3}, \ref{subsec:modint}). 
The advantage of the one-particle 
version is that it has a simpler formulation and that 
standard subspaces are completely encoded in 
the antiunitary representations $\gamma_V$ of $\R^\times$, hence in 
an antiunitary representation of a group $(G, G_1)$ generated by 
the image of homomorphism $(\R^\times, \R^\times_+) \to (G,G_1)$. 
Therefore one can hope that any results on standard subspaces, 
half-sided modular inclusions and the corresponding groups can be 
expressed in terms of antiunitary representations of suitable 
involutive pairs of Lie groups $(G, G_1)$. This was one of the key 
motivations for us to write this note. 

\begin{remark} (a) A typical result of this type is Wiesbrock's Theorem 
on half-sided modular inclusions (cf.~Theorem~\ref{thm:wiesbrock} and 
\cite{Wi93, Wi97, AZ05}). On the level of modular inclusions of 
von Neumann algebras $(\cM,\cN,\Omega)$, 
Wiesbrock provides the additional information that, 
if $\cM$ is a factor, then it is of type III$_1$ 
(see \cite[Thm.~12]{Wi93} which uses \cite{Lo82}). 
It would be interesting 
to see if and how this can be formulated and derived on the level of standard 
subspaces and antiunitary representations. 
The discussion of modular nuclearity in \cite[\S 6.3]{Lo08} may 
indicate a possible way how this can be done. 

(b) In \cite[Thm.~4.11]{GLW98} 
(see also \cite[Lemmas 3,4ff]{Wi93c} and \cite[Thm.~2]{Wi93b}), 
similar structures related to 
multiple modular inclusions are studied, namely 
quadruples $(\cM_0, \cM_1, \cM_2, \Omega)$, where 
the $\cM_j$ are von Neumann algebras with the common separating cyclic vector $\Omega$ 
such that the $\cM_j$ commute pairwise and, in cyclic order, 
$\cM_j \subeq \cM_{j+1}'$ is a half sided modular inclusion. 
From this structure, which arises from partitions of $\bS^1$ into 
three intervals, one  derives antiunitary positive energy representations 
of $\PGL_2(\R)$ as in Theorem~\ref{thm:wies2-standard} (\cite[Thm.~1.2]{GLW98}). 

(c) In \cite{Wi93b} it is shown that 
the von Neumann version of Theorem~\ref{thm:wies2-standard}(b) characterizes 
conformal quantum fields on the circle in terms of modular data associated to three intervals. 

(d) In \cite{KW01} configurations of 6 von Neumann algebras 
$(\cM_{ij})_{1 \leq i < j \leq 4}$ are used to generate unitary representations of the 
group $\SO_{1,3}(\R)^\uparrow$ and further of 
the connected Poincar\'e group $P(4)^\uparrow_+$. 
\end{remark}

\subsection{Borchers triples} 
\mlabel{subsec:4.3} 

In this subsection we briefly discuss generalization of Borchers pairs 
to higher dimensional situations, where the semigroup 
$\R_+$ acting on a standard subspace is replaced by a 
wedge $W$ in Minkowski space or by the subsemigroup of $P(d)_+$ 
mapping such a wedge into itself. 

\begin{definition} \mlabel{def:wedges}
In $d$-dimensional Minkowski space $\R^{1,d-1}$, we consider the {\it right wedge} 
\[ W_R:=\big\{ x = (x_0, \ldots, x_{d-1}) \in \R^{d} \: x_1 > |x_0|\big\}.\] 
To fix notation for the following, we write 
$W_R = W_R^2 \oplus E_R,$ where 
\[ E_R = \{ (x_0, \bx) \: x_0 = x_1 = 0\}\cong \R^{d-2} \] 
is the {\it edge of the wedge} 
and $W_R^2$ is the standard right wedge in $\R^2$. 

A subset of the form $W = gW_R$, $g \in P(d)$, is called a {\it wedge}. 
We write $\cW$ for the set of wedges in~$\R^{1,d-1}$. 
\end{definition}

The following lemma contains some details on $\cW$ as an 
ordered homogeneous space. For item (iii), we 
recall the generator $b_0$ of the Lorentz boost from Example~\ref{ex:one-par} 
(see also \cite[\S 2]{BGL02}). 

\begin{lemma}
  \mlabel{lem:4.17} 
The wedge space $\cW$ has the following properties: 
  \begin{itemize}
  \item[\rm(i)] The stabilizer $P(d)_{W_R} = \{ g \in P(d)\:gW_R = W_R\}$ 
of the standard right wedge has 
the form 
\[ P(d)_{W_R} \cong E(d-2) \times \OO_{1,1}(\R)_{W_R^2},\] 
where $E(d-2)$ denotes the euclidean group on $E_R \cong \R^{d-2}$.
  \item[\rm(ii)] $r_W  := g R_{01} g^{-1}$ for $W = gW$  and 
$R_{01} = \diag(-1,-1,1,\ldots, 1)$ 
yields a consistent definition of  wedge reflections $(r_W)_{W \in \cW}$. 
  \item[\rm(iii)] The subgroup $P(d)^\uparrow$ acts transitively on $\cW$, 
and the following are equivalent for $g \in P(d)^\uparrow$: 
\begin{itemize}
\item[\rm(a)] $gW_R = W_R$. 
\item[\rm(b)] $\Ad_g b_0 = b_0$ and 
$g$ commutes with the wedge reflection $r_{W_R} = R_{01}$.
\item[\rm(c)] $\Ad_g b_0 = b_0$. 
\end{itemize}
The set of all elements satisfying these conditions is 
\begin{equation}
  \label{eq:stabgrp}
P(d)^\uparrow_{W_R} \cong E(d-2) \times \SO_{1,1}(\R)^\uparrow.
\end{equation}
In particular, the subgroup $\SO_{1,1}(\R)^\uparrow$ is central in 
$P(d)^\uparrow_{W_R}$. 
For $d > 2$, even the identity component acts transitively on $\cW$ with 
stabilizer 
\begin{equation}
  \label{eq:stabgrp2}
 P(d)^\uparrow_{+,W_R} \cong E(d-2)_+ \times \SO_{1,1}(\R)^\uparrow.
\end{equation}
\item[\rm(iv)] For $\gamma_{W_R} \: \R^\times \to P(d)_+$,  
defined by $\gamma_{W_R}(e^t) := e^{tb_0}$ and $\gamma_{W_R}(-1) := r_{W_R}$, 
we have a bijection 
\[ \cW \to C_{\gamma_{W_R}}, \quad g W_R \mapsto \gamma_W := \gamma_{W_R}^g \quad \mbox{ for } \quad 
g \in P(d)^\uparrow_+, \quad 
\gamma_{W_R}^g(t) = g\gamma_{W_R}(t)g^{-1}\] 
and the map 
\[ \cW \to C_{r_{W_R}} = \{ r_W \: W \in \cW\}, \quad 
W \mapsto r_W \] 
corresponding to evaluation in $-1$ is a two-fold covering map. 
\item[\rm(v)] We have a bijection 
$\cW \to \Ad(P(d)^\uparrow)b_0, gW_R \mapsto \Ad(g)b_0$ 
of $\cW$ onto an adjoint orbit of $P(d)^\uparrow$. 
\item[\rm(vi)] The stabilizer $P(d)_{W_R}$ is open in the centralizer 
of $r_{W_R}$ in $P(d)$. In particular 
$(P(d), P(d)_{W_R})$ is a symmetric pair and $\cW$ is a symmetric space. 
\item[\rm(vii)] The semigroup 
$S_{W_R} := \{ g \in P(d) \: gW_R \subeq W_R\}$ is given by 
$\oline{W_R} \rtimes \OO_{1,d-1}(\R)_{W_R}$. 
  \end{itemize}
\end{lemma}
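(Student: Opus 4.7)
The plan is to work through the seven parts in order, exploiting the basic geometric decomposition $\R^{1,d-1} = V_0 \oplus E_R$, where $V_0$ is the $(x_0, x_1)$-plane and $E_R$ is the edge. For \textrm{(i)}, I would first observe that the edge $E_R$ is intrinsic to $W_R$: it is the largest affine subspace contained in $\partial W_R$. Any $g = (v,\Lambda) \in P(d)_{W_R}$ must therefore preserve $E_R$ and its Lorentz-orthogonal complement $V_0$. This forces $\Lambda$ to split as $\Lambda_0 \oplus \Lambda_1$ with $\Lambda_0 \in \OO_{1,1}(V_0)_{W_R^2}$ and $\Lambda_1 \in \OO(E_R)$, while $v$ decomposes as $v_0 + v_1$; the requirement $g W_R = W_R$ then forces $v_0 = 0$. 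This yields the claimed isomorphism with $E(d-2) \times \OO_{1,1}(\R)_{W_R^2}$. For \textrm{(ii)}, I would verify directly that $R_{01}$ is central in $P(d)_{W_R}$: it is trivial on $E_R$ and acts as $-\1$ on $V_0$, hence commutes with everything in $\OO_{1,1}(V_0) \times \OO(E_R) \ltimes E_R$. Thus $r_W = gR_{01}g^{-1}$ depends only on the coset $g P(d)_{W_R}$, i.e., only on $W$.

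For \textrm{(iii)}, transitivity of $P(d)^\uparrow$ on $\cW$ follows from Witt's extension theorem applied to pairs of light-like vectors spanning the two characteristic half-planes of a wedge, combined with translations. For the equivalence (a)$\Leftrightarrow$(c), I compute the centralizer of $b_0$ in $P(d)^\uparrow$: using the semidirect product formula $\Ad(w,\Lambda)(0, b_0) = (-\Lambda b_0 \Lambda^{-1} w, \Lambda b_0 \Lambda^{-1})$, the condition $\Ad_g b_0 = b_0$ reads $\Lambda b_0 \Lambda^{-1} = b_0$ and $w \in \ker(b_0) = E_R$. Since the range of $b_0$ is $V_0$, a centralizing $\Lambda \in \OO_{1,d-1}^\uparrow$ must preserve both $V_0$ and $E_R$; restricting to $V_0$, centralizing $b_0$ in $\OO_{1,1}^\uparrow$ gives exactly $\SO_{1,1}^\uparrow = \exp(\R b_0)$ (since $T = \mathrm{diag}(-1,1)$ satisfies $T b_0 T^{-1} = -b_0$, not $b_0$). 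This identifies the centralizer of $b_0$ with $E_R \rtimes (\SO_{1,1}^\uparrow \times \OO(d-2)) = E(d-2) \times \SO_{1,1}^\uparrow$, which coincides with $P(d)^\uparrow_{W_R}$ from \textrm{(i)}. That (a)$\Rightarrow$(b) is immediate since $b_0$ and $R_{01}$ are intrinsic to $W_R$, and (b)$\Rightarrow$(c) is trivial. The orientation-preserving variant follows by restricting $\OO(d-2)$ to $\SO(d-2)$.

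For \textrm{(iv)}, the centralizer in $P(d)^\uparrow$ of the whole image $\gamma_{W_R}(\R^\times)$ equals the centralizer of $b_0$ (since $R_{01}$ centralizes $b_0$ automatically), which is $P(d)^\uparrow_{W_R}$; so the orbit-stabilizer theorem gives $C_{\gamma_{W_R}} \cong P(d)^\uparrow/P(d)^\uparrow_{W_R} \cong \cW$. For the covering $\cW \to C_{r_{W_R}}$, I would compute that $Z_{P(d)^\uparrow}(R_{01}) = E_R \rtimes (\OO_{1,1}^\uparrow \times \OO(d-2))$, which contains $P(d)^\uparrow_{W_R}$ with index $2$ (generated by the spatial reflection $P = \mathrm{diag}(1,-1)$ on $V_0$). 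The two preimages of $r_{W_R}$ correspond to the wedge $W_R$ and its opposite (left) wedge $-W_R = PW_R$. Part \textrm{(v)} is the special case of the orbit map $g \mapsto \Ad(g)b_0$, with stabilizer again $P(d)^\uparrow_{W_R}$ by the centralizer computation above. For \textrm{(vi)}, since $P(d)_{W_R}$ has index $2$ in $Z_{P(d)}(r_{W_R})$, it is automatically open, so $(P(d), P(d)_{W_R})$ is a symmetric pair (Appendix~\ref{app:a.2}).

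For \textrm{(vii)}, it is clear that $\oline{W_R}\rtimes \OO_{1,d-1}(\R)_{W_R} \subseteq S_{W_R}$: translations by $v \in \oline{W_R}$ satisfy $W_R + v \subseteq W_R$ by the triangle-style estimate $x_1 + v_1 - |x_0 + v_0| \geq (x_1 - |x_0|) + (v_1 - |v_0|) > 0$. For the reverse inclusion, given $g = (v,\Lambda) \in S_{W_R}$, the key step is to show that $\Lambda W_R = W_R$; then $v + W_R \subseteq W_R$ forces $v \in \oline{W_R}$. The characteristic ``horospheric'' structure of $\partial W_R$ consists of two light-like half-planes containing one-parameter families of null lines, and $\Lambda$, being a Lorentz isometry, must map null lines to null lines. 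I would argue that $\Lambda W_R \subseteq W_R$ combined with the asymptotic behavior of the boost flow $e^{tb_0}$ on $\Lambda W_R$ (which is another wedge) forces $\Lambda W_R$ and $W_R$ to share both characteristic half-planes, hence to coincide; alternatively, one can use that both $W_R$ and $\Lambda W_R$ are maximal causally complete regions with the same wedge shape, so one cannot properly contain the other. This is the step I expect to require the most care, as it is the only part where a direct orbit-stabilizer reduction does not suffice and genuine causal geometry of Minkowski space is needed.
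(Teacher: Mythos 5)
Your treatment of (i)--(vi) follows essentially the same route as the paper: the splitting $\R^{1,d-1}=V_0\oplus E_R$, the identification of $P(d)^\uparrow_{W_R}$ with the centralizer of $b_0$ via the kernel/range decomposition of $b_0$, orbit--stabilizer for (iv) and (v), and the index-two observation for the covering map and for openness in (vi). Two small inaccuracies there. The edge is \emph{not} the largest affine subspace of $\partial W_R$: every translate $p+E_R$ with $p_1=|p_0|$ also lies in $\partial W_R$, so this characterization does not single out $E_R$; the correct intrinsic description is $E_R=\oline{W_R}\cap(-\oline{W_R})$, equivalently the apex set of the convex cone $\oline{W_R}$, and that is what makes $g(0)\in E_R$ and the splitting of the linear part automatic. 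Also, your witness $T=\diag(-1,1)$ for the non-centralizing coset does not lie in $\OO_{1,1}(\R)^\uparrow$; use $\diag(1,-1)$ instead (it likewise anticommutes with $b_0$). Finally, note that Witt's theorem only produces an element of $\OO_{1,d-1}(\R)$, so to get transitivity of $P(d)^\uparrow$, and of $P(d)^\uparrow_+$ for $d>2$, you still need the observation that the stabilizer $P(d)_{W_R}$ meets the non-orthochronous and orientation-reversing components --- which is exactly how the paper argues.

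The genuine gap is in (vii). First, you never explain how to pass from $v+\Lambda W_R\subeq W_R$ to a statement about the linear part $\Lambda$ alone. The paper does this with the recession cone: for a closed convex set $C$ and an affine map $g=(b,a)$ one has $\lim(gC)=a\lim(C)$, and monotonicity of $C\mapsto\lim(C)$ under inclusion immediately yields $a\oline{W_R}=\lim(g\oline{W_R})\subeq\lim(\oline{W_R})=\oline{W_R}$. Second, your fallback that two regions of the same wedge shape cannot properly contain one another is false as stated: $W_R+e_1$ is a wedge properly contained in $W_R$. What saves the argument is that $\Lambda\oline{W_R}$ and $\oline{W_R}$ are cones with vertex $0$: then $\Lambda E_R=\Lambda\oline{W_R}\cap(-\Lambda\oline{W_R})\subeq\oline{W_R}\cap(-\oline{W_R})=E_R$, hence $\Lambda E_R=E_R$ by injectivity and finite-dimensionality, $\Lambda$ splits as $\Lambda_1\oplus\Lambda_2$ along $V_0\oplus E_R$, and $\Lambda_1W_R^2$ is a quarter-plane bounded by light rays contained in $W_R^2$, which forces $\Lambda_1W_R^2=W_R^2$. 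Your ``asymptotic boost flow'' idea can probably be made rigorous, but the recession-cone computation closes both holes at once and is the argument the paper uses; I recommend adopting it.
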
 

\begin{proof} (i) The stabilizer group 
contains the translation group corresponding to the edge $E_R$ 
and $gW_R = W_R$ implies $g(0) \in E_R$, so that 
\[ P(d)_{W_R} \cong E_R \rtimes \OO_{1,d-1}(\R)_{W_R}.\] 
Further, 
\[ \OO_{1,d-1}(\R)_{W_R} 
= \OO_{d-2}(\R) \times \OO_{1,1}(\R)_{W_R^2} 
= \OO_{d-2}(\R) \times (\SO_{1,1}(\R)^\uparrow \{\1, R_1\}), \] 
where $R_1 = \diag(1,-1,1,\ldots, 1)$. We thus obtain (i).

(ii) follows from the fact that the wedge 
reflection $R_{01}$ commutes with $P(d)_{W_R}$.
 
(iii) That $P(d)^\uparrow$ acts transitively follows from the fact that 
the stabilizer $P(d)_{W_R}$ contains the reflection $R_1$ 
satisfying $R_1 V_+ = -V_+$. 
If $d > 2$, then the stabilizer $P(d)_{W_R}$ intersects all four connected 
components of $P(d)$, so that even $P(d)^\uparrow_+$ acts transitively. 
For $d = 2$ we obtain two orbits because $\pm W_R$ lie in different orbits 
of $P(2)^\uparrow_+$. 
 
It remains to verify the equivalence of (a), (b) and (c). 
From \eqref{eq:stabgrp} we derive that (a) implies (b) and hence (c). 
That $g = (b,a)$ commutes with $R_{01}$ is equivalent to 
$b \in E_R$  and $a = a_1 \oplus a_2$ with 
$a_1$ acting on the first two coordinates and $a_2$ on $E_R$. 
That, in addition, $g$ commutes with $b_0$ restricts $a_1 \in \OO_{1,1}(\R)^\uparrow$ 
to an element of $\SO_{1,1}(\R)^\uparrow$. Finally, we observe that, 
if $g$ commutes with $b_0$, then the eigenspace decomposition 
of $\ad b_0$ on $\fp(d)$ implies that $g = (b,a)$ with 
$b \in E_R$ and $a = a_1 \oplus a_2$ with $a_1 \in \SO_{1,1}(\R)^\uparrow$. 

(iv) The first part follows from the equivalence of (a) and (b) in (iii). 
For the second part, we observe with  (iii) above that 
the stabilizer of $W_R$ is a subgroup of index $2$ in the centralizer 
of $r_{W_R}$. 

(v) follows from the equivalence of (a) and (c) in (iii). 

(vi) The centralizer of $r_{W_R}$ in $P(d)$ 
is the subgroup $E(d-2) \times \OO_{1,1}(\R)$ in which 
the stabilizer group $P(d)_{W_R}$ is open. This means that 
$(P(d), P(d)_{W_R})$ is a symmetric pair. 

(vii) For a closed convex subset $C \subeq \R^d$, its recession cone 
\[ \lim(C) 
:= \{ x \in \R^d \: x + C \subeq C \}  
=  \{ x \in \R^d \: (\exists c \in C)\, c + \R_+ x \subeq C \} \] 
is a closed convex cone (\cite[Prop.~V.1.6]{Ne00}), and each 
affine map $g = (b,a) \in \break{\R^d \rtimes \GL_d(\R)} \cong \Aff(\R^d)$ satisfies 
\begin{equation}
  \label{eq:lim-cone}
\lim(gC) = \lim(aC) = a\lim(C).
\end{equation}

If $g = (b,a) \in S_{W_R}$, then $g$ maps $\oline{W_R}$ into itself, 
so that $b = g(0) \in \oline{W_R}$. Further \eqref{eq:lim-cone} implies that 
$\oline{W_R} \supeq \lim(gW_R) = a \oline{W_R}$, and hence $aW_R \subeq W_R$. 
It follows that $aE_R \subeq E_R$, so that $aE_R = E_R$ as $a$ is injective 
and $\dim E_R < \infty$. This in turn implies that $a$ commutes with 
$r_{W_R}$, so that $a = a_1 \oplus a_2$ as above, 
where $a_2 \in \OO(E_R)$ and $a_1 W_R^2 \subeq W_R^2$. 
As $a_1 W_R^2$ is a quarter plane bounded by light rays, we get 
$a_1 W_R^2 = W_R^2$, and finally $a W_R = W_R$. 
\end{proof}

\begin{definition} \mlabel{def:standard-pair} (\cite[Def.~2.7]{Le15}) 
A $d$-dimensional {\it standard pair $(V,U)$  
with translation symmetry 
relative to $W\in \cW$} consists of a standard subspace 
$V \subeq \cH$  and a strongly continuous unitary positive energy 
representation $U$ of the translation group $\R^d$  (cf.~Definition~\ref{def:posen-rep}) 
such that $U_xV \subeq V$ whenever $x + W \subeq W$. 
\end{definition}

Here is the corresponding concept for von Neumann algebras: 

\begin{definition} (\cite[\S4]{BLS11}) A {\it (causal) Borchers triple}
$(\cM, U,\Omega)$ relative to the wedge $W\subeq \R^d$ consists of 
\begin{itemize}
\item[\rm(B1)] a von Neumann algebra $\cM \subeq B(\cH)$, 
\item[\rm(B2)] a positive energy representation $(U,\cH)$ of 
the translation group $\R^d$ such that $U_x \cM U_x^* \subeq \cM$ if $x + W \subeq W$, and 
\item[\rm(B3)] a $U$-invariant unit vector $\Omega \in \cs(\cM)$. 
\end{itemize}
\end{definition}

\begin{remark} Let $\cM \subeq B(\cH)$ be a von Neumann algebra,  
$U \: \R^d \to \U(\cH)$ be a continuous unitary representation 
and $\Omega \in \cH^U\cap \cs(\cM)$. We consider the corresponding standard subspace 
$V := \oline{\cM_h\Omega}$. 
Then $U_x V \subeq V$ is equivalent to 
$U_x \cM U_x^* \subeq \cM$ by Lemma~\ref{lem:4.14}. 
Therefore $(\cM, U,\Omega)$ is a Borchers triple with respect to $W$ 
if and only if $(V,U)$ is a standard pair with respect to $W$. 
\end{remark}

The following theorem can be obtained by translating \cite{Bo92} 
from the context of Borchers triples to standard pairs by 
arguing as in \S \ref{subsec:4.2}. 
We give a direct proof based on our Theorem~\ref{thm:bor-stand}.

\begin{theorem}[Borchers' standard pair Theorem] \mlabel{thm:6.2a} 
Let $(V,U)$ be a $d$-dimensional 
standard pair with translation symmetry relative to $W_R$ 
and $\gamma_{W_R} \: \R^\times \to \SO_{1,d-1}(\R)$ be the corresponding 
homomorphism with $\gamma_{W_R}(e^t) = e^{tb_0}$ and $\gamma_{W_R}(-1)= r_{W_R} = R_{01}$ 
{\rm(Lemma~\ref{lem:4.17}(iv))}. Then the antiunitary 
representation $(U^V,\cH)$ of $\R^\times$ corresponding to $V$ satisfies 
\[ U^V_t U_x U^V_{t^{-1}} = U_{\gamma_{W_R}(t)x} \quad \mbox{ for } \quad x \in \R^d, 
t \in \R^\times, \] 
so that we obtain an antiunitary representation 
of $\R^d \rtimes \SO_{1,1}(\R)$ 
by $(b, \gamma_{W_R}(t)) \mapsto U_b U^V_t$. 

Conversely, every antiunitary positive energy representation 
$(U,\cH)$ of $\R^d \rtimes \SO_{1,1}(\R)$ defines a standard pair 
$(V_{\gamma_{W_R}}, U\res_{\R^d})$.
\end{theorem}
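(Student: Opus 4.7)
The plan is to reduce the statement to the one-dimensional Borchers Theorem~\ref{thm:bor-stand} and its converse from Theorem~\ref{thm:3.8}(ii) by using the eigenspace decomposition of the Lorentz boost generator $b_0$ on $\R^d$. Write $\ell_\pm = e_0 \pm e_1$ for the light-like $\pm 1$-eigenvectors of $b_0$, so that $\R^d = \R\ell_+ \oplus \R\ell_- \oplus E_R$ and $\gamma_{W_R}(a)$ acts on $\R\ell_\pm$ by $a^{\pm 1}$ and trivially on $E_R$. The parametrization $x = a\ell_+ + b(-\ell_-) + e$ with $e\in E_R$ satisfies $x\in\oline{W_R}$ if and only if $a,b \geq 0$.

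For the forward direction, I would exhibit three one-parameter subgroups of translations compatible with $V$. Since $\ell_+ \in \oline{V_+} \cap \oline{W_R}$, the pair $(U_{\cdot\ell_+}, V)$ is a positive Borchers pair; since $-\ell_-\in \oline{W_R}$ while $-\ell_-\notin \oline{V_+}$, the pair $(U_{\cdot(-\ell_-)}, V)$ is a negative Borchers pair. Applying the positive and negative cases of Theorem~\ref{thm:bor-stand} respectively yields
\[ U^V_t U_{s\ell_+} (U^V_t)^{-1} = U_{ts\ell_+} \quad \text{and}\quad U^V_t U_{s\ell_-} (U^V_t)^{-1} = U_{t^{-1}s\ell_-} \]
for all $t\in\R^\times, s\in\R$. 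For $e\in E_R$, both $\pm e$ lie in $\oline{W_R}$, so $U_e V = V$ and hence $U_e$ commutes with $J_V$ and $\Delta_V$; this matches $\gamma_{W_R}(t)e = e$. Combining the three cases gives the intertwining relation $U^V_t U_x (U^V_t)^{-1} = U_{\gamma_{W_R}(t)x}$ on all of $\R^d$, so $(b,\gamma_{W_R}(t))\mapsto U_b U^V_t$ defines the desired antiunitary representation of $\R^d \rtimes \SO_{1,1}(\R)$.

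For the converse, Proposition~\ref{prop:3.2} applied to $U\circ\gamma_{W_R}$ produces $V := V_{\gamma_{W_R}}$; I would verify $U_x V\subeq V$ for $x\in\oline{W_R}$ by checking the three summands separately. The subgroup $\R\ell_+ \rtimes \gamma_{W_R}(\R^\times)$ is isomorphic to $\Aff(\R)$ via $(s\ell_+, \gamma_{W_R}(a))\leftrightarrow(s,a)$, and under this isomorphism the dilation subgroup gets identified with $\gamma_{W_R}(\R^\times)$, so the standard subspace $V_0$ of Theorem~\ref{thm:3.8} coincides with $V$; since the restricted representation has positive energy (as $-i\dd U(\ell_+)\geq 0$), Theorem~\ref{thm:3.8}(ii)(a)$\Rightarrow$(b) gives $U_{a\ell_+} V\subeq V$ for $a\geq 0$. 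Similarly, $\R\ell_- \rtimes \gamma_{W_R}(\R^\times) \cong \Aff(\R)$ via $(s\ell_-, \gamma_{W_R}(a))\leftrightarrow(s, a^{-1})$, but the inverted scaling together with Lemma~\ref{lem:stand-factorial}(i) forces $V_0 = V'$; Theorem~\ref{thm:3.8}(ii) then yields $U_{s\ell_-}V'\subeq V'$ for $s\geq 0$, and passing to the symplectic complement gives $U_{b(-\ell_-)}V\subeq V$ for $b\geq 0$. For $e\in E_R$, $U_e$ commutes with $U_{\gamma_{W_R}(\R^\times)}$, hence with $J_V$ and $\Delta_V$, and so $U_e V = V$. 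The main obstacle is the sign and scaling bookkeeping on the $\ell_-$-summand, where the $-1$-eigenvalue of $b_0$ forces passage through either a negative Borchers pair (forward direction) or through the symplectic complement $V'$ (converse direction); once this is tracked carefully, everything reduces to direct applications of the one-dimensional theory plus the trivial action on the edge.
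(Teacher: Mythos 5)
Your proposal is correct and follows essentially the same route as the paper: decompose $\R^d = \R\ell_+ \oplus \R\ell_- \oplus E_R$ into the two light-ray eigendirections of $b_0$ and the edge, apply the one-particle Borchers Theorem~\ref{thm:bor-stand} (positive case on $\ell_+$, negative case on $-\ell_-$) together with the trivial action on $E_R$ for the forward direction, and reduce the converse to Theorem~\ref{thm:3.8}(ii) for the two embedded copies of $\Aff(\R)$. Your explicit handling of the $\ell_-$-summand in the converse via $V_0 = V'$ and Lemma~\ref{lem:stand-factorial}(i) is a careful filling-in of a step the paper leaves terse, not a different method.
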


\begin{proof} 
First we write $\R^d = \R^{1,1} \oplus \R^{d-2}$, so that 
$W_R = W_R^2 \oplus \R^{d-2}$, where $W_R^2 \subeq \R^{1,1}$ is the standard right wedge. 
For the light-like vectors $\ell_\pm := (1,\pm 1,0,\ldots, 0)$ we then have 
$W_R^2 = \R_+^\times \ell_+ - \R_+^\times \ell_-$. By assumption,
$U_{t\ell_\pm} = e^{it P_\pm}$ with $P_\pm \geq 0$. 
The strong continuity of $U$ implies 
\[ U_x V \subeq V \quad \mbox{ for all } \quad 
x \in \oline W 
= ([0,\infty)\ell_+  - [0,\infty)\ell_-) \oplus \R^{d-2}.\] 
Now Theorem~\ref{thm:bor-stand} yields 
\[ U^V_{e^t} U_{s\ell_\pm} U^V_{e^{-t}} 
= U_{e^{\pm t} s \ell_\pm} \quad \mbox{ for } \quad t, s \in \R.\] 
Further, $U_x V = V$ for $x  = (0,0,x_2, \ldots, x_{d-1})$ implies that 
$U_x$ commutes with $\Delta_V$. Combing all this, the first 
assertion follows. 

For the converse, let $(U,\cH)$ be an antiunitary 
positive energy representation of the group ${\R^d \rtimes \SO_{1,1}(\R)}$ 
and $V = V_{\gamma_{W_R}}$ be the standard subspace corresponding to 
$\gamma^V := U \circ \gamma_{W_R}$ by (Proposition~\ref{prop:3.2}). 
Since $\gamma_{W_R}$ commutes with $E_R$, the subgroup 
$U_{E_R}$ commutes with $\gamma^U$ and leaves $V$ invariant. 
That $U_x V \subeq V$ for $x \in W_R^2$ follows from the 
positive energy condition and Theorem~\ref{thm:3.8}(ii). 
\end{proof}

Here is a variant of this concept where the translation group is replaced by the Poincar\'e group: 
\begin{definition}
A $d$-dimensional {\it standard pair $(V,U)$  
with Poincar\'e symmetry relative to $W_R \in \cW$} consists of a standard subspace 
$V \subeq \cH$  and a strongly continuous unitary positive energy 
representation $U$ of the connected Poincar\'e group $P(d)^\uparrow_+$, such that 
\begin{itemize}
\item[\rm(i)] $U_gV \subeq V$ for all $g \in P(d)^\uparrow_+$ with $gW_R \subeq W_R$ and 
\item[\rm(ii)] $U_gV \subeq V'$ for all $g \in P(d)^\uparrow_+$ with $gW_R \subeq -W_R$. 
\end{itemize}
\end{definition}

\begin{lemma} \mlabel{lem:stand-pair-poincare}
Let $(U,\cH)$ be an antiunitary positive energy representation of 
$P(d)_+$ for $d \geq 3$ and $V \subeq \cH$ be the standard subspace 
corresponding to the canonical homomorphism 
$\gamma_{W_R} \: \R^\times \to P(d)_+$. 
Then $(V,U)$ is a standard pair with Poincar\'e symmetry.   
\end{lemma}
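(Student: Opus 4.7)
My plan is to verify conditions (i) and (ii) of the standard pair definition by decomposing elements of $P(d)^\uparrow_+$ into three kinds of building blocks: translations by vectors in $\oline{W_R}$, elements of the stabilizer $P(d)^\uparrow_{+,W_R}$, and a single distinguished wedge-reversing element.

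First I will treat the stabilizer. For $h \in P(d)^\uparrow_{+,W_R}$, Lemma~\ref{lem:4.17}(iii) shows that $h$ commutes with both $b_0$ and $R_{01}$. Since $V$ is attached to $\gamma_{W_R}$, we have $\Delta_V^{-it/2\pi} = U_{\exp(tb_0)}$ and $J_V = U_{R_{01}}$ (Corollary~\ref{cor:2.21}), so $U_h$ commutes with $\Delta_V$ and with $J_V$, hence with $S_V = J_V\Delta_V^{1/2}$, and therefore $U_hV = V$. Next, the positive-energy hypothesis allows me to invoke the converse direction of Borchers' Theorem~\ref{thm:6.2a}, applied to the restriction $U|_{\R^d \rtimes \SO_{1,1}(\R)}$, yielding $U_{t_b}V \subseteq V$ whenever $b \in \oline{W_R}$. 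Combining these: given $g \in P(d)^\uparrow_+$ with $gW_R \subseteq W_R$, Lemma~\ref{lem:4.17}(vii) decomposes $g = t_b a$ with $b \in \oline{W_R}$ and Lorentz part $a \in \SO_{1,d-1}(\R)^\uparrow \cap \OO_{1,d-1}(\R)_{W_R} = P(d)^\uparrow_{+,W_R}$, so $U_gV = U_{t_b}U_aV \subseteq V$, proving (i).

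For (ii) I will use the hypothesis $d \geq 3$ to select the spatial rotation by $\pi$ in the $(x_1,x_2)$-plane, $g_0 := \mathrm{diag}(1,-1,-1,1,\ldots,1) \in \SO_{d-1}(\R) \subseteq P(d)^\uparrow_+$. A direct check gives $g_0W_R = -W_R$, $g_0R_{01} = R_{01}g_0$, and $g_0 b_0 g_0^{-1} = -b_0$. Consequently, conjugation by $U_{g_0}$ fixes $J_V$ and inverts $\Delta_V$, so $U_{g_0}S_VU_{g_0}^{-1} = J_V\Delta_V^{-1/2} = S_{V'}$, and taking fixed-point spaces yields $U_{g_0}V = V'$ (equivalently, $U \circ \gamma_{W_R}^{g_0} = U \circ \gamma_{W_R}^\vee$ is the antiunitary representation of $\R^\times$ attached to $V'$). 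For an arbitrary $g \in P(d)^\uparrow_+$ with $gW_R \subseteq -W_R$, the element $g_0^{-1}g$ lies in $P(d)^\uparrow_+$ and satisfies $(g_0^{-1}g)W_R \subseteq W_R$, so part (i) gives $U_{g_0^{-1}g}V \subseteq V$, whence $U_gV = U_{g_0}U_{g_0^{-1}g}V \subseteq U_{g_0}V = V'$.

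The only step requiring honest computation---and the main (minor) obstacle---is verifying that conjugation by $g_0$ sends $b_0$ to $-b_0$. This reduces to the $2\times 2$ identity $\mathrm{diag}(1,-1)\, b_0\, \mathrm{diag}(1,-1) = -b_0$ on the $(x_0,x_1)$-block, and it is precisely the point where the assumption $d \geq 3$ enters essentially: for $d = 2$ no wedge-reversing element lies in $P(2)^\uparrow_+$ at all, so condition (ii) is vacuous and the argument above collapses to part (i).
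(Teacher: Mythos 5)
Your proof is correct and follows essentially the same route as the paper: condition (i) via the semigroup decomposition of Lemma~\ref{lem:4.17}(vii) into $\oline{W_R}$-translations (handled by the converse part of Theorem~\ref{thm:6.2a}) and wedge-stabilizing elements (which commute with $\gamma_{W_R}$ by Lemma~\ref{lem:4.17}(iii)), and condition (ii) via the rotation $R_{12}=\diag(1,-1,-1,1,\ldots,1)$, which is exactly the element $r$ used in the paper, together with the identity $\gamma_{W_R}^{R_{12}}=\gamma_{W_R}^\vee$ giving $U_{R_{12}}V=V'$. The extra computations you supply ($R_{12}b_0R_{12}^{-1}=-b_0$ and the conjugation of $S_V$ to $S_{V'}$) just make explicit what the paper compresses into the citation of $\gamma_{W_R}^r=\gamma_{W_R}^\vee$.
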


\begin{proof} If $g W_R = W_R$, then $g \in P(d)_+$ commutes with $\gamma_{W_R}$ 
by Lemma~\ref{lem:4.17}(iii), so that $U_g V = V$. 
Further $U_x V \subeq V$ for $x \in W_R$ (and hence also for 
$x \in \oline{W_R}$ by continuity) follows from the second part of 
Theorem~\ref{thm:6.2a}. In view of Lemma~\ref{lem:4.17}(vii), 
this implies that $U_g V \subeq V$ if $gW_R \subeq W_R$. 

If $g W_R \subeq - W_R$, then the element $r := R_{12} = \diag(1,-1,-1,1,\ldots, 1) 
\in P(d)_+^\uparrow$ satisfies $r g W_R \subeq r(-W_R) = W_R$, 
so that the above argument leads to 
$U_g V = U_rU_{rg} V \subeq U_rV$. Now $\gamma_{W_R}^r = \gamma_{W_R}^\vee$ 
yields $U_r V = V' = V_{\gamma_{W_R}^\vee}$, so that $U_g V \subeq V'$. 
\end{proof}

\begin{remark} \mlabel{rem:inner} (a) In \cite{BLS11}, standard pairs with Poincar\'e symmetry 
are used to obtain Borchers triples by second quantization 
(cf.~Section~\ref{sec:6}). Composing with a 
deformation process due to Rieffel, this 
construction yields non-free quantum fields in arbitrary large dimensions. 

(b) The main point of the notion of a 
Borchers triple is that they can be used to construct a representation  
of the Poincar\'e group $P(d)_+^\uparrow$ by generating it with 
modular one-parameter groups of a finite set of von Neumann algebras 
with a common cyclic separating vector (\cite{Bo96}, \cite{Wi93c, Wi98, SW00, KW01}). 

(c) For $d = 1$, we think of $\R = \R^d$ as 
the underlying space as a light ray in Minkowski space, 
so that the Poincar\'e group is replaced by the affine group $\Aff(\R)$. 
In this context unitary ``endomorphisms'' of irreducible 
one-dimensional standard pairs (Definition~\ref{def:standard-pair}) are  
unitary operators $W \in \U(\cH)$ commuting with the one-parameter 
group $U$ satisfying $WV \subeq V$. 
If $P$ is the momentum operator determined by $U_t = e^{itP}$, 
then these are precisely the operators of the form 
$W = \phi(P)$, where $\phi$ is a 
{\it symmetric inner function} on the upper half plane 
$\C_+ \subeq \C$ and  
$P$ is the momentum operator. A symmetric inner function is a bounded 
holomorphic function on $\C_+$ satisfying 
\[ \phi(p)^{-1} = \oline{\phi(p)} = \phi(-p) \quad \mbox{ for almost all } \quad 
p \in \R\] 
(\cite[Cor.~2.4]{LW11}; see also  Example~\ref{ex:hardy}). 
That these functions can be used to construct Borchers triples was 
shown by Tanimoto in \cite{Ta12}. 
\end{remark}

Much more could be said about the structured 
related to standard subspaces, half-sided modular inclusions, 
modular intersections etc.. For more details and an in depth study 
of these concepts, we refer to \cite{Bo97} and Wiesbrock's work 
\cite{Wi93c, Wi97b, Wi98}.

\subsection{Modular geometry} 
\mlabel{subsec:4.4}

In this subsection we discuss some of the geometric structures
arising from a single von Neumann algebra $\cM \subeq B(\cH)$ which has 
cyclic separating vectors. Any such vector $\xi$ leads 
to a standard subspace $V_\xi = \oline{\cM_h \xi}$ and corresponding 
modular objects $(\Delta_\xi, J_\xi)$ (Theorem~\ref{thm:tom-tak}). Fixing a cyclic separating 
vector $\Omega$, the associated natural cone provides a means to 
analyze the orbits of the group generated by 
$\U(\cM)$, $\U(\cM')$ and the modular conjugations on the data.

\begin{definition} \mlabel{def:4.20} We consider a von Neumann algebra $\cM \subeq B(\cH)$ 
for which the set $\cs(\cM)$ of cyclic and separating unit vectors 
is non-empty. We fix an element $\Omega \in \cs(\cM)$ and the 
corresponding modular objects $(\Delta, J)$ 
(Theorem~\ref{thm:tom-tak}). We recall 
the {\it natural positive cone} 
\[ \cP := \oline{\{Aj(A)\Omega\: A \in \cM\}}, \quad \mbox{ where } \quad 
j(A) := JAJ\] 
(\cite[Def.~2.5.25]{BR87}) and write 
\[ \cs(\cM)_+ := \cP \cap \cs(\cM)\] 
for the set of cyclic separating unit vectors in $\cP$. 
We further write 
\[ \mc(\cM) := \{ J_\xi \: \xi \in \cs(\cM)\} \] 
for the corresponding set of {\it modular conjugations}. We further consider the 
set 
\[ \ms(\cM) = \{ V_\xi = \oline{\cM_h \xi} \: \xi \in \cs(\cM)\} 
\subeq \Stand(\cH) \] 
of {\it modular standard subspaces for $\cM$} and 
note that $\Delta_{V_\xi} = \Delta_\xi$ and $J_{V_\xi} = J_\xi$. 
\end{definition}

We write $\cZ := \cM \cap \cM'$ for the center of $\cM$. 

\begin{proposition} \mlabel{prop:4.7} 
The following assertions hold: 
\begin{itemize}
\item[\rm(i)] {\rm(Polar decomposition of $\cs(\cM)$)} The map 
$\U(\cM') \times \cs(\cM)_+ \to \cs(\cM), (U,\xi) \mapsto U\xi$ 
is a bijection. 
\item[\rm(ii)] The unitary groups $\U(\cM)$ and $\U(\cM')$ 
both act transitively on $\mc(\cM)$ by conjugation. 
For $J \in \mc(\cM)$, the stabilizer in both groups is the discrete central subgroup 
\[ \U(\cM')_J = \U(\cM)_J = \Inv(\U(\cZ)) = \{ z \in \U(\cZ) \: z^2 = \1\}\]  
of central unitary involutions. The orbit map 
$\sigma \: \U(\cM) \to \mc(\cM), \sigma(U) := UJU^{-1}$ 
is a covering morphism of Banach--Lie groups if we identify 
$\mc(\cM)$ with the quotient $\U(\cM)/\ker\sigma$. 
\item[\rm(iii)] $\grp(\mc(\cM)) = \Comm(\U(\cM)) \mc(\cM) \{ \1,J\},$ 
where $\Comm(\U(\cM))$ is the commutator subgroup of $\U(\cM)$. 
\item[\rm(iv)] $\cs(\cM)_J := \{ \xi \in \cs(\cM) \: J_\xi = J \} 
= \Inv(\U(\cZ))\cs(\cM)_+$. 
\item[\rm(v)] The stabilizer of $J$ in the group 
$\U(\cM)U(\cM')$ is $\{ Uj(U) \: U \in \U(\cM)\}\Inv(\U(\cZ))$. 
\item[\rm(vi)] For $\xi \in \cs(\cM)$, we have 
$V_\xi = V_\Omega$ if and only if there exists a 
selfadjoint operator $Z$ affiliated with $\cZ$, i.e., 
commuting with $\U(\cM)\U(\cM')$, such that $\xi = Z\Omega$. 
\end{itemize}
\end{proposition}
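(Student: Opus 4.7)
The plan is to derive all six items from three core technical inputs: Connes' natural-cone theorem asserting that every normal positive form on $\cM$ has a unique vector representative in $\cP$; the direct identity $S_{U\eta} = U S_\eta U^{-1}$ for $U \in \U(\cM')$ and $\eta \in \cs(\cM)$; and the properties of the antilinear algebra homomorphism $j(X) := JXJ$, which swaps $\cM$ and $\cM'$ and restricts to $z \mapsto z^*$ on the center $\cZ$ by Theorem~\ref{thm:tom-tak}(c).

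For (i), I would apply Connes' theorem to the normal state $\omega_\xi(M) = \langle \xi, M\xi\rangle$, obtaining its unique cone representative $\eta \in \cP$. GNS uniqueness then yields a partial isometry $U \in \cM'$ with $U\eta = \xi$, and the cyclic/separating property of $\xi$ forces $U$ unitary and $\eta \in \cs(\cM)_+$; uniqueness of $(U,\eta)$ follows from uniqueness of the natural cone representative. For (ii), taking the polar decomposition of $S_{U\eta} = U S_\eta U^{-1}$ and using the standard fact $J_\eta = J$ for $\eta \in \cs(\cM)_+$ (vectors in $\cP$ are $J$-fixed), one obtains $J_{U\eta} = UJU^{-1}$, yielding via (i) the transitivity of $\U(\cM')$ on $\mc(\cM)$. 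Transitivity of $\U(\cM)$ follows by the same route applied to $U\Omega \in \cs(\cM)$ for $U \in \U(\cM)$. The stabilizer calculation reduces $UJU^{-1} = J$ to $j(U) = U$: since $j$ swaps $\cM$ and $\cM'$, this forces $U \in \cZ$; there $j(U) = U^*$, so $U = U^*$ and $U^2 = \1$, i.e., $U \in \Inv(\U(\cZ))$. Discreteness of this stabilizer combined with smoothness of $\sigma$ yields the Banach--Lie covering statement.

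Parts (iv) and (v) are direct corollaries. Part (iv) reads $\cs(\cM)_J = \Inv(\U(\cZ)) \cdot \cs(\cM)_+$ off from (i)--(ii): $\xi = V\eta$ with $V \in \U(\cM')$ and $\eta \in \cs(\cM)_+$ gives $J_\xi = J$ precisely when $V \in \Inv(\U(\cZ))$. For (v), an element $UV \in \U(\cM)\U(\cM')$ commutes with $J$ iff $UV = j(UV) = j(V)j(U)$; exploiting the $\U(\cZ)$-ambiguity in the $\U(\cM)\U(\cM')$-decomposition yields $U = j(V)c$ with $c \in \U(\cZ)$ satisfying $j(c) = c$, which forces $c^* = c$ and $c^2 = \1$; then $UV = W j(W) \cdot c$ for $W := j(V) \in \U(\cM)$, as claimed. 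For (iii), I would expand products $J_{\xi_1}\cdots J_{\xi_n}$ via the representation $J_{\xi_i} = U_iJU_i^{-1}$ with $U_i \in \U(\cM)$ from (ii), use $JXJ = j(X)$ to disentangle the $\cM$- and $j(\cM)$-factors, and invoke $\cM$-$\cM'$ commutativity to recombine the $j$-residues into commutators $[U_i, U_j^{-1}] \in \Comm(\U(\cM))$; the leftover $J$-conjugation produces a single element of $\mc(\cM)$ and a factor in $\{\1, J\}$ according to the parity of $n$.

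For (vi), I would reduce $V_\xi = V_\Omega$ to $S_\xi = S$ (equivalent because $S$ determines $V = \Fix(S)$ and vice versa), then write $\xi = T\Omega$ for a closed densely defined $T$ affiliated with $\cM'$ (possible since $\Omega$ is cyclic for $\cM'$ and $\xi \in \cs(\cM)$). The commutation of $T$ with $\cM$ yields $S_\xi = TST^{-1}$ on $\cM\Omega$, so $S_\xi = S$ becomes $[T, J] = [T, \Delta^{1/2}] = 0$. Commutation with all $\Delta^{it}$ forces $T$ affiliated with $\cM \cap \cM' = \cZ$, and commutation with $J$ (restricting to $z \mapsto z^*$ on $\cZ$ and extending to affiliated operators) then forces $T = T^*$. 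The main obstacle is part (iii): carefully tracking how commutators in $\U(\cM)$ interact with the involution $j$ under iterated products of modular conjugations, and verifying that the resulting factorization is precisely $\Comm(\U(\cM)) \cdot \mc(\cM) \cdot \{\1, J\}$ rather than a larger or smaller set.
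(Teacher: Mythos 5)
Parts (i)--(v) of your proposal are essentially sound and track the paper's argument closely; in fact your uniqueness argument in (i) (same state on $\cM$ $\Rightarrow$ same natural-cone representative $\Rightarrow$ the $\U(\cM')$-intertwiner is trivial by the separating property) is a little cleaner than the paper's eigenspace decomposition of $\cM$ via $\cP_-\cap -\cP_-=\{0\}$, and your handling of (iii) is just the hand-unpacked version of the paper's abstract group lemma for $(G\times G)\rtimes\{\1,\tau\}$. One imprecision in (ii): applying your formula $J_{U\Omega}=UJU^{-1}$ to $U\in\U(\cM)$ only shows that the $\U(\cM)$-orbit of $J$ is \emph{contained} in $\mc(\cM)$; for transitivity you must exchange the roles of $\cM$ and $\cM'$ (using $\cs(\cM)=\cs(\cM')$, $J_{\cM,\xi}=J_{\cM',\xi}$ and the polar decomposition of $\cs(\cM')$), which is what the paper does.

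Part (vi), however, has a genuine gap. First, the starting point ``write $\xi=T\Omega$ for a closed densely defined $T$ affiliated with $\cM'$'' is not justified by cyclicity of $\Omega$ for $\cM'$: the densely defined map $A'\Omega\mapsto A'\xi$ ($A'\in\cM'$) is affiliated with $\cM$, not $\cM'$, and the map $A\Omega\mapsto A\xi$ ($A\in\cM$) need not be closable. The existence of such a representative with the right affiliation is essentially the content of the noncommutative Radon--Nikodym theorem, which is exactly the machinery the paper invokes (it deduces from $\Delta_\xi=\Delta$ that $\omega_\xi$ is a KMS state for the modular group and then applies \cite[Prop.~5.3.29]{BR96} to produce $\sqrt{T}$ affiliated with $\cZ$). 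Second, even granting $\xi=T\Omega$, the step ``$TST^{-1}=S$ becomes $[T,J]=[T,\Delta^{1/2}]=0$'' is invalid: $TJT^{-1}$ need not be antiunitary nor $T\Delta^{1/2}T^{-1}$ positive, so $TST^{-1}$ is not presented in polar-decomposed form, and commuting with $S$ is strictly weaker than commuting with $J$ and $\Delta^{1/2}$ separately (e.g.\ every invertible operator preserving $V=\Fix(S)$ commutes with $S$). Third, the attribution is reversed: for $T$ affiliated with $\cM'$ it is commutation with $J$ that forces $T$ into $\cZ=\cM\cap\cM'$ (since $JTJ$ is affiliated with $\cM$), whereas commutation with all $\Delta^{it}$ only places $T$ in the centralizer of the state on $\cM'$, which is in general much larger than $\cZ$ (cf.\ the example $\cM=B(\cK)$ on $B_2(\cK)$, where the fixed points of the modular group in $\cM'$ are the right multiplications by operators commuting with $D$). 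To repair (vi) you should follow the paper's route through KMS states, or otherwise supply an independent proof of the uniqueness statement of the Radon--Nikodym theorem for weights.
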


\begin{proof} (i) For any $\xi \in \cs(\cM)$, there exists a 
unit vector $\tilde\xi \in \cP$ defining the same state of $\cM$ 
(\cite[Thm.~2.5.31]{BR87}). 
By the GNS Theorem, there exists a $U \in \U(\cM')$ with 
$\xi = U\tilde\xi$. 
Since the elements of $\cs(\cM)_+$ are also separating for $\cM'$, 
their stabilizer in $\U(\cM')$ is trivial. 

To verify injectivity, it remains to see that every $\U(\cM')$-orbit 
in $\cs(\cM)$ meets $\cs(\cM)_+$ exactly once. 
Let $U \in \U(\cM')$ and $\xi \in \cs(\cM)_+$ be such that 
$U\xi \in \cP$. As $J_\xi = J$ for every $\xi \in \cP$ by 
\cite[Prop.~2.5.30]{BR87}, we obtain 
$U J U^{-1} = J_{U\xi} = J$. 
Then $j(U)= U$ leads to $U \in \cM \cap \cM'$ and hence 
to $U = j(U) = U^{-1}$ (Theorem~\ref{thm:tom-tak}(c)), so that $U^2 =\1$. 
Then $\cM_{\pm 1}:= \{ M \in \cM \: UM = \pm M\}$ are ideals of $\cM$ 
and $\cM \cong \cM_+ \oplus \cM_-$ is a direct sum of von Neumann algebras. 
Now $\xi = \xi_+ \oplus \xi_-$ decomposes accordingly with 
$\xi_\pm \in \cs(\cM_\pm)$. As 
$U \xi = \xi_+ - \xi_-$ and 
$\cP = \cP_+ \oplus \cP_-$, it follows that  
$\xi_- \in \cP_- \cap - \cP_- = \{0\}$ (\cite[Prop.~2.5.28]{BR87}) 
and thus $\cM_- = \{0\}$ and $U = \1$. 

(ii) If $\Omega_1, \Omega_2 \in \cs(\cM)$, 
then \cite[Lemma~2.5.35]{BR87} implies the existence of a 
unitary element $U \in \U(\cM')$ with $U J_{\Omega_1} U^{-1} = J_{\Omega_2}.$ 
Exchanging the roles of $\cM$ and $\cM'$, it also follows that 
$\U(\cM)$ acts transitively on $\mc(\cM)$. 

For $J \in \mc(\cM)$ we have $J\cM J = \cM'$, so that, for $U \in \U(\cM)$, the relation 
$UJU^{-1} = J$ implies $U = JUJ \in \cZ$. As in (i), this leads to 
$JUJ = U^* = U^{-1}$, so that $U^2 = \1$. Conversely, any involution in 
$\U(\cZ)$ stabilizes~$J$.

Clearly, $\sigma$ is a surjective equivariant map whose kernel is discrete in the 
norm topology. As the stabilizer subgroup of $J$ in $\U(\cM)$ 
is discrete and central, the quotient $\U(\cM)/\ker\sigma$ carries a natural 
Banach--Lie group structure for which $\sigma$ 
becomes a covering homomorphism. 

(iii) We consider the group $G:= \U(\cM')$ and the representation 
of $(G \times G) \rtimes \{\1,\tau\}$ on $\cH$ given by 
$U(g,h,\tau^\eps) = g JhJ J^\eps.$ 
Then Proposition~\ref{prop:4.7} shows that 
\[ U(C_{(e,e,\tau)}) = \{ g Jg^{-1}J J \: g \in \U(\cM') \} 
= \{ g Jg^{-1} \: g \in \U(\cM') \} = \mc(\cM).\] 
Now the assertion follows from Lemma~\ref{lem:abstract-grp}. 

(iv)  We have already seen in (i) that $J_\xi = J$ for every 
$\xi \in \cs(\cM)_+$. 
If $\xi = U \tilde\xi$ for some $U \in \U(\cM')$ and 
$\tilde\xi \in\cs(\cM)_+$ as in (i), then 
$J_\xi = U J U^{-1}$ equals $J$ if and only if 
$U \in \cM \cap \cM'$ is an involution. This proves (iv).

(v) Since $J$ commutes with each operator of the form 
$J U JU = j(U)U = U j(U)$, the stabilizer contains all these elements 
and also $\Inv(\cZ)$, as we have already seen above. 
If, conversely, $U \in \U(\cM)$ and $W \in \U(\cM')$ are such that 
$UW$ commutes with $J$, then 
$UW = UJUJ (JUJ)^{-1} W$ with 
$(JUJ)^{-1}W \in \U(\cM')_J = \Inv(\U(\cZ))$ by (ii). 

(vi) We shall use the theory of KMS states (cf.~\cite{BR96}). 
We recall that, for a continuous action $\alpha \: \R \to \Aut(\cA)$ 
of $\R$ on a $C^*$-algebra $\cA$, a state $\omega$ of $\cA$ 
is called an {\it $\alpha$-KMS state} if, for every pair of hermitian elements 
$A,B \in \cA$, the function 
\[ \psi \: \R \to \C, \quad \psi(t) :=\omega(A\alpha_t(B)) \] 
extends analyticallty to a holomorphic function on the strip 
$\cS := \{ z \in \C \: 0 < \Im z < 1\}$, extends continuously 
to its closure and satisfies 
$\psi(i+t) = \oline{\psi(t)}$ for $t \in \R$. 

First we observe that $\omega(A) := \la \Omega, A \Omega\ra$ 
is a KMS state with respect to the modular automorphism 
group $\alpha_t(A) = \Delta^{it} A\Delta^{-it}$ 
(Takesaki's Theorem,  \cite[Thm.~5.3.10]{BR96}). 
Now let $\xi \in \cs(\cM)$ with $V_\xi = V_\Omega$, i.e., 
$J_\xi = J$ and $\Delta_\xi = \Delta$. Then, for the same reason, the state 
$\omega_\xi(A) := \la \xi, A \xi\ra$ is also an $\alpha$-KMS state. 
By \cite[Prop.~5.3.29]{BR96}, there exists a unique 
positive selfadjoint operator 
$T \geq 0$ affiliated with $\cZ$ such that 
\[\la \xi, A \xi \ra
=  \omega_\xi(A) 
= \omega(\sqrt{T} A \sqrt{T}) 
= \la \Omega, \sqrt{T} A \sqrt{T} \Omega \ra
= \la \sqrt{T} \Omega,  A \sqrt{T} \Omega \ra 
\quad \mbox{ for } \quad A \in \cM.\] 
Therefore $\xi$ and $\sqrt{T}\Omega$ define the same state. 
Further $\sqrt{T}\Omega$ is also contained in the natural cone $\cP$ 
(\cite[Prop.~2.5.26]{BR87}). 

As we have seen in (i), there exists a unique 
$U \in \U(\cM')$ with $U\xi \in \cs(\cM)_+$. 
As $J = J_{U\xi} = U J_\xi U^{-1} = U^{-1} J U$, it 
follows from (ii) that $U \in \Inv(\U(\cZ))$. 
As $U\xi$ and $\sqrt{T}\Omega$ define the same state and 
both are contained in $\cP$, 
\cite[Thm.~2.5.31]{BR87} yields $U\xi = \sqrt{T}\Omega$, 
i.e., $\xi = U \sqrt{T}\Omega$. Now the assertion 
follows with $Z := U \sqrt{T}$. 

Suppose, conversely, that $\xi = Z \Omega$ 
with a selfadjoint operator affiliated to~$\cZ$. 
Decomposing $\cH$, $\cM$ and $\Omega$ as a direct sum 
corresponding to bounded spectral projections of $Z$ 
(which are central in $\cM$ as well), we may w.l.o.g.\ assume that 
$Z$ is bounded. Since $\xi$ is separating, $\ker Z = \{0\}$, 
so that we may further assume that $Z$ is invertible. 
As $Z$ commutes with $J$ and $\Delta$, 
it commutes with $S = J \Delta^{1/2}$, and thus $Z$ 
leaves $V = \Fix(S)$ invariant. This shows that 
$V_\xi = Z V = V$. 
\end{proof}

\begin{remark} (a) Proposition~\ref{prop:4.7}(iv) describes
the fibers of the map 
\[ \cs(\cM) \to \mc(\cM), \quad \xi \mapsto J_\xi.\] 
This map is $\U(\cM')$-equivariant, so that the space 
$\cs(\cM)$ is a homogeneous $\U(\cM')$-bundle over the symmetric 
space $\mc(\cM)$. 

(b) Proposition~\ref{prop:4.7}(vi) describes the fibers 
of the map $\cs(\cM)\to \ms(\cM)$ in terms of the center~$\cZ$. 
If $\cM$ is a factor, i.e., $\cZ = \C\1$, then we see in particular that 
$V_\xi = V$ implies $\xi = \pm \Omega$ (because $\|\xi\|=1$). 
\end{remark}

\begin{lemma}[Stabilizer subgroup of $V = V_\Omega$]
The stabilizer of $V$ in the group $G :=   \U(\cM)\U(\cM')$ 
consists of all elements of the form 
$g = u j(u) z$ with $z \in \Inv(\U(\cZ))$ and 
$u \in \U(\cM)$ fixed by the modular automorphisms 
$\alpha_t(M) = \Delta^{it} M \Delta^{-it}$. 
\end{lemma}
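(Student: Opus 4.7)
The plan is to exploit that $V$ is the fixed-point space of the densely defined antilinear involution $S = J\Delta^{1/2}$, whose polar decomposition is unique. A unitary $g \in G$ thus preserves $V$ if and only if $gSg^{-1} = S$, equivalently if and only if $g$ commutes with both $J$ and the modular one-parameter group $(\Delta^{it})_{t\in\R}$. So the stabilizer of $V$ in $G$ is the intersection of the stabilizer of $J$ with the centralizer of the modular flow; the first of these is already computed in Proposition~\ref{prop:4.7}(v) to be $\{u\,j(u) : u\in\U(\cM)\}\cdot\Inv(\U(\cZ))$.

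Starting from an arbitrary element $g = u\,j(u)\,z$ with $u\in\U(\cM)$ and $z\in\Inv(\U(\cZ))$, I would next compute $\alpha_t(g)$. First note that $J$ commutes with every $\Delta^{it}$: the relation $J\Delta J = \Delta^{-1}$ together with the antilinearity of $J$ yields $J\log\Delta\cdot J = -\log\Delta$, whence $J\Delta^{it}J = \Delta^{it}$. Consequently $\Delta^{it}\,j(u)\,\Delta^{-it} = J\,\alpha_t(u)\,J = j(\alpha_t(u))$, and combined with $\alpha_t(z) = z$ from Theorem~\ref{thm:tom-tak}(c) this gives
\begin{align*}
\alpha_t(g) = \alpha_t(u)\,j(\alpha_t(u))\,z.
\end{align*}
Thus $g$ is fixed by the modular flow if and only if $\alpha_t(u)\,j(\alpha_t(u)) = u\,j(u)$ for every $t$.

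To exploit this equation, set $w_t := u^{-1}\alpha_t(u) \in \U(\cM)$ and rearrange: $w_t = j(u\,\alpha_t(u)^{-1}) = j(u\,w_t^{-1}u^{-1}) = j(u)\,j(w_t)^{-1}\,j(u)^{-1}$. The left-hand side lies in $\cM$ while the right-hand side lies in $\cM'$ (since $j$ maps $\cM$ onto $\cM'$), so $w_t\in\cM\cap\cM'=\cZ$; then $j(w_t) = w_t^*$ and the identity becomes automatic, confirming that the condition $\alpha_t(g)=g$ is equivalent to $u^{-1}\alpha_t(u)\in\U(\cZ)$ for all $t$. The remaining---and most delicate---step is to replace $u$ by a genuinely $\alpha$-fixed representative of its coset and absorb the residue into $z$: since $u\mapsto u\,j(u)$ has kernel exactly $\U(\cZ)$ (Proposition~\ref{prop:4.7}(ii)) and $j(v) = v^*$ for $v\in\U(\cZ)$, changing $u$ by a central unitary does not affect $u\,j(u)$, while the general change $u \leadsto uw$ with $w\in\U(\cM)$ alters $u\,j(u)$ by $w\,j(w)$; trivializing the continuous one-parameter subgroup $(w_t)\subseteq\U(\cZ)$ via a square-root construction in the spirit of Lemma~\ref{lem:app.1} then produces $u_0\in\U(\cM)^\alpha$ with $u_0\,j(u_0) = u\,j(u)\,\varepsilon$ for some $\varepsilon\in\Inv(\U(\cZ))$, and $g = u_0\,j(u_0)\,(\varepsilon^{-1}z)$ has the asserted form. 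The converse inclusion is immediate from $\alpha_t(u_0) = u_0$ and $\alpha_t(z) = z$, which make $u_0j(u_0)z$ manifestly commute with both $J$ and each $\Delta^{it}$.
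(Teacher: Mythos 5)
Your argument is correct, and coincides with the paper's, up to and including the key equivalence: for $g = u\,j(u)\,z$ with $z \in \Inv(\U(\cZ))$, invariance under the modular flow is equivalent to $w_t := u^{-1}\alpha_t(u) \in \U(\cZ)$ for all $t$. The gap is in your final step, and it is not merely a delicate point left to the reader: the proposed construction cannot work. If $u_0 = uw$ with $w \in \U(\cM)$, then indeed $u_0\,j(u_0) = u\,j(u)\,w\,j(w)$; but $w\,j(w) \in \U(\cZ)$ forces $j(w) \in w^{-1}\cZ$, hence $w \in \cM \cap \cM' = \cZ$, and then $w\,j(w) = ww^* = \1$. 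So the only modifications of $u$ compatible with changing $u\,j(u)$ by a central involution are modifications by \emph{central} unitaries, which produce only $\varepsilon = \1$ and, crucially, leave the cocycle untouched: for $w \in \U(\cZ)$ one has $\alpha_t(w) = w$ by Theorem~\ref{thm:tom-tak}(c), so $(uw)^{-1}\alpha_t(uw) = w^{-1}w_t w = w_t$. No square-root construction in $\U(\cZ)$, in the spirit of Lemma~\ref{lem:app.1} or otherwise, can trivialize the one-parameter group $(w_t)_{t\in\R}$ this way.

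The missing ingredient --- which is the actual content of the lemma --- is that the condition $u^{-1}\alpha_t(u) \in \cZ$ for all $t$ already forces $\alpha_t(u) = u$, so that no replacement of $u$ is needed at all. The paper proves this with a KMS argument: since $\alpha_t(u) = u w_t$ with $w_t$ central, the abelian von Neumann algebra $\cA$ generated by $\cZ$ and $u$ is globally $\alpha$-invariant; the state $\omega(A) = \la \Omega, A\Omega\ra$ restricted to $\cA$ is a faithful normal KMS state for $\alpha_t\res_\cA$; and by Takesaki's theorem (\cite[Thm.~5.3.10]{BR96}) the unique automorphism group of $\cA$ satisfying the KMS condition with respect to this state is its modular group, which is trivial because $\cA$ is abelian. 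Hence $\alpha_t\res_\cA = \id_\cA$ and in particular $\alpha_t(u) = u$. You should replace your last step by this (or an equivalent) argument; the converse direction of your proof is fine.
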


\begin{proof} Since standard subspaces are completely determined by their 
modular objects, the stabilizer of $V$ in $G$ is 
\[ G_V = G_J \cap G_\Delta = \{ g \in G \: gJg^{-1} = J, g\Delta g^{-1} = \Delta \}.\] 
By Proposition~\ref{prop:4.7}(v), any $g \in G_J$ is of the form 
$g = uj(u)z$ with $u \in \U(\cM)$ and $z \in \Inv(\U(\cZ))$. 
As $z$ is central, it commutes with the modular unitaries 
$\Delta^{it}$ (Theorem~\ref{thm:tom-tak}(c)), i.e., $z \in G_V$. 
An element of the form $g = uj(u)$ is fixed by each $\alpha_t$ 
if and only if 
\[ \alpha_t(u) J \alpha_t(u) J = u J u J, 
\quad \mbox{ resp. } \quad 
u^{-1} \alpha_t(u)  = J u \alpha_t(u)^{-1} J.\] 
Then  $u^{-1} \alpha_t(u) \in \cM \cap J\cM J = \cZ$. 
To see that this implies that $\alpha_t(u) = u$, we consider the 
commutative von Neumann subalgebra $\cA \subeq \cM$ 
generated by the center $\cZ$ and $u$. As each $\alpha_t$ fixes the 
center pointwise, we have $\alpha_t(\cA) = \cA$ for every $t \in \R$. 
Then the state of $\cA$ given by 
$\omega(A) := \la\Omega, A \Omega \ra$ is a KMS state 
with respect to $\alpha_t\res_\cA$, so that 
the restrictions $\alpha_t\res_\cA$ are the unique automorphisms corresponding 
to this KMS states (\cite[Thm.~5.3.10]{BR96}). 
Since $\cA$ is abelian, the uniqueness of the automorphism group 
implies its triviality. We conclude that each $\alpha_t$ fixes~$u$ if 
$g \in G_V$. 

If, conversely, $\alpha_t(u) =u$, then $\alpha_t$ fixes $g$. 
This implies that $g$ commutes with $S = J \Delta^{1/2}$, hence 
preserves $V = \Fix(S)$. 
\end{proof}

\begin{ex} \mlabel{ex:hilb-schm} 
(a) If $\cM$ is a factor, then $\Inv(\U(\cZ)) = \{ \pm 1\}$, 
so that $\U(\cM)_J = \{\pm \1\}$ and $\mc(\cM) \cong \U(\cM)/\{\pm \1\}$. 

(b) For $\cM = B(\cK)$ acting on $\cH = B_2(\cK)$ 
by left multiplications,  we have 
$JA = A^*$ (Example~\ref{ex:3.2}(b)) and by (a), 
we have $\mc(B(\cK)) \cong \U(\cK)/\{\pm \1\}$. 
For any $0 < \Omega = \Omega^* \in \cs(\cM)$ we have 
$\cP = \{ A \in B_2(\cK) \: A \geq 0\}$. 

If $\Omega = \sqrt{D}$ holds for the trace class operator $D > 0$, 
then the centralizer of $\Delta$ in $\U(\cM)$ is  
\[ \{ g \in \U(\cM) \: g\Delta g^{-1} = \Delta\} 
\cong  \{ g \in \U(\cK) \: gDg^{-1} = D\}, \] 
and since $D$ is diagonalizable, this subgroup consists 
of those unitaries leaving all eigen\-spaces of $D$ invariant. 
In particular 
\[ \{ A \in B_2(\cK) \: A = A^*, [A,D] = 0\} \subeq V \cap V' \] 
shows that $V \cap V'$ is much larger than $\R \Omega$. 
For every elements $A = A^* \not\in \cP \cup -\cP$ 
with $\ker A = \{0\}$, we have $JA = A$ but 
$J_A \not=J$ (Proposition~\ref{prop:4.7}(iv)). 

(c) For $\cM = L^\infty(X,\fS,\mu)$, $\mu$ finite, 
acting on $\cH = L^2(X,\mu)$ by multiplication operators, 
we find for $Jf= \oline f$ that 
$\U(\cM)_J$ is the set of involutions in $\U(\cM)$.  
As the squaring map $U \mapsto U^2$ is a morphism of Banach--Lie groups, 
the Banach symmetric space 
$\mc(\cM)$ is diffeomorphic to the unitary group and we have a 
short exact sequence 
\[ \1 \to \Inv(\U(\cM)) \to \U(\cM) \to \mc(\cM) \to \1.\] 
\end{ex}

\section{Nets of standard subspaces and von 
Neumann algebras} 
\mlabel{sec:5}

In this section we briefly discuss some elementary properties of 
nets of standard subspaces 
$(V_\ell)_{\ell \in L}$ and their connection with 
antiunitary representations $(U,\cH)$. The connection 
with nets of von Neumann algebras and QFT is made in \S \ref{subsec:5.2}. 
Nets of standard subspaces are considerably simpler than 
nets of von Neumann algebras and naturally 
determined by an antiunitary representation, 
of the group generated by all subgroups $U^{V_\ell}(\R^\times) \subeq \AU(\cH)$ 
(Proposition~\ref{prop:3.2}), but this group need not be finite dimensional. 

\subsection{Nets of standard subspaces} 

Let $\cV := (V_\ell)_{\ell \in L}$ be a family of standard subspaces of the Hilbert 
space~$\cH$. We assume that the map $\ell\mapsto V_\ell$ is injective, so that 
the index set $L$ is only a notational convenience and we could equally 
well work directly with the subset $\{ V_\ell \: \ell \in L\} \subeq \Stand(\cH)$ 
(cf.\ \cite{BGL02, SW03}). 

\begin{definition}
A {\it net automorphism} is an $\alpha \in \AU(\cH)$ permuting 
the standard subspaces $V_\ell$. We write $\Aut(\cV) \subeq \AU(\cH)$ 
for the subgroup of net automorphisms. 
A net automorphism is called an {\it internal symmetry} if it preserves 
each $V_\ell$ separately. The corresponding subgroup of $\Aut(\cV)$ 
is denoted $\Inn(\cV)$. 
\end{definition}

We write $(\Delta_\ell, J_\ell)$ for the modular objects corresponding 
to $V_\ell$ and consider the {\it modular symmetry group} 
\[ \cJ := \grp(\{J_\ell \: \ell \in L\}) \subeq \AU(\cH) \] 
generated by the modular conjugations. 
A natural assumption enriching the underlying geometry is 
the condition of {\it geometric modular action} 
\begin{description}
\item[\rm(CGMA)] $\cJ \subeq \Aut(\cV)$ 
\end{description}
(see \cite{BS93, BDFS00} for the von Neumann context). 
The condition 
\begin{description}
\item[\rm(MS)] $\Delta_{V_\ell}^{it} \in \cJ$ for all $t \in \R, \ell \in L$ 
\end{description}
is called the {\it modular stability condition} 
(\cite{BDFS00}, \cite[\S\S IV.5.6/7]{Bo00}).

From now on we assume that (CGMA) is satisfied. 
We obtain an action \break $\sigma \: \Aut(\cV) \times L \to L, 
(g,\ell) \mapsto \sigma_g(\ell)$ 
on the index set $L$ by 
\begin{equation}
  \label{eq:modact1}
 g V_\ell  = V_{\sigma_g(\ell)}.
\end{equation}
This further implies 
\begin{equation}
  \label{eq:modact2}
g J_\ell g^{-1} = J_{\sigma_g(\ell)} \quad \mbox{ and }\quad 
g \Delta_\ell g^{-1} = \Delta_{\sigma_g(\ell)}. 
\end{equation}
In particular, (CGMA) implies that  
\[ \cS := \{ J_\ell \: \ell \in L\} \] 
is a conjugation invariant set of generators of $\cJ$. 
This fact opens the door to the construction 
of geometric structures from the group $\cJ$ and its generating set 
$\cS$ in specific situations. 

\begin{lemma} \mlabel{lem:5.2.1} 
The subgroup of $\cJ$ of those elements acting trivially on $L$ 
is its center 
\[ \cZ := \{ g \in \cJ \: (\forall \ell \in L)\, gV_\ell = V_\ell \} 
= \Inn(\cV) \cap \cJ = \ker \sigma.\] 
\end{lemma}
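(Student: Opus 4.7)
The three equalities split into two directly definitional identifications and one statement of real content. First, $\cZ = \Inn(\cV)\cap\cJ$ is immediate from the definition of $\Inn(\cV)$: an element of $\cJ$ lies in $\Inn(\cV)$ precisely when it preserves every $V_\ell$, which is the defining condition of $\cZ$. Similarly, $\cZ = \ker\sigma$ follows from \eqref{eq:modact1} together with the standing injectivity of $\ell\mapsto V_\ell$: we have $\sigma_g(\ell)=\ell$ iff $gV_\ell = V_{\sigma_g(\ell)}=V_\ell$.

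The substantive claim is that this subgroup coincides with the center $Z(\cJ)$. For the inclusion $\cZ \subseteq Z(\cJ)$, I would argue as follows. If $g \in \cZ$, then $g$ preserves each $V_\ell$, and hence commutes with the closed antilinear involution $S_\ell = J_\ell\Delta_\ell^{1/2}$ (whose fixed point set is $V_\ell$). Conjugation by the unitary or antiunitary operator $g$ preserves polar decompositions, and $gS_\ell g^{-1} = S_\ell$ then forces, by uniqueness of the polar decomposition, $gJ_\ell g^{-1} = J_\ell$ and $g\Delta_\ell g^{-1} = \Delta_\ell$ for every $\ell \in L$. In particular $g$ commutes with the generating set $\{J_\ell : \ell \in L\}$ of $\cJ$, so $g \in Z(\cJ)$.

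For the reverse inclusion $Z(\cJ) \subseteq \cZ$: if $g \in Z(\cJ)$, then $g J_\ell g^{-1} = J_\ell$ for every $\ell$, which combined with \eqref{eq:modact2} yields $J_{\sigma_g(\ell)} = J_\ell$. To upgrade this to $V_{\sigma_g(\ell)} = V_\ell$, I would invoke the modular stability condition (MS): it gives $\Delta_\ell^{it} \in \cJ$ for all $t \in \R$ and all $\ell$, so centrality of $g$ also forces $g\Delta_\ell^{it}g^{-1} = \Delta_\ell^{it}$, hence $g\Delta_\ell g^{-1} = \Delta_\ell$. Combined with $g J_\ell g^{-1} = J_\ell$ this gives $gS_\ell g^{-1}=S_\ell$, so $g$ preserves $V_\ell = \Fix(S_\ell)$, i.e., $g \in \cZ$.

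The main obstacle is precisely this last step: passing from commutation with all modular conjugations to commutation with all modular operators. Without (MS), one knows only $J_{\sigma_g(\ell)}=J_\ell$, which does not by itself force $\sigma_g(\ell)=\ell$ since standard subspaces are not determined by their modular conjugations alone. Under (MS) the argument closes cleanly, and this is the hypothesis I would use.
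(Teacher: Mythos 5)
Your proposal is correct and in fact more complete than the paper's own argument. The paper's proof consists of exactly one step: if $\sigma_g = \id_L$, then \eqref{eq:modact2} gives $gJ_\ell g^{-1} = J_{\sigma_g(\ell)} = J_\ell$ for every $\ell$, and since $\cJ$ is generated by $\cS = \{J_\ell : \ell \in L\}$, the element $g$ is central. That is your inclusion $\cZ \subseteq Z(\cJ)$; you rederive the relation $gJ_\ell g^{-1}=J_\ell$ from scratch via uniqueness of the polar decomposition of $S_\ell$, which is precisely how \eqref{eq:modact2} is obtained from \eqref{eq:modact1} in the first place, so the content is the same. The definitional identifications $\cZ = \Inn(\cV)\cap\cJ = \ker\sigma$ are treated as self-evident in the paper, as in your proposal.

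Where you go beyond the paper is the reverse inclusion $Z(\cJ) \subseteq \cZ$: the paper's proof simply does not address it, even though the lemma is worded as an equality with the center. Your diagnosis of the obstacle is exactly right: commuting with all $J_\ell$ only yields $J_{\sigma_g(\ell)} = J_\ell$, and since a standard subspace is not determined by its conjugation alone, this does not force $\sigma_g(\ell)=\ell$. Indeed the equality can fail under (CGMA) alone: take $L = \{V, V'\}$ for a single standard subspace with $\Delta_V \neq \1$; then $J_{V'}=J_V$, so $\cJ = \{\1, J_V\}$ is abelian and $J_V$ is central, yet $J_V V = V' \neq V$. Your repair via (MS) is sound ($g$ then also commutes with the $\Delta_\ell^{it}$, hence with $S_\ell$, hence fixes $V_\ell$), but be aware that (MS) is not among the standing hypotheses at this point in the text --- only (CGMA) is assumed --- so you are proving a strengthened statement under a strengthened hypothesis. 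What is actually used afterwards (the ``central extension of $\cJ/\cZ$'' remark) only needs $\cZ \subseteq Z(\cJ)$, which both you and the paper establish.
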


\begin{proof}
For $g \in \cJ$, the relation  $\sigma_g = \id_L$ implies that $g$ commutes with 
every element $J_\ell \in \cS$ by~\eqref{eq:modact2}. 
As $\cJ$ is generated by $\cS$, the assertion follows. 
\end{proof}

By Lemma~\ref{lem:5.2.1}, 
the action of $\cJ$ on the index set describes this group as a 
central extension of the group $\cJ/\cZ$ that acts faithfully on the set $L$ 
which is supposed to carry geometric information (cf.~\cite{BDFS00} 
and Remark~\ref{rem:5.5} below). 

An immediate consequence of (CGMA) is that the net is invariant 
under the passage to the symplectic orthogonal space 
$V_\ell \mapsto V_\ell' = J_\ell V_\ell$ (cf.~\S \ref{subsec:3.2}). 
In particular, we have a duality map $\ell \mapsto \ell' := \sigma_{J_\ell}(\ell)$ 
on~$L$. 
We also have a natural order structure $\leq$ on $L$ by 
\[ \ell_1 \leq \ell_2 \quad \mbox{ if }  \quad V_{\ell_1} \subeq V_{\ell_2}.\] 

\begin{remark} \mlabel{rem:order-axioms} The key properties of the triple 
$(L, \leq, ')$, given by the partial order $\leq$ and the {\it duality map} 
$\ell \mapsto \ell'$ are that 
\begin{itemize}
\item[\rm(A1)] $\ell_1 \leq \ell_2$ implies $\ell_2' \leq \ell_1'$, and 
\item[\rm(A2)] $\ell_1 \leq \ell_2'$ if and only if $\ell_2 \leq \ell_1'$. 
\end{itemize}
From (A2) we immediately derive $\ell \leq  \ell''$ and by combining 
this with (A1), we obtain $\ell' = \ell'''$ 
for every $\ell \in L$, i.e., the duality map restricts to an 
involution on its range. 
\end{remark}

\begin{exs} \mlabel{ex:caus-comp} 
(a) For a subset $S$ of the Minkowski space $\R^{1,d-1}$, we 
define the {\it causal complement} by 
\[ S' := \{ x \in \R^{1,d-1} \: (\forall y \in S) [x-y,x-y] < 0\}.\] 
Then $S' = \bigcap_{s \in S} \{s\}'$, which immediately leads to 
(A1/2). Here $S \subeq T'$ means that $S$ and $T$ are {\it space-like separated},  
and $S''$ is the {\it causal completion of $S$}. 
For $g \in P(d)$ and $S \subeq \R^{1,d-1}$, 
we have $(gS)' = gS'$. 

For the standard right wedge $W_R \subeq \R^{1,d-1}$, we have $W_R' = - \oline{W_R}$ 
and $W_R'' = W_R$,  and for the positive light cone $V_+$ we have $V_+' = \eset$ and 
$V_+'' = \R^{1,d-1}$. For $x-y \in V_+$, the causal completion 
\[ \{x,y\}'' = (x - \oline{V_+}) \cap (y + \oline{V_+}) = \oline{\cO_{x,y}} \] 
is the closure of the double cone $\cO_{x,y}$ 
(cf.\ Remark~\ref{rem:doubco}(b)). 

(b) If $(X,\leq)$ is a partially ordered space, then we define 
\[ \{x\}' := \{ y \in X \: x \not\leq y, y \not\leq x\} 
\quad \mbox{ and } \quad 
S' := \bigcap_{s \in S} \{s\}'.\] 
Then the set $L$ of subsets of $X$, endowed with the inclusion order, 
satisfies (A1/2). 

(c) For a complex Hilbert space $\cH$, the set of real subspace 
$V \subeq \cH$, endowed with the inclusion order 
and the symplectic orthogonal space $V' = i V^{\bot_\R}$ 
satisfies (A1/2). 

(d) For a complex Hilbert space $\cH$, the set of von Neumann 
subalgebras $\cM \subeq B(\cH)$,  endowed with the inclusion order 
and the commutant map $\cM \mapsto \cM'$ 
satisfies (A1/2). 
\end{exs}

\begin{remark} \mlabel{rem:5.5} (a) In QFT, one expects that the structure 
$(L, \leq,')$ encodes physical 
information and one would like to recover information 
on the geometry of spacetime from this structure. 
In this context, causal complements, resp., the notion of being space-like 
separated, appears more fundamental than the causal 
order if we want to recover a spacetime $M$ from the triple 
$(L, \leq, ')$, where $L$ consists of certain subsets of $M$ 
but does not contain one-point sets (cf.\ \cite{Ke96}). 

If the modular stability condition is satisfied, i.e., if $\cJ$ contains also the 
modular unitaries, this group is supposed to encode the dynamics of 
the quantum theory, the isometry group of the corresponding 
spacetime and a (projective) unitary representation 
of this group (\cite[\S 6.4]{Su05}). This connects naturally 
with the approach of Connes and Rovelli who ``construct'' the dynamics 
of a quantum statistical system by a modular one-parameter group $\Delta^{it}$ 
(\cite{CR94}). 

Here an interesting result concerning the detection of known group from this 
viewpoint is the characterization of the Poincar\'e group in terms of 
structure preserving maps on the set $\cW$ of wedges in $\R^{1,d-1}$ 
(\cite{BDFS00}, \cite[\S IV.5]{Bo00}, Lemma~\ref{lem:4.17}). 

(b) In \cite[\S 4]{Ke98} the relation of the causal structure 
on spacetime and how it can be determined by data encoded in 
nets of $C^*$-algebras is discussed very much in the spirit of this 
section. We refer to \cite{Ra17} for an approach to quantum field theory 
based on modular theory of operator algebras that does not 
assume an a priori given underlying spacetime. Instead, one would like 
to generate the spacetime geometry from operator theoretic data.  

(c) In \cite{SW03} this program 
is carried out to a large extent by specifying a set of axioms 
formulated in terms of the modular conjugations 
$J_\ell$, such that the index set $L$ 
corresponds to the set $\cW$ of wedges in three-dimensional Minkowski space 
$\R^{1,2}$, $J_W$ corresponds to the  the orthogonal 
reflection $r_W \in P(3)_+$ in the edge of $W$ and $\cJ \cong P(3)_+$ 
(cf.~Lemma~\ref{lem:4.17}). 

In this case, the subset $\SO_{1,2}(\R)^\uparrow W_R\subeq \cW$ 
identifies naturally with the anti-deSitter space 
$\AdS^2  \cong \SO_{1,2}(\R)^\uparrow/\SO_{1,1}(\R)^\uparrow$, which can 
be realized as an adjoint orbit in the Lie algebra 
$\so_{1,2}(\R)\cong \fsl_2(\R)$ 
(cf.\ Lemma~\ref{lem:4.17}(v)). 
\end{remark} 

The following construction is of fundamental importance in our 
approach. It is inspired by the modular localization approach to QFT developed in 
\cite[Thm.~2.5]{BGL02}: 

\begin{proposition} {\rm (Nets of standard subspaces from antiunitary representations; 
the BGL construction)} \mlabel{prop:antiunirep-stand}
Let $(U,\cH)$ be an antiunitary representation of the Lie group pair $(G,G_1)$ and 
associate to $\gamma \: (\R^\times, \R^\times_+) \to (G,G_1)$ 
the standard subspace $V_\gamma$ with $U^{V_\gamma} = U \circ \gamma$ 
{\rm(Proposition~\ref{prop:3.2})}. 
Then, for every non-empty subset $\Gamma \subeq \Hom((\R^\times, \R^\times_+), (G,G_1))$ 
invariant under conjugation with elements of $G$ and 
under inversion, we thus obtain a net 
$(V_\gamma)_{\gamma \in \Gamma}$ of standard subspaces 
which satisfies {\rm(CGMA)} for the group 
$\cJ$ which is the image under $U$ of the subgroup 
of $G$ generated by the conjugation invariant set of involutions 
$\{\gamma(-1) \: \gamma \in \Gamma\}$. 
\[ \mat{
\Gamma & \mapright{\ev_{-1}} & \Inv(G\setminus G_1) \\ 
\mapdown{\gamma \mapsto V_\gamma} & & \mapdown{U} \\ 
\Stand(\cH) &\mapright{V \mapsto J_V} & \Conj(\cH).}\] 
\end{proposition}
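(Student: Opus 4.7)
\smallskip

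\noindent\textbf{Proof plan.} The whole statement reduces to a single transformation rule: for every $g\in G$ and every $\gamma\in\Hom((\R^\times,\R^\times_+),(G,G_1))$,
\begin{equation}
U_g\,V_\gamma \;=\; \begin{cases} V_{\gamma^g} & \text{if } g\in G_1,\\[1mm] V_{(\gamma^\vee)^g} & \text{if } g\in G\setminus G_1.\end{cases}
\label{eq:transfo}
\end{equation}
The plan is to establish \eqref{eq:transfo} in two calculational steps and then harvest both the $\cJ$-invariance of the net (i.e.\ (CGMA)) and the commutativity of the diagram as direct consequences.

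\emph{Step 1 (modular objects under $U_g$).} For any standard subspace $V$ with modular objects $(\Delta_V,J_V)$, Remark~\ref{rem:4.3}(a) gives $V=\Delta_V^{-1/4}\cH^{J_V}$. Since the function $x\mapsto x^{-1/4}$ is real, the functional calculus identity for antilinear isometries yields $U_g\Delta_V^{-1/4}U_g^{-1}=(U_g\Delta_V U_g^{-1})^{-1/4}$, while $U_g\cH^{J_V}=\cH^{U_gJ_VU_g^{-1}}$. Hence $U_gV$ is again standard with
\[ J_{U_gV}=U_gJ_VU_g^{-1}, \qquad \Delta_{U_gV}=U_g\Delta_V U_g^{-1}. \]

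\emph{Step 2 (action on the parameter $\gamma$).} Apply Step 1 with $V=V_\gamma$ and read off the antiunitary representation of $\R^\times$ associated to $U_gV_\gamma$ via Proposition~\ref{prop:3.2}. For $g\in G_1$ the operator $U_g$ is unitary, so
\[ U^{U_gV_\gamma}(e^t)=\Delta_{U_gV_\gamma}^{-it/2\pi}=U_g\Delta_{V_\gamma}^{-it/2\pi}U_g^{-1}=U(\gamma^g(e^t)), \]
and likewise $J_{U_gV_\gamma}=U(\gamma^g(-1))$; hence $U^{U_gV_\gamma}=U\circ\gamma^g$ and by the bijection of Proposition~\ref{prop:3.2} we conclude $U_gV_\gamma=V_{\gamma^g}$. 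For $g\in G\setminus G_1$ the operator $U_g$ is antilinear, which flips the sign of $i$; by the same functional calculus argument
\[ \Delta_{U_gV_\gamma}^{-it/2\pi}=U_g\Delta_{V_\gamma}^{+it/2\pi}U_g^{-1}=U(\gamma^g(e^{-t}))=U((\gamma^\vee)^g(e^t)), \]
while $J_{U_gV_\gamma}=U((\gamma^\vee)^g(-1))$, giving $U_gV_\gamma=V_{(\gamma^\vee)^g}$. This establishes \eqref{eq:transfo}.

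\emph{Step 3 (harvest).} The commutativity of the diagram is immediate from Proposition~\ref{prop:3.2}: $J_{V_\gamma}=U^{V_\gamma}(-1)=U(\gamma(-1))$, so the clockwise and counterclockwise compositions both send $\gamma\mapsto U(\gamma(-1))$. For (CGMA), the group $\cJ$ is the image under $U$ of the subgroup $H\subeq G$ generated by the involutions $\{\gamma(-1):\gamma\in\Gamma\}$, and each generator lies in $G\setminus G_1$. By hypothesis $\Gamma$ is stable under $G$-conjugation and under the involution $\gamma\mapsto\gamma^\vee$, so both $\gamma^g$ (for $g\in G_1\cap H$) and $(\gamma^\vee)^g$ (for $g\in H\setminus G_1$) remain in $\Gamma$; \eqref{eq:transfo} then shows that every element of $\cJ$ permutes $\{V_\gamma:\gamma\in\Gamma\}$, which is (CGMA).

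\emph{Main obstacle.} There is no genuine difficulty; the one place that requires care is the antilinear case of Step~2, where one must remember that conjugating a unitary one-parameter group $e^{itA}$ by an antiunitary operator produces $e^{-itU_gAU_g^{-1}}$. Precisely this sign flip is the reason the invariance of $\Gamma$ under $\gamma\mapsto\gamma^\vee$ (and not just under $G$-conjugation) is needed for the net to be stable under the modular conjugations $J_{V_\gamma}$.
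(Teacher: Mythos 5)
Your proof is correct and follows essentially the same route as the paper, which rests on the single observation that $U_gV_\gamma=V_{\gamma^g}$ for $g\in G_1$ and $U_gV_\gamma=V_{\gamma^g}'$ for $g\notin G_1$ (your $V_{(\gamma^\vee)^g}$ equals $V_{\gamma^g}'$ by Lemma~\ref{lem:stand-factorial}(i)). You merely spell out the verification of this transformation rule via the behaviour of the modular objects under (anti)unitary conjugation, including the sign flip in the antilinear case, which the paper leaves implicit.
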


\begin{proof} This follows from the observation that, 
for $\gamma^g(t) := g\gamma(t)g^{-1}$, we have 
$U_g V_\gamma = V_{\gamma^g}$ for $g \in G_1$ and 
$U_g V_\gamma = V_{\gamma^g}'$ for $g \not\in G_1$, 
so that $G$ acts naturally by automorphisms on the net 
$(V_\gamma)_{\gamma \in \Gamma}$. 
\end{proof}

\begin{remark}
(a) Evaluation in $-1$ leads to a fibration  
\[ \ev_{-1} \: \Hom((\R^\times, \R^\times_+), (G,G_1)) 
\to \Inv(G\setminus G_1). \] 
An involution $r \in G\setminus G_1$ is contained in the image 
if and only if there exists an $x \in \g$ fixed by $\Ad(r)$, 
i.e., if $\g^{\Ad(r)} = \ker(\Ad(r)-\1)\not=\{0\}$. This is always the case if 
$\g$ is non-abelian, i.e., if $-\id_\g$ is not an automorphism. Then the fiber 
over $r$ can be identified with the Lie subalgebra $\g^{\Ad(r)}$.   
  
(b) In many situations one considers minimal sets 
\[ \Gamma = C_{\gamma} \cup C_{\gamma^\vee}, \quad \mbox{ where } \quad 
C_\gamma := \{ \gamma^g \: g \in G\}. \] 
Then $\ev_{-1}(C_\gamma) = C_r$ is the conjugacy class of the 
involution $r := \gamma(-1) \in G \setminus G_1$, 
hence in particular a symmetric space (cf.~Appendix~\ref{app:a.2}). 
An important example in QFT is $\gamma = \gamma_{W_R}$ for $G = P(d)_+$ (Lemma~\ref{lem:4.17}). 

(c) In the context of Proposition~\ref{prop:antiunirep-stand}, the relation 
$V_\gamma' = V_{\gamma^\vee}$ shows that duality is naturally built into the construction. 
However, in general it may not be so easy to determine when 
$V_{\gamma_1} \subeq V_{\gamma_2}$. In \cite[Thm.~3.4]{BGL02} it is shown that, 
for $G = P(d)_+$ and homomorphisms $(\gamma_W)_{W \in \cW}$ corresponding to wedges,  
the relation $W_1 \subeq W_2$ is equivalent to 
$V_{\gamma_{W_1}} \subeq V_{\gamma_{W_2}}$ if and only if $U$ is a positive energy representation. 
\end{remark}

The preceding discussion suggests a closer 
look at conjugacy classes of involutions 
$\tau \in G \setminus G_1$. 
We write $C_\tau \subeq G$ for the conjugacy class of $G$. 

\begin{lemma} \mlabel{lem:conjug-gen}
Let $G_1$ be a connected Lie group with 
Lie algebra $\g$ and $\tau \in \Aut(G_1)$ be an involutive automorphism. 
Then the conjugacy class $C_\tau \subeq G := G_1 \rtimes \{\1,\tau\}$ 
generates the subgroup $\grp(C_\tau) = B \{\1,\tau\}$, where 
$B$ is the integral subgroup whose Lie algebra is the ideal 
$\fb := \g^{-\tau} + [\g^{-\tau}, \g^{-\tau}]$. In particular, 
$C_\tau$ generates $G$ if and only if $\g^\tau = [\g^{-\tau}, \g^{-\tau}]$. 
\end{lemma}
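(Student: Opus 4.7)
The plan is to exploit the Cartan-like decomposition $\g = \g^\tau \oplus \g^{-\tau}$ and the Cayley-type map $c \: G_1 \to G_1$, $c(g) := g\tau(g)^{-1}$, which satisfies $\tau(c(g)) = c(g)^{-1}$ and parametrises the conjugacy class via $C_\tau = c(G_1)\cdot \tau$. Writing $\grp(C_\tau) = H \cup H\tau$ with $H := \grp(C_\tau) \cap G_1$, the task is to identify $H$ with the integral subgroup $B$ of $\fb = \g^{-\tau} + [\g^{-\tau},\g^{-\tau}]$.

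First I would verify that $\fb$ is a $\tau$-invariant ideal of $\g$: the bracket relations $[\g^\tau, \g^{-\tau}] \subeq \g^{-\tau}$ and $[\g^{-\tau},\g^{-\tau}]\subeq \g^\tau$ together with Jacobi show both that $\fb$ is a Lie subalgebra (namely the Lie subalgebra generated by $\g^{-\tau}$) and that $[\g^\tau, [\g^{-\tau},\g^{-\tau}]] \subeq [\g^{-\tau},\g^{-\tau}]$, whence $[\g, \fb] \subeq \fb$. Consequently the integral subgroup $B \trile G_1$ is normal and $\tau$-invariant, so $B\{\1,\tau\}$ is a well-defined subgroup of $G$.

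Next I would establish the inclusion $H \subeq B$ by passing to the quotient. On $\g/\fb \cong \g^\tau/[\g^{-\tau},\g^{-\tau}]$ the induced involution $\bar\tau$ acts as the identity, and since $G_1/B$ is connected, $\bar\tau = \id_{G_1/B}$. Hence $c(g) = g\tau(g)^{-1}$ projects to the identity, i.e.\ $c(G_1) \subeq B$, so every element of $C_\tau \cdot C_\tau$ lies in $B$. For the reverse inclusion $B \subeq H$, I would use that, for $x \in \g^{-\tau}$, the one-parameter group $\exp(\R x)$ is abelian and $\tau$-antiinvariant, so
\[ c(\exp(tx/2)) = \exp(tx/2)\tau(\exp(tx/2))^{-1} = \exp(tx/2)\exp(tx/2) = \exp(tx),\]
which shows $\exp(\g^{-\tau}) \subeq c(G_1)$. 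Since $\fb$ is the Lie subalgebra generated by $\g^{-\tau}$ and $B = \grp(\exp(\g^{-\tau}))$, this yields $B \subeq H$. Combining both inclusions gives $H = B$ and hence $\grp(C_\tau) = B\{\1,\tau\}$.

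The final equivalence is then immediate: $C_\tau$ generates $G$ iff $B = G_1$ iff $\fb = \g$, and since $\g^{-\tau}\subeq \fb$ always, the latter is equivalent to $\g^\tau \subeq \fb$, i.e.\ $\g^\tau = [\g^{-\tau},\g^{-\tau}]$. The only non-routine step is checking the ideal property of $\fb$ and the quotient argument for $c(G_1) \subeq B$; the rest is a direct computation with $c$ and the standard fact identifying integral subgroups with the Lie algebras they generate.
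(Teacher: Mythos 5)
Your overall strategy coincides with the paper's: both proofs identify $\grp(C_\tau)\cap G_1$ with the integral subgroup $B$ of $\fb$ by (i) observing that this intersection is generated by the elements $c(g)=g\tau(g)^{-1}$, (ii) getting $\exp(\g^{-\tau})\subeq c(G_1)$ from the computation $c(\exp(x))=\exp(2x)$ for $x\in\g^{-\tau}$, and (iii) showing conversely that $c(G_1)\subeq B$ because the induced involution on the quotient by $\fb$ is trivial. Steps (i) and (ii) in your write-up are correct and match the paper.

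There is, however, a gap in step (iii) as you have written it. You pass to the quotient group $G_1/B$ and argue that, since $\bar\tau$ induces the identity on $\g/\fb$ and $G_1/B$ is connected, $\bar\tau=\id_{G_1/B}$. This inference requires $G_1/B$ to be a Lie group on which connectedness plus triviality of the differential forces triviality of the automorphism; but $B$ is only an \emph{integral} subgroup of the (normal, $\tau$-invariant) ideal $\fb$, and a normal integral subgroup of a connected Lie group need not be closed, so $G_1/B$ need not carry a manifold structure and the Lie-algebra-to-group argument is not available there. The paper's proof addresses exactly this point: it lifts to the universal covering group $\tilde G_1$, where the normal integral subgroup $\tilde B=\la \exp_{\tilde G_1}\fb\ra$ \emph{is} closed, concludes $g\tau(g)^{-1}\in\tilde B$ for all $g\in\tilde G_1$ from the triviality of the induced involution on the Lie group $\tilde G_1/\tilde B$, and then projects back down to $G_1$. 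Alternatively, one can avoid quotients altogether by noting that the differential of $c$ at any $g$ has image in the right translate of $\Ad(g)\,\g^{-\tau}\subeq\fb$, so that the connected set $c(G_1)$ stays inside the leaf $B$ of the corresponding invariant distribution. With either repair, the rest of your argument (including the final equivalence $\fb=\g$ iff $\g^\tau=[\g^{-\tau},\g^{-\tau}]$) goes through as stated.
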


\begin{proof} Let $H = \grp(C_\tau) \subeq G$ be the subgroup generated 
by $C_\tau$. As $\tau \in H$, we have 
$H = B \{e,\tau\}$ for $B := H \cap G_1$. 
Then $B$ is generated by 
the elements of the form $g \tau(g)^{-1}$, $g \in G_1$, hence in particular 
arcwise connected. 
For $x \in \g^{-\tau}$, we therefore obtain 
$\exp(2x) = \exp(x) \tau(\exp(-x)) \in B$, so that 
the Lie algebra $\fb$ of $B$ contains $\g^{-\tau}$ and hence also 
$\fb = \g^{-\tau} + [\g^{-\tau},\g^{-\tau}]$, which is an ideal of $\g_1$. 

Let $\tilde G_1$ denote the universal covering group of 
$G_1$. Then $\tilde B := \la \exp_{\tilde G_1} \fb \ra$ is a normal integral 
subgroup of $\tilde G_1$, hence closed. As $\tau$ acts trivially 
on the quotient group $\tilde G_1/\tilde B$, 
all elements of the form $g\tau(g)^{-1}$ are contained in $\tilde B$. 
Therefore $B := \la \exp_G \fb \ra$ contains the arcwise
connected subgroup $H \cap G_1$, and thus 
$H \cap G_1 = B$. This implies the lemma. 
\end{proof}

\begin{ex} \mlabel{exs:5.8}
$G = \Aff(\R)$ and $\tau = (x,-1)$ with $C_\tau = \R \times \{-1\}$, 
and this conjugacy class generates~$G$. 
\end{ex} 

\begin{lemma}
 \mlabel{lem:5.8} 
Let $\tau = r_W \in P(d)_+$ be a wedge reflection for some 
$W \in \cW$. Then 
\begin{itemize}
\item[\rm(i)] The conjugacy class $C_\tau$ of $\tau$ generates 
$P(d)_+$ if and only if $d > 2$. 
\item[\rm(ii)] The conjugacy class $C_\tau$ of $\tau$ in the conformal 
group $\SO_{2,d}(\R)$ generates the whole group for any $d > 0$. 
\end{itemize}
\end{lemma}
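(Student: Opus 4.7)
The plan for both parts is to invoke Lemma~\ref{lem:conjug-gen}, which reduces the generation statement to verifying the Lie-algebraic identity $\g^\tau = [\g^{-\tau}, \g^{-\tau}]$ for the involution $\Ad(r_W)$.

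For (i), I would take $(G, G_1) = (P(d)_+, P(d)_+^\uparrow)$ and $\tau = r_{W_R} = R_{01}$; since $R_{01}$ reverses time, $\tau \in G \setminus G_1$, so Lemma~\ref{lem:conjug-gen} applies. The involution $\Ad(\tau)$ decomposes $\g_1 = \R^{1,d-1} \oplus \so_{1,d-1}(\R)$ as
\[ \g^\tau = E_R \oplus \so_{1,1}(\R) \oplus \so_{d-2}(\R), \qquad \g^{-\tau} = (\R P_0 + \R P_1) \oplus \Spann\{M_{aj} : a \in \{0,1\},\, j \geq 2\}. \]
For $j, k \geq 2$ with $j \neq k$, the key brackets
\[ [M_{0j}, P_0] = \pm P_j, \qquad [M_{0j}, M_{1j}] = \pm M_{01}, \qquad [M_{0j}, M_{0k}] = \pm M_{jk} \]
recover $E_R$, $\so_{1,1}(\R)$, and $\so_{d-2}(\R)$ respectively. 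For $d \geq 3$ some such $j$ exists and all three brackets are available (the third being vacuous when $d = 3$ since $\so_1(\R) = 0$), giving $[\g^{-\tau}, \g^{-\tau}] = \g^\tau$. For $d = 2$, $\g^{-\tau} = \R^{1,1}$ is abelian while $\g^\tau = \R M_{01} \neq 0$, so the hypothesis fails and $C_\tau$ generates only the proper subgroup $\R^{1,1} \rtimes \{1, R_{01}\}$.

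For (ii), I would use that $b_0 \in \so_{1,d-1}(\R)$ embeds into $\so_{2,d}(\R)$ as a boost acting on a $2$-plane of signature $(1,1)$ spanned by basis vectors of norm $+1$ and $-1$, and trivially on its orthogonal complement. Then $\tau = \exp(\pi i b_0)$ acts as $-I$ on this $2$-plane and as $\1$ elsewhere, negating exactly one positive-norm and one negative-norm basis vector of $\R^{2,d}$. This reverses the orientations of both the positive-definite $2$-plane and the negative-definite $d$-plane, so $\tau$ lies in the non-identity component of $\SO_{2,d}(\R)$, and Lemma~\ref{lem:conjug-gen} applies with $(G, G_1) = (\SO_{2,d}(\R), \SO_{2,d}(\R)^\uparrow)$. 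Writing $A$ for the $2$-element index set of the boost plane and $B$ for the remaining $d$ indices, the eigenspaces are
\[ \g^\tau = \so(A) \oplus \so(B) \cong \so_{1,1}(\R) \oplus \so_{1,d-1}(\R), \qquad \dim \g^{-\tau} = 2d. \]
The brackets $[M_{ab}, M_{a'b}] = \pm M_{aa'}$ for distinct $a, a' \in A$ and any $b \in B$ generate $\so(A)$ (non-vacuous for $d \geq 1$), and $[M_{ab}, M_{ac}] = \pm M_{bc}$ for $a \in A$ and distinct $b, c \in B$ generate $\so(B)$ (non-vacuous for $d \geq 2$; $\so(B) = 0$ when $d = 1$). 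Hence $[\g^{-\tau}, \g^{-\tau}] = \g^\tau$ for every $d \geq 1$, proving (ii).

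The main obstacle is correctly identifying the image of $r_W$ under the Poincar\'e-to-conformal embedding. The naive guess that it acts as $\diag(-1,-1,I_d)$ in the $(2,d)$-signature is misleading: that element lies in the identity component of $\SO_{2,d}(\R)$ and would only generate a proper subgroup. The correct image, read off from $\tau = \exp(\pi i b_0)$, negates one positive-norm and one negative-norm coordinate, placing $\tau$ outside the identity component; once this is clarified, both parts reduce to the routine bracket computations above.
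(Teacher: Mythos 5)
Your argument is correct, but it takes a genuinely different route from the paper's, especially in part (ii), so a comparison is worthwhile. For (i), the paper does not verify the bracket criterion on the whole Poincar\'e algebra: it first notes that the $d=2$ computation, applied inside every Lorentzian $2$-plane, already puts all translations into $\grp(C_\tau)$, and then invokes Lemma~\ref{lem:conjug-gen} only for the Lorentz factor, where simplicity of $\so_{1,d-1}(\R)$ forces the ideal $\fb=\g^{-\tau}+[\g^{-\tau},\g^{-\tau}]$ to be everything with no computation. Your single application of Lemma~\ref{lem:conjug-gen} to the full algebra $\R^{1,d-1}\rtimes\so_{1,d-1}(\R)$ costs three explicit brackets but makes the $d=2$ failure transparent: $\g^{-\tau}$ collapses to the abelian translation algebra while $\g^\tau=\R M_{01}\neq 0$. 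For (ii), the paper argues structurally rather than infinitesimally: $\grp(C_\tau)$ contains the Poincar\'e group, which sits in a parabolic of $\SO_{2,d}(\R)$ and meets both connected components, hence $\grp(C_\tau)$ also contains the opposite parabolic, and the two together generate. Your version instead identifies the image of $r_W$ in $\SO_{2,d}(\R)$ as the involution negating exactly one positive-norm and one negative-norm coordinate (your caveat about the naive block $\diag(-1,-1,1,\ldots,1)$ in signature $(2,d)$ is well taken --- that element is the exponential of a compact rotation generator and lies in the identity component, so it could not possibly work) and then reruns the bracket criterion for $\g^\tau\cong\so_{1,1}(\R)\oplus\so_{1,d-1}(\R)$. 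This buys something concrete: the paper's step ``$\grp(C_\tau)$ contains $P(d)_+$'' is justified by part (i) only for $d>2$, so the parabolic argument as written does not obviously cover $d\in\{1,2\}$, whereas your computation treats all $d\geq 1$ uniformly (for $d=1$ the $\so(B)$-summand vanishes and there is nothing to generate there; for $d=2$ both bracket families are available in the conformal algebra even though $C_\tau$ fails to generate $P(2)_+$). The only point to keep explicit when writing this up is the verification that $\SO_{2,d}(\R)$ has exactly two connected components and that your $\tau$ lies outside the identity component, since that is what legitimizes applying Lemma~\ref{lem:conjug-gen} with $G_1=\SO_{2,d}(\R)^\uparrow$; you do address this via the orientation reversal on both maximal definite subspaces, which is the right criterion.
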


\begin{proof} (i) Since all wedges 
$W \in \cW$ are conjugate to the standard right wedge $W_R$, it suffices to consider
$\tau = r_{W_R} = R_{01} = \diag(-1,-1,1,\ldots,1)$. 
If $d = 2$, then $R_{01} = - \1$, so that $\grp(C_\tau) 
= \R^2 \rtimes \{ \pm \1\}$ is a proper subgroup of $P(2)_+$. 

The case $d = 2$ already implies 
that $\grp(C_\tau)$ contains all translations in the directions of all Lorentzian 
$2$-planes, hence all translations. Therefore it suffices to show that the 
conjugacy class of $R_{01}$ in $\SO_{1,d}(\R)$ generates the whole group. 
In view of Lemma~\ref{lem:conjug-gen}, this follows from 
the simplicity of the real Lie algebra 
$\g = \so_{1,d-1}(\R)$. 

(ii) We consider $\SO_{2,d}(\R)$ as a group acting on 
$\R^{1,d-1}$ by rational maps (cf.~\cite[\S17.4]{HN12}). 
We have already seen above that the group $\grp(C_\tau)$ generated by 
the conjugacy class $C_\tau$ in $\SO_{2,d}(\R)$ 
contains the Poincar\'e group $P(d)_+$, which is a parabolic 
subgroup of $\SO_{2,d}(\R)$ and it intersects both connected components. 
By the same argument, it contains the opposite 
parabolic subgroup, and both subgroups generate $\SO_{2,d}(\R)$ because 
it has only two connected components (cf.~\cite{Be96}). 
\end{proof}

If $d$ is odd, then $\SO_{2,d}(\R) \cong \OO_{2,d}(\R)/\{\pm \1\}$ 
is the full conformal 
group of $\R^{1,d-1}$, but if $d$ is even, then 
the kernel $\{\pm \1\}$ of the action of $\OO_{2,d}(\R)$ 
is contained in the identity component, so that 
$\Conf(\R^{1,d}) \cong \OO_{2,d}(\R)/\{ \pm \1\}$ 
has four connected components (\cite[\S17.4]{HN12}). 
Therefore the conjugacy class of a wedge reflection does not 
generate the whole conformal group. 

\begin{remark} In \cite[Thm.~4.7]{BGL02},
Brunetti, Guido and Longo describe a 
one-to-one correspondence between antiunitary 
positive energy representations of $P(d)_+$ and certain 
nets of closed real subspaces $V_\cO$ indexed by certain 
open subsets $\cO \subeq \R^d$, for which the subspaces $(V_W)_{W \in \cW}$ 
corresponding to wedges are standard and the modular covariance condition
\[ \Delta_W^{-it/2\pi} V_\cO = V_{\gamma_W(t)\cO} \] 
 holds for the homomorphisms $\gamma_W \: \R^\times \to P(d)_+$ 
and the modular unitaries of $V_W$. 

The uniqueness of the local net, 
once the unitary representation is given, 
is discussed in \cite[Rem.~4.8]{BGL02} (see also \cite{BGL93}). For the converse, i.e.,  
the uniqueness of the unitary representation, once the local net 
is given, we refer to \cite{BGL93}. 
In \cite{Mu01}, Mund shows that, for any representation 
$(U,\cH)$ of $P(d)^\uparrow_+$ that is a finite direct sum of 
irreducible representations of strictly positive mass, there 
is only one covariant net of standard subspaces; which therefore 
coincides with the one obtained in Proposition~\ref{prop:antiunirep-stand} 
from any antiunitary extension of $U$ to $P(d)_+$. 
\end{remark}

\begin{ex} (Nets arising from a single von Neumann algebra) \\
(a) Let $\cM \subeq B(\cH)$ be a von Neumann algebra 
for which $\cs(\cM) \not=\eset$ and consider the 
corresponding set 
\[ \cV := \ms(\cM) = \{ V_\xi \: \xi \in \cs(\cM)\} \] 
of standard subspaces (Definition~\ref{def:4.20}). 
Fix fix a cyclic separating vector $\Omega$ and the corresponding 
modular objects $(\Delta, J)$ and consider the group 
\[ G := \U(\cM)\U(\cM')\{\1,J\} \subeq \AU(\cH).\] 
It is easy to see that this group 
permutes the standard subspaces in~$\cV$. 
From Proposition~\ref{prop:4.7}(ii) we  derive that 
\[ \mc(\cM) =  \{ gJg^{-1} \: g \in G \} \] 
is the conjugacy class of $J$ in $G$. 
We also note that the $G$-orbit 
$\{ g \cM g^{-1} \: g \in G \} = \{\cM,\cM'\}$ 
of $\cM$ in the set of von Neumann subalgebras of $B(\cH)$ consists only of two 
elements.

(b) Consider the group 
\[ G^\sharp := \U(\cM)\U(\cM') \gamma(\R^\times) 
\quad \mbox{ for  } \quad 
\gamma(-1) := J\quad \mbox{ and } \quad \gamma(e^t) := \Delta^{-it/2\pi}.\]  
That $G^\sharp$ is a group follows from the fact that $\gamma(\R^\times_+)$ normalizes 
$\U(\cM)$ and $\U(\cM')$, whereas conjugation by $J = \gamma(-1)$ exchanges both. 
This group is strictly larger than $\U(\cM)\U(\cM')\{\1,J\}$ 
if the modular automorphisms 
$\alpha_t(M) := \Delta^{it} M \Delta^{-it}$ of $\cM$ are not inner.  

If $\xi \in \cs(\cM)$ is different from $\Omega$, 
then Connes' Radon Nikodym Theorem\begin{footnote}{
See \cite[Thm.~III.4.7.5]{Bla06}, \cite[Thm.~5.3.34]{BR96}, 
and in particular \cite{Fl98} for a quite direct proof.}\end{footnote}
implies the existence of a strongly continuous path of unitaries 
$(u_t)_{t\in\R}$ in $\U(\cM)$ such that the corresponding 
modular automorphism group 
$\alpha_t^\xi(M) = \Delta_\xi^{it} M \Delta_\xi^{-it}$ 
satisfies 
\[ \alpha_t^\xi(M) = u_t\alpha_t(M)u_t^* 
\quad \mbox{ for } \quad M \in \cM,  t \in \R.\] 
This implies that 
$\Delta_\xi^{-it} u_t \Delta^{it} \in \U(\cM')$, so that 
$G^\sharp$ also contains the operators $\Delta_\xi^{it}$. 
Hence the net of standard subspaces of $\cH$ 
specified by the conjugacy class of the antiunitary 
representation $\gamma \in \Hom(\R^\times, G^\sharp)$ coincides with the orbit 
$G^\sharp V = GV\subeq \Stand(\cH)$. 
\end{ex} 

\subsection{Nets of von Neumann algebras} 
\mlabel{subsec:5.2}

The context that actually motivates
the consideration of families of standard subspaces  
are families $(\cM_\ell)_{\ell \in L}$ of von Neumann algebras on some 
Hilbert space~$\cH$. 
In the theory of algebras of local observables, one considers 
$\ell$ as indicating the ``laboratory'' in which observables corresponding 
to $\cM_\ell$ can be measured, and then $L$ is the set of laboratories 
(cf.~\cite{Ha96, Ar99, Bo97}). 

We write $\cM\subeq B(\cH)$ 
for the von Neumann algebra generated by all the algebras~$\cM_\ell$. 
We shall discuss several properties of these families and 
relate them to antiunitary representations and some results in 
Algebraic Quantum Field Theory (AQFT). 
Our first assumption is the {\it Reeh--Schlieder property}: 
\begin{itemize}
\item[\rm(RS)] There exists a unit vector $\Omega$ that is 
cyclic and separating for each $\cM_\ell$. 
\end{itemize}
By the Tomita--Takesaki Theorem, (RS) 
leads to a family of standard subspaces 
given 
\[ V_\ell := \oline{\cM_{\ell,h}\Omega}\] 
and the map $\ell \mapsto V_\ell$ is injective if and only if the map 
$\ell \mapsto \cM_\ell$ is injective 
(Lemma~\ref{lem:4.14}). This leads us to the setting of the 
preceding subsection, so that everything said there applies in particular 
here. As each $J_\ell$ fixes $\Omega$, it is fixed by the whole group~$\cJ$. 
For $g \in \cJ$ and 
$\cM_\ell^g := g\cM_\ell g^{-1}$, we therefore have 
$g V_\ell = \oline{\cM^g_{\ell,h}\Omega},$ so that Lemma~\ref{lem:4.14} implies that 
$gV_\ell = V_{\tilde\ell}$ for some $\tilde\ell \in L$ 
is equivalent to $g\cM_\ell g^{-1} = \cM_{\tilde\ell}$. 
Hence the {\it condition of geometric modular action} (CGMA) from the 
preceding section is equivalent to the following (\cite{BDFS00}): 
\begin{description}
\item[\rm(CGMA)] Conjugation with elements of the group 
$\cJ$ permutes the von Neumann algebras $(\cM_\ell)_{\ell \in L}$. 
\end{description} 
The relation $J_\ell \cM_\ell J_\ell = \cM_\ell'$ 
then implies that the net $(\cM_\ell)_{\ell \in L}$ is invariant under the 
passage to the commutant.

\begin{remark} \mlabel{rem:doubco} (a) In quantum field theory, where 
$L$ often is the set $\cW$ of wedges in Minkowski space $\R^{1,3}$, 
 \cite[Thm.~5.2.6]{BDFS00} asserts that (CGMA) 
basically is equivalent to the duality condition 
$\cM(W') = \cM(W)'$ for every $W \in \cW$.
Then one obtains an antiunitary 
representation of the Poincar\'e group $P(4)_+$ fixing 
$\Omega$ and acting covariantly on the net. 
Further, $U_{r_W} = J_W$ (cf.~Lemma~\ref{lem:4.17}) 
and the spectrum of the translation subgroup is either 
contained in $\oline{V_+}$ or in $-\oline{V_+}$, 
i.e., we either have positive or negative energy representations. 

(b) For $x,y \in \R^{1,3}$ and $x-y \in V_+$, the open causal interval 
\[ \cO_{x,y} := (x - V_+) \cap (y + V_+) \] 
is called a {\it double cone}. 
There are various Reeh--Schlieder Theorems,  
that provide sufficient conditions for the vacuum vector to be 
cyclic and separating for an algebras $\cM(\cO)$ of 
local observables attached to an open subset of Minkowski space 
(\cite{Bo92, RS61, Bo68}). The most classical results 
concern the cyclicity of the vacuum for 
double cone algebras $\cM(\cO_{x,y})$. 
Since every wedge contains double cones, the vacuum is also 
cyclic and separating for wedge algebras $\cM(W)$, $W \in \cW$. 
This leads to modular objects $(\Delta_W, J_W)$, so that 
the condition (RS) in \S \ref{subsec:5.2} holds for the 
index set~$L = \cW$. 

(c) For nets of von Neumann algebras $\cM(\cO)$ of local oberservables associated 
to regions $\cO$ in some spacetime $M$, it is important to specify those 
regions behaving well with respect to our assumptions. 
In \cite{Sa97} they are called {\it test regions}. This requires in particular 
that the vacuum vector $\Omega$ should be cyclic for $\cM(\cO)$ 
(the Reeh--Schlieder property) and that a suitable duality holds 
$\cM(\cO)' = \cM(\cO')$, where $\cO'$ is the (interior of the) causal complement of 
$\cO$. Prototypical examples of test domains are wedges $W$ in Minkowski space (or its conformal 
completion) \cite[Thm.~2.5]{BGL02}, but in many situations larger classes also have these properties, 
such as double cones or {\it spacelike cones}, i.e., 
translates of convex cones $\R_+ \cD$, where $\cD$ is a double cone not containing~$0$. 
In this context the CGMA is a natural additional requirement for test regions 
that ties the corresponding modular structure to spacetime geometry.  

(d) For a Haag--Kastler net $\cA(\cO)$ (as in the introduction), 
the (CGMA) for the net $\pi_\omega(\cA(\cO))''$ of von Neumann algebras 
specified by a state $\omega$ of $\cA$ can be seen as a requirement 
that selects states which are particularly natural 
(cf.~\cite[p.~485]{BDFS00}). 
\end{remark} 

Under certain assumptions on the corresponding 
net of local observables, the Bisog\-nano--Wichmann Theorem (\cite{BW76, So10}) 
asserts that the antiunitary representation 
$(U,\cH)$ of $P(4)_+$ obtained from the PCT Theorem, where 
$\Theta = U_{-\1}$ is the antiunitary PCT operator, 
has the property that the boost generator $b_0$ 
from  \eqref{eq:bostgen-d} in Example~\ref{ex:one-par}  
satisfies 
\[ \Delta_{W_R}^{-it/2\pi} = U(e^{tb_0}) \quad \mbox{ and } \quad 
J_{W_R} = U(r_{W_R}) = \Theta U(\diag(1,1,-1,-1)).\] 
The first relation is called the {\it modular covariance relation} 
(cf.\ \cite[p.~911]{Mu01}). 
In \cite[Props.~2.8,2.9]{GL95} Guido and Longo show that 
modular covariance implies covariance of the corresponding modular 
conjugations which in turn implies the PCT Theorem. 
In the context of standard subspaces, 
the Bisognano--Wichmann Theorem and the PCT Theorem 
were derived in by Mund (\cite[Thm.~5]{Mu01}). 

\begin{exs} \mlabel{exs:5.16} (Conformal invariance) 
Beyond the Bisognano--Wichmann Theorem, 
the following geometric implementations of modular automorphism 
groups are known: 

(a) In \cite{Bu78}, Buchholz shows that, for a free scalar massless field on 
$\R^{1,d-1}$ (which automatically enjoys conformal symmetry), for $d > 2$ 
the dilation group $\gamma_{V_+}(a)(x) = a x$, $a \in \R^\times$, 
corresponds to the modular objects of the light cone algebra $\cM(V_+)$. 
As we shall see in (b) below,  the light cone is conformally equivalent to 
the right wedge $W_R$. Therefore 
$\gamma_{V_+}$ is conjugate in the conformal group to the homomorphism 
\break $\gamma_{W_R} \:\R^\times \to \Conf(\R^{1,d-1})$, 
 corresponding to the right wedge $W_R$, which occurs 
in the Bisognano--Wichmann Theorem. 

(b) In \cite{HL82}, Hislop and Longo obtain similar results for 
double cones in the context of 
massless scalar fields by conjugating them conformally to light cones 
and then apply \cite{Bu78}. More concretely, the 
{\it relativistic ray inversion} 
\[ \rho \: 
x = (t,\bx) \mapsto \frac{1}{[x,x]}(t,\bx), \quad [x,x] = t^2 - \bx^2 \] 
(which is an involution), exchanges the translated right wedge 
\[ W_R + \frac{r}{2} e_1 
= \Big\{ (x_0, \bx) \: x_1 > \frac{r}{2} + |x_0| \Big\} \] 
with the double cone 
\[ \cO_{\frac{e_0-e_1}{r}, \frac{-e_0-e_1}{r}} = 
\Big(\frac{e_0 - e_1}{r} - V_+\Big) \cap \Big(-\frac{e_0 + e_1}{r} + V_+\Big).\] 
It also exchanges the double cone 
$\cO_{r e_0, 0} = (r e_0 - V_+) \cap V_+$ and the light cone  
$\frac{e_0}{r} + V_+$ 
(see \cite[p.~111]{Gu11}).
With these explicit transformations, one also obtains the corresponding 
one-parameter groups of automorphisms and the corresponding 
conformal involutions. For the light cone $V_+$, we know from (a) that 
the corresponding automorphism group is given by the dilations 
$\gamma_{V_+}(t)x = t x$. So it follows in particular, that it is conformally 
conjugate to the Lorentz boosts $\gamma_W \: \R^\times \to P(4)_+$ 
corresponding to a wedge $W$ (Lemma~\ref{lem:4.17}). 

As a consequence of this discussion, the modular automorphism groups 
corresponding to the local observable algebras associated to 
double cones, light cones and wedges are conjugate under 
the conformal group $\Conf(\R^{1,d-1}) \cong \OO_{2,d}(\R)/\{\pm \1\}$. 
In particular, they  
correspond to a single conjugacy class of homomorphism 
$\gamma \: \R^\times \to \Conf(\R^{1,d-1})$ which is most simply 
represented by $\gamma_{V_+}$. 
\end{exs}

\begin{ex} (cf.\ Example~\ref{ex:proj-grp}) 
In the one-dimensional Minkowski space 
$\R$, the order intervals are represented by the open interval 
$(-1,1)$ transformed by the Cayley map $c(x) := \frac{1 + x}{1-x}$ to $(0,\infty) = V_+$ 
and the involution $\sigma(x) := x^{-1}$ maps $(-1,1)$ to its (conformal) 
complement. These are the geometric transformations corresponding to the modular 
operators on the double cone algebra $\cM(\cO_{1,-1})$ for $d = 1$. 
\end{ex}

\begin{ex} Interesting examples of nets of von Neumann algebras 
with (CGMA) arise from \cite[Thm.~4.3.9]{BDFS00}, 
where the index set is the set $\cW$ of wedges in $\R^{1,3}$. 
Under suitable continuity assumptions, one obtains a 
continuous antiunitary representation of $P(4)_+$ with 
\[ U_{r_W} = J_W \quad \mbox{ and } \quad \cJ = U_{P(4)_+}.\] 
Here a key point is that $P(4)_+$ 
is generated by the conjugacy class of the wedge reflection $r_{W_R}$ 
(Lemma~\ref{lem:5.8}). 
\end{ex}

\begin{remark}
A key observation in the work of Borchers and Wiesbrock 
is that von Neumann algebras of local observables corresponding to two 
wedges having a light ray in common define modular intersections 
(\cite[Prop.~7]{Wi98}). That one can deal with them as pairs without 
any direct reference to the intersection (cf.\ Theorem~\ref{thm:wies3-standard}) 
is crucial because the modular group of the intersection need not be implemented 
geometrically \cite{Bo96}. This is of particular interest for QFT on de Sitter 
space $\dS^d$ whose isometry group $\OO_{1,d}(\R)$ has no positive energy 
representations for $d > 2$. 
\end{remark}

\section{Second quantization and modular localization} 
\mlabel{sec:6}

In this section we explain how 
Second Quantization, i.e., the passage from a (one-particle) Hilbert space 
$\cH$ to the corresponding Fock spaces $\cF_\pm(\cH)$ 
(bosonic and fermionic) provides for 
each standard subspace $V \subeq \cH$ 
pairs $(\cR^\pm(V),\Omega)$, where $\cR^\pm(V)$ is a von Neumann algebra 
on $\cF_\pm(\cH)$ and the vacuum vector 
$\Omega \in \cF_\pm(\cH)$ is cyclic. 

Let $\cH$ be a complex Hilbert space and let  
\[ \cF(\cH) := \hat\bigoplus_{n = 0}^\infty \cH^{\hat\otimes n}  \] 
be the full {\it Fock space over $\cH$}. 
We write $\cF_+(\cH)$ for the subspace of symmetric tensors, 
the {\it bosonic Fock space}, and 
$\cF_-(\cH)$ for the subspace of skew-symmetric tensors, 
the {\it fermionic Fock space}. 
Both spaces carry a natural representation 
\[ \Gamma_\pm \: \AU(\cH) \to \AU(\cF_\pm(\cH)) \] 
of the antiunitary group $\AU(\cH)$  given by 
\[ \Gamma_+(U)(v_1 \vee \cdots \vee v_n) 
:= Uv_1 \vee \cdots \vee Uv_n, \qquad 
 \Gamma_-(U)(v_1 \wedge \cdots \wedge v_n) 
:= Uv_1 \wedge \cdots \wedge Uv_n.\] 
Moreover, the bosonic Fock space 
carries a unitary representation of the Heisenberg
group $\Heis(\cH)$ (\S \ref{subsec:7.1}) and its subgroups 
can be used to derive a net of von Neumann algebras 
on $\cF_+(\cH)$. A similar construction can be carried out 
for the fermionic Fock space in terms of the 
natural representation of the $C^*$-algebra $\CAR(\cH)$, a 
$C^*$-algebra defined by the {\it canonical anticommutation operators}. 
Both constructions are functorial and associate to every 
antiunitary representation $(U,\cH)$ of $(G,G_1)$ on $\cH$ a 
covariant family $(\cM_\gamma)_{\gamma \in \Gamma}$ 
of von Neumann algebras on $\cF_\pm(\cH)$, 
where $\Gamma$ is as in Proposition~\ref{prop:antiunirep-stand}.

\subsection{Bosonic Fock space} 
\mlabel{subsec:7.1}

We start with the construction of the von Neumann algebras on the bosonic Fock space. 
For $v_1, \ldots, v_n \in \cH$, we define 
\[  v_1 \cdots v_n := v_1 \vee \cdots \vee v_n := 
\frac{1}{\sqrt{n!}} \sum_{\sigma \in S_n} v_{\sigma(1)} \otimes \cdots \otimes v_{\sigma(n)} \] 
and $v^n := v^{\vee n}$, so that 
\begin{eqnarray} 
  \label{eq:symprod}
\la v_1 \vee \cdots \vee v_n, w_1 \vee \cdots \vee w_n \ra 
&=&   \sum_{\sigma \in S_n} \la v_{\sigma(1)}, w_1 \ra 
\cdots \la v_{\sigma(n)}, w_m \ra. 
\end{eqnarray} 
For every $v \in \cH$, the series 
$\Exp(v) := \sum_{n = 0}^\infty \frac{1}{n!} v^n$ 
defines an element in $\cF_+(\cH)$ and the scalar product of two 
such elements is given by 
\[ \la \Exp(v), \Exp(w) \ra 
= \sum_{n = 0}^\infty \frac{n!}{(n!)^2} \la v, w\ra^n = e^{\la v, w \ra}.\] 
These elements span a dense subspace of $\cF_+(\cH)$, and therefore 
we have for each $x \in \cH$ a unitary operator on $\cF_+(\cH)$ determined by the 
relation 
\begin{equation}
  \label{eq:Ux-ops}
 U_x \Exp(v) = e^{ -\la x, v\ra - \frac{\|x\|^2}{2}} \Exp(v+x) \quad 
\mbox{ for } \quad x,v \in \cH.   
\end{equation}
A direct calculation then shows that 
\begin{equation}
  \label{eq:comm-rel-U}
U_x U_y = e^{-i\Im \la x, y \ra} U_{x+y} \quad \mbox{ for  } \quad 
x, y \in \cH.
\end{equation}
To obtain a unitary representation, we have to replace the 
additive group of $\cH$ by the {\it Heisenberg group}
\[ \Heis(\cH) := \T\times \cH \quad \mbox{ with } \quad 
(z,v)(z',v') := (zz' e^{-i\Im \la v,v' \ra}, v + v').  \] 
For this group, we obtain with \eqref{eq:comm-rel-U} a unitary representation 
\[ U \: \Heis(\cH) \to \U(\cF_+(\cH)) \quad \mbox{ by } \quad U_{(z,v)} := z U_v.\] 
In this physics literature, all this is expressed in terms of the 
so-called {\it Weyl operators} 
\[ W(v) := U_{iv/\sqrt{2}}, \qquad v \in \cH \] 
satisfying the {\it Weyl relations} 
\begin{equation}
  \label{eq:weyl}
  W(v) W(w) = e^{-i \Im \la v,w \ra/2} W(v+w), \qquad v,w \in \cH.
\end{equation}

\begin{definition} To each real subspace $V \subeq \cH$, we assign the 
von Neumann algebra $\cR(V) := \cR^+(V) := W(V)'' \subeq B(\cF_+(\cH))$ on the bosonic Fock 
space of $\cH$.   
\end{definition}

\begin{lemma} \mlabel{lem:6.3} We have 
  \begin{itemize}
  \item[\rm(i)] $\cR(\cH) = B(\cF_+(\cH))$, resp., the representation 
of $\Heis(\cH)$ on $\cF_+(\cH)$ is irreducible. 
  \item[\rm(ii)] $\cR(V) \subeq \cR(W)'$ if and only if 
$V \subeq W'$ (locality). 
  \item[\rm(iii)] $\cR(V) = \cR(\oline V)$. 
\item[\rm(iv)] $\Omega= \Exp(0) \in \cF_+(\cH)$ is cyclic for $\cR(V)$ if and only 
if $V + i V$ is dense in $\cH$.
  \item[\rm(v)] $\Omega  \in \cF_+(\cH)$ is separating 
for $\cR(V)$ if and only if $\oline V \cap i \oline V = \{0\}$. 
\item[\rm(vi)] $\Omega \in \cs(\cR(V))$ if and only if $\oline V$ is standard. 
  \end{itemize}
\end{lemma}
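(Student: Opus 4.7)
The plan is to handle (i)--(iii) using the Weyl relations \eqref{eq:weyl} and strong continuity of $v \mapsto W(v)$, then to analyze (iv)--(v) by determining which coherent vectors $\Exp(w)$ lie in $\overline{\cR(V)\Omega}$; (vi) is immediate from (iv) and (v) combined with (iii).

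For (i), I would observe that $W(v)\Omega = e^{-\|v\|^2/4}\Exp(iv/\sqrt{2})$, so cyclicity of $\Omega$ under $W(\cH)$ follows from the density of coherent vectors in $\cF_+(\cH)$. Irreducibility is then a standard computation: any bounded operator commuting with all $W(tv)$ and $W(itv)$ commutes with the associated field operators, hence with the creation and annihilation operators, which act irreducibly on $\cF_+(\cH)$. For (ii), the Weyl relation shows that $W(v)$ and $W(w)$ commute iff $\Im\la v,w\ra = 0$, so $W(V) \subeq W(W)'$ iff $V \subeq W'$; double commutants then yield (ii). For (iii), strong continuity of $v \mapsto W(v)$ on coherent vectors gives $W(\oline V) \subeq W(V)'' = \cR(V)$, hence $\cR(V) = \cR(\oline V)$.

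The technical heart is a lemma needed for (iv): for any real subspace $M \subeq \cH$, the closed complex linear span of $\{\Exp(v) : v \in M\}$ equals $\cF_+(\oline{M+iM})$. The inclusion $\subeq$ is immediate; for the reverse, differentiating $\R \ni \lambda \mapsto \Exp(\lambda v)$, $v \in M$, at $\lambda = 0$ yields each symmetric power $v^n$ in the closed complex span, and polarization then gives all symmetric products of vectors in $M+iM$. Applied to $M = iV/\sqrt{2}$, together with the formula for $W(v)\Omega$, this gives $\overline{\cR(V)\Omega} = \cF_+(\oline{V+iV})$, whence (iv). Before addressing (v) I would record the algebraic identity that, for any real subspace, $V^{\bot_\R} = iV'$ and hence $(V+iV)^{\bot_\R} = V' \cap iV'$; together with $V'' = \oline V$ (a direct Hahn--Banach argument), this yields both ``$V+iV$ is dense iff $V' \cap iV' = \{0\}$'' and the dual ``$V'+iV'$ is dense iff $\oline V \cap i\oline V = \{0\}$''.

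For (v), the easy direction now follows: if $\oline V \cap i\oline V = \{0\}$, then $V'+iV'$ is dense, so $\Omega$ is cyclic for $\cR(V')$ by (iv); since $\cR(V') \subeq \cR(V)'$ by (ii), $\Omega$ is cyclic for $\cR(V)'$, i.e., separating for $\cR(V)$. The main obstacle is the converse, which I would handle by a tensor-product argument. If a nonzero $v \in \oline V$ satisfies $iv \in \oline V$, then $\C v = \R v + \R(iv) \subeq \oline V$. Under the canonical isomorphism $\cF_+(\cH) \cong \cF_+(\C v) \otimes \cF_+((\C v)^{\bot_\C})$ the Weyl operators $W(w)$ for $w \in \C v$ factor as $W(w) \otimes \1$ (using orthogonality of $\C v$ to its complex complement in the Weyl composition law), and $\Omega = \Omega_1 \otimes \Omega_2$. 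By (i) applied internally to the one-complex-dimensional Hilbert space $\C v$, $\{W(w) : w \in \C v\}'' = B(\cF_+(\C v))$, so $B(\cF_+(\C v)) \otimes \C\1 \subeq \cR(\oline V) = \cR(V)$ by (iii). The orthogonal projection $P$ onto the one-particle line $\C v \subeq \cF_+(\C v)$, viewed as $P \otimes \1 \in \cR(V)$, then annihilates $\Omega$, since $v$ is orthogonal to the vacuum $\Omega_1$; hence $\Omega$ is not separating. I expect this tensor-factorization step to be the trickiest, requiring careful verification of the exponential factorization of Weyl operators and the internal applicability of (i) to $\C v$. Part (vi) is finally the conjunction of (iv) and (v), using (iii) to pass freely between $V$ and $\oline V$.
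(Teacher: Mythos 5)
Your proposal is correct and follows essentially the same route as the paper: (i)--(iii) from the Weyl relations and strong continuity, (iv) by identifying $\overline{\cR(V)\Omega}$ with $\cF_+(\overline{V+iV})$, the easy direction of (v) by passing to $\cR(V')$ and using (ii) and (iv), and the converse of (v) by reducing to the complex line $\C v\subseteq\overline V$, on which the Weyl operators act irreducibly so that $B(\cF_+(\C v))\otimes\C\1\subseteq\cR(V)$ contains a nonzero operator killing $\Omega$. The only cosmetic difference is in (iv), where the paper deduces totality of the coherent vectors over $V$ in $\cF_+(\overline{V+iV})$ from the holomorphy of $v\mapsto\langle f,\Exp(v)\rangle$ (a function vanishing on a real form vanishes on its complex span) rather than from your differentiation-and-polarization argument; both rest on the same analyticity of $\lambda\mapsto\Exp(\lambda v)$.
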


\begin{proof} (i) is well-known (\cite[Prop.~5.2.4(3)]{BR96}). 

(ii) follows directly from the Weyl relations \eqref{eq:weyl}. 

(iii) follows from the fact that $\cH \to B(\cF_+(\cH)), v \mapsto W_v$ 
is strongly continuous and $\cR(V)$ is closed in the weak operator topology. 

(iv) Assume that $\cK:=\overline{V+iV}\not= \cH$. 
Then $\cR(V)\Omega \subeq \cF_+(\cK)$, so that $\Omega$ cannot be cyclic. 

Suppose, conversely, that $\cK = \cH$ and that $f \in (\cR(V)\Omega)^\bot$. 
Then the holomorphic function $\hat f(v) := \la f, \Exp(v) \ra$ on 
$\cH$ vanishes on $V$, hence also on $V + i V$, and since this subspace is 
dense in $\cH$, we obtain $f = 0$ because $\Exp(\cH)$ is total in $\cF_+(\cH)$. 
We conclude that $\Omega$ is cyclic. 

(v) In view of (iii), we may assume that $V$ is closed. 
Let $0 \not= w \in V \cap i V$. To see that $\Omega$ is not separating 
for $\cR(V)$, it suffices to show that, for the one-dimensional Hilbert space 
$\cH_0 := \C w$, the vector $\Omega$ is not separating for 
$\cR(\C w) = B(\cF_+(\C w))$ (which follows from the irreducibility 
of the representation of $\Heis(\C w)$ on $\cF_+(\C w)$). 
This is obviously the case because $\dim \cF_+(\C w) > 1$. 

Suppose that $\cK = \{0\}$. As $\cK = V'' \cap (iV'') = (V' + i V')'$, 
it follows that $V' + i V'$ is dense in $\cH$. By (ii), 
$\Omega$ is cyclic for $\cR(V')$ which commutes with $\cR(V)$. Therefore 
$\Omega$ is separating for $\cR(V)$. 

(vi) follows from (iv) and (v). 
\end{proof}

\begin{remark} (a) $\cR(V)$ is commutative if and only if 
$V \subeq V'$. For a standard subspace $V$ the relation 
$V' = JV$ shows that this is equivalent to $V = V'$, respectively 
to $\Delta = \1$ (Lemma~\ref{lem:stand-factorial}). 

(b) The imaginary 
part $\omega(\xi,\eta) := \Im \la \xi,\eta\ra$ turns $\cH$ into a symplectic manifold 
$(\cH,\omega)$. From this perspective, we may consider the algebras 
$\cR(V)$ as ``quantizations'' of the algebra of measurable functions on the 
Lagrangian subspace $E := \cH^J$. If $V = V'$, then 
$\cF_+(\cH) \cong L^2(E^*, \gamma)$, where 
$\gamma$ is a Gaussian probability measure on the algebraic dual space $E^*$ of $E$, 
endowed with the smallest $\sigma$-algebra for which all evaluation maps are 
measurable. Then the commutative von Neumann algebra $\cR(V)$ is isomorphic to 
$L^\infty(E^*,\gamma)$. In general, if $V \not\subeq V'$, then $\cR(V)$ is non-commutative 
and the degree of non-commutativity depends on the non-degeneracy of $\omega$ on $V$. 
It is ``maximal'' if $V \cap V' = \{0\}$, which implies that $\cR(V)$ is a factor 
by the following theorem. 
\end{remark}

\begin{theorem} \mlabel{thm:araki-1} {\rm(\cite[Thm.~1]{Ar63})} 
For closed real subspaces $V,W, V_j$ of $\cH$, the following assertions hold: 
\begin{itemize}
\item[\rm(i)] $\cR(V) \subeq \cR(W)$ if and only if $V \subeq W$ (isotony). 
\item[\rm(ii)] $R\big(\bigvee_{j \in J} V_j\big) = \bigvee_{j \in J} \cR(V_j)$, 
where $\bigvee_{j \in J} V_j$ denotes the closed subspace generated by the 
$V_j$ and $\bigvee_{j \in J} \cR(V_j)$ denotes the von Neumann algebra generated by 
the $\cR(V_j)$. 
\item[\rm(iii)] $R\big(\bigcap_{j \in J} V_j\big) = \bigcap_{j \in J} \cR(V_j)$. 
\item[\rm(iv)] $\cR(V)' = \cR(V')$ (duality). 
\item[\rm(v)] $\cR(V) \cap \cR(V') = \cR(V \cap V')$. In particular, 
the algebra $\cR(V)$ is a factor if and only if $V \cap V' =\{0\}$. 
\end{itemize}
\end{theorem}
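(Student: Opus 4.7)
The plan is to establish duality (iv) first and to derive (ii), (iii), (v) from it by standard commutant manipulations, then to handle isotony (i) separately. Throughout I will assume that $V$ is standard (i.e., $V\cap iV=\{0\}$ and $V+iV$ is dense); the general case reduces to this by factoring. If $V+iV$ is not dense, write $\cH=\overline{V+iV}\oplus(\overline{V+iV})^\perp$ and use the canonical tensor decomposition $\cF_+(\cH)\cong\cF_+(\cH_1)\otimes\cF_+(\cH_2)$, under which $\cR(V)=\cR_{\cH_1}(V)\otimes\1$; if $V\cap iV\neq\{0\}$, isolate the complex subspace $V\cap iV$ on which Lemma~\ref{lem:6.3}(i) makes $\cR(\cdot)$ the full algebra $B(\cF_+(V\cap iV))$.

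For the duality (iv), Lemma~\ref{lem:6.3}(vi) tells us $\Omega$ is cyclic and separating for $\cR(V)$, so $(\cR(V),\Omega)$ has Tomita--Takesaki modular objects $(\tilde\Delta,\tilde J)$. The heart of the proof is the identification
\begin{equation*}
\tilde\Delta=\Gamma_+(\Delta_V),\qquad\tilde J=\Gamma_+(J_V),
\end{equation*}
where the right-hand sides are obtained by second quantization of the modular operator and modular conjugation of the standard subspace $V$ (the positive operator $\Gamma_+(\Delta_V)$ being defined through the one-parameter group $\Gamma_+(\Delta_V^{it})$). Functoriality of $\Gamma_+$ guarantees that these operators fix $\Omega$, satisfy the modular relation, and implement Weyl covariance $\Gamma_+(U)W(v)\Gamma_+(U)^{-1}=W(Uv)$; it remains to check that $\Gamma_+(S_V):=\Gamma_+(J_V)\Gamma_+(\Delta_V)^{1/2}$ agrees with the closure of the Tomita operator $M\Omega\mapsto M^*\Omega$. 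By uniqueness of closed antilinear involutions, it suffices to verify the agreement on the core $\{W(v)\Omega:v\in V\}$, and for such $v$ the antilinearity of $S_V$ together with $S_Vv=v$ gives $S_V(iv/\sqrt 2)=-iv/\sqrt 2$, whence
\begin{equation*}
\Gamma_+(S_V)W(v)\Omega=e^{-\|v\|^2/4}\Exp\bigl(-iv/\sqrt 2\bigr)=W(-v)\Omega=W(v)^*\Omega.
\end{equation*}
With this identification in place, Tomita--Takesaki yields
\begin{equation*}
\cR(V)'=\Gamma_+(J_V)\cR(V)\Gamma_+(J_V)=\cR(J_VV)=\cR(V'),
\end{equation*}
using $J_VV=V'$ from~\S\ref{subsec:3.2}.

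The remaining items follow. For (ii), the inclusion $\supseteq$ is isotony (trivial direction of (i)), and $\subseteq$ is the observation that any generator $W(v)$ with $v\in\overline{\sum V_j}$ is a strong limit of finite products of Weyl operators drawn from the $V_j$, by~\eqref{eq:weyl} and strong continuity of $v\mapsto W(v)$. Part (iii) follows from (ii) and (iv) together with the symplectic identity $(\bigcap V_j)'=\overline{\sum V_j'}$:
\begin{align*}
\cR\Bigl(\bigcap_j V_j\Bigr)=\cR\Bigl(\bigl(\overline{{\textstyle\sum_j} V_j'}\bigr)'\Bigr)=\cR\Bigl(\overline{{\textstyle\sum_j} V_j'}\Bigr)'=\Bigl(\bigvee_j\cR(V_j')\Bigr)'=\bigcap_j\cR(V_j')'=\bigcap_j\cR(V_j).
\end{align*}
Part (v) is the specialization of (iii) to the pair $(V,V')$. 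For the nontrivial direction of (i), the key is the identity $\overline{\cR(V)_h\Omega}\cap\cH=V$: the inclusion $V\subseteq$ follows from the Segal fields $\phi(v)$ (affiliated with $\cR(V)$) with $\phi(v)\Omega=v/\sqrt 2$, approximated by bounded spectral truncations in $\cR(V)_h$; the reverse inclusion uses the identification of the previous paragraph, since any $\xi\in\overline{\cR(V)_h\Omega}\cap\cH$ is fixed by $\tilde S=\Gamma_+(S_V)$, which restricts to $S_V$ on the one-particle subspace $\cH$, forcing $\xi\in\Fix(S_V)=V$. Hence $\cR(V)\subseteq\cR(W)$ gives $\overline{\cR(V)_h\Omega}\subseteq\overline{\cR(W)_h\Omega}$, and intersecting with $\cH$ yields $V\subseteq W$.

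The main obstacle is the modular identification in paragraph two: one must justify that $\{W(v)\Omega:v\in V\}$ is indeed a core for the closed antilinear operator appearing, which requires density of $\cR(V)\Omega$ combined with control of the unbounded operator $\Gamma_+(\Delta_V)^{1/2}$ through its explicit action $\Exp(w)\mapsto\Exp(\Delta_V^{1/2}w)$ for $w$ in the domain of $\Delta_V^{1/2}$, together with analyticity of the exponential vectors under the second-quantized modular one-parameter group.
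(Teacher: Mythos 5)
The paper does not prove this theorem; it is quoted from Araki \cite[Thm.~1]{Ar63}, and the key ingredient of your approach --- the identification $\tilde\Delta=\Gamma_+(\Delta_V)$, $\tilde J=\Gamma_+(J_V)$ of the Tomita--Takesaki objects of $(\cR(V),\Omega)$ with the second-quantized modular objects of $V$ --- is itself only cited in the paper (Proposition~\ref{prop:6.9}, attributed to \cite{FG89} and \cite{EO73}). Granting that identification, your derivations of (i), (ii), (iii) and (v) from (iv) are essentially correct: the symplectic identity $(\bigcap_j V_j)'=\oline{\sum_j V_j'}$ and $V''=V$ for closed real subspaces are valid, the Weyl-relation argument for (ii) works, and the extraction $\oline{\cR(V)_h\Omega}\cap\cH=V$ via Segal fields and the one-particle restriction of $\Gamma_+(S_V)$ is a clean way to get isotony. (Minor point: for antiunitary $U$ one actually has $\Gamma_+(U)W(v)\Gamma_+(U)^{-1}=W(-Uv)$, but since $J_VV=-J_VV$ this does not affect the conclusion $\cR(J_VV)=\cR(V')$.)

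The genuine gap is in the second paragraph, exactly where you place your ``main obstacle,'' and your proposed fix does not close it. Agreement of $\Gamma_+(S_V)$ with the Tomita operator $\tilde S\colon M\Omega\mapsto M^*\Omega$ on the span of the Weyl vectors $W(v)\Omega$, $v\in V$, gives only the inclusion $\tilde S\subeq\Gamma_+(S_V)$ of closed antilinear operators (that span is a core for $\tilde S$ by the standard fact about dense $*$-subalgebras, but you have not shown it is a core for $\Gamma_+(S_V)$). For unbounded antilinear involutions this one-sided inclusion does not force equality: $S_1\subeq S_2$ corresponds precisely to an inclusion $\Fix(S_1)\subeq\Fix(S_2)$ of standard subspaces, and proper such inclusions abound (this is the whole subject of \S\ref{subsec:3.3}). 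What is missing is the reverse inclusion $\Fix(\Gamma_+(S_V))\subeq\oline{\cR(V)_h\Omega}$, i.e.\ the statement that the fixed space of the second-quantized involution is exhausted by the ``exponential'' of $V$ --- and that is the substantive content of Araki's theorem, not a routine domain check. Analyticity of exponential vectors under $\Gamma_+(\Delta_V^{it})$ shows $\Exp(w)\in\cD(\Gamma_+(\Delta_V)^{1/2})$ for suitable $w$, but says nothing about whether vectors in $\Fix(\Gamma_+(S_V))$ can be approximated in graph norm from the Weyl span. The standard ways to close this gap are either Takesaki's theorem (verify the KMS condition for the automorphism group implemented by $\Gamma_+(\Delta_V^{it})$ on two-point functions of Weyl operators, which reduces to a one-particle analytic continuation of $t\mapsto\la u,\Delta_V^{it}v\ra$), or the direct lattice-theoretic argument of Araki and of Leyland--Roberts--Testard. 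Without one of these, (iv) --- and hence (i), (iii), (v), which you derive from it --- is not established.
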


\subsection{Fermionic Fock space} 
\mlabel{subsec:7.2}

On the fermionic Fock space, the construction of the von Neumann algebras is slightly 
different but similar in spirit. 

For $v_1, \ldots, v_n \in \cH$, we define 
\[  v_1 \wedge \cdots \wedge v_n := 
\frac{1}{\sqrt{n!}} \sum_{\sigma \in S_n} \sgn(\sigma) 
v_{\sigma(1)} \otimes \cdots \otimes v_{\sigma(n)}, \] 
so that 
\begin{eqnarray} 
  \label{eq:altprod}
\la v_1 \wedge \cdots \wedge v_n, w_1 \wedge  \cdots \wedge w_n \ra 
&=&   \sum_{\sigma \in S_n} \sgn(\sigma) \la v_{\sigma(1)}, w_1 \ra 
\cdots \la v_{\sigma(n)}, w_m \ra
\end{eqnarray} 
In $\cF_-^0(\cH) \cong \C$ we pick a unit vector $\Omega$, called the {\it vacuum}.

\begin{definition}
The {\it CAR-algebra} $\CAR(\cH)$ of $\cH$ is a $C^*$-algebra, 
together with a continuous antilinear map
$a \: \cH \to \Car(\cH)$ satisfying the
{\it canonical anticommutation relations}
\begin{equation}
  \label{eq:car}
\{a(f), a(g)^*\}  = \la f,g \ra \1
\quad \hbox{ and } \quad  \{a(f),a(g)\} = 0
\quad \mbox{ for } \quad f, g \in \cH
\end{equation}
and which has the universal property that, 
for any $C^*$-algebra $\cA$ and any antilinear map 
$a' \:  \cH \to \cA$ satisfying the above anticommutation relations, 
there exists a unique homomorphism 
$\phi \: \CAR(\cH) \to \cA$ with $\phi \circ a  =a'$. 
This determines the pair $(\Car(\cH),a)$ 
up to isomorphism (\cite[Thm.~5.2.8]{BR96}). 
We  write $a^*(f) := a(f)^*$ and observe that this defines a complex 
linear map $a^* \: \cH \to \Car(\cH)$. 
\end{definition} 

\begin{remark} \mlabel{rem:10.1} 
The $C^*$-algebra $\Car(\cH)$ has an irreducible 
representation 
$(\pi_0, \cF_-(\cH))$ 
on the fermionic Fock space $\cF_-(\cH)$ (\cite[Prop.~5.2.2(3)]{BR96}). 
The image $c(f) := \pi_0(a(f))$ 
acts by $c(f)\Omega=0$ and 
\[  c(f)(f_1 \wedge \cdots\wedge f_n) 
= \sum_{j = 1}^n (-1)^{j-1} \la f, f_j \ra 
f_1 \wedge \cdots \wedge f_{j-1} \wedge f_{j+1} \wedge \cdots \wedge f_n. \] 
Accordingly, we have 
$$ c^*(f)\Omega = f \quad \mbox{ and } \quad 
c^*(f)(f_1 \wedge \cdots \wedge f_n) 
= f \wedge f_1 \wedge \cdots \wedge f_n. $$
\end{remark}
Consider the hermitian operators 
\begin{equation}
  \label{eq:6.1}   
b(f) := c(f) + c^*(f) \in \CAR(\cH) 
\end{equation}
and note that 
\begin{equation}
  \label{eq:6.2}
\{b(f), b(g)\} = \{c(f), c^*(g)\} + \{c^*(f), c(g)\} 
= \la f,g \ra \1 + \la g, f \ra \1 = 2 \beta(f,g) \1,
\end{equation}
where 
\[ \beta(f,g) = \Re \la f, g \ra \quad \mbox{ for } \quad f,g \in \cH\] 
is the real scalar product on $\cH$.

\begin{definition}
Let $\cH= \cH_{\oline 0} \oplus \cH_{\oline 1}$ be a $2$-graded Hilbert space. 
Accordingly, $B(\cH)$ inherits a grading and therefore a 
{\it Lie superbracket} which on homogeneous elements is given by 
\[ [A,B]_\tau := AB - (-1)^{|A| |B|} BA, \] 
where $|A|$ denotes the degree of a homogeneous element~$A$.
For a subset $E \subeq B(\cH)$, we accordingly define the {\it super-commutant} by 
\[ E^\sharp := \{ A \in B(\cH) \: (\forall M \in E)\, [A,M]_\tau = 0\}.\] 
\end{definition}

For each homogeneous $M \in B(\cH)$, the operator $D_M(A) := [M,A]_\tau$ is a 
superderivation of the $\Z_2$-graded associative algebra $B(\cH)$ in the sense that 
\begin{equation}
  \label{eq:6.3}
D_M(AB) = D_M(A)B + (-1)^{|M| |A|} A D_M(B).
\end{equation}
It follows in particular that, if $E$ is spanned by homogeneous elements, 
then $E^\sharp$ is a von Neumann algebra adapted to the 
$2$-grading of $B(\cH)$. 
Let $Zv = (-1)^{|v|} v$ ($|v| \in \{0,1\}$) 
denote the parity operator on $\cH$ and 
$\tilde Z v = (-i)^{|v|} v$ (also known as 
the {\it Klein twist} $\tilde Z = \frac{1 + iZ}{1+i\1}$) 
which satisfies $\tilde Z^2 =Z$. For $A$ and $M$ odd we then have 
\[ [\tilde Z^{\pm 1} A \tilde Z^{\mp 1}, M] = \pm i Z\{A,M\} = -i Z[A,M]_\tau.\] 
This leads to 
\begin{equation}
  \label{eq:6.4}
E^\sharp = \tilde Z E' \tilde Z^{-1} = \tilde Z^{-1} E' \tilde Z\end{equation}
for any graded subspace  $E \subeq B(\cH)$.

As in \cite{Fo83}, we associated to every real 
linear subspace $V \subeq \cH$ a von Neumann subalgebra 
\[ \cR(V) :=  \cR^-(V) := b(V)'' \subeq B(\cF_-(\cH)).\] 
We list some properties of this assignment (cf.\ \cite[Prop.~2.5]{Fo83} for (iv) and (v)): 

\begin{lemma} \mlabel{lem:ferm-dual} We have 
  \begin{itemize}
  \item[\rm(i)] $\cR(\cH) = B(\cF_-(\cH))$, resp., the representation 
of $\Car(\cH)$ on $\cF_-(\cH)$ is irreducible. 
  \item[\rm(ii)] $\cR(V) = \cR(\oline V)$. 
\item[\rm(iii)] $\cR(V)$ and $\cR(W)$ super-commute if and only if $V \bot_\beta W$ 
(twisted duality). 
\item[\rm(iv)] The vacuum $\Omega$ is cyclic for $\cR(V)$ if and only if 
$V + i V$ is dense in $\cH$. 
\item[\rm(v)] The vacuum $\Omega$ is separating for $\cR(V)$ if and only if 
$\oline V \cap i \oline V = \{0\}$. 
\item[\rm(vi)] $\Omega \in \cs(\cR(V))$ if and only if $\oline V$ is standard. 
  \end{itemize}
\end{lemma}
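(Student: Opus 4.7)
The plan is to prove (i)--(vi) in order, with (i)--(iii) being quick consequences of the CAR relations and (iv)--(v) (whence (vi)) resting on a cyclicity/separating argument in Fock space.

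For (i), antilinearity of $c$ and linearity of $c^*$ give $b(if)=i(c^*(f)-c(f))$, hence $c(f) = \shalf(b(f)+ib(if))$ and $c^*(f) = \shalf(b(f)-ib(if))$; so $\cR(\cH) = b(\cH)''$ contains all $c(f)$ and $c^*(f)$, and Remark~\ref{rem:10.1} gives $\cR(\cH) = B(\cF_-(\cH))$. For (ii), $b(f)^2 = \beta(f,f)\1$ with selfadjointness gives $\|b(f)\| = \|f\|$, so $b$ is norm-continuous and $b(V), b(\oline V)$ generate the same von Neumann algebra. For (iii), $\{b(v),b(w)\} = 2\beta(v,w)\1$ with $b(v),b(w)$ odd yields $[b(v),b(w)]_\tau = 0 \iff \beta(v,w) = 0$; since $\cR(V)$ is graded (generated by odd elements), super-commutation propagates from generators to the full algebras via the super-Leibniz rule~\eqref{eq:6.3} and weak continuity of the super-commutator.

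For (iv), if $V+iV$ is not dense with closure $\cK$, the graded Fock isomorphism $\cF_-(\cH)\cong\cF_-(\cK)\,\hat\otimes\,\cF_-(\cK^\perp)$ makes $b(v)=b_\cK(v)\otimes\id$ for $v\in V\subseteq\cK$ (as $v\bot\cK^\perp$), so $\cR(V)\Omega\subseteq\cF_-(\cK)\otimes\C\Omega_{\cK^\perp}$, a proper subspace. Conversely, assume $V+iV$ dense and let $M_n$ be the closed complex span of $\{b(v_1)\cdots b(v_k)\Omega : k\leq n,\, v_j\in V\}$; I would show by induction on $n$ that $M_n$ contains every wedge $v_1\wedge\cdots\wedge v_k$ for $k\leq n$ and $v_j\in V$. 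The inductive step uses the Wick-type expansion
\[
b(v_1)\cdots b(v_n)\Omega \;=\; v_1\wedge\cdots\wedge v_n \;+\; (\text{scalar contractions times wedges of fewer than $n$ of the $v_j$}),
\]
the remainder lying in $M_{n-1}$ by induction. Complex multilinearity then yields $M_n\supseteq$ the complex span of $k$-fold wedges from $V+iV$, which is dense in $\cF_-^k(\cH)$ since $V+iV$ is dense in $\cH$.

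For (v), the ``only if'' direction: if $0\neq w\in\oline V\cap i\oline V$, then both $w$ and $iw$ lie in $\oline V$, so $c(w) = \shalf(b(w)+ib(iw))\in\cR(\oline V) = \cR(V)$ is nonzero (as $\{c(w),c^*(w)\}=\|w\|^2\1$) and annihilates $\Omega$. Conversely, assume $\oline V\cap i\oline V=\{0\}$; the computation $(V'+iV')^{\perp_\R} = iV\cap V = \oline V\cap i\oline V$ shows $V'+iV'$ is dense, and the identity $\Im\la v,-iu\ra = -\Re\la v,u\ra$ gives $V^{\perp_\beta} = iV'$ (hence $iV^{\perp_\beta} = V'$), so $V^{\perp_\beta}+iV^{\perp_\beta} = V'+iV'$ is also dense. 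Applying (iv) to $V^{\perp_\beta}$ yields $\Omega$ cyclic for $\cR(V^{\perp_\beta})$; by (iii), $\cR(V^{\perp_\beta})\subseteq\cR(V)^\sharp = \tilde Z\cR(V)'\tilde Z^{-1}$ (using~\eqref{eq:6.4}); since $\tilde Z\Omega = \Omega$, this makes $\Omega$ cyclic for $\cR(V)'$, equivalently separating for $\cR(V)$. Statement (vi) is the conjunction of (iv) and (v).

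The main obstacle is the inductive step in (iv): handling the Wick expansion carefully enough that the remainder after extracting $v_1\wedge\cdots\wedge v_n$ decomposes cleanly into pieces of strictly lower wedge-degree. Once that is in place, (v) and (vi) follow formally from (iv), the weak twisted duality of (iii), and the Klein-twist identity~\eqref{eq:6.4}; (i)--(iii) are routine.
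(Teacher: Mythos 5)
Your proof is correct, and in two places it takes a genuinely different (and more self-contained) route than the paper. For (iv), the paper does not argue directly on $\cF_-(\cH)$: it passes to the self-dual CAR formalism (a conjugation $\Gamma$ and basis projection $P$ on a doubled space $\cK$, with $b = a\circ \Phi^{-1}$ on $\cK^\Gamma$) and then quotes \cite[Prop.~3.4]{BJL02}; your Wick-expansion induction replaces that citation by an elementary computation, and it does work --- the leading term of $b(v_1)\cdots b(v_n)\Omega$ is the full wedge and every contraction strictly lowers the wedge degree, so the remainder lies in $M_{n-1}$, and complex multilinearity of the wedge map passes from $V$-wedges to $(V+iV)$-wedges. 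For the ``only if'' part of (v), the paper reduces to the one-dimensional subspace $\C w$ and uses that $\cF_-(\C w)\cong \C^2$ admits no separating vector for $M_2(\C)$; your observation that $w,iw\in\oline V$ forces $c(w)=\shalf(b(w)+ib(iw))\in\cR(\oline V)=\cR(V)$, a nonzero operator annihilating $\Omega$, is more direct and avoids the implicit tensor factorization in the paper's reduction. The ``if'' part of (v) follows the same strategy as the paper (density of $V^{\bot_\beta}+iV^{\bot_\beta}$, then twisted duality), but you are more careful than the text: the paper says $\cR(V^{\bot_\beta})$ ``anticommutes'' with $\cR(V)$ and concludes at once, whereas you correctly insert the Klein twist via \eqref{eq:6.4} and use $\tilde Z\Omega=\Omega$ to transfer cyclicity to $\cR(V)'$ --- which is what actually makes the step rigorous. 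Parts (i)--(iii) and (vi) coincide with the paper's (terser) arguments.
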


\begin{proof} (i) is well-known (\cite[Prop.~5.2.2(3)]{BR96}). 

(ii) follows from the fact that 
$b \: \cH \to B(\cF_-(\cH))$ is continuous. 

(iii) follows immediately from \eqref{eq:6.2}. 

(iv) We explain how this can be derived from 
\cite[Prop.~3.4]{BJL02}, where a different setting is used: 
Consider a conjugation $\Gamma$ on a complex 
Hilbert space $\cK$ and a corresponding basis projection $P$, i.e., 
$\Gamma P \Gamma = \1 - P$. For $v \in \cK^\Gamma$ we then have 
the orthogonal decomposition $v = Pv + (\1 - P)v$, where both summands 
are exchanged by $\Gamma$, hence have the same length. 
Therefore the map 
\[ \Phi \: \cK^\Gamma \to P\cK, \quad 
\Phi(v) = \sqrt 2 Pv \] 
is an isometry between the real Hilbert space $\cK^\Gamma$ and the 
complex Hilbert space $\cH := P\cK$. 
The antilinear map 
\[ a \: \cK \to \CAR(\cH), \qquad 
a(f) := c^*(P\Gamma f) + c(Pf) \] 
then satisfies 
\[ a(\Gamma f) = a(f)^*\quad \mbox{ for }\quad f \in \cK\] 
and $a$ is the unique antilinear extension of the map 
$a\res_{\cK^\Gamma} = b \circ P \: \cK^\Gamma \to \CAR(\cH).$ 

For any $\Gamma$-invariant subspace $\cV \subeq \cK$, we therefore 
have 
\begin{equation}
  \label{eq:ac-rel}
a(\cV) = b(P\cV^\Gamma)_\C = b(\Phi(\cV^\Gamma))_\C 
\end{equation}
and thus, for the real subspace $V := \Phi(\cV^\Gamma) = P(\cV^\Gamma) \subeq \cH$,  
\begin{equation}
  \label{eq:ac-rel2}
a(\cV^\Gamma)'' = a(\cV)'' = b(\Phi(\cV^\Gamma))'' = \cR(V)''. 
\end{equation}
As $V + i V = P(\cV^\Gamma_\C) = P(\cV),$ 
\cite[Prop.~3.4]{BJL02} implies that 
$P(\cV)$ is dense in $P(\cH)$ if and only if $\Omega$ is $\cR(V)$-cyclic, and 
(iv) follows. 

(v) In view of (ii), we may assume that $V$ is closed. 
Let $0 \not= w \in W := V \cap i V$. To see that $\Omega$ is not separating 
for $\cR(V)$, it suffices to show that, for the one-dimensional Hilbert space 
$\cH_0 := \C w$, the vector $\Omega$ is not separating for 
$\cR(\C w) = B(\cF_-(\C w))$. This follows from the irreducibility 
of the representation of $\Car(\C w) \cong M_2(\C)$ on $\cF_-(\C w)\cong \C^2$ 
which has no separating vector (see (i)). 

Suppose, conversely, that $W = \{0\}$. As $W = (V^\bot+ i V^\bot)^\bot$, 
the subspace  $V^\bot + i V^\bot$ is dense in $\cH$. By (iii), 
$\Omega$ is cyclic for $\cR(V^\bot)$ which anticommutes with $\cR(V)$. Therefore 
$\Omega$ is separating for $\cR(V)$. 
\end{proof}

The following theorem is the fermionic version of 
the duality result in Theorem~\ref{thm:araki-1}(iii) 
(\cite[Thm.~7.1]{BJL02}, \cite[Thm.~2.4(v)]{Fo83}). 

\begin{theorem}[Fermionic Duality Theorem] \mlabel{thm:6.9} 
\[ \cR(V^{\bot_\beta}) = \cR(V)^\sharp  = \{ A \in B(\cF_-(\cH)) \: (\forall v \in V) [A, b(v)]_\tau = 0\} 
= \tilde Z^{-1} \cR(V)' \tilde Z \] 
for every real linear subspace $V \subeq \cH$. 
\end{theorem}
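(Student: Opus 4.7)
The plan is to prove the three equalities in turn. First I would establish the super-commutant description: each $b(v)$ is odd, so $\cR(V) = b(V)''$ is generated by odd self-adjoint elements and is a $\Z_2$-graded subalgebra satisfying $Z\cR(V)Z = \cR(V)$. The super-Leibniz rule \eqref{eq:6.3} shows that an operator super-commuting with all $b(v)$, $v \in V$, automatically super-commutes with the $*$-algebra they generate; since super-commutation with a fixed homogeneous operator $A$ is a condition stable under weak limits on each parity component, this property extends to the weak closure $\cR(V)$. The second equality $\cR(V)^\sharp = \tilde Z^{-1} \cR(V)' \tilde Z$ then follows from \eqref{eq:6.4} applied to the graded subspace $\cR(V)$, and in particular shows that $\cR(V)^\sharp$ is a von Neumann algebra.

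Next I would prove the main duality $\cR(V^{\bot_\beta}) = \cR(V)^\sharp$. The easy direction is immediate from \eqref{eq:6.2}: for $v \in V$ and $w \in V^{\bot_\beta}$, $\{b(w),b(v)\} = 2\beta(v,w)\1 = 0$, so $b(V^{\bot_\beta}) \subseteq \cR(V)^\sharp$, and taking the double commutant (using that $\cR(V)^\sharp$ is weakly closed) gives $\cR(V^{\bot_\beta}) \subseteq \cR(V)^\sharp$. For the reverse inclusion I would reduce to the case of closed $V$ by Lemma~\ref{lem:ferm-dual}(ii), and then decompose $V$ orthogonally as $V_0 \oplus V_1$, where $V_0 = V \cap iV$ is the maximal complex subspace of $V$ and $V_1$ is its real orthogonal complement in $V$. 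This induces a splitting $\cH = \cH_0 \oplus \cH_1$ and a $\Z_2$-graded tensor factorization $\cF_-(\cH) \cong \cF_-(\cH_0) \hat\otimes \cF_-(\cH_1)$ which respects the grading. On the $\cH_0$-factor, $V_0$ is complex and $\cR(V_0) = B(\cF_-(\cH_0))$ by irreducibility of the Fock representation of $\Car(\cH_0)$, so the duality is trivial. On the $\cH_1$-factor, $V_1$ is standard in $\cH_1$, and the required inclusion is precisely the twisted duality for standard subspaces in the self-dual CAR framework established in \cite[Thm.~7.1]{BJL02} (see also \cite[Thm.~2.4(v)]{Fo83}); its proof embeds the situation into a doubled Hilbert space $\cH_1 \oplus \oline{\cH_1}$ with a canonical conjugation, realizes $V_1$ as a basis projection, and uses the modular objects $(\Delta_{V_1}, J_{V_1})$ together with Tomita--Takesaki theory on $\cF_-(\cH_1)$ to identify the commutant.

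The main obstacle is this last step, the reverse inclusion in the standard case, whose difficulty parallels Araki's bosonic duality (Theorem~\ref{thm:araki-1}(iv)) but is additionally complicated by the $\Z_2$-grading. The Klein twist $\tilde Z$ is the device that mediates between the super-commutant and the ordinary commutant, and verifying that conjugation by $\tilde Z$ correctly transports the self-dual CAR commutant computation into the required fermionic super-commutant statement is the technical heart of the argument. Once that step is granted, all four expressions in the theorem agree, and the result follows.
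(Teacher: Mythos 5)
The paper does not actually prove this theorem: it is quoted from the literature, with the proof deferred to \cite[Thm.~7.1]{BJL02} and \cite[Thm.~2.4(v)]{Fo83}, so there is no in-text argument to compare yours against. That said, your outline is consistent with how the result is established in those sources: the identification of $\cR(V)^\sharp$ with the set of operators super-commuting with the generators $b(v)$ (via the super-Leibniz rule \eqref{eq:6.3} and weak closedness of the super-commutation condition on each parity component), the identity $\cR(V)^\sharp = \tilde Z^{-1}\cR(V)'\tilde Z$ from \eqref{eq:6.4}, and the inclusion $\cR(V^{\bot_\beta}) \subeq \cR(V)^\sharp$ from the anticommutation relation \eqref{eq:6.2} are all correct and elementary, and you rightly locate the entire difficulty in the reverse inclusion, which you ultimately hand back to the same two references the paper cites.

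Two points on your reduction step. First, the splitting $V = V_0 \oplus V_1$ with $V_0 = V \cap iV$ does not by itself reduce the problem to ``complex subspace plus standard subspace'': $V_1$ satisfies $V_1 \cap iV_1 = \{0\}$, but $V_1 + iV_1$ need not be dense in $V_0^{\perp}$, so a third graded tensor factor $\cF_-\big(V_0^{\perp} \ominus \oline{V_1 + iV_1}\big)$ remains, on which $\cR(V)$ acts as $\C\1$ while $V^{\bot_\beta}$ contains the entire summand; this piece is harmless but must be accounted for, and one must also verify that the graded tensor factorization intertwines super-commutants in the expected way. Second, the reduction is in fact unnecessary: both \cite[Thm.~2.4(v)]{Fo83} and \cite[Thm.~7.1]{BJL02} prove twisted duality for an arbitrary closed real subspace, so after passing to $\oline V$ via Lemma~\ref{lem:ferm-dual}(ii) you may cite them directly. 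Neither point affects the correctness of your conclusion, but as written the decomposition argument is incomplete.
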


To match our notation with Foit's in \cite{Fo83}, we note that Foit's operator \break 
${V := \frac{1}{\sqrt 2}(\1 - i Z)}$ satisfies $V = e^{-\pi i/4}\tilde Z^{-1}$, 
so that $\tilde Z^{-1} A \tilde Z = V A V^*$ for every operator~$A$ on $\cF_-(\cH)$.

\subsection{From antiunitary representations to local nets} 

For a closed real subspace $V$ of the Hilbert space $\cH$, 
we write $\cR^\pm(V) \subeq B(\cF_\pm(\cH))$ for the associated 
von Neumann algebras on the bosonic and fermionic Fock space. 

\begin{proposition} \mlabel{prop:6.9}
For a closed real subspace $V \subeq \cH$, 
the vacuum $\Omega$ is cyclic and separating for the 
von Neumann algebras $\cR^\pm(V)\subeq B(\cF_\pm(\cH))$ if and only if 
$V$ is a standard subspace of $\cH$. 
The corresponding modular objects $(\Delta_V^\pm, J_V^\pm)$ on $\cF_\pm(\cH)$ 
are obtained by second quantization from the modular objects 
$(\Delta_V, J_V)$ associated to~$V$, in the sense that 
\begin{equation}
  \label{eq:6.1b}
\Delta_V^\pm= \Gamma_\pm(\Delta_V), \quad 
J_V^+ = \Gamma_+(J_V) \quad \mbox{ and }\quad 
J_V^- = \tilde Z \Gamma_-(i J_V).
\end{equation}
\end{proposition}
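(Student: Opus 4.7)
The first equivalence, that $\Omega \in \cs(\cR^\pm(V))$ if and only if $V$ is standard, is precisely the content of Lemma~\ref{lem:6.3}(vi) in the bosonic case and Lemma~\ref{lem:ferm-dual}(vi) in the fermionic case, so nothing new is needed there.

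For the identification of the modular objects, I would use the uniqueness of the polar decomposition: it suffices to exhibit an antilinear closed involution $\hat S^\pm$ coinciding with the Tomita involution of the pair $(\cR^\pm(V),\Omega)$, i.e., satisfying $\hat S^\pm(M\Omega) = M^*\Omega$ on a core of $M \in \cR^\pm(V)$, and then read off $\Delta_V^\pm$ and $J_V^\pm$ from its polar decomposition. In the bosonic case the natural candidate is $\hat S^+ := \Gamma_+(J_V)\Gamma_+(\Delta_V)^{1/2}$. Using the functorial identity $\Gamma_+(U)W(v)\Gamma_+(U)^{-1} = W(Uv)$ for unitary $U$ and its antilinear analogue $\Gamma_+(U)W(v)\Gamma_+(U)^{-1} = W(Uv)^{*}$ for antiunitary $U$, together with $\Gamma_\pm(U)\Omega = \Omega$, the one-parameter group version $\Gamma_+(\Delta_V^{it})W(v)\Omega = W(\Delta_V^{it}v)\Omega$ extends analytically to the strip so that $\Gamma_+(\Delta_V)^{1/2}W(v)\Omega = W(\Delta_V^{1/2}v)\Omega$ for $v \in V$. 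For such $v$ the relation $J_V\Delta_V^{1/2}v = v$ then gives $\hat S^+(W(v)\Omega) = W(v)^*\Omega$, and because $\{W(v)\Omega : v \in V\}$ spans a core of the Tomita involution, uniqueness yields the first two identities in \eqref{eq:6.1b}.

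For the fermionic case the key vectors are $b(v)\Omega = v \in \cH \subset \cF_-(\cH)$ for $v \in V$, which are self-adjoint images of generators and hence lie in $\Fix(S^-_V)$. Taking as candidate $\hat S^- := \tilde Z\,\Gamma_-(iJ_V)\,\Gamma_-(\Delta_V)^{1/2}$, the computation on the one-particle sector reads
\[
\hat S^- v = \tilde Z\,\Gamma_-(iJ_V)\bigl(\Delta_V^{1/2}v\bigr) = \tilde Z(iJ_V\Delta_V^{1/2}v) = \tilde Z(iv) = i\cdot \tilde Z v = i\cdot(-i)v = v,
\]
using that $v$ is homogeneous of degree one so $\tilde Z v = -iv$. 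The identity $\hat S^- = S^-_V$ on higher-particle products $b(v_1)\cdots b(v_n)\Omega$, $v_j \in V$, is then verified by induction, the crucial point being that the conjugation rule $\Gamma_-(iJ_V)b(v)\Gamma_-(iJ_V)^{-1}$ together with the Klein twist exactly absorbs the signs produced by the graded commutation between fermionic generators; the resulting closed extension must then agree with $S^-_V$ by the density of $\cR^-(V)_h\Omega$ and uniqueness of polar decomposition.

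The main obstacle is bookkeeping in the fermionic case: one has to verify that, beyond the one-particle sector, the insertion of the factor $i$ inside $\Gamma_-(\,\cdot\,)$ combined with the Klein twist $\tilde Z$ produces an \emph{involution} that implements $b(v)\Omega \mapsto b(v)\Omega$ on the whole of $\cR^-(V)_h\Omega$, and that it is antiunitary with respect to $\Gamma_-(\Delta_V)$. This amounts to checking two compatibilities: the interaction of $\tilde Z$ with the antilinear functor $\Gamma_-(iJ_V)$ on graded subspaces, and the compatibility of $\Gamma_-(\Delta_V)^{it}$ with anticommutators. The bosonic case, by contrast, requires only the well-known functoriality of $\Gamma_+$ and symplectic arguments, and is essentially immediate once the action on the cyclic vectors $W(v)\Omega$ is identified.
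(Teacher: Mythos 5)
The equivalence in the first assertion is handled exactly as the paper does it: it is Lemma~\ref{lem:6.3}(vi) in the bosonic case and Lemma~\ref{lem:ferm-dual}(vi) in the fermionic case, and no more is needed. For the identification of the modular objects, however, the paper offers no argument at all --- it cites \cite{FG89}, \cite{EO73} for the bosonic case and \cite{Fo83}, \cite{BJL02}, \cite{Lle09} for the fermionic one --- so your sketch is an attempt to reconstruct the proofs of those references. The strategy is the right one, and your one-particle computations are correct; note that the intermediate identity $\Gamma_+(\Delta_V)^{1/2}W(v)\Omega = W(\Delta_V^{1/2}v)\Omega$ is valid only because $\|\Delta_V^{1/2}v\| = \|J_V\Delta_V^{1/2}v\| = \|v\|$ for $v \in V$ kills the scalar $e^{(\|\Delta_V^{1/2}v\|^2 - \|v\|^2)/4}$ that appears for general $v \in \cD(\Delta_V^{1/2})$; for $v \notin V$ the identity is false.

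The genuine gap is at the concluding step. Granting that the span of $\{W(v)\Omega : v \in V\}$ is a core for the Tomita operator $S$ of $(\cR^+(V),\Omega)$ (which does follow, by Kaplansky density, since these vectors come from a $\sigma$-weakly dense unital $*$-subalgebra), the fact that $\hat S^+ := \Gamma_+(J_V)\Gamma_+(\Delta_V)^{1/2}$ agrees with $S$ on that core only yields the \emph{inclusion} $S \subeq \hat S^+$ of closed antilinear operators: the domain $\cD(\hat S^+) = \cD(\Gamma_+(\Delta_V)^{1/2})$ could a priori be strictly larger than $\cD(S)$, and ``uniqueness of the polar decomposition'' cannot be invoked for an inclusion. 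The standard device to upgrade this to equality is to run the identical one-particle computation for the commutant: by the duality $\cR^+(V)' = \cR^+(V')$ of Theorem~\ref{thm:araki-1}(iv) and $\Delta_{V'} = \Delta_V^{-1}$, $J_{V'} = J_V$ (Lemma~\ref{lem:stand-factorial}), one obtains $F := S^* \subeq \Gamma_+(J_V)\Gamma_+(\Delta_V)^{-1/2} = (\hat S^+)^*$, and the two inclusions together force $S = \hat S^+$. Your sketch never invokes the duality theorem, so it cannot close the argument; in the fermionic case the twisted duality of Theorem~\ref{thm:6.9} is needed in exactly the same role, in addition to the multi-particle induction that you explicitly postpone.
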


\begin{proof} The first assertion follows from 
Lemmas~\ref{lem:6.3} and \ref{lem:ferm-dual}. 
For the identification of the modular objects, 
we refer to \cite[Thm.~1.4]{FG89} (see also \cite{EO73}) 
in the bosonic case and to 
\cite[Prop.~2.8]{Fo83} for the fermionic case 
(see also \cite[Cor.~5.4]{BJL02}, \cite[Thm.~4.13]{Lle09}). 
\end{proof}

\begin{remark} (a) The twists arising in Theorem~\ref{thm:6.9} and Proposition~\ref{prop:6.9} 
arise from the fact that the fermionic situation has to take the 
$2$-grading  on $\cF_-(\cH)$ into account. In particular Theorem~\ref{thm:6.9} takes its most natural 
form $\cR(V^{\bot_\beta}) = \cR(V)^\sharp$ if the commutant is defined in terms of the 
super bracket.

(b) If $\cM$ is a $\Z_2$-graded von Neumann algebra on the $\Z_2$-graded Hilbert 
space $\cH = \cH_0 \oplus \cH_1$ and $\Omega \in \cH_0$ is a cyclic separating vector, then 
the theory of Lie superalgebras suggests to consider the antilinear involution 
$(x_0 + x_1)^\sharp := x_0^* - i x_1^*$ instead of the operator adjoint. Then the 
corresponding {\it unitary Lie superalgebra} is 
\[ \fu(\cM) = \{ x \in \cM \: x^\sharp = - x\} 
= \{ x = x_0 + x_1 \in \cM \: x_0^* = -x_0, x_1^* = -ix_1\}.\] 
Accordingly, modular theory can be based on the unbounded antilinear operator defined by 
$\tilde S(M\Omega) := M^\sharp \Omega = \tilde Z S(M\Omega)$  for $M \in \cM$. 
The polar decomposition $\oline{\tilde S} = \tilde J \Delta^{1/2}$ 
results in the pair $(\tilde J, \Delta)$ of modular objects, where 
$\Delta$ is unchanged, but $\tilde J = \tilde Z J$. This leads to the relation 
\[ \tilde J \cM \tilde J = \tilde Z \cM' \tilde Z^{-1} = \cM^\sharp,\] 
which is a super version of $J\cM J = \cM'$. 

We also obtain with \eqref{eq:6.1b} 
\[ \tilde {J_V^-} = \tilde Z^2 \Gamma_-(iJ_V) 
= Z \Gamma_-(i J_V)= \Gamma_-(-i J_V).\] 
To obtain a situation where the modular objects on $\cF^-(\cH)$ are simply given 
by second quantization, one may consider the von Neumann algebras 
$\tilde \cR^-(V) := \cR^-(\zeta V)$ for $\zeta := e^{\pi i/4}$ instead. The standard subspace 
$\tilde V := \zeta V$ satisfies $\Delta_{\tilde V} =\Delta_V$ and 
$J_{\tilde V} = i J_V$, so that the modular conjugation corresponding to 
$\tilde \cR^-(V)$ is 
\[ \tilde {J_{\zeta V}^-} = \Gamma_-(-i J_{\zeta V}) = \Gamma_-(J_V).\] 
\end{remark}

\begin{remark} Let $(U,\cH)$ be an antiunitary representation of 
$(G,G_1)$ on $\cH$ and \break 
$\gamma \: \R^\times \to G$ be a homomorphism with 
$\gamma(-1) \not\in G_1$, so that it specifies 
a standard subspace $V_\gamma \subeq \cH$ 
(Proposition~\ref{prop:antiunirep-stand}). 
Consider the antiunitary representation 
\[ \Gamma_\pm \: \AU(\cH) \to \AU(\cF_\pm(\cH)) \] 
of the antiunitary group of $\cH$ on the corresponding Fock spaces. 
Then $\Gamma_\pm \circ U$ is an antiunitary representation 
of $(G, G_1)$ on $\cF_\pm(\cH)$, so that we also obtain 
standard subspaces $V_\gamma^\pm \subeq \cF_\pm(\cH)$. 
The pair $(\cR^+(V_\gamma), \Omega)$ then satisfies 
\[ V_\gamma^+ = \oline{\cR^+(V_\gamma)_h \Omega},\] 
and in the fermionic case the pair $(\cR^-(V_\gamma), \Omega)$ leads to the 
correct modular operator $\Delta_{V_\gamma}^-$, but to the modular conjugation 
$\tilde Z \Gamma_-(i J_{V_\gamma}).$ 
\end{remark}

\section{Perspectives} 
\mlabel{sec:7} 

For a detailed exposition of the results mentioned below, 
we refer to the forthcoming paper \cite{NO17}.

\subsection{The Virasoro group}

In $\Diff(\bS^1)$ we consider the involution on 
$\bS^1 \cong \T \subeq \C$, given by 
$r(z) = \oline z$. We consider the group $G := \Diff(\bS^1) 
\cong \Diff(\bS^1)_0  \rtimes \{\1,r\}$. 
One can show that all projective unitary positive energy 
representations of $\Diff(\bS_1)_0$ extend naturally to projective 
antiunitary representations of $G$. To obtain 
antiunitary representations, one has to replace $G$ by 
a central extension $\Vir \rtimes \{\1,r\}$, where 
$\Vir$ is the simply connected Virasoro group. 

Another closely related  ``infinite dimensional'' group that occurs in the context of 
modular localization is the free product 
$\PSL_2(\R) *_{\Aff(\R)_0}  \PSL_2(\R)$ of two copies of $\PSL_2(\R)$ 
over the connected affine group (\cite{GLW98}).

\subsection{Euclidean Jordan algebras} 
\mlabel{subsec:7.2b}

Minkowski spaces are particular examples of simple  euclidean 
Jordan algebras, namely those of rank~$2$ (cf.~\cite{FK94}).
Many of the geometric structures of Minkowski spaces and their conformal 
completions are also available for general simple euclidean 
Jordan algebras, where the role of the lightcone $V_+$ is played 
by the open cone of invertible squares. There also exists a 
natural causal compactification $\hat V$ which carries a causal 
structure. The corresponding conformal group $G := \Conf(V)$ 
has an index $2$ subgroup $G_1$ preserving the causal structure on $\hat V$; 
other group elements reverse it. 
In $\hat V$, the set $\cW^c := \{ g V_+ \: g \in \Conf(V)\}$ 
specializes for Minkowski spaces to the set of conformal wedge 
domains, which include in particular the light cone and double cones 
(cf.~Example~\ref{exs:5.16}). Moreover, the homomorphism 
\[ \gamma_{V_+} \:  \R^\times \to \GL(V), \quad 
\gamma(t)v = tv\]
is naturally specified because $\gamma_{V_+}(\R^\times_+)$ is central 
in the identity component of the stabilizer $G_{V_+}$. 
Therefore any antiunitary positive energy representation of $G$ yields a net 
of standard subspaces indexed by $\cW^c$. 
In \cite{NO17} we obtain a classification of these representations 
along the lines of \S \ref{subsec:2.2}. 
The subsemigroups $S_{V_+} := \{ g \in G \: gV_+ \subeq V_+\} \subeq G$ 
also leads to a natural generalization of Borchers pairs in this context.

\subsection{Hermitian groups} 
\mlabel{subsec:7.3}

The conformal group $\Conf(V)$ of a euclidean Jordan algebra 
can be identified with the group $\AAut(T_{V_+})$ of 
holomorphic and antiholomorphic automorphisms 
of the corresponding tube domain $T_{V_+} = V_+ + i V$. 
This suggests that some of the crucial structure relevant for 
antiunitary representations can still be obtained for the 
groups $G := \AAut(\cD)$ of all holomorphic and 
antiholomorphic automorphisms of a bounded symmetric domain $\cD$. 
The irreducible antiunitary positive energy 
representations can also be parametrized in a natural way 
by writing $G = G_1 \rtimes \{\id,\sigma\}$, where 
$\sigma$ is an antiholomorphic involution of $\cD$ (\cite{NO17}).
Here there are many homomorphisms $\gamma \: (\R^\times, \R^\times_+) \to (G,G_1)$ 
with $\gamma(-1) = \sigma$, but one cannot expect 
$\gamma(\R^\times_+)$ to be central in 
$G^\sigma$, which can be achieved 
for tube type domains 
(coming from euclidean Jordan algebras). 

\subsection{Analytic extension} 
\mlabel{subsec:7.3b}

We have seen that, for antiunitary representations 
of $\Aff(\R)$, the positive energy condition appears quite naturally 
from the order structure on the set of standard subspaces. 
If $(U,\cH)$ is an antiunitary representation of $G$ 
containing copies of $\Aff(\R)$ coming from 
half-sided modular inclusions, it follows that 
the closed convex cone 
\[ C_U := \{ x \in \g \: -i\dd U(x) \geq 0\} \] 
is non-trivial. 
This further leads to an analytic extension of the representation to the domain 
$G \exp(i C_U)$ (see \cite{Ne00} for details on this process). 

On the other hand, antiunitary representations of 
$\R^\times$ correspond to modular objects $(\Delta, J)$ and the 
orbit maps of elements $v \in V$ extend to the strip 
$\{ z \in \C \: 0 \leq \Im z \leq \pi\}$ 
(Remark~\ref{rem:anaext}). 
Composing families of homomorphisms $\gamma \: \R^\times \to G$ 
with an antiunitary representation, 
we therefore expect analytic continuation of $U$ to natural 
complex domains containing $G$ in their boundary. 

It would be very interesting to combine these two types of 
analytic continuations in a uniform manner, in the same spirit 
as the KMS condition is a generalization of the ground state condition 
(corresponding to positive energy) (\cite{BR96}).  One may further expect 
that this leads to ``euclidean realizations'' 
of antiunitary representations of $G$ by unitary representations 
of a Cartan dual group in the sense of the theory of reflection positivity 
developed in in \cite{NO14, NO16}; see also \cite{Sch06} for relations 
with modular theory. Maybe it can even be combined 
with the analytic extensions to the crown of a Riemannian 
symmetric space (\cite{KS05}). 

\subsection{Geometric standard subspaces} 
\mlabel{subsec:7.3c}

In QFT, the algebras $\cR(V)$ are supposed to correspond to regions in some 
spacetime $M$. Therefore one looks for standard subspaces $V(\cO)$ that 
are naturally associated to a domain in some spacetime, such as Hardy spaces 
(Example~\ref{ex:hardy}) or the standard subspaces $K(\cO)$ constructed in \cite{FG89} 
for free fields. From the perspective of antiunitary group representations, 
a natural class of representations of a pair $(G,G_1)$ are those realized 
in spaces $\cH_D \subeq C^{-\infty}(M)$ of distributions on a manifold $M$ on which $G_1$ acts. 
Here $\cH_D$ is the Hilbert space completion of the space $C^\infty_c(M)$ of test functions 
with respect to the scalar product given by a positive definite distribution $D$ on $M \times M$ 
via 
\[ \la \xi, \eta \ra_D = \int_{M \times M} \oline{\xi(x)}\eta(y)\, dD(x,y)\] 
(cf.\ \cite{NO14}). 
We associate to each open subset $\cO \subeq M$ a subset $\cH_D(\cO)$ 
generated by the space $C^\infty_c(\cO)$ of test functions supported in $\cO$. 
In this context it is an interesting problem to find natural 
antiunitary extensions of the representation of $G_1$ to $G$ such 
that some of the corresponding standard subspaces 
(Proposition~\ref{prop:3.2}) have natural geometric descriptions.  
In this context the detailed analysis of KMS conditions for unitary representations 
of $\R$ in  \cite{NO16} should be a crucial tool because one typically expects 
standard subspaces to be described in terms of analytic continuations of 
distributions on some domain $\cO \subeq M$ 
to a complex manifold containing $\cO$, 
which links this problem to~\S\ref{subsec:7.3b} (cf.\ \cite{NO17b}). 
As one also wants the modular 
unitaries to act geometrically on the manifold $M$, the case $G_1 = \R$ acting 
by translations on $\R$ considered in \cite{NO16} is of key importance. \\

Conversely, one may also consider Hilbert spaces $\cH$ of holomorphic functions 
on a complex manifold $M$ on which $G$ acts in such a way that $G_1$ acts 
by holomorphic maps and $G \setminus G_1$ by antiholomorphic ones. Then 
any $\gamma \in \Hom((\R^\times,\R^\times_+),(G,G_1))$ leads to a standard subspace 
of $\cH$. Many natural examples of this type arise from \S\S\ref{subsec:7.2b} and~\ref{subsec:7.3}. 
In particular, the representation of $\AU(\cH)$ on $\cF_+(\cH)$ is of this type 
if we identify $\cF_+(\cH)$ with the Hilbert space of holomorphic functions on $\cH$ 
with the reproducing kernel $K(\xi,\eta) = e^{\la \xi,\eta\ra}$ (cf.~\cite{Ne00}).

\subsection{Dual pairs in the Heisenberg group} 
\mlabel{subsec:7.4}

Let $\cH$ be a complex Hilbert space and $V \subeq \cH$ be a real linear 
subspace. We consider the  corresponding 
subgroup $\Heis(V) := \T \times V \subeq \Heis(\cH)$ (\S \ref{subsec:7.1}). 
The centralizer of this subgroup in $\Heis(\cH)$ coincides with 
$\Heis(V')$. If $V$ is closed, we thus obtain a 
{\it dual pair} $(\Heis(V), \Heis(V'))$ of subgroups in 
$\Heis(\cH)$ in the sense that both subgroups are their mutual centralizers. 

\begin{remark} For a closed linear subspace 
$V \subeq \cH$, we have 
$\Herm(\cH) = \oline{\Heis(V)\Heis(V')}$ if and only if 
$V + V'$ is dense in $\cH$, which is equivalent to $(V + V')' = V \cap V' = \{0\}$ 
(cf.~Lemma~\ref{lem:stand-factorial}). 
If this is the case, then $\cR(V) \subeq B(\cF_+(\cH))$ is a factor 
by Theorem~\ref{thm:araki-1}. Accordingly, the restriction 
of the irreducible Fock representation $(U, \cF_+(\cH))$ of 
$\Heis(\cH)$ to $\Heis(V)$ is a factor representation 
and the same holds for $\Heis(V')$. We thus obtain many 
interesting types of factor representations of Heisenberg 
groups of the type $\Heis(V)$ simply by restricting an 
irreducible representation of $\Heis(\cH)$. 
In \cite{vD71} this approach is used to realize quasi-free representations 
of $\Heis(V)$ in a natural way.  
\end{remark}

\begin{remark} (a) Suppose that $G$ is a group which is the product $G = G_1G_2$ 
of two subgroups $G_1$ and $G_2$ such that $G_1 = Z_G(G_2)$ and $G_2 = Z_G(G_1)$. 
Then $G_1 \cap G_2 = Z(G)$ and every unitary representation 
$(U,\cH)$ of $G$ restricts to factor representations of the subgroups 
$G_j$. 

(b) A typical example arises from  a von Neumann algebra 
$\cM \subeq B(\cH)$ in symmetric form, i.e., there exists a conjugation 
$J$ with $J\cM J = \cM'$ (Definition~\ref{def:symform}). 
Then $G := \U(\cM) \U(\cM')$ is a product of two subgroups 
$G_1 := \U(\cM)$ and $G_2 := \U(\cM')$ satisfying this condition. 
The representation of $G$ on $\cH$ is multiplicity free 
because $G' = \cM \cap \cM'$ is the center of $\cM$, hence abelian. 
It is irreducible if and only if $\cM$ is a factor, and then 
the representations of the subgroups $G_1$ and $G_2$ are factor representations. 
Note that the representation of $G$ extends to an antiunitary 
representation of $G \rtimes \{\1,j\}$, where 
$j(g) = JgJ$.
\end{remark}

\begin{remark}
Similar structures also arise for infinite dimensional 
Lie groups such as $\Diff(\bS^1)$, (doubly extended) loop groups 
and oscillator groups because modular objects provide information on restrictions 
of irreducible representations to factorial representations of subgroups 
(cf.~\cite{Wa98} for loop groups). 
So one should also try to develop the theory of modular localization 
for antiunitary representations of infinite dimensional Lie groups. 
\end{remark}

\subsection{A representation theoretic perspective on 
modular localization} 

The analysis of ordered families of von Neumann algebras 
with a common cyclic separating vector carried out by Borchers in 
\cite{Bo97} should also have a natural counterpart in the context 
of standard subspaces, in the spirit of the translation mechanism 
described in Subsection~\ref{subsec:4.2}. It would be interesting 
to see if the corresponding results can be formulated entirely 
in group theoretic terms, concerning multiplicative 
one-parameter groups of some pair $(G,G_1)$ 
(cf.~Proposition~\ref{prop:antiunirep-stand}). 
As we have seen in \S\S\ref{subsec:3.3} and \ref{subsec:modint}, this works 
perfectly well for 
half-sided modular inclusions and modular intersections, 

The same could be said about Wiesbrock's program, concerning 
the generation of Haag--Kastler nets from finite configurations 
of von Neumann algebras with common cyclic separating vectors 
(\cite{Wi93c, Wi97b, Wi98, KW01}). 

\appendix 

\section{Appendices} 

In this short appendix we collect some general lemmas used in the main 
text. 

\subsection{A lemma on von Neumann algebras} 
\mlabel{app:a.3} 

\begin{lemma} \mlabel{lem:app.1}
Let $\cM\subeq \cH$ be a von Neumann algebra, 
$\alpha \: \cM \to \cM$ a real-linear weakly continuous automorphism and $U \in \U(\cM)$ be a unitary element. Then 
the following assertions hold: 
\begin{itemize}
\item[\rm(a)] If $\alpha$ is complex linear and $\alpha(U)= U$, 
then there exists a $V \in \U(\cM)$ with $\alpha(V) = V$ and $V^2 = U$.
\item[\rm(b)] If $\alpha$ is complex linear and $\alpha(U)= U^{-1}$ 
with $\ker(U+\1) = \{0\}$, then there exists a $V \in \U(\cM)$ with 
$\alpha(V) = V^{-1}$ and $V^2 = U$.
\item[\rm(c)] If $\alpha$ is antilinear and $\alpha(U)= U^{-1}$, 
then there exists a $V \in \U(\cM)$ with $\alpha(V) = V^{-1}$ and $V^2 = U$.
\item[\rm(d)] If $\alpha$ is antilinear and $\alpha(U)= U$  
with $\ker(U+\1) = \{0\}$, then there exists a $V \in \U(\cM)$ with 
$\alpha(V) = V$ and $V^2 = U$.
\end{itemize}
\end{lemma}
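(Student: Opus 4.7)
The natural approach is to realize $V$ as $V := f(U)$ via Borel functional calculus applied to the unitary $U$, for a suitably chosen measurable square root $f \colon \T \to \T$ with $f(z)^2 = z$. Since $U \in \U(\cM)$ and $f$ is bounded Borel with $|f| \equiv 1$, the resulting $V$ automatically lies in $\U(\cM)$ and satisfies $V^2 = U$. All four parts then reduce to arranging the correct symmetry of $f$ on $\spec(U)$ so that $\alpha(V)$ equals $V$ or $V^{-1}$ as required.

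The pivotal ingredient is that a weakly continuous real-linear $*$-automorphism $\alpha$ of $\cM$ intertwines Borel functional calculus on $U$. I would establish this first on $*$-polynomials $p(z,\bar z) = \sum a_{m,n} z^m \bar z^n$ by a direct computation: applying $\alpha$ to $p(U,U^*)$ yields $p(\alpha(U),\alpha(U)^*)$ when $\alpha$ is complex linear, and $\tilde p(\alpha(U),\alpha(U)^*)$ when $\alpha$ is antilinear, where $\tilde p$ has complex-conjugated coefficients. Passing to $\T$, this corresponds to the function $\tilde f(z) := \overline{f(\bar z)}$. The identity then extends to bounded Borel $f$ by normality (equivalently, weak continuity) of $\alpha$, giving
\[
 \alpha(f(U)) = f(\alpha(U)) \quad\text{(complex linear)}, \qquad \alpha(f(U)) = \tilde f(\alpha(U)) \quad\text{(antilinear)}.
\]

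With this intertwining, each case reduces to a symmetry requirement on $f$. In (a), $\alpha(V) = f(\alpha(U)) = f(U) = V$ automatically, so any Borel square root works, e.g.\ $f(e^{i\theta}) := e^{i\theta/2}$ on $[0,2\pi)$. In (c), unfolding $\alpha(V) = \tilde f(U^{-1})$ and using $\bar z = z^{-1}$ on $\T$, the equality $\alpha(V) = V^{-1}$ reduces to $\overline{f(z)} = f(z)^{-1}$, which is automatic once $|f| \equiv 1$; again any measurable square root suffices. In (b) one needs $f(z^{-1}) = f(z)^{-1}$, and in (d) one needs $f(\bar z) = \overline{f(z)}$; both are satisfied by the principal branch $f(e^{i\theta}) := e^{i\theta/2}$ on $(-\pi,\pi)$.

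The main point to address, and the reason the hypothesis $\ker(U+\1) = \{0\}$ enters (b) and (d), is the obstruction at $z=-1$: no value $f(-1) \in \T$ can simultaneously satisfy $f(-1)^2 = -1$ together with the required symmetry, since in (b) the symmetry forces $f(-1)^2 = 1$ and in (d) it forces $f(-1) \in \R$ and hence $f(-1)^2 \geq 0$. The hypothesis $\ker(U+\1) = \{0\}$ is precisely the statement that the spectral projection of $U$ at $\{-1\}$ vanishes, so that $\T \setminus \{-1\}$ carries full spectral measure for $U$ and $f$ need only be defined there. The main technical obstacle is therefore not producing $f$ but verifying the functional-calculus intertwining under antilinear automorphisms in (c) and (d); I would handle this by bootstrapping from $*$-polynomials using weak continuity of $\alpha$ and the fact that the bounded Borel functional calculus of a single normal element is the weak closure of the $*$-polynomial functional calculus.
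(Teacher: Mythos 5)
Your proposal is correct and follows essentially the same route as the paper: both construct $V$ by applying a measurable square root with the appropriate symmetry to $U$ via the spectral/Borel functional calculus, with the hypothesis $\ker(U+\1)=\{0\}$ serving exactly to remove the spectral mass at $-1$, where no symmetric square root exists. The only real difference is in the antilinear cases (c) and (d): you verify the intertwining $\alpha(f(U))=\tilde f(\alpha(U))$ directly by bootstrapping from $*$-polynomials, whereas the paper reduces these cases to (a) and (b) by passing to the complex-linear automorphism $\beta(N):=\alpha(N^*)$ of the abelian von Neumann algebra generated by $U$.
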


\begin{proof} (a) Let $P$ be the spectral measure of $U$ on the circle 
$\T \subeq \C$ with $U = \int_{\T} z \, dP(z)$. As $P$ is uniquely determined 
by $U$ and $\alpha$ is complex linear, we obtain 
$\alpha(P(E)) = P(E)$ for every measurable subset $E \subeq \T$. 
For every measurable function $S \:  \T \to \T$ with $S(z)^2 = z$ for $z\in \T$, 
we obtain by $V := \int_\T S(z)\, dP(z)$ a square root of $U$ fixed by $\alpha$. 

(b) Now our assumptions implies that $P(\{-1\}) = 0$, 
so that we find a unique spectral measure $\tilde P$ on the open 
interval $(-\pi, \pi)$ with $U = \int_{-\pi}^\pi e^{i\theta}\, d\tilde P(\theta)$. 
From $\alpha(U) = U^{-1}$ we derive $\alpha(\tilde P(E)) = \tilde P(-E)$ 
for every measurable subset $E \subeq (-\pi,\pi)$. Therefore 
$V := \int_{-\pi}^\pi e^{i\theta/2}\, dP(\theta)$ is a square root of $U$ satisfying 
$\alpha(V) = V^{-1}$. 

(c) We consider the von Neumann algebra $\cN \subeq \cM$ generated by 
$U$. Then $\cN$ is abelian and $\alpha$-invariant. Therefore 
$\beta(N) := \alpha(N^*)$ defines a complex linear automorphism of $\cN$. 
It satisfies $\beta(U) = U$, so that the existence of $V$ follows from (a). 

(d) We argue as in (c), but now (b) applies. 
\end{proof}

\begin{ex} For $\cM = \C$, $\alpha = \id_\C$ and $U = - \1$ there exists no 
unitary $V \in \U(\cM)$ with $V^2 = U$ and $V = \alpha(V) = V^{-1}$. 
This shows that the extra assumption in (b) is really needed. 
\end{ex}

\subsection{Two  lemmas on groups} 

\begin{lemma} \mlabel{lem:abstract-grp} 
Let $G$ be a group and $G^\sharp := (G \times G) \rtimes \{\1,\tau\}$, 
where $\tau(g,h) = (h,g)$ is the flip automorphism. We consider the subset 
$D := \{ (g,g^{-1}) \: g \in G \}$. 
Then the following assertions hold: 
\begin{itemize}
\item[\rm(a)] $\grp(D) = (G' \times \{e\}) D,$ 
where $G' \subeq G$ is the commutator subgroup. 
\item[\rm(b)] The conjugacy class $C_{(e,e,\tau)}$ 
of the involution $(e,e,\tau) \in G^\sharp$ coincides with 
$D\times \{\tau\}$ and 
\[ \grp(C_{(e,\tau)}) = \big((G' \times \{e\})D\big) \times \{\1,\tau\}.\] 
\end{itemize}
\end{lemma}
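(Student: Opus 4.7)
The plan for (a) is to exploit the product formula
\[
(g, g^{-1})(h, h^{-1}) = (gh, h^{-1}g^{-1}) = (gh, (hg)^{-1}),
\]
whose two coordinates differ by the commutator, so that
\[
(g, g^{-1})(h, h^{-1}) = ([g,h], e) \cdot (hg, (hg)^{-1}).
\]
Since $D$ is closed under inversion (the inverse of $(g,g^{-1})$ is $(g^{-1},g) \in D$), an inductive argument on the number of factors, repeatedly applying this identity to move commutators to the left, will show that every finite product of elements of $D$ lies in $(G' \times \{e\})D$. This gives the inclusion $\grp(D) \subseteq (G' \times \{e\})D$.

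For the reverse inclusion, I would rearrange the same identity to
\[
([g,h], e) = (g, g^{-1})(h, h^{-1})\bigl((hg)^{-1}, hg\bigr),
\]
exhibiting each commutator (and hence every element of $G' \times \{e\}$, by multiplicativity) as a product of elements of $D$. Combined with $D \subseteq \grp(D)$ this forces $(G' \times \{e\})D \subseteq \grp(D)$. I would then independently check that $(G' \times \{e\})D$ is in fact a subgroup, which boils down to the identities $(c_1g_1)(c_2g_2)(g_2g_1)^{-1} = c_1\cdot g_1 c_2 g_1^{-1}\cdot [g_1,g_2]$ and $(cg)^{-1}\cdot g^{-1} = g^{-1}c^{-1}g$, both in $G'$ by normality; this is the only bookkeeping step where a miscalculation is easy, so it is where I would be most careful.

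For (b), I would first compute the conjugacy class of the involution $\tau_0 := (e,e,\tau)$. Using the semidirect product law, conjugating by $(g_1, g_2, 1)$ yields $(g_1g_2^{-1}, g_2g_1^{-1}, \tau) = (h, h^{-1}, \tau)$ with $h = g_1g_2^{-1}$, and as $g_1, g_2$ range over $G$ the element $h$ ranges over all of $G$; conjugation by $\tau_0$ itself fixes $\tau_0$, and the case of $(g_1,g_2,\tau)$ reduces to the previous one. Hence $C_{\tau_0} = D \times \{\tau\}$.

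For the final formula, set $H := \grp(C_{\tau_0})$ and split $H = H_0 \sqcup H_0\tau_0$ by parity in the $\{\1,\tau\}$-factor. The computation $(g, g^{-1}, \tau)\cdot\tau_0 = (g, g^{-1}, 1)$ shows $D \times \{1\} \subseteq H$, so part (a) gives $(G' \times \{e\})D \times \{1\} \subseteq H_0$. Conversely, any even-length product of elements of $D \times \{\tau\}$ collapses to something of the form $d_1 \cdot \tau(d_2)\cdot d_3 \cdot \tau(d_4)\cdots$ in $G \times G$, and since $\tau(g, g^{-1}) = (g^{-1}, g) \in D$ we have $\tau(D) = D$, so this product lies in $\grp(D) = (G' \times \{e\})D$. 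Hence $H_0 = (G' \times \{e\})D$, and appending the $\tau$-coset gives $H = ((G' \times \{e\})D) \times \{\1,\tau\}$ as claimed. The whole argument is essentially a direct computation; the only subtle point is the bookkeeping in (a) showing $(G' \times \{e\})D$ is a subgroup, and part (b) then follows formally from (a) together with the involution trick $d\tau_0 \in H$.
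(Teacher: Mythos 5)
Your proof is correct and follows essentially the same route as the paper: the identity $(g,g^{-1})(h,h^{-1})=([g,h],e)\,(hg,(hg)^{-1})$ for (a), the conjugation computation $(g,h,\1)(e,e,\tau)(g,h,\1)^{-1}=(gh^{-1},hg^{-1},\tau)$ for (b), and the reduction of $\grp(C_{(e,e,\tau)})$ to $\grp(D)$. Two intermediate formulas contain transcription slips --- the second coordinate of $(g,g^{-1})(h,h^{-1})$ is $g^{-1}h^{-1}=(hg)^{-1}$, not $h^{-1}g^{-1}$, and the inverse bookkeeping should read $(cg)^{-1}\cdot g=g^{-1}c^{-1}g$ --- but the identities you actually use downstream are the correct ones, so nothing breaks.
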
 

\begin{proof} (a) For $g,h \in G$, the relation 
\[ (g,g^{-1})(h,h^{-1}) =  (gh,g^{-1}h^{-1}) 
= (ghg^{-1}h^{-1},e)(hg, (hg)^{-1}) \] 
implies that $G' \times \{e\} \subeq \grp(D)$. 
Conversely, it is easy to see that $(G' \times \{e\}) D$ is a subgroup 
of $G \times G$. 

(b) The first assertion follows from 
$(g,h,\1)(e,e,\tau)(g,h,\1)^{-1} = (gh^{-1}, hg^{-1}, \tau)$. 
The second assertion follows from (a). 
\end{proof}

\begin{lemma} \mlabel{lem:ext-homo} Let 
$G_1 \subeq G$ be a subgroup of index two, 
$r \in G \setminus G_1$, 
and $\phi \: G_1 \to H$ be a group homomorphism. 
If $h \in H$ satisfies $h^2 = \phi(r^2)$ and 
$h\phi(g)h^{-1} = \phi(rgr^{-1})$ for $g \in G_1$, 
then $\hat\phi(gr) := \phi(g) h$ and $\hat\phi(g) := \phi(g)$ 
for $g \in G_1$ defines an extension of 
$\phi$ to a homomorphism $\hat\phi \: G \to H$. 
\end{lemma}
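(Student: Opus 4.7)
Since $G_1$ has index two in $G$, we have the coset decomposition $G = G_1 \sqcup G_1 r$, and $G_1$ is automatically normal, so in particular $rg r^{-1} \in G_1$ for every $g \in G_1$ and $r^2 \in G_1$. Thus every element of $G$ has a unique expression as $g$ or $gr$ with $g \in G_1$, and the formula defining $\hat\phi$ is unambiguous and clearly extends $\phi$. The plan is to check multiplicativity $\hat\phi(xy) = \hat\phi(x)\hat\phi(y)$ on the four possible coset combinations, using repeatedly the two hypotheses on $h$ and the normality of $G_1$.

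For $x = g_1$, $y = g_2$ in $G_1$, multiplicativity is immediate from that of $\phi$. For $x = g_1 \in G_1$ and $y = g_2 r$, the product $xy = (g_1 g_2) r$ lies in $G_1 r$, and
\[
\hat\phi(g_1 g_2 r) = \phi(g_1 g_2) h = \phi(g_1)\phi(g_2) h = \hat\phi(g_1)\hat\phi(g_2 r).
\]
For $x = g_1 r$ and $y = g_2 \in G_1$, write $g_1 r g_2 = g_1 (r g_2 r^{-1}) r$ with $r g_2 r^{-1} \in G_1$; then
\[
\hat\phi(g_1 r g_2) = \phi(g_1)\phi(r g_2 r^{-1}) h = \phi(g_1)\bigl(h\phi(g_2)h^{-1}\bigr)h = \phi(g_1) h \phi(g_2) = \hat\phi(g_1 r)\hat\phi(g_2),
\]
where we used the intertwining relation $h\phi(g_2)h^{-1} = \phi(rg_2 r^{-1})$. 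Finally, for $x = g_1 r$ and $y = g_2 r$, the product $xy = g_1(r g_2 r^{-1}) r^2$ lies in $G_1$, so
\[
\hat\phi(g_1 r \cdot g_2 r) = \phi(g_1)\phi(r g_2 r^{-1})\phi(r^2) = \phi(g_1)\bigl(h\phi(g_2)h^{-1}\bigr) h^2 = \phi(g_1) h \phi(g_2) h = \hat\phi(g_1 r)\hat\phi(g_2 r),
\]
using both hypotheses $h\phi(g_2)h^{-1} = \phi(rg_2 r^{-1})$ and $h^2 = \phi(r^2)$.

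There is no genuine obstacle here: the argument is a direct verification, and both hypotheses on $h$ are used exactly once (the intertwining property handles the mixed cases, while $h^2 = \phi(r^2)$ is only needed for the $(g_1 r)(g_2 r)$ case). The only point worth flagging is the implicit use of the index-two hypothesis to guarantee normality of $G_1$, which ensures $r g_2 r^{-1} \in G_1$ and thereby legitimizes all invocations of $\phi$ on conjugated elements.
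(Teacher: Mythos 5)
Your proof is correct and follows essentially the same route as the paper's: a direct case-by-case verification of multiplicativity over the two cosets, using the intertwining relation $h\phi(g)h^{-1} = \phi(rgr^{-1})$ for the mixed products and $h^2 = \phi(r^2)$ for the product of two elements of $G_1 r$. The paper organizes the computation slightly differently (reducing to the products $\hat\phi(rg)$ and $\hat\phi(rgr)$), but the substance is identical.
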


\begin{proof} First we observe that 
$\hat\phi(gu) = \hat\phi(g)\hat\phi(u)$ obviously holds for $g \in G_1$ and $u \in G$. 
For $g \in G_1$ we further have 
\[ \hat\phi(rg) = \hat\phi(rgr^{-1}r) = \phi(rgr^{-1}) h 
= h\phi(g) h^{-1} h 
= h\phi(g) = \hat\phi(r) \hat\phi(g).\] 
Finally, we note that, for $g \in G_1$, 
\[ \hat\phi(rgr) = \hat\phi(rgr^{-1} r^2) 
= \phi(rgr^{-1})\phi(r^2) 
= h\phi(g) h^{-1} \phi(r^2) 
= h\phi(g) h = \hat\phi(r)  \hat\phi(gr).\] 
This implies that $\hat\phi$ is a group homomorphism.
\end{proof}

\subsection{Symmetric spaces} 
\mlabel{app:a.2}

\begin{definition} (a) Let $M$ be a set and 
\[ \mu \: M \times M \to M, \quad (x,y) \mapsto x \cdot y =: s_x(y) \] 
be a map with the following properties: 
\begin{itemize}
\item[\rm(S1)] $x \cdot x=x$ for all $x \in M$, i.e., $s_x(x) = x$.
\item[\rm(S2)] $x \cdot (x \cdot y) =y$ for all $x,y \in M$, i.e., $s_x^2 = \id_M$. 
\item[\rm(S3)] $s_x(y \cdot z) = s_x(y)\cdot s_x(z)$ for all $x,y \in M$, 
i.e., $s_x \in \Aut(M,\mu)$. 
\end{itemize}
Then we call $(M,\mu)$ a {\it reflection space}. 

(b) If $M$ be a smooth manifold and $\mu \: M \times M \to M$ is a smooth map 
turning $(M,\mu)$ into a reflection space, then it is called a 
{\it symmetric space} (in the sense of Loos) if, in addition, 
each $x$ is an isolated fixed point of $s_x$ (\cite{Lo69}). 
\end{definition}

\begin{ex} (a) Any group $G$ is a reflection space 
with respect to the product
\[  g \bullet h := s_g(h) := gh^{-1}g.\] 
The subset $\Inv(G)$ of involutions in $G$ is a reflection subspace 
on which the product takes the form $s_g(h) := ghg = ghg^{-1}$. 
More generally, the subset 
\[ G_2 := \{ g \in G \: g^2 \in Z(G)\} \] 
is a reflection subspace of $G$ because $g^2, h^2 \in Z(G)$ implies 
\[ (gh^{-1}g)^2 = gh^{-1}g^2 h^{-1}g = g^2 h^{-2} g^2 \in Z(G).\] 
This calculation even shows that the square map 
$G_2 \to Z(G), g \mapsto g^2$ is a homomorphism of reflection  spaces. 

(b) Suppose that $G$ is a Lie group and $\tau \in \Aut(G)$ an involution. 
For any open subgroup $H\subeq G^\tau = \Fix(\tau)$, we obtain on the coset space 
$M = G/H$ the structure of a symmetric space by 
\[ xH \bullet y H := x\tau(x)^{-1} \tau(y) H.\]
\end{ex}

\subsection*{Acknowledgments}

K.-H.~Neeb acknowledges supported by DFG-grant NE 413/9-1. 
The research of G.~\'Olafsson was supported by NSF grant  DMS-1101337.
Both authors thank Arthur Jaffe for supporting a visit at Harvard 
University, where some of this work was done.

\end{document}